\date{\today}
\newcommand{\C}{{\mathbb C}}
\newcommand{\M}{{\mathfrak M}}
\newcommand{\N}{{\mathbb{N}}}
\newcommand{\bbN}{{\mathbb{N}}}
\newcommand{\PP}{{\mathbb P}}
\newcommand{\Q}{{\mathbb Q}}
\newcommand{\R}{{\mathbb R}}
\newcommand{\T}{{\mathbb T}}
\newcommand{\X}{{\mathbb S}}
\newcommand{\Z}{{\mathbb Z}}
\newcommand{\oT}{{\overline{T}}}
\newcommand{\og}{{\overline{g}}}
\newcommand{\CW}{{\mathcal{W}}}
\newcommand{\A}{{\mathcal A}}
\newcommand{\SL}{{\mathrm{SL}}}
\newcommand{\dist}{{\mathrm{dist}}}
\newcommand{\hull}{{\mathrm{hull}}}
\newcommand{\orb}{{\mathrm{orb}}}
\newcommand{\Ran}{{\mathrm{Ran}}}
\newcommand{\rot}{{\mathrm{rot}}}
\newcommand{\sgn}{{\mathrm{sgn}}}
\newcommand{\tr}{{\mathrm{Tr}\,}}
\newcommand{\unif}{{\mathrm{unif}}}
\renewcommand{\ker}{{\mathrm{Ker}}}
\newcommand{\schwartzmanGroup}{{\mathfrak{A}}}
\newtheorem{theorem}{Theorem}[section]
\newtheorem{lemma}[theorem]{Lemma}
\newtheorem{prop}[theorem]{Proposition}
\newtheorem{coro}[theorem]{Corollary}
\theoremstyle{definition}
\newtheorem{claim}{Claim}[theorem]
\newtheorem{step}{Step}[theorem]
\newtheorem{case}{Case}[theorem]
\newtheorem{definition}[theorem]{Definition}
\newtheorem{example}[theorem]{Example}
\newtheorem{remark}[theorem]{Remark}
\newtheorem{question}[theorem]{Question}
\theoremstyle{plain}
\newenvironment{claimproof}[1][Proof of Claim]{\noindent \underline{#1.} }{\hfill$\diamondsuit$}
\numberwithin{equation}{section}
\DeclareMathOperator{\supp}{supp}
\newcommand{\set}[1]{\left\{#1\right\}}
\newcommand{\chisub}[1]{\chi_{_{#1}}}
\definecolor{purple}{rgb}{.5,0,1}
\newcommand{\jdf}[1]{\textcolor{red}{#1}}
\begin{document}

\title[Gap Labelling for 1D Schr\"odinger Operators]{Gap Labelling for Discrete One-Dimensional \\ Ergodic Schr\"odinger Operators}

\author{David Damanik}
\thanks{D.D.\ was supported in part by NSF grants DMS--1700131 and DMS--2054752 and Simons Fellowship $\# 669836$}

\author{Jake Fillman}
\thanks{J.F.\ was supported in part by Simons Foundation Collaboration Grant \#711663.}

\dedicatory{Dedicated to the memory of Sergey Naboko}

\maketitle

\begin{abstract}
In this survey, we give an introduction to and proof of the gap labelling theorem for discrete one-dimensional ergodic Schr\"odinger operators via the Schwartzman homomorphism. To keep the paper relatively self-contained, we include background on the integrated density of states, the oscillation theorem for 1D operators, and the construction of the Schwartzman homomorphism. We illustrate the result with some examples. In particular, we show how to use the Schwartzman formalism to recover the classical gap-labelling theorem for almost-periodic potentials. We also consider operators generated by subshifts and operators generated by affine homeomorphisms of finite-dimensional tori. In the latter case, one can use the gap-labelling theorem to show that the spectrum associated with potentials generated by suitable transformations (such as Arnold's cat map) is an interval.
\end{abstract}

\setcounter{tocdepth}{1}
\tableofcontents

\hypersetup{
	linkcolor={black!30!blue},
	citecolor={black!30!blue},
	urlcolor={black!30!blue}
}

\section{Introduction}
\subsection{Setting}

We will discuss Schr\"odinger operators of the form $H = H_V = \Delta +V$ acting in $\ell^2(\Z)$, where
\[ [\Delta\psi](n) = \psi(n-1)+\psi(n+1), \quad [V\psi](n) = V(n)\psi(n). \]
The main case of interest is that in which the potential, $V$, is dynamically defined over a \emph{topological dynamical system}. Given a compact metric space $\Omega$, a homeomorphism $T:\Omega \to \Omega$, and a continuous function $f: \Omega \to\R$, we consider 
\begin{equation} \label{eq:HfomegaDef} H_{f,\omega} = \Delta + V_{f,\omega},
\end{equation} where the potential $V_{f,\omega}$ is defined by
\begin{equation} \label{eq:VfomegaDef}
V_{f,\omega}(n) = f(T^n\omega).
\end{equation}
The Schr\"odinger operators that can be defined in this manner have been heavily studied over the years; see for example the books \cite{CarmonaLacroix1990, Chulaevsky1989:APO, CFKS, ESO1, PasturFigotin1992:ESO, Teschl2000:Jacobi}, the surveys \cite{Damanik2007KotaniSurvey, Damanik2017ESOSurvey, Jitomirskaya2007, Kotani1997, MarxJito2017ETDS, Spencer1990}, and references therein. One reason for the popularity of this perspective is that this setup contains as special cases numerous models of particular interest including, inter alia, periodic operators ($\Omega = \Z_p$, $T\omega = \omega+1 \ \mathrm{mod} \ p$), quasi-periodic operators ($\Omega = \T^d$, $T$ is translation by a vector with rationally independent coordinates), almost-periodic operators ($\Omega$ is a compact monothetic group and $T$ is a minimal translation),  random operators ($\Omega$ is a suitable product space and $T$ is a shift thereupon), and subshift operators ($\Omega$ is a subshift of low complexity and $T$ is the shift on $\Omega$). We will discuss each of these examples in more detail later in the paper.

It is often fruitful to equip the topological dynamical system $(\Omega,T)$ with additional structure. A Borel probability measure $\mu$ is said to be $T$\emph{-invariant} if $\mu(B) = \mu(T^{-1}B)$ for every Borel $B \subseteq \Omega$ and $T$\emph{-ergodic} if it is $T$-invariant and $\mu(B)\in\{0,1\}$ whenever $T^{-1}B = B$. We will also simply say that $(\Omega,T,\mu)$ is ergodic. It is well known and not hard to show that for a given $(\Omega,T)$, the space of invariant Borel probability measures is nonempty, convex, and compact (in the weak$^*$ topology), that the ergodic measures are the extreme points of the set of invariant probability measures, and that there is always at least one ergodic measure \cite{Walters1982:ErgTh}.

Ergodic systems enjoy many helpful statistical properties; for the purpose of this text, the chief consequences of ergodicity that we employ are  the almost-sure constancy of invariant functions and the Birkhoff pointwise ergodic theorem. That is, for any Borel function $g$, if $g \circ T = g$, then there is a constant $c$ for which $g(\omega) = c$ for $\mu$-a.e.\ $\omega \in \Omega$, and, if $h \in L^1(\Omega,\mu)$, then
\begin{equation}
\frac{1}{N} \sum_{n=0}^{N-1} h(T^n\omega) \to \int_\Omega \! h \, d\mu \quad \text{ for } \mu\text{-a.e.\ } \omega \in \Omega.
\end{equation}

Given $(\Omega,T)$ and $f \in C(\Omega,\R)$, and defining $\{H_{f,\omega}\}$ as above, if $\mu$ is a $T$-ergodic Borel probability measure on $\Omega$, then one sees that the spectrum is $\mu$-almost everywhere constant, that is, there exists a compact set $\Sigma_{\mu,f} \subseteq \R$ such that 
\begin{equation}
\sigma(H_{f,\omega})= \Sigma_{\mu,f} \text{ for }\mu\text{-a.e.\ }\omega \in \Omega.
\end{equation}
 In a similar way, the absolutely continuous, singular continuous, and pure point spectra of $H_{f,\omega}$ are $\mu$-almost everywhere constant \cite{CFKS, KunSou1980CMP}. If $(\Omega,T)$ is uniquely ergodic, then we simply write $\Sigma_f = \Sigma_{\mu,f}$ where $\mu$ is the unique invariant measure. Similarly, if $(\Omega,T)$ is minimal, then there exists $\Sigma_f$ such that $\sigma(H_{f,\omega}) = \Sigma_f$ for \emph{all} $\omega \in \Omega$ and again there is no need to note the dependence on the measure.

The \emph{density of states measure} $\kappa = \kappa_{\mu,f}$ associated with the family $\{H_\omega\}_{\omega \in \Omega}$ is given by
\[ \int g \, d\kappa = \int_\Omega \langle \delta_0, g(H_\omega) \delta_0 \rangle \, d\mu(\omega). \]
It is well known that 
\begin{equation} \label{eq:SigmaDOSrelation}
\Sigma_{\mu,f} = \supp(\kappa_{\mu,f}).
\end{equation}
The accumulation function of $\kappa$,
\[ k(E) = k_{\mu,f}(E) = \int \! \chi_{_{(-\infty,E]}} \, d\kappa, \]
is called the \emph{integrated density of states} (IDS).  We will discuss proofs of some fundamental facts about the IDS in Section~\ref{sec:IDS}. Our presentation is based on Benderskii--Pastur \cite{BenderskiiPastur1970}, Pastur \cite{Pastur1980CMP}, Avron--Simon \cite{AvrSim1983DMJ}, and Delyon--Souillard \cite{DelSou1984CMP}. We direct the reader to the surveys \cite{Kirsch1985LNM, KirschMetzger2007PSPM, Simon1987DOS} for more background about the density of states and related objects.

The present survey aims to discuss the \textbf{gap labelling theorem} for one-dimensional operators: Given an ergodic topological dynamical system $(\Omega,T,\mu)$, there is a countable group $\schwartzmanGroup = \schwartzmanGroup(\Omega,T,\mu) \subseteq \R$ such that for any continuous $f \in C(\Omega,\R)$, 
\begin{equation}k_{\mu,f}(E) \in \schwartzmanGroup \cap [0,1] \end{equation} 
for all $E \in \R \setminus \Sigma_{\mu,f}$, and moreover, this group can be explicitly calculated in many examples of interest (e.g.\ for almost-periodic and random operator families). The precise formulation follows. The relevant definitions take some work to set up. For definitions of the terms used in the statement of the theorem (suspension, homotopy classes, and the Schwartzman homomorphism), we refer the reader to Definitions~\ref{def:flow:suspension}, \ref{def:homotopy}, \ref{def:flow:schwarzmannhom}.

\begin{theorem}[Gap-Labelling Theorem] \label{t:gablabel}
Given an ergodic topological dynamical system $(\Omega,T,\mu)$ such that $\supp\mu = \Omega$, let $(X,\oT,\overline{\mu})$ denote its suspension, let $C^\sharp(X,\T)$ denote the set of homotopy classes of maps $X \to \T$, let $\mathfrak{A}_{\overline{\mu}}: C^\sharp(X,\T) \to \R$ denote the Schwartzman homomorphism, and denote by
\begin{equation}
\schwartzmanGroup = \schwartzmanGroup(\Omega,T,\mu) := \mathfrak{A}_{\overline{\mu}}(C^\sharp(X,\T))
\end{equation}
the range of the Schwartzman homomorphism. Then, for any continuous $f \in C(\Omega,\R)$, 
\begin{equation}k_{\mu,f}(E) \in \schwartzmanGroup \cap [0,1] \end{equation} 
for all $E \in \R \setminus \Sigma_{\mu,f}$.
\end{theorem}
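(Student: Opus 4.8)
The plan is to fix an energy $E$ in a gap of the almost sure spectrum and to exhibit a homotopy class in $C^\sharp(X,\T)$ whose Schwartzman number equals $k_{\mu,f}(E)$ modulo an integer --- which is harmless, since $\Z\subseteq\schwartzmanGroup$ (the map $[\omega,t]\mapsto t\bmod 1$ is well defined and continuous on $X$ and has Schwartzman number $1$), $\schwartzmanGroup$ is a group, and $k_{\mu,f}(E)\in[0,1]$ always. First I would reduce to this situation: since $\Sigma_{\mu,f}=\supp(\kappa_{\mu,f})$, the IDS $k_{\mu,f}$ is locally constant on $\R\setminus\Sigma_{\mu,f}$, so it suffices to show that for a fixed gap $G$ the constant value $k_{\mu,f}|_G$ lies in $\schwartzmanGroup$.

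Next I would bring in the cocycle picture. Writing $A_E(\omega)=\begin{pmatrix} E-f(\omega) & -1\\ 1 & 0\end{pmatrix}\in\SL(2,\R)$ for the one-step transfer matrix, the spectrum of the ergodic family is characterized (using $\supp\mu=\Omega$, which forces $\sigma(H_{f,\omega})=\Sigma_{\mu,f}$ for \emph{every} $\omega\in\Omega$, not merely $\mu$-a.e.) by: $E\notin\Sigma_{\mu,f}$ iff the cocycle $(T,A_E)$ over $(\Omega,T)$ is uniformly hyperbolic. Uniform hyperbolicity supplies a continuous $A_E$-invariant splitting of $\R^2$ into a forward stable and an unstable line; in particular a continuous map $s_E\colon\Omega\to\PP^1(\R)$ with $A_E(\omega)\cdot s_E(\omega)=s_E(T\omega)$. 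Choosing a continuous family of paths $t\mapsto A_E^{(t)}(\omega)$ in $\SL(2,\R)$, $t\in[0,1]$, joining $A_E^{(0)}(\omega)=I$ to $A_E^{(1)}(\omega)=A_E(\omega)$, I would then define
\[ F_E([\omega,t]) = A_E^{(t)}(\omega)\cdot s_E(\omega)\in\PP^1(\R)\cong\T \]
on the suspension $X=(\Omega\times\R)/\!\sim$, $(\omega,t+1)\sim(T\omega,t)$; the invariance relation $A_E(\omega)\cdot s_E(\omega)=s_E(T\omega)$ is exactly what makes $F_E$ well defined and continuous on $X$, so it has a well-defined class in $C^\sharp(X,\T)$.

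The heart of the matter --- and the step I expect to be the main obstacle --- is the identification $\mathfrak{A}_{\overline{\mu}}([F_E]) = \pm k_{\mu,f}(E) \pmod{\Z}$. Unwinding the definition of the Schwartzman homomorphism, one tracks a continuous lift to $\R$ of $s\mapsto F_E(\phi_s x)$ along an orbit of the suspension flow $(\phi_s)$ over $N$ periods; this accumulated winding is the sum over $k=0,\dots,N-1$ of the $\PP^1(\R)$-windings of the interpolation arcs $t\mapsto A_E^{(t)}(T^k\omega)\cdot s_E(T^k\omega)$, each of which connects $s_E(T^k\omega)$ to $s_E(T^{k+1}\omega)$. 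On one hand, the contribution of the interpolation (as opposed to the jumps carried by the transfer matrices themselves) is bounded independently of $N$, so it disappears after dividing by $N$; on the other hand, the one-dimensional oscillation theorem of the earlier sections identifies this winding rate with the density of Dirichlet eigenvalues below $E$: for the truncation to $[1,N]$ the number of eigenvalues $\le E$ equals the number of sign changes of the relevant solution, i.e.\ its $\PP^1(\R)$-winding, and, dividing by $N$ and letting $N\to\infty$ (Birkhoff / uniform convergence of the finite-volume IDS), one gets $k_{\mu,f}(E)$; uniform hyperbolicity ensures the winding rate does not depend on the initial direction, which legitimizes using $s_E(\omega)$. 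Combining, $\mathfrak{A}_{\overline{\mu}}([F_E]) = \pm k_{\mu,f}(E) + m$ for some $m\in\Z$, hence $k_{\mu,f}(E)\in\schwartzmanGroup$, and since $k_{\mu,f}(E)\in[0,1]$ this gives $k_{\mu,f}(E)\in\schwartzmanGroup\cap[0,1]$.

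The delicate points to get right are exactly in that last step: (i) the \emph{discrete} bookkeeping of windings --- unlike the continuous-time case, the transfer matrix is not canonically connected to the identity, so the interpolation path must be chosen and its contribution controlled; (ii) matching normalizations so that the Schwartzman number comes out equal to $k_{\mu,f}(E)$ and not, say, a nontrivial multiple of it, for which the precise form of the oscillation theorem is essential; and (iii) the use of $\supp\mu=\Omega$ throughout, so that all objects involved (the invariant splitting, the section $s_E$, the map $F_E$) are genuinely continuous on $\Omega$ and $X$ rather than merely measurable, as the Schwartzman homomorphism requires.
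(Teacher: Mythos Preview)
Your proposal is correct and follows essentially the same route as the paper: interpolate the discrete Schr\"odinger cocycle to a continuous $\SL(2,\R)$-cocycle on the suspension, use Johnson's theorem and uniform hyperbolicity at $E\notin\Sigma_{\mu,f}$ to produce a continuous invariant section $X\to\R\PP^1\cong\T$, and then identify its Schwartzman number with (one minus) the IDS via oscillation theory and sign-flip counting. The paper carries out exactly your ``delicate points'' (i)--(iii), writing down an explicit interpolation $Y_E(\omega,t)$ and doing the discrete bookkeeping case-by-case to obtain the precise identity $1-k(E)=\mathfrak{A}_{\overline\mu}(\Phi_E)$, after which $\Z\subseteq\schwartzmanGroup$ finishes as you indicate.
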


One can show (cf.\ Remark~\ref{rem:gl:AcontainsZ}) that the group $\schwartzmanGroup(\Omega,T,\mu)$ always contains $\Z$; furthermore, the reader can check from the definitions that the IDS obeys $0 \le k \le 1$. Consequently, the gap labelling theorem has the following important consequence. If one can show that $\schwartzmanGroup = \Z$, then the only labels corresponding to open gaps are $0$ and $1$, which correspond to the trivial gaps $(-\infty,\min\Sigma_{\mu,f})$ and $(\max\Sigma_{\mu,f},\infty)$. Thus, if $\schwartzmanGroup(\Omega,T,\mu) = \Z$, then $\Sigma_{\mu,f}$ is an interval for \emph{every} continuous $f:\Omega \to \R$.

\begin{coro} \label{coro:A=Z}
 Suppose $(\Omega,T,\mu)$ is an ergodic topological dynamical system. If $\schwartzmanGroup(\Omega,T,\mu)=\Z$, then $\Sigma_{\mu,f}$ is an interval for every $f \in C(\Omega,\R)$. More generally, if $\schwartzmanGroup(\Omega,T,\mu) = n^{-1}\Z$ for some $n \in \N$, then $\Sigma_{\mu,f}$ is a finite union of no more than $n$ intervals for every $f \in C(\Omega,\R)$. \end{coro}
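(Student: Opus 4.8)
The plan is to derive the corollary from Theorem~\ref{t:gablabel} together with two elementary properties of the IDS: the accumulation function $k = k_{\mu,f}$ is nondecreasing with $k(E) \to 0$ as $E \to -\infty$ and $k(E) \to 1$ as $E \to +\infty$ (immediate since $\kappa_{\mu,f}$ is a Borel probability measure), and the spectrum $\Sigma := \Sigma_{\mu,f}$ equals $\supp\kappa_{\mu,f}$ by~\eqref{eq:SigmaDOSrelation}. Since $\Sigma$ is compact and nonempty, $\R \setminus \Sigma$ has exactly two unbounded components, $(-\infty, \min\Sigma)$ and $(\max\Sigma, \infty)$, together with an at most countable collection of bounded open intervals, which I will call the \emph{gaps}; moreover $\Sigma$ is a union of at most $N$ intervals as soon as it has at most $N-1$ gaps. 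So it suffices to bound the number of gaps, and I will take $\schwartzmanGroup = n^{-1}\Z$ throughout (the case $n=1$ recovering the first assertion).

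First I would note that $k$ is constant on each connected component $U$ of $\R \setminus \Sigma$: one has $\kappa(U) = 0$ because $U \cap \supp\kappa = \emptyset$, so $k|_U$ is constant. On the two unbounded components this constant equals $0$ and $1$ respectively, whereas on a gap $G$ the constant value $c_G$ belongs to $\schwartzmanGroup \cap [0,1]$ by Theorem~\ref{t:gablabel}. In fact $c_G \in (0,1)$: if $c_G = 0$ then $\kappa((-\infty,E]) = 0$ for $E \in G$, forcing $G \subseteq (-\infty,\min\Sigma)$, and symmetrically $c_G = 1$ would force $G \subseteq (\max\Sigma,\infty)$, either of which contradicts $G$ being a bounded gap. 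Hence each $c_G$ lies in the $(n-1)$-element set $\{1/n, 2/n, \dots, (n-1)/n\}$.

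The crux — and the place where monotonicity of $k$ really earns its keep — is to show that $G \mapsto c_G$ is injective on the set of gaps. If $G$ and $G'$ are gaps with $c_G = c_{G'} = v$, then, $k$ being nondecreasing and equal to $v$ on both $G$ and $G'$, it is equal to $v$ on the convex hull $H$ of $G \cup G'$; consequently $\kappa$ assigns no mass to $\Int(H)$, so $\Int(H)$ is a connected subset of $\R \setminus \Sigma$ containing both $G$ and $G'$, which forces $G = G'$ since distinct gaps are distinct connected components of $\R \setminus \Sigma$. It follows that there are at most $n-1$ gaps, hence $\Sigma$ is a union of at most $n$ intervals; in the case $n=1$ there are no gaps at all and $\Sigma$ is a single interval (nonempty, being the spectrum of a bounded self-adjoint operator). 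I do not expect a genuine obstacle: once Theorem~\ref{t:gablabel} is in hand the argument is soft. The one spot that deserves a moment's care is the injectivity step, specifically the assertion that $\Int(H)$ actually avoids $\Sigma$ and not merely that $\kappa$ vanishes on a dense subset of it — this is exactly where one invokes $\Sigma = \supp\kappa$.
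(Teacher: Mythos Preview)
Your proof is correct and follows the same line as the paper's, which treats the corollary as an immediate consequence of Theorem~\ref{t:gablabel} and gives only the brief argument in the paragraph preceding the statement (handling $n=1$ and leaving the general case to the reader). You have simply filled in the details the paper omits: the injectivity of the gap-to-label map via monotonicity of $k$ and the identification $\Sigma = \supp\kappa$, which is exactly what is needed to pass from ``at most $n-1$ nontrivial labels'' to ``at most $n-1$ bounded gaps.''
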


\begin{remark} Let us make some additional comments about Theorem~\ref{t:gablabel}.
\begin{enumerate}
\item The Schwartzman homomorphism derives its name from the study of the asymptotic cycle by Schwartzman \cite{Schwarzmann1957Annals}.
\medskip

\item The theorem as formulated here and the approach to the proof we give are due to Johnson \cite{Johnson1986JDE}. There is a more general gap-labelling framework obtained by Bellissard and coauthors that uses K-theory of C$^*$-algebras and was in fact obtained earlier than Johnson's work \cite{Bel1986, Bel1990, Bel1992b, Bel2003, BelBovGhe1992}. See Section~\ref{subsec:ktheory} for further discussion.
\medskip

\item Each connected component of $\R\setminus\Sigma_{\mu,f}$ is called a \emph{gap} of the spectrum. In view of \eqref{eq:SigmaDOSrelation}, the IDS is constant on every gap of the spectrum. As such, Theorem~\ref{t:gablabel} implies that there is a function that associates an element of $\schwartzmanGroup\cap[0,1]$ to each gap of $\Sigma_{\mu,f}$. Accordingly, elements of $\schwartzmanGroup(\Omega,T,\mu)$ are called \textit{gap labels}.
\medskip

\item In the event in which $(\Omega,T)$ is uniquely ergodic, we suppress $\mu$ and simply write $\schwartzmanGroup(\Omega,T)$ instead of $\schwartzmanGroup(\Omega,T,\mu)$.
\medskip

\item One can show that $C^\sharp(X,\T)$ is countable (see Proposition~\ref{prop:flow:homotopyClassFacts}). Consequently, the set $\schwartzmanGroup(\Omega,T,\mu)$ is countable. In particular, the set of gap labels is always countable, and therefore could possibly be in one-to-one correspondence with the set of gaps.
\medskip
 
\item The theorem is optimal in the sense that there exist models for which every possible gap label corresponds to an open gap. That is, there exist topological dynamical systems $(\Omega,T,\mu)$ and $f \in C(\Omega,\R)$ such that for every $a \in \schwartzmanGroup(\Omega,T,\mu) \cap [0,1]$, there is a nonempty open interval $I \subseteq \R$ such that  $k|_I \equiv a$. In this situation, one says \textit{all gaps are open}.
\medskip

\item In some situations, one can try to show that all gaps are open for generic choices of $f \in C(\Omega,\R)$ \cite{ABD12, Eliasson1992, PS06, Simon1976AIHP}. See Section~\ref{subsec:deriveCantor} for further discussion.
\medskip

\item One may also try to prove that all gaps are open for specific dynamical systems and explicit sampling functions. The most famous version of this problem concerns the almost-Mathieu operator, where $\Omega = \T$, $T\omega = \omega + \alpha$ with $\alpha$ irrational, and $f(\omega) = 2\lambda \cos(2\pi \omega) $. In this case, it is known that the set of possible labels is $\schwartzmanGroup = \{n+m\alpha : n,m \in \Z\}$ (see Theorem~\ref{t:ap:qpschwartzmangroup} for details). It is widely expected that for any $\lambda \neq 0$ and any irrational $\alpha$, all gaps are open. This is known as the \emph{Dry Ten Martini Problem}, and has been discussed in \cite{AviJit2010JEMS, BorgniaSlager, CEY90, LiuYuan2015, Puig2004, Riedel2012ATMP} and in the forthcoming work \cite{AvilaYouZhouPreprint}. A similar result (all possible gaps open) has been presented for the Extended Harper's Model in the non-self-dual region for Diophantine $\alpha$ as well \cite{Han2018TAMS}. 
\medskip

\item This problem (showing that all possible gap labels correspond to open gaps) has also been heavily studied for substitution Hamiltonians \cite{BaaGriJos1993, Luck1989PRB}, such as those generated by the Fibonacci \cite{DamGor2011CMP, DamGorYes2016Invent, Raymond1997}, period-doubling \cite{BelBovGhe1991, LiuQuYao2021Preprint}, and Thue--Morse \cite{Bel1990} Hamiltonians. 
\end{enumerate}
\end{remark}

Beyond Schr\"odinger operators, gap-labelling is also relevant in other contexts, such as the study of the asymptotic distribution of the Lee--Yang zeros in the Ising model; compare \cite{BaaGriPis1995JSP, DFLY2015IMRN, GrimmBaake1995proc}.

\subsection{Examples}

Let us see some examples. We will give proofs in later sections.

The first class of examples we discuss are those generated by minimal translations of compact groups. Recall that a toplogical dynamical system $(\Omega,T)$ is called \emph{minimal} if the $T$-orbit of every point is dense in $\Omega$. Minimal translations of groups are important since they are precisely the dynamical systems by which one recovers almost-periodic sequences.

\begin{example}[Group Translations] \label{ex:ap} If $G$ is an abelian group and $g \in G$, we write $R_g:G \to G$ for the map $R_g: x \mapsto g+x$. If $G$ is a compact abelian group with a dense cyclic subgroup, we say that $G$ is \emph{monothetic}. In the event that $G$ is monothetic and $g$ generates a dense subgroup, it is known that $(G,R_g)$ is uniquely ergodic, with normalized Haar measure supplying the unique invariant measure. These examples will be addressed in detail with proofs in Section~\ref{sec:ap}.
\begin{enumerate}
\item For $p \in \N$, let $\Z_p := \Z / p\Z$ denote the integers modulo $p$; for $k \in \Z$, we write $[k] \in \Z_p$ for the residue class of $k$. We have
\begin{equation}
\schwartzmanGroup(\Z_p,R_{[1]}) = p^{-1}\Z.
\end{equation}
This gives the first precise instance of a general heuristic: that $\schwartzmanGroup(\Omega,T)$ represents the ``frequencies'' of $(\Omega,T)$ in a suitable sense.
\item Given $d \in \N$, let $\T^d = \R^d/\Z^d$ denote the $d$-dimensional torus. We say $\alpha \in \R^d$ has rationally independent coordinates if the set $\{1,\alpha_1,\alpha_2,\ldots,\alpha_d\}$ is linearly independent over $\Q$. Equivalently, for $k \in \Z^d$,
\[ \sum_{j=1}^d k_j \alpha_j  \in \Z \implies k_1=k_2= \cdots = k_d = 0. \]
In this case, $(\T^d,R_\alpha)$ is strictly ergodic\footnote{We employ a minor but standard abuse of notation, writing $\alpha$ both for the element of $\R^d$ and its projection to $\T^d$. This will be convenient later on when we talk about flows.} and (defining $\alpha_0=1$ for convenience)
\begin{equation}\label{e.RalphaSchwartzman}
\schwartzmanGroup(\T^d,R_\alpha) = \Z+\alpha\Z^d = \set{\sum_{j=0}^d k_j \alpha_j : k_j \in \Z \ \forall 0 \le j \le d}.
\end{equation}
\item In general, suppose $\Omega$ is a compact monothetic group and $\alpha \in \Omega$ generates a dense cyclic subgroup $\langle \alpha \rangle = \{n\alpha:n \in \Z\}$. Assuming $\Omega$ is not cyclic, $\langle \alpha \rangle \cong \Z$.  Letting $\widehat{\Omega}$ denote the dual group (see Section~\ref{sec:ap} for definitions), we have an injective homomorphism $\widehat{\varphi}_\alpha: \widehat{\Omega} \to \T$ given by $\widehat{\varphi}_\alpha(\chi) = \chi(\alpha)$. Denoting $\pi:\R \to \T$ the canonical projection, one has
\begin{equation}
\schwartzmanGroup(\Omega,R_\alpha) = \pi^{-1}(\widehat{\varphi}_\alpha(\widehat{\Omega})).
\end{equation}
This allows one to recover the classical gap-labelling theorem for almost-periodic potentials; compare \cite{DelSou1983CMP, JohnMos1982CMP}.
\end{enumerate}
\end{example}

\begin{example}[Subshifts] \label{ex.subshift}
The next class of examples come from \emph{subshifts}, which give a natural setting in which one may study ergodic potentials taking finitely many values. Given a finite set $\A$, called the \emph{alphabet}, the \emph{full shift} is the space $\A^\Z$ equipped with the product topology. The \emph{shift} acts on $\A^\Z$ by $[T\omega](n)=\omega(n+1)$. A \emph{subshift} is a nonempty closed (hence compact) and $T$-invariant subset $\Omega \subseteq \A^\Z$. These examples will be addressed in detail with proofs in Section~\ref{sec:subshift}.
\begin{enumerate}
\item Suppose $(\Omega,T)$ is a subshift with ergodic measure $\mu$. Then
\begin{equation}\label{e.subshiftSchwartzman}
\schwartzmanGroup(\Omega,T,\mu) = \set{\int f \, d\mu : f \in C(\Omega,\Z)}.
\end{equation}
Thus, for a given subshift, the problem of determining the set of gap labels amounts to identifying integrals of elements of $C(\Omega,\Z)$ against $\mu$ which in turn can be reduced to computing the $\mu$-measure of cylinder sets (compare Theorem~\ref{t.subshiftproperties}).
\item Let $\A = \{0,1\}$, define $S:\A \to \A^*$ by $S:0 \mapsto 01$ and $1\mapsto 0$, let $u_n = S^n(0)$, and define
\[ \Omega = \set{\omega \in \A^\Z : \text{ for every interval }I \subseteq \Z, \ \omega|_I \text{ is a subword of some } u_n }.  \]
This is a strictly ergodic subshift, called the \emph{Fibonacci subshift}. For additional background about subshifts generated by substitions, see Section~\ref{subsec:substitution} and \cite{Queffelec1987}. One has
\begin{equation}
\schwartzmanGroup(\Omega,T) = \Z + \alpha\Z = \{m+n\alpha: m,n\in\Z\},
\end{equation}
where $\alpha = \frac{1}{2} (\sqrt{5}-1)$ is the inverse of the golden mean.

\item Consider $\Omega = \{1,2,\ldots,m\}^\Z$, $T:\Omega \to \Omega$ the shift $[T\omega](n) = \omega(n+1)$, $\mu_0$ a measure on $\{1,2,\ldots,m\}$, and $\mu = \mu_0^\Z$ the product measure. Then $\schwartzmanGroup(\Omega,T,\mu) $ is the $\Z$-algebra generated by $\{\mu(\{j\}) : 1 \le j \le m\}$, that is,
\begin{align}
\schwartzmanGroup(\Omega,T,\mu) 
= \Z[w_1,\ldots,w_n]  
= \set{\sum_{k \in \Z_+^n} c_k w^k : c_k \in \Z \text{ and } c_k = 0 \text{ for all but finitely many } k},
\end{align}
where $w_j = \mu_0(\{j\})$ and $w^k = w_1^{k_1}w_2^{k_2} \cdots w_n^{k_n}$ for $k \in \Z_+^n$.
\end{enumerate}
\end{example}

\begin{example}[Affine Toral Homeomorphisms] \label{ex:affine}
Given $d \in \N$, consider $\Omega = \T^d = \R^d/\Z^d$, the $d$-dimensional torus. We now consider maps of the form $T = T_{A,b}$ given by $T\omega = A\omega + b$ where $A \in \SL(d,\Z)$ and $b \in \T^d$.  Note that Lebesgue measure is $T$-invariant for all $A \in \SL(d,\Z)$ and $b \in \T^d$. These examples will be addressed in detail with proofs in Section~\ref{sec:affine}.
\begin{enumerate}
\item If $\mu$ is a $T_{A,b}$-ergodic measure with $\supp \mu= \T^d$, then
\begin{equation}
\schwartzmanGroup(\T^d,T_{A,b}, \mu) 
= \set{n + \langle k, b \rangle : n \in \Z, \ k \in \Z^d \cap \ker(I-A^*)}.
\end{equation}
\item If $A$ does not have any root of unity as an eigenvalue, $b=0$, and $\mu$ is Lebesgue measure, then $(\T^d,T_{A,0},\mu)$ is ergodic \cite[Corollary~1.10.1]{Walters1982:ErgTh} and 
\begin{equation}
\schwartzmanGroup(\T^d,T_{A,0},\mu) = \Z.
\end{equation}
As mentioned in Corollary~\ref{coro:A=Z}, the conclusion $\schwartzmanGroup(\Omega,T,\mu) = \Z$ implies that $\Sigma_{\mu,f}$ is an interval for every $f \in C(\Omega,\R)$. The \emph{cat map}, defined by $T_{\rm cat}= T_{A,0}$, where
 \begin{equation}
 A = \begin{bmatrix}2 & 1 \\ 1 & 1 \end{bmatrix},
 \end{equation}
 supplies a concrete example of a transformation with the property that the underlying linear transformation has no root of unity as an eigenvalue.

\item The \emph{skew-shift} is defined by taking $d=2$,
\begin{equation}
A = \begin{bmatrix} 1 & 0 \\ 1 & 1 \end{bmatrix}, \quad b =\begin{bmatrix}
\alpha \\ 0
\end{bmatrix}
\end{equation} 
with $\alpha$ irrational. In this case, it is known that $(\T^2,T_{A,b})$ is strictly ergodic \cite{Furst1981Porter}. One has
\begin{equation}
\schwartzmanGroup(\T^2,T_{A,b}) = \Z+\alpha\Z.
\end{equation}
\end{enumerate}
\end{example}

\subsection{Deriving or Preventing Cantor Spectrum} \label{subsec:deriveCantor}

A very important application of the gap labelling theorem involves the ability to show the presence or absence of Cantor-type structures in the spectrum in concrete settings. This is motivated by the discovery in the 1980's that the spectra of Schr\"odinger operators with almost-periodic potentials can be Cantor sets, that is, they can be nowhere dense while not having any isolated points. In fact, an ergodicity argument rules out the existence of isolated points \cite{Pastur1980CMP}, so that the challenge in establishing a Cantor-type structure revolves around finding a proof that the gaps of the spectrum are dense. In other words, any real number can be approximated by sequence of elements of the complement of the spectrum in $\R$.

Gap labelling enters this discussion as follows. As the set of growth points of the integrated density of states coincides with the almost sure spectrum, gaps can be dense only if the possible gap labels are dense. Thus, the first application of the gap labelling theorem is the general criterion that the non-denseness of gap labels implies the absence of a Cantor spectrum. Thus, it is of interest to identify settings where the range of the Schwartzman homomorphism is not dense in $[0,1]$. We have seen some examples above and will provide more detailed explanations below.

If on the other hand the range of the Schwartzman homomorphism is dense in $[0,1]$, this does not by itself imply that the spectrum is a Cantor set, as one does not know which elements of $\schwartzmanGroup \cap [0,1]$ actually appear as gap labels corresponding to open gaps. In other words, the denseness of the range of the Schwartzman homomorphism in $[0,1]$ suggests that Cantor spectrum is in principle possible, but does not establish it. The gap labels that do appear depend on $f$. Thus, a natural goal is to show that for a dense subset of the dense set of all possible labels, there are $f$'s for which the labels in this subset all occur on gaps of $\Sigma_{\mu,f}$. In typical applications, one actually works with the full set of labels, which is natural given the reasoning employed. Namely, one usually fixes an potential gap label and considers the set of $f$'s for which $\Sigma_{\mu,f}$ has a gap with that label. If one is able to show that this set of $f$'s is open and dense in a suitable function space, then by Baire's theorem, Cantor spectrum is generic in that function space. As this argument only uses that the set of labels one intersects over is countable, one may as well intersect over all possible labels, that is, the option to intersect over a countable dense subset does not give any stronger conclusion. We refer the reader to \cite{Eliasson1992}, \cite{PS06}, and \cite{ABD12} for results obtained along the lines just discussed. We also mention that the somewhat involved approach of \cite{ABD12} can be simplified significantly if one is only interested in the conclusion, generic Cantor spectrum, rather than the sufficient condition for it in terms of gap labelling; see \cite{ABD09}.

\subsection{Gap Labelling via K-Theory} \label{subsec:ktheory}
We want to note that there is a beautiful approach to gap labeling, which is in fact substantially more general in that it is not restricted to operators in one space dimension and which also can handle more general covariant families of operators in one dimension. This alternative approach is based on K-theory of C$^*$-algebras and it was developed by Bellissard and coworkers.

Let us describe this approach briefly. For proofs, more details, and further discussion, we refer the reader to \cite{Bel1986, Bel1990, Bel1992b, Bel2003, BelBovGhe1992, KellZois2005JPA}.

A C$^*$-algebra $\mathscr{A}$ is a Banach algebra together with an involution $x \mapsto x^*$ satisfying, among other things, $\|x^*x\| = \|x\|^2$. An element $p \in \mathscr{A}$ is an orthogonal projection (henceforth simply a projection) if $p^2 = p^* = p$. Denote by $\mathscr{P}(\mathscr{A})$ the set of all projections.

Let us assume for simplicity that $\mathscr{A}$ is a unital (i.e., it has a multiplicative identity) C$^*$-algebra. For each $n \in \bbN$, the set $M_n(\mathscr{A})$ of $n \times n$ matrices with entries from $\mathscr{A}$ is a C$^*$-algebra as well with the natural $^*$ operation, and we set
\begin{equation}
\mathscr{P}_n(\mathscr{A}) := \mathscr{P}(M_n(\mathscr{A})), \quad \mathscr{P}_\infty(\mathscr{A}) := \bigcup_{n \in \bbN} \mathscr{P}_n(\mathscr{A}).
\end{equation}
On $\mathscr{P}_\infty(\mathscr{A})$, the relation $\sim_0$ is defined as follows: if $p \in \mathscr{P}_n(\mathscr{A})$ and $q \in \mathscr{P}_m(\mathscr{A})$, then $p \sim_0 q$ if and only if there is $v \in M_{m,n}(\mathscr{A})$ (the $m \times n$ matrices with entries from $\mathscr{A}$) with $p = v^*v$ and $q = vv^*$. It turns out that $\sim_0$ is an equivalence relation and we set
\begin{equation}
K_0^+(\mathscr{A}) := \mathscr{P}_\infty(\mathscr{A}) / \sim_0.
\end{equation}

We also define $\oplus : \mathscr{P}_\infty(\mathscr{A}) \times \mathscr{P}_\infty(\mathscr{A}) \to \mathscr{P}_\infty(\mathscr{A})$ by
\begin{equation}
p \oplus q := \mathrm{diag}(p,q) = \begin{bmatrix} p & 0 \\ 0 & q \end{bmatrix}.
\end{equation}
The operation $\oplus$ on $\mathscr{P}_\infty(\mathscr{A})$ descends to an operation on $K_0^+(\mathscr{A})$ and makes it an abelian semigroup. A standard (Grothendieck) construction then associates an abelian group $K_0(\mathscr{A})$ whose elements are equivalence classes of formal differences of elements of the semigroup, along with a natural extension of the group operation.

The next step is to associate a C$^*$-algebra with an ergodic family $\{ H_\omega \}_{\omega \in \Omega}$ of Schr\"odinger operators. Let us consider the case where $\Omega \subseteq I^\Z$ is closed and shift-invariant for some compact interval $I$ in $\R$ and $T$ is the shift on this sequence space. We can arrive at this setting by pushing forward under $\omega \mapsto V_\omega$ and using the evaluation at zero as the  sampling function. The associated C$^*$ algebra is then given by the crossed product
\begin{equation}
\mathscr{A} := C^*(\Omega, T) := C(\Omega) \rtimes_T \Z
\end{equation}
of $C(\Omega)$ by the $\Z$ action defined by $T$, that is, the closure of $C_c(\Omega \times \Z)$ with respect to the C$^*$ norm
\begin{equation}
\|g\| := \sup_{\omega \in \Omega} \|\pi_\omega(g)\|,
\end{equation}
where $\pi_\omega$ is the family of $^*$-representations on $\ell^2(\Z)$ defined by
\begin{equation}
[\pi_\omega(g) \psi](n) := \sum_{m \in \Z} g(T^{-n} \omega, n) \psi(m+n), \quad g \in C_c(\Omega \times \Z).
\end{equation}

An ergodic measure $\mu$ on $\Omega$ (again, we would need to push forward under $\omega \mapsto V_\omega$ if we start from the abstract setting) defines a trace $\tau_\mu$ on $C^*(\Omega,T)$ in the following way:
\begin{equation}
\tau_\mu(g) = \int_\Omega g(\omega,0) \, d\mu(\omega), \quad g \in C_c(\Omega \times \Z).
\end{equation}

It can then be shown that there is an induced group homomorphism
\begin{equation}
\tau_{\mu}^* : K_0(\mathscr{A}) \to \R
\end{equation}
such that for every projection $p \in \mathscr{A}$, we have $\tau_\mu(p) = \tau_{\mu}^*([p])$, where $[p]$ is the class of $p$ in $K_0(\mathscr{A})$.

\begin{theorem}[Gap labeling via K-theory]
Consider an ergodic family $\{ H_\omega \}_{\omega \in \Omega}$ of Schr\"odinger operators and let $\mathscr{A}$ be the associated C$^*$-algebra. Then for each gap of $\Sigma$, the value the integrated density of states takes inside it belongs to the countable set of real numbers given by $\tau_{\mu}^*(K_0(\mathscr{A})) \cap [0,1]$.
\end{theorem}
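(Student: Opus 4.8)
The plan is to realize the Fermi projection --- the spectral projection of $H$ onto energies at or below a point $E$ lying in a gap --- as a genuine projection in $\mathscr{A}$, and then to identify the trace of that projection with the value of the integrated density of states in the gap.

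First I would reduce to the case $\supp\mu = \Omega$: passing from $\Omega$ to the closed $T$-invariant subset $\supp\mu$ changes neither $\mathscr{A}$, nor $\tau_\mu$, nor the family $\{H_\omega\}$ on a set of full measure, so the statement for the restricted system implies the original one. Next, note that $H$ is itself a self-adjoint element of the unital C$^*$-algebra $\mathscr{A} = C(\Omega)\rtimes_T\Z$: up to the standard identifications it is $U+U^{-1}$ plus the element of $C(\Omega)$ given by the sampling function, where $U\in\mathscr{A}$ is the canonical unitary, and one has $\pi_\omega(H) = H_\omega$ for every $\omega$. Because $\|g\| = \sup_{\omega}\|\pi_\omega(g)\|$, the representation $\bigoplus_\omega\pi_\omega$ is isometric, hence faithful, so $\sigma_{\mathscr{A}}(H) = \overline{\bigcup_{\omega\in\Omega}\sigma(H_\omega)}$; combining $\supp\mu=\Omega$ with the strong continuity of $\omega\mapsto H_\omega$ one upgrades the almost-sure identity $\sigma(H_\omega) = \Sigma_{\mu,f}$ to the inclusion $\sigma(H_\omega)\subseteq\Sigma_{\mu,f}$ for \emph{all} $\omega$, and therefore $\sigma_{\mathscr{A}}(H) = \Sigma_{\mu,f} =: \Sigma$.

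Now fix a gap $(a,b)$ of $\Sigma$ and $E\in(a,b)$. On a neighborhood of $\sigma_{\mathscr{A}}(H)$ the indicator $\chi_{(-\infty,E]}$ coincides with a continuous function, so the continuous functional calculus inside $\mathscr{A}$ yields a bona fide projection $p := \chi_{(-\infty,E]}(H)\in\mathscr{P}(\mathscr{A})$, with $\pi_\omega(p) = \chi_{(-\infty,E]}(H_\omega)$; in particular $[p]\in K_0^+(\mathscr{A})\subseteq K_0(\mathscr{A})$, with no matrix amplification needed. Using the defining property of $\tau_\mu^*$ and then the formula for $\tau_\mu$ (whose value on $p$ is the integral of the diagonal matrix element $\langle\delta_0,\pi_\omega(p)\delta_0\rangle$), we get
\[
\tau_\mu^*([p]) = \tau_\mu(p) = \int_\Omega \langle \delta_0, \chi_{(-\infty,E]}(H_\omega)\delta_0\rangle \, d\mu(\omega) = k_{\mu,f}(E),
\]
the last equality being the definition of the IDS together with that of the density of states measure. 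Since the density of states measure is a probability measure, $0\le k_{\mu,f}(E)\le 1$, so $k_{\mu,f}(E)\in\tau_\mu^*(K_0(\mathscr{A}))\cap[0,1]$. Finally, $\Omega$ compact metric makes $C(\Omega)$, hence $\mathscr{A}$, separable, and $K_0$ of a separable C$^*$-algebra is countable, so $\tau_\mu^*(K_0(\mathscr{A}))$ is countable.

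The main obstacle is the identification $\sigma_{\mathscr{A}}(H) = \Sigma$ --- concretely, promoting the almost-sure equality of spectra to the statement $\sigma(H_\omega)\subseteq\Sigma$ valid for every $\omega$, which is exactly where full support of $\mu$ enters --- together with the care needed to ensure that $p$, a priori merely a measurable field of projections, actually lies in $\mathscr{A}$; this is precisely what continuity of $\chi_{(-\infty,E]}$ on $\sigma_{\mathscr{A}}(H)$ provides. A secondary point, standard but worth stating explicitly, is that $\tau_\mu$ extends from $C_c(\Omega\times\Z)$ to a positive normalized trace on $\mathscr{A}$ and descends to a homomorphism $\tau_\mu^*$ on $K_0(\mathscr{A})$, so that the expression $\tau_\mu^*([p])$ makes sense and equals $\tau_\mu(p)$.
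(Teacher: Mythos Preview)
The paper does not actually prove this theorem: it is stated in Section~1.4 as background, with the line ``For proofs, more details, and further discussion, we refer the reader to \ldots'' and a list of references to Bellissard and coauthors. So there is no in-paper argument to compare against.

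Your sketch is the standard proof and is essentially correct: realize $H$ as a self-adjoint element of $\mathscr{A}$, use that $E$ lies in a gap of $\sigma_{\mathscr{A}}(H)$ to build the Fermi projection $p=\chi_{(-\infty,E]}(H)\in\mathscr{P}(\mathscr{A})$ via the continuous functional calculus, and then compute $\tau_\mu(p)=k(E)$ directly from the definitions. One small imprecision: the claim that restricting $\Omega$ to $\supp\mu$ ``changes neither $\mathscr{A}$'' is not literally true---the crossed product shrinks to $\mathscr{A}':=C(\supp\mu)\rtimes_T\Z$. What you actually need is that the restriction map $\rho:\mathscr{A}\to\mathscr{A}'$ is a surjective $*$-homomorphism through which $\tau_\mu$ factors, and that the Fermi projection $p'\in\mathscr{A}'$ lifts to a projection in $\mathscr{A}$ (or, more directly, that the same continuous function $g$ with $g|_\Sigma=\chi_{(-\infty,E]}|_\Sigma$ applied to $H\in\mathscr{A}$ yields an element whose image under $\rho$ is $p'$ and whose $\tau_\mu$-value is $k(E)$; one then needs to check it is a projection, which is exactly where full support enters). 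Alternatively---and this is how the references you would be citing typically proceed---one simply builds $\supp\mu=\Omega$ into the hypotheses, as the paper does for its main theorem. Either way the core of your argument stands.
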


In view of this discussion, it is natural to ask whether the two approaches yield identical results. That is:

\begin{question}
Given an ergodic topological dynamical system $(\Omega,T,\mu)$, is it true that 
\begin{equation}
\tau_{\mu}^*(K_0(\mathscr{A})) = \schwartzmanGroup(\Omega,T,\mu)?
\end{equation}
\end{question}

This is addressed in higher dimensions for minimal $\Z^d$ actions on totally disconnected spaces and repetitive Delone sets in \cite{BellissardBenedettiGambaudo2006CMP, BKL01, BenameurOyonoOyono2001, KaminkerPutnam2003MMJ, vE94}.\footnote{To be more precise, while $\schwartzmanGroup$ does not have an immediate analogue in higher dimensions in general, in the specific setting considered in these papers, $\schwartzmanGroup$ can be identified as a set, compare \eqref{e.subshiftSchwartzman}, that does have a natural higher-dimensional counterpart.}  See also \cite{GHK13} for relevant work. When specialized to dimension one as in the present work, one essentially ends up with minimal subshifts as in Section~\ref{sec:subshift}.

Given that there is a more powerful and more general approach to gap-labelling (which applies both to operators in higher dimensions and to more general operators in dimension one), we should reflect on why we felt compelled to elucidate the current perspective. Indeed, it should be understood as a pedagogical and aesthetic choice: presenting in this fashion allows us to give a self-contained presentation modulo tools and techniques  from linear algebra, measure theory, and a few elementary aspects of ergodic theory.

\subsection{Organization} The remainder of the paper is divided into two parts: general theory and examples. The first part of the paper builds up the necessary machinery and proves the one-dimensional version of the gap-labelling theorem formulated in Theorem~\ref{t:gablabel}. We give a brief review of the oscillation theorem for one-dimensional Schr\"odinger operators in Section~\ref{sec:osc}, define and recall basic properties of the integrated density of states in Section~\ref{sec:IDS}, and then recall basic facts about flows and the definition of the Schwartzman homomorphism in Section~\ref{sec:flow}. This is all put together in Section~\ref{sec:gaplabel} to prove Theorem~\ref{t:gablabel}. The second part of the paper then deals with computing $\schwartzmanGroup(\Omega,T,\mu)$ for examples of interest. We deal with almost-periodic potentials in Section~\ref{sec:ap}, subshift and random potentials in Section~\ref{sec:subshift}, and operators generated by general affine toral homeomorphisms in Section~\ref{sec:affine}.

\subsection*{Acknowledgements} We are grateful to Michael Baake, Franz G\"{a}hler, Svetlana Jitomirskaya, Johannes Kellendonk, Andy Putman, Lorenzo Sadun, and Hiro Lee Tanaka for helpful conversations and comments. We also want to thank the American Institute of Mathematics for hospitality and support during a January 2022 visit, during which part of this work was completed.

\section{Oscillation Theory} \label{sec:osc}
Let us briefly leave the ergodic setting and consider a general potential $V:\Z \to \R$, and the associated difference equation
\begin{equation} \label{eq:deve:osc}
u(n-1) + u(n+1) + V(n)u(n) = Eu(n),
\quad
n \in \Z.
\end{equation}
We will seek to connect oscillation of solutions of \eqref{eq:deve:osc} with eigenvalue counting functions of Dirichlet truncations of $H$; for $N \in \N$, define $H_N \in \R^{N \times N}$ by
\begin{equation}
H_N
= \begin{bmatrix} V(1) & 1 \\ 1 & V(2) & 1 \\
& \ddots & \ddots & \ddots \\
&& 1 & V(N-1) & 1 \\
&&& 1 & V(N) \end{bmatrix}.
\end{equation}

Let $u_E:\Z \to \R$ denote the sequence defined by
\begin{equation} u_E \text{ solves \eqref{eq:deve:osc} with } u_E(0) = 0 \text{ and } u_E(1)=1.
\end{equation}
The main theorem of the present section states that the number of sign flips of $u_E$ on the interval $[1,N+1]$ coincides with the number of eigenvalues of $H_N$ that exceed $E$. 

Oscillation and comparison theorems for linear second-order differential operators date back to Sturm \cite{Sturm1936a,Sturm1936b}. To the best of our knowledge, the earliest appearance of discrete versions of these results is due to B\^ocher \cite{Bocher1897/98}. The approach to oscillation theory here is inspired by Simon \cite{Simon2005:OscTh}. See also Teschl \cite[Chapter~4]{Teschl2000:Jacobi} and Zettl \cite{Zettl2005}.

In the present setting, one may readily derive the main results by noting that $u_E(n)$ (as a function of energy) may be realized as the characteristic polynomial of a suitable Dirichlet truncation, and as such a sign flip of $u_E$ between $n$ and $n+1$ can be formulated in terms of eigenvalue counts. More precisely:
\begin{prop} \label{prop:dirich:soln:det}
Let $V: \Z \to \R$ be given. For any $E \in \C$ and any $n \in \N$,
\begin{equation}
u_E(n+1) = \det(E-H_n).
\end{equation}
\end{prop}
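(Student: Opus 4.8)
The plan is to prove the identity $u_E(n+1) = \det(E - H_n)$ by induction on $n$, exploiting the fact that both sides satisfy the same three-term recurrence in $n$ and agree on the first two values.

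First I would establish the base cases. For $n = 0$: the right-hand side is $\det(E - H_0)$, the determinant of the empty matrix, which is $1$ by convention, matching $u_E(1) = 1$. For $n = 1$: $\det(E - H_1) = E - V(1)$; on the other hand, applying \eqref{eq:deve:osc} at $n = 1$ with $u_E(0) = 0$ and $u_E(1) = 1$ gives $u_E(0) + u_E(2) + V(1)u_E(1) = E u_E(1)$, i.e.\ $u_E(2) = E - V(1)$, so the two agree. (If one prefers to start the induction at $n = 1$ and $n = 2$, the case $n = 2$ can be checked by a direct $2\times 2$ determinant computation together with the recurrence at $n = 2$; either normalization works.)

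Next I would carry out the inductive step. Expanding $\det(E - H_{n+1})$ along the last row (equivalently, last column, since $H_{n+1}$ is symmetric tridiagonal) yields the standard continuant recurrence
\begin{equation}
\det(E - H_{n+1}) = (E - V(n+1))\det(E - H_n) - \det(E - H_{n-1}),
\end{equation}
where the two off-diagonal $1$'s in the last row/column contribute the $-\det(E-H_{n-1})$ term. On the other side, the recurrence \eqref{eq:deve:osc} evaluated at the index $n+1$ reads $u_E(n) + u_E(n+2) + V(n+1)u_E(n+1) = E u_E(n+1)$, i.e.\ $u_E(n+2) = (E - V(n+1))u_E(n+1) - u_E(n)$. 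By the induction hypothesis $u_E(n+1) = \det(E-H_n)$ and $u_E(n) = \det(E - H_{n-1})$, so the two recursions coincide and hence $u_E(n+2) = \det(E - H_{n+1})$, completing the induction.

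I do not expect any serious obstacle here; this is a routine induction. The one point requiring a modicum of care is the cofactor expansion establishing the continuant recurrence for tridiagonal determinants — in particular making sure the $\pm$ signs and the index shift in the $\det(E - H_{n-1})$ term are handled correctly (expanding along the last row produces a minor whose own expansion along its last column isolates $\det(E - H_{n-1})$ with the correct sign). One should also be slightly careful about the indexing conventions for the truncated matrices $H_n$ versus the shifted blocks appearing in the expansion, but since all the potential entries $V(1), \dots, V(n+1)$ line up as expected, this is purely bookkeeping. The identity is stated for all $E \in \C$, which causes no difficulty since the recurrence and the determinant expansion are polynomial identities in $E$ valid over any commutative ring.
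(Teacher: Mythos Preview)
Your induction via the continuant recurrence is correct and complete. The paper takes a different, more global route: it observes that both $u_E(n+1)$ and $\det(E-H_n)$ are monic polynomials of degree $n$ in $E$, that both vanish precisely at the eigenvalues of $H_n$ (the former because $u_E(n+1)=0$ makes $u_E$ a Dirichlet eigenvector), and that the eigenvalues of $H_n$ are simple, forcing equality. Your approach is arguably more elementary since it avoids invoking simplicity of the spectrum; the paper's argument is slicker but leans on that auxiliary fact (which it leaves to the reader). Both are standard and either would be acceptable here.
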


\begin{proof}
Both sides are readily seen to be monic polynomials of degree $n$ in $E$ that vanish whenever $E \in \sigma(H_n)$. The reader can check that every eigenvalue of $H_n$ is simple, so the result follows.
\end{proof}

The discussion of sign flips becomes slightly more delicate when $E$ is a Dirichlet eigenvalue, i.e., an eigenvalue of some $H_n$.  In particular, for such $E$, we have $u_E(n+1) = 0$ and
\[
u_E(n+2) = - u_E(n),
\]
so we should only count this as a single sign flip. To accomplish this, linearly interpolate $u_E$ to obtain a continuous real function $u_E(x)$, $x \in \R$.  For each $N \in \N$, let $F_N(E)$ denote the number of zeros of $u_E(x)$ in the interval $(1,N+1)$.  Equivalently,
\begin{equation}
F_N(E)
=
\# \{ 1 \leq j \leq N : \sgn( u_E(j) ) \neq \sgn (u_E(j+1)) \}
\end{equation}
whenever $E \not\in \sigma(H_N)$ and
\begin{equation}
F_N(E)
=
\# \{ 1 \leq j \leq N-1 : \sgn( u_E(j) ) \neq \sgn (u_E(j+1)) \}
\end{equation}
for $E \in \sigma(H_N)$, where we adopt the convention $\sgn(0) = 1$ to avoid over-counting sign flips at the Dirichlet eigenvalues.

\begin{theorem}[Oscillation Theorem] \label{t:sturm:osc}
Fix a potential $V: \Z \to \R$. For all $N \in \N$ and $E \in \R$, $F_N(E)$ is equal to the number of eigenvalues of $H_N$ that exceed $E$. That is,
\begin{equation} \label{e.ids.signflips}
\# \left( \sigma(H_N) \cap (E,\infty) \right)
=
F_N(E)
\text{ for all } E \in \R.
\end{equation}
\end{theorem}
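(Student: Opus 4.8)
The plan is to prove \eqref{e.ids.signflips} by a monotonicity-and-interlacing argument in the energy variable $E$, using Proposition~\ref{prop:dirich:soln:det} as the bridge between sign flips of $u_E$ and spectra of the Dirichlet truncations $H_j$. First I would observe that both sides of \eqref{e.ids.signflips} are nonincreasing, integer-valued, right-continuous (with the $\sgn(0)=1$ convention) step functions of $E$ that vanish for $E$ large. Hence it suffices to track how each side jumps as $E$ decreases through a critical value. The right-hand side $\#(\sigma(H_N)\cap(E,\infty))$ clearly jumps by exactly the multiplicity of $E$ as an eigenvalue of $H_N$ — and every eigenvalue of $H_N$ is simple (as noted in the proof of Proposition~\ref{prop:dirich:soln:det}), so each jump is by exactly $1$, occurring precisely at $E\in\sigma(H_N)$. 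So the whole theorem reduces to showing that $F_N(E)$ also increases by exactly $1$ as $E$ passes (downward) through each eigenvalue of $H_N$, and is locally constant elsewhere, with $F_N(E)=0$ for $E>\max\sigma(H_N)$.

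The key structural input is that $u_E(j+1)=\det(E-H_j)$ for $1\le j\le N$, so the sign pattern $(\sgn u_E(1),\sgn u_E(2),\ldots,\sgn u_E(N+1))$ is governed by the signs of the characteristic polynomials of the nested family $H_1\subseteq H_2\subseteq\cdots\subseteq H_N$ (with $u_E(1)=1$ corresponding to the empty truncation $H_0$). By Cauchy interlacing, the eigenvalues of $H_{j-1}$ strictly interlace those of $H_j$; this is the classical mechanism by which, for the continuous/linearly-interpolated $u_E(x)$, a zero is created in the interval $(j,j+1)$ exactly when $E$ crosses an eigenvalue of $H_j$. Concretely, I would argue: (i) for $E$ above $\max\sigma(H_N)$ all the determinants $\det(E-H_j)$ are positive, so there are no sign flips and $F_N(E)=0$; (ii) as $E$ decreases past a value $E_0\in\sigma(H_j)$ for some $j\le N$, exactly the sign of $u_E(j+1)=\det(E-H_j)$ changes, and by the interlacing property $E_0\notin\sigma(H_{j-1})\cup\sigma(H_{j+1})$ (assuming for the moment $1\le j\le N-1$), so $u_{E}(j)$ and $u_E(j+2)$ keep their (nonzero) signs across $E_0$; a short case check on the triple of signs $(\sgn u_E(j),\sgn u_E(j+1),\sgn u_E(j+2))$ then shows the number of sign changes among consecutive entries increases by exactly $1$. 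One must also handle the case $j=N$ (the sign change of $u_E(N+1)$ is the last entry, and the interpolated function has a zero leaving the interval through the right endpoint — here the $\sgn(0)=1$ convention and the definition of $F_N$ on $\sigma(H_N)$ are precisely calibrated so the count is right), and in principle the degenerate possibility that two of the $H_j$ share an eigenvalue, which interlacing rules out.

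The main obstacle I anticipate is the bookkeeping at coincidences and endpoints: making the ``one new sign flip per eigenvalue crossing'' count airtight when $E$ is itself a Dirichlet eigenvalue (so some $u_E(j+1)=0$), and checking that the two different formulas defining $F_N(E)$ — one for $E\notin\sigma(H_N)$ counting flips among $u_E(1),\ldots,u_E(N+1)$, the other for $E\in\sigma(H_N)$ counting flips among $u_E(1),\ldots,u_E(N)$ — glue together consistently with the right-continuity needed to match $\#(\sigma(H_N)\cap(E,\infty))$. The identity $u_E(n+2)=-u_E(n)$ at a Dirichlet eigenvalue (already noted in the text) is the crucial local fact that prevents double-counting. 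Once the jump behaviour is verified at every critical $E$ and the normalization $F_N(E)=0$ for large $E$ is in hand, \eqref{e.ids.signflips} follows because two right-continuous nonincreasing integer step functions that agree for large argument and have the same jumps must coincide everywhere.
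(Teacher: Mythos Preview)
Your overall strategy---fixing $N$ and tracking how both sides of \eqref{e.ids.signflips} jump as $E$ decreases---is sound and is genuinely different from the paper's approach, which instead inducts on $N$ for each fixed $E$ using the interlacing lemma. Both routes ultimately rest on interlacing and Proposition~\ref{prop:dirich:soln:det}.

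However, your step (ii) contains a real error that, if followed as written, makes the count come out wrong. You assert that when $E$ decreases past an eigenvalue $E_0\in\sigma(H_j)$ with $1\le j\le N-1$, the sign-flip count in the triple $(\sgn u_E(j),\sgn u_E(j+1),\sgn u_E(j+2))$ increases by exactly $1$. But if only the middle entry of three flips and the outer two keep their nonzero signs, an honest case check shows the local count changes by $0$ (outer signs opposite) or by $\pm 2$ (outer signs equal)---never by $1$. Worse, your claim would force $F_N$ to jump at each of the $1+2+\cdots+N$ eigenvalues of $H_1,\ldots,H_N$, whereas $F_N\le N$; and it directly contradicts your own correctly stated goal that $F_N$ be locally constant away from $\sigma(H_N)$.

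The repair is simple and uses exactly the ingredient you already flagged: at $E_0\in\sigma(H_j)$ with $j<N$ one has $u_{E_0}(j+2)=-u_{E_0}(j)$ from the difference equation, so the outer signs are \emph{opposite} and the local sign-flip count is unchanged---precisely what you need for local constancy. At an eigenvalue of $H_N$ only the last entry $u_E(N+1)$ flips; here you must verify, via interlacing between $H_{N-1}$ and $H_N$, that just above $E_0=E_N^k$ one has $\sgn u_E(N)=\sgn u_E(N+1)=(-1)^{N-k}$, so the pair $(\sgn u_E(N),\sgn u_E(N+1))$ \emph{gains} a flip as $E$ decreases. With those two corrections (and your endpoint bookkeeping for $E\in\sigma(H_N)$), the energy-sweep argument goes through.
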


The following lemma will be useful.

\begin{lemma}[Interlacing Lemma] \label{l.cutoff.interlace}
For every $N \geq 2$, the eigenvalues of $H_{N-1}$ strictly interlace those of $H_{N}$.  More precisely, if $ E_{N-1}^1 < \cdots < E_{N-1}^{N-1}$ and $ E_N^1 < \cdots < E_N^N$ denote the eigenvalues of $H_{N-1}$ and $H_N$, respectively, then
\begin{equation} \label{e.cutoff.interlace1}
E_N^j
<
E_{N-1}^j
<
E_N^{j+1}
\end{equation}
for all $1 \leq j \leq N-1$.
\end{lemma}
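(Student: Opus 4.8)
The statement to prove is that the eigenvalues of $H_{N-1}$ strictly interlace those of $H_N$. The natural approach is via the characteristic polynomials $p_N(E) := \det(E - H_N)$ together with the three-term recursion they satisfy. First I would record that by Proposition~\ref{prop:dirich:soln:det}, $p_N(E) = u_E(N+1)$, and that expanding the determinant along the last row gives the recursion
\begin{equation}
p_N(E) = (E - V(N)) p_{N-1}(E) - p_{N-2}(E),
\end{equation}
with $p_0 \equiv 1$ and $p_{-1} \equiv 0$ (this is of course just the difference equation \eqref{eq:deve:osc} for $u_E$). Each $p_N$ is monic of degree $N$. The recursion immediately yields the key structural fact: consecutive polynomials have no common zero, since if $p_{N-1}(E_0) = p_{N-2}(E_0) = 0$ then the recursion forces $p_j(E_0) = 0$ for all $j$, contradicting $p_0 \equiv 1$. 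In particular every eigenvalue of $H_N$ is simple (which is also needed in the proof of Proposition~\ref{prop:dirich:soln:det}), and no eigenvalue of $H_{N-1}$ is an eigenvalue of $H_N$, so the interlacing inequalities in \eqref{e.cutoff.interlace1} will automatically be strict once non-strict interlacing is established.

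The cleanest route to the interlacing itself is the standard Sturm-sequence / sign-change argument. Evaluate the recursion at a zero $E_0$ of $p_{N-1}$: it gives $p_N(E_0) = -p_{N-2}(E_0)$, so $p_N$ and $p_{N-2}$ take opposite (nonzero) signs at every root of $p_{N-1}$. Combined with the fact that $p_N$ is monic of degree $N$ and hence $\to +\infty$ as $E \to +\infty$, a sign-counting argument shows that $p_N$ must change sign between consecutive roots of $p_{N-1}$ and also once above the largest root and once below the smallest, which forces exactly one root of $p_N$ strictly between each consecutive pair of roots of $p_{N-1}$, plus one above and one below — accounting for all $N$ roots of $p_N$. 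Unwinding the indexing gives precisely $E_N^j < E_{N-1}^j < E_N^{j+1}$ for $1 \le j \le N-1$. An alternative, perhaps more conceptual, argument is the Cauchy interlacing theorem for the rank-type perturbation relating $H_{N-1}$ (the top-left $(N-1)\times(N-1)$ principal submatrix) to $H_N$: since $H_{N-1}$ is obtained from $H_N$ by deleting the last row and column, Cauchy interlacing gives $E_N^j \le E_{N-1}^j \le E_N^{j+1}$, and then strictness comes for free from the no-common-eigenvalue observation above. I would probably present the Cauchy-interlacing version for brevity and invoke the recursion only to upgrade to strict inequalities.

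The only genuine subtlety — and the step I would be most careful about — is establishing \emph{strictness}, i.e. ruling out the degenerate possibility that some $E_{N-1}^j$ coincides with an eigenvalue of $H_N$; but as noted this follows cleanly from the three-term recursion and $p_0 \equiv 1$, so there is no real obstacle. A secondary bookkeeping point is making sure the indices line up so that the $N-1$ eigenvalues of $H_{N-1}$ and the $N$ eigenvalues of $H_N$ are matched in the order asserted by \eqref{e.cutoff.interlace1}; this is automatic once one knows there is exactly one eigenvalue of $H_N$ in each of the $N$ open intervals determined by the eigenvalues of $H_{N-1}$ (including the two unbounded ones). Everything else is routine, and the lemma will then be immediately available for the proof of Theorem~\ref{t:sturm:osc}.
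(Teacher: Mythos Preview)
Your proposal is correct, but the paper takes a different route. Rather than working with the three-term recursion or invoking Cauchy interlacing, the paper considers the Weyl-type function
\[
h(z) = \frac{\det(z - H_{N-1})}{\det(z - H_N)} = \langle \delta_N, (z - H_N)^{-1} \delta_N \rangle,
\]
the second equality coming from Cramer's rule. Expanding $\delta_N$ in the orthonormal eigenbasis of $H_N$ gives $h(z) = \sum_j |\langle v_j,\delta_N\rangle|^2 / (z - E_N^j)$, and after checking that each $\langle v_j,\delta_N\rangle \neq 0$ one sees that $h$ runs from $+\infty$ to $-\infty$ on every open interval $(E_N^j, E_N^{j+1})$, forcing a zero of $h$ --- i.e.\ an eigenvalue of $H_{N-1}$ --- strictly inside each such interval. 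Your Sturm-sequence/Cauchy approach is arguably more elementary and avoids the eigenvector non-vanishing check; the paper's argument, on the other hand, is the Herglotz-function viewpoint that recurs throughout spectral theory and makes the strictness automatic from the positivity of the residues.
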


\begin{proof}
The reader can check that all eigenvalues of $H_N$ and $H_{N-1}$ are simple, so it suffices to show that for any $1 \leq j \leq N-1$, there exists an eigenvalue $\beta$ of $H_{N-1}$ with $E_N^j < \beta < E_N^{j+1}$.  To that end, let $v_1,\ldots,v_N$ be the (orthonormal) eigenvectors of $H_N$, chosen so that
$$
H_N v_j = E_N^j v_j,
$$
and define a meromorphic function $h$ by
\begin{equation}
 h(z) = \frac{\det(z-H_{N-1})}{\det(z-H_{N})}.
\end{equation}
By Cramer's rule, $h(z) =  \left\langle \delta_N, (z-H_{N})^{-1} \delta_N \right\rangle$ for all $z \notin \sigma(H_{N})$.  If we expand $\delta_N$ in the basis $\{ v_1,\ldots, v_N \}$, we see that
\begin{align*}
h(z)
& =
\left\langle \delta_N, (z-H_{N})^{-1} \delta_N \right\rangle \\
& =
\left\langle \sum_{j=1}^N \langle v_j,\delta_N \rangle v_j, \sum_{j=1}^N \frac{1}{z - E_N^j} \langle v_j,\delta_N \rangle v_j \right\rangle\\
& =
\sum_{j=1}^N \frac{|\langle v_j,\delta_N \rangle|^2}{z - E_N^j}
\end{align*}
for all $z \notin \sigma(H_N)$. One can readily check that $\langle v_j,  \delta_N \rangle \neq 0$.  Thus, for each $1 \leq j \leq N-1$, we have
\[
\lim_{x \downarrow E_N^j} h(x) = + \infty
\text{ and }
\lim_{x \uparrow E_N^{j+1}} h(x) = - \infty,
\]
so $h$ vanishes somewhere in the open interval $\left( E_N^j , E_N^{j+1} \right)$. It follows that $H_{N-1}$ has an eigenvalue strictly between $E_N^j$ and $E_N^{j+1}$, as desired.
\end{proof}

With the interlacing lemma in hand, we are ready to prove the desired oscillation theorem.

\begin{proof}[Proof of Theorem~\ref{t:sturm:osc}]
Let $B_N(E)$ denote the number of eigenvalues of $H_N$ that exceed $E$, i.e., the left-hand side of \eqref{e.ids.signflips}. In the case $N = 1$, one has
\begin{align*}
E < E_1^1
& \iff
E < V(1) \\
& \iff
\sgn(E - V(1)) = -1.
\end{align*}
Since $u_E(1) = 1$ and $u_E(2) = E-V(1)$, $ B_1(E) = F_1(E) $ for every $E \neq V(1) $.  If $ E = E_1^1 = V(1) $, then one has $B_1(E) = F_1(E) = 0$.
\newline

Inductively, assume that $B_N \equiv F_N$ for some $N \in \N$, and consider $E \in \R$. Adopting the conventions $E_N^0 = -\infty$ and $ E_N^{N+1} = +\infty $, choose $0 \leq k \leq N $ so that \[E_N^k \leq E < E_N^{k+1}.\] By definition of $B_N$, one has $B_N(E) = N-k$, and then $F_N(E) = N-k$ by the inductive hypothesis. From here, we analyze a few cases separately.
\newline

\begin{case} \textbf{\boldmath$E = E_N^k$.}  It follows that $u_E(N+1) = 0$. Consequently, exactly one of $u_E(N)$, $u_E(N+2,E)$ is positive and the other must be negative. In particular, we have
\[
F_{N+1}(E) = F_N(E) + 1 = N-k+1.
\]
On the other hand, the interlacing lemma (Lemma~\ref{l.cutoff.interlace}) implies that $E_{N+1}^k < E < E_{N+1}^{k+1}$, which yields
\[
B_{N+1}(E) = N-k+1 = F_{N+1}(E).
\]\end{case}

\begin{case} \textbf{{\boldmath$E_N^k < E < E_{N+1}^{k+1}$.}}  First, notice that this gives $B_{N+1}(E) = B_N(E)+1$, as before. Then, since $E$ is neither an eigenvalue of $H_N$ nor of $H_{N+1}$, Proposition~\ref{prop:dirich:soln:det} yields
\begin{align*}
\sgn(u_E(N+2))
& =
\sgn(\det(E - H_{N+1})) \\
& =
(-1)^{B_{N+1}(E)} \\
& \neq
(-1)^{B_N(E)} \\
& =
\sgn(\det(E - H_N)) \\
& =
\sgn(u_E(N+1),
\end{align*}
and hence $F_{N+1}(E) = F_N(E) + 1 = B_{N+1}(E)$.
\end{case}



By the interlacing lemma, the remaining cases are $E =  E_{N+1}^{k+1}$ or $E_{N+1}^{k+1} < E < E_N^{k+1}$, which  are similar and left to the reader. \end{proof}

\section{The Integrated Density of States} \label{sec:IDS}

In this section we discuss two closely related fundamental objects associated with an ergodic family of Schr\"odinger operators $\{ H_\omega \}_{\omega \in \Omega}$: the density of states measure and the integrated density of states.

\begin{definition} \label{def:IDSgen:IDSandDOSM}
The \emph{density of states measure} (DOSM) is the measure $\kappa$ defined by
\begin{equation}\label{f.idsdef}
\int \! g \, d\kappa
=
\int_\Omega  \langle \delta_0 , g(H_\omega) \delta_0 \rangle  \, d\mu(\omega)
\end{equation}
for bounded measurable $g$. The function $k$ defined by
\begin{equation}
k(E) = \int \chi_{(-\infty,E]}  \, d\kappa
\end{equation}
is called the \textit{integrated density of states} (IDS).
\end{definition}

Note that by definition, $\kappa$ is the $\mu$-average of the spectral measure  corresponding to the pair $(H_\omega,\delta_0)$.

\medskip

We first establish one of the central properties of the IDS/DOSM, namely its relation with the almost sure spectrum. 

\begin{theorem}\label{t.as83}
The almost sure spectrum is given by the points of increase of $k$. Equivalently, $\Sigma = \supp \kappa$.
\end{theorem}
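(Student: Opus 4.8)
The plan is to show the two inclusions $\supp\kappa\subseteq\Sigma$ and $\Sigma\subseteq\supp\kappa$ separately, exploiting the formula \eqref{f.idsdef} defining $\kappa$ as the $\mu$-average of the spectral measures $\mu^\omega_{\delta_0}$ associated to the pairs $(H_\omega,\delta_0)$, together with the $\mu$-almost-sure constancy of the spectrum $\sigma(H_\omega)=\Sigma$.

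First I would prove $\supp\kappa\subseteq\Sigma$. If $E\notin\Sigma$, choose $\varepsilon>0$ with $(E-\varepsilon,E+\varepsilon)\cap\Sigma=\emptyset$. For $\mu$-a.e.\ $\omega$ we have $\sigma(H_\omega)=\Sigma$, hence $\langle\delta_0,\chi_{(E-\varepsilon,E+\varepsilon)}(H_\omega)\delta_0\rangle=0$ since the spectral projection onto an interval disjoint from the spectrum vanishes. Integrating over $\Omega$ against $\mu$ and using \eqref{f.idsdef} with $g=\chi_{(E-\varepsilon,E+\varepsilon)}$ gives $\kappa\big((E-\varepsilon,E+\varepsilon)\big)=0$, so $E\notin\supp\kappa$.

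Next I would prove $\Sigma\subseteq\supp\kappa$; equivalently, $\R\setminus\supp\kappa\subseteq\R\setminus\Sigma$. Suppose $E\notin\supp\kappa$, so there is $\varepsilon>0$ with $\kappa\big((E-\varepsilon,E+\varepsilon)\big)=0$. By \eqref{f.idsdef} this means $\int_\Omega\langle\delta_0,\chi_{(E-\varepsilon,E+\varepsilon)}(H_\omega)\delta_0\rangle\,d\mu(\omega)=0$, and since the integrand is nonnegative, $\langle\delta_0,\chi_{(E-\varepsilon,E+\varepsilon)}(H_\omega)\delta_0\rangle=0$ for $\mu$-a.e.\ $\omega$. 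The point now is to upgrade this from the single vector $\delta_0$ to the whole cyclic family $\{\delta_n:n\in\Z\}$: by $\mu$-invariance of the measure and the covariance relation $H_{T^n\omega}=S^{-n}H_\omega S^n$ (where $S$ is the shift on $\ell^2(\Z)$), applying the above with $T^n\omega$ in place of $\omega$ shows that for $\mu$-a.e.\ $\omega$ one has $\langle\delta_n,\chi_{(E-\varepsilon,E+\varepsilon)}(H_\omega)\delta_n\rangle=0$ simultaneously for all $n\in\Z$ (a countable intersection of full-measure sets). Since $\{\delta_n\}$ is a cyclic family for $H_\omega$ (indeed an orthonormal basis), the positivity of the spectral projection forces $\chi_{(E-\varepsilon,E+\varepsilon)}(H_\omega)=0$, hence $(E-\varepsilon,E+\varepsilon)\cap\sigma(H_\omega)=\emptyset$ for $\mu$-a.e.\ $\omega$, i.e.\ $(E-\varepsilon,E+\varepsilon)\cap\Sigma=\emptyset$, so $E\notin\Sigma$.

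The only delicate point is the covariance bookkeeping in the second inclusion — making sure the null sets are handled correctly so that one genuinely gets a single full-measure set of $\omega$ on which $\langle\delta_n,\chi_I(H_\omega)\delta_n\rangle=0$ for every $n\in\Z$ at once; this is routine given that $\Z$ is countable and $\mu$ is $T$-invariant, but it is the step that actually uses ergodicity (or at least invariance) of the family rather than just the definition of $\kappa$. Everything else is elementary spectral theory: a self-adjoint operator with an orthonormal basis of cyclic-type vectors has no spectrum in an interval $I$ iff $\langle e_j,\chi_I(H)e_j\rangle=0$ for all basis vectors $e_j$.
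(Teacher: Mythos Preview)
Your proof is correct and follows essentially the same route as the paper's own argument: both inclusions are established exactly as you describe, with the second using $T$-invariance and the covariance relation to pass from $\delta_0$ to all $\delta_n$, and then concluding via the vanishing of the spectral projection (the paper phrases this last step as $\tr\chi_I(H_\omega)=0$, which is equivalent to your orthonormal-basis argument). Your parenthetical observation that only invariance, not full ergodicity, is needed here is also correct.
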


\begin{proof}
We begin with the inclusion $\supp \kappa \subseteq \Sigma$. If $E_0 \not\in \Sigma$, there is an open interval $I$ containing $E_0$ with $I \cap \Sigma = \emptyset$. For $\mu$-almost every $\omega$, we have $\Sigma = \sigma(H_\omega)$ and hence $\chi_I(H_\omega) = 0$. It follows that
$$
\int  \! \chi_I \, d\kappa
=
\int_\Omega  \langle \delta_0 , \chi_I(H_\omega) \delta_0 \rangle  \, d\mu(\omega) = 0.
$$
Thus, $E_0 \not\in \supp \kappa$.

Let us now prove $\supp \kappa \supseteq \Sigma$: If $E_0 \not\in \supp \kappa$, there is an open interval $I$ containing $E_0$ such that $I \cap \supp \kappa = \emptyset$. But this implies
\begin{align*}
0=\int \!  \chi_I \, d\kappa
& =
\int_\Omega \langle \delta_0 , \chi_I(H_\omega) \delta_0 \rangle  \, d\mu(\omega) \\
& =
\int_\Omega \langle \delta_0 , \chi_I(H_{T^n \omega}) \delta_0 \rangle \, d\mu(\omega) \\
& =
\int_\Omega \langle \delta_n , \chi_I(H_\omega) \delta_n \rangle \, d\mu(\omega),
\end{align*}
where the second step follows by $T$-invariance of $\mu$, and the third follows from the covariance relation $H_{T\omega} = UH_\omega U^*$, where $U$ is the unitary shift $\delta_n \mapsto \delta_{n-1}$. Thus, for $\mu$-almost every $\omega$, we have
\[
\dim \Ran \, \chi_I(H_\omega)
=
\tr \chi_I(H_\omega)
=
0.
\]
Consequently, $E_0 \not\in \Sigma$.
\end{proof}

We now connect the IDS and density of states measure to properties of finite cutoffs of $H_\omega$. To that end, we will show that the density of states measure can also be viewed as the $\mu$-a.e.\ weak limit of normalized traces of truncations of $H_\omega$.

\begin{definition}
For $\omega \in \Omega$ and $N \ge 1$, define the measure $\kappa_{\omega,N}$ by
\begin{equation} \label{eq:IDSgen:dkOmegaNdef}
\int  \! g \, d\kappa_{\omega,N}
=
\frac{1}{N} \, \tr (g(H_\omega) \chi_{[1,N]})
\end{equation}
for bounded measurable $g$.
\end{definition}

\begin{lemma}\label{kapplem}
For every bounded, measurable function $g$, there exists a set $\Omega_g \subseteq \Omega$ of full $\mu$-measure such that
\begin{equation}\label{kapprox}
\lim_{N \to \infty} \int \! g \, d\kappa_{\omega,N}
=
\int \!  g \, d\kappa
\end{equation}
for every $\omega \in \Omega_g$.
\end{lemma}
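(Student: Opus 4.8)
The plan is to combine the Birkhoff pointwise ergodic theorem with the oscillation theorem from Section~\ref{sec:osc}. First I would handle a special class of test functions: for $E \in \R$ not an atom of $\kappa$, consider $g = \chi_{(-\infty,E]}$. Then $\int g \, d\kappa_{\omega,N}$ is, up to the normalization $1/N$, the eigenvalue counting function $\#(\sigma(H_\omega^{[1,N]}) \cap (-\infty,E])$ for the Dirichlet truncation of $H_\omega$ to the window $[1,N]$, which by Theorem~\ref{t:sturm:osc} equals $N - F_N(E)$ where $F_N(E)$ counts sign flips of the solution $u_E$ (with the appropriate base point). The key point is that $F_N(E)$ is nearly additive as one concatenates windows: the number of sign flips of $u_E$ over $[1,N]$ differs from the sum of the sign-flip counts over consecutive subwindows by an error bounded by the number of subwindows, since the discrepancy at each junction is at most one. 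This sets up an application of the subadditive (or, after symmetrization, the standard) ergodic theorem.

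Concretely, I would fix a large block length $L$, write $N = qL + r$ with $0 \le r < L$, and express the sign-flip count over $[1,N]$ as $\sum_{j=0}^{q-1} (\text{sign flips over the } j\text{th block}) + O(q) + O(L)$. The number of sign flips over the $j$th block is a function of the restriction of the potential to that block, hence of the form $\varphi_L(T^{jL}\omega)$ for a suitable bounded measurable $\varphi_L$ depending only on $L$ coordinates of the potential. Applying Birkhoff's theorem to $\varphi_L \circ T^L$ under the (still ergodic) transformation $T^L$ — or, more cleanly, just applying Birkhoff directly to the telescoped sum — yields that $\frac1N \sum_{j} \varphi_L(T^{jL}\omega)$ converges $\mu$-a.e.; letting $L \to \infty$ along a countable sequence and using that $\int \varphi_L \, d\mu \to \int \chi_{(-\infty,E]} \, d\kappa$ (which itself follows from \eqref{f.idsdef} together with the covariance/$T$-invariance manipulation already used in the proof of Theorem~\ref{t.as83}, rewriting $\langle\delta_0,\chi_{(-\infty,E]}(H_\omega)\delta_0\rangle$ as an average over the $N$ sites and interchanging limits), one gets \eqref{kapprox} for $g = \chi_{(-\infty,E]}$, $E$ not an atom. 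Since $\kappa$ has at most countably many atoms and both sides of \eqref{kapprox} are monotone in $E$, one upgrades to all $E$, hence to all intervals, hence to all bounded measurable $g$ by the usual approximation; the full-measure set $\Omega_g$ is obtained by taking a countable intersection over a generating family of intervals with rational, non-atomic endpoints and then, for general $g$, intersecting with one more full-measure set coming from a monotone-class argument, or — more simply — noting that the collection of $g$ for which an a.e. statement of this type holds is closed under the relevant limits.

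The main obstacle, and the step deserving the most care, is controlling the boundary/junction errors when passing from the truncated operator $H_\omega^{[1,N]}$ to a concatenation of independent blocks: one must check that replacing the genuine $N\times N$ Dirichlet matrix by a block-diagonal matrix (with $q$ blocks of size $L$ plus a remainder) changes the eigenvalue counting function by at most $O(q)$, uniformly in $\omega$ and $E$. This is a rank-$O(q)$ perturbation estimate (each junction removes one off-diagonal coupling, a rank-two perturbation), so the eigenvalue counting functions differ by at most $2q$; dividing by $N = qL + r$ makes this $O(1/L)$, which vanishes as $L \to \infty$. Alternatively one can phrase the same bound directly via the oscillation theorem as the "at most one sign flip is lost or gained per junction" statement noted above, which is perhaps the cleanest route and avoids invoking min-max. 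Once this uniform block-approximation bound is in place, the ergodic theorem does the rest and the argument is routine.
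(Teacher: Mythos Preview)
You have misidentified the measure $\kappa_{\omega,N}$. By its definition \eqref{eq:IDSgen:dkOmegaNdef}, one applies $g$ to the \emph{full} operator $H_\omega$ and only then restricts to $[1,N]$:
\[
\int g\, d\kappa_{\omega,N} = \frac{1}{N}\tr\bigl(g(H_\omega)\chi_{[1,N]}\bigr)
= \frac{1}{N}\sum_{n=1}^N \langle \delta_n, g(H_\omega)\delta_n\rangle .
\]
You instead work with the Dirichlet truncation $H_{\omega}^{[1,N]}$, which is the measure $\widetilde{\kappa}_{\omega,N}$ of Definition~\ref{def:IDS:tildekdef}, a different object treated later in Theorem~\ref{t.evrestdefids}. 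With the correct definition the proof is a single application of Birkhoff: by covariance $\langle \delta_n, g(H_\omega)\delta_n\rangle = \langle \delta_0, g(H_{T^n\omega})\delta_0\rangle$, so $\int g\, d\kappa_{\omega,N}$ is already a Birkhoff average of the bounded function $\overline{g}(\omega)=\langle \delta_0, g(H_\omega)\delta_0\rangle$, and the ergodic theorem gives \eqref{kapprox} immediately. No oscillation theory, block decomposition, or rank-perturbation estimate is needed.

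Even if one reads your argument as aimed at $\widetilde{\kappa}_{\omega,N}$, there are gaps. Applying Birkhoff to $\varphi_L\circ T^L$ under $T^L$ is not legitimate in general, since $T^L$ need not be ergodic when $T$ is. The passage from $g=\chi_{(-\infty,E]}$ to arbitrary bounded measurable $g$ via a monotone-class argument is also problematic as stated: the lemma requires, for each fixed $g$, a full-measure set on which \eqref{kapprox} holds, and almost-everywhere convergence is not stable under pointwise limits of the test function, so the class of $g$ for which the statement holds is not obviously closed in the way you assert. The paper handles $\widetilde{\kappa}_{\omega,N}$ by matching moments, using that $\langle \delta_n,(H_{\omega,N})^p\delta_n\rangle$ and $\langle \delta_n,(H_\omega)^p\delta_n\rangle$ differ only for boundedly many $n$ near the boundary, which sidesteps all of these issues.
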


\begin{proof}
Fix a bounded measurable function $g$ and define
\begin{equation}
\overline{g}(\omega) = \langle \delta_0 , g(H_\omega) \delta_0 \rangle,
\quad \omega \in \Omega.
\end{equation}
Since $g$ is bounded and measurable, $\overline{g} \in L^\infty(\Omega,\mu) \subseteq L^1(\Omega,\mu)$. Therefore,
\begin{align*}
\int \!  g \, d\kappa_{\omega,N} 
& = \frac{1}{N} \, \tr (g(H_\omega) \chi_{[1,N]}) \\
& = \frac{1}{N} \, \sum_{n = 1}^{N} \langle \delta_n , g(H_\omega) \delta_n \rangle \\
& = \frac{1}{N} \, \sum_{n = 1}^{N} \langle \delta_0 , g(H_{T^n \omega}) \delta_0 \rangle \\
& = \frac{1}{N} \, \sum_{n = 1}^{N} \overline{g} (T^n \omega) \\
& \to \, \int_\Omega \!\overline{g} \, d\mu \\
& = \int  \! g \, d\kappa.
\end{align*}
In the penultimate step, we applied the Birkhoff ergodic theorem, which works for $\mu$-almost every $\omega \in \Omega$.
\end{proof}

\begin{coro} \label{coro:dosweakconv}
For $\mu$-almost every $\omega$, $\kappa_{\omega,N}$ converges weakly to $\kappa$ as $N \to \infty$. That is, there exists a set $\Omega_*$ of full $\mu$-measure such that \eqref{kapprox} holds for all $\omega \in \Omega_*$ and all $g \in C(\R)$.
\end{coro}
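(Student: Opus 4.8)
The plan is to upgrade the single-function conclusion of Lemma~\ref{kapplem} to a statement holding simultaneously for all $g \in C(\R)$, by exploiting separability together with the fact that all measures in play are probability measures supported on one fixed compact interval. First I would record the uniform boundedness of the family: since $\Omega$ is compact and $f$ is continuous, $\|f\|_\infty < \infty$, so $\|H_\omega\| \le 2 + \|f\|_\infty =: R$ for every $\omega \in \Omega$. Hence $\kappa$ and every $\kappa_{\omega,N}$ is a Borel probability measure supported in $J := [-R,R]$ (total mass one follows from the defining identities \eqref{f.idsdef} and \eqref{eq:IDSgen:dkOmegaNdef} with $g \equiv 1$).

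Next, $C(J)$ with the uniform norm is separable; I would fix a countable dense subset $\{g_k : k \in \N\} \subseteq C(J)$ and extend each $g_k$ to a bounded measurable function on $\R$ (the extension is immaterial, as all relevant measures are supported in $J$). Applying Lemma~\ref{kapplem} to each $g_k$ yields a full-measure set $\Omega_{g_k} \subseteq \Omega$ on which \eqref{kapprox} holds, and I set $\Omega_* := \bigcap_{k \in \N} \Omega_{g_k}$, which has full $\mu$-measure as a countable intersection of full-measure sets.

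It then remains to verify \eqref{kapprox} for every $\omega \in \Omega_*$ and every $g \in C(\R)$ via a standard $3\varepsilon$-argument: given $\varepsilon > 0$, pick $k$ with $\sup_{x \in J}|g(x) - g_k(x)| < \varepsilon/3$; since $\kappa_{\omega,N}$ and $\kappa$ are probability measures on $J$, we get $\big|\int g\,d\nu - \int g_k\,d\nu\big| \le \varepsilon/3$ for $\nu \in \{\kappa_{\omega,N},\kappa\}$ and all $N$, while $\omega \in \Omega_{g_k}$ gives $N_0$ with $\big|\int g_k\,d\kappa_{\omega,N} - \int g_k\,d\kappa\big| < \varepsilon/3$ for $N \ge N_0$; the triangle inequality closes the argument. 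I do not anticipate a genuine obstacle here; the only point that truly needs care is the reduction to a countable dense family of test functions, which is precisely why the uniform-boundedness observation — collapsing everything onto the fixed compact interval $J$, on which $C(J)$ is separable — is indispensable.
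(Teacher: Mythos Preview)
Your proposal is correct and follows essentially the same approach as the paper: reduce to the compact interval $J$ on which all the measures are supported, exploit separability of $C(J)$ to pick a countable dense family, intersect the corresponding full-measure sets from Lemma~\ref{kapplem}, and finish with an $\varepsilon/3$ argument. The only cosmetic difference is that you spell out the $\varepsilon/3$ step and justify the support bound via continuity of $f$ on the compact $\Omega$, whereas the paper is terser on both points.
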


\begin{proof}
Since $\|H_\omega\| \le 2 +\|f\|_\infty =: M$ for a.e.\ $\omega$, it follows that the interval $J = [-M,M]$ supports $dk_{\omega,N}$ for a.e.\ $\omega$ and $N$. Consequently, it suffices to show that there is a full-measure set $\Omega_* \subseteq \Omega$ such that \eqref{kapprox} holds for all $g \in C(J,\R)$. Since $J$ is compact, $C(J)$ enjoys a countable dense subset, $\mathcal D$. The statement of the corollary then follows with
\begin{equation}
\Omega_*
=
\bigcap_{g \in \mathcal D} \Omega_g,
\end{equation}
by an $\varepsilon/3$ argument (where $\Omega_g$ is as defined in Lemma~\ref{kapplem}).
\end{proof}

\begin{theorem} \label{t:idscont}
The density of states measure and the integrated density of states are both continuous.
\end{theorem}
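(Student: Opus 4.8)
The plan is to reduce the theorem to the single assertion that the density of states measure $\kappa$ is non-atomic, i.e.\ $\kappa(\{E\}) = 0$ for every $E \in \R$. Indeed, $k(E) = \kappa((-\infty,E])$ is monotone and right-continuous, so it is continuous at $E$ precisely when $k(E) - \lim_{E' \uparrow E} k(E') = \kappa(\{E\}) = 0$; and ``$\kappa$ is continuous'' means exactly that $\kappa$ is atomless. Thus both claims in the theorem amount to the vanishing of $\kappa(\{E\})$ for all $E$.

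To prove this, fix $E \in \R$ and write $P_\omega := \chi_{\{E\}}(H_\omega)$ for the spectral projection of $H_\omega$ onto $\ker(H_\omega - E)$. By Definition~\ref{def:IDSgen:IDSandDOSM} (applied with $g = \chi_{\{E\}}$) we have $\kappa(\{E\}) = \int_\Omega \langle \delta_0, P_\omega \delta_0 \rangle \, d\mu(\omega)$. Running verbatim the covariance-and-invariance computation from the proof of Theorem~\ref{t.as83} --- using $H_{T\omega} = U H_\omega U^*$ with $U\delta_n = \delta_{n-1}$, hence $P_{T\omega} = U P_\omega U^*$, together with the $T$-invariance of $\mu$ --- one finds that the quantity $c := \int_\Omega \langle \delta_n, P_\omega \delta_n \rangle \, d\mu(\omega)$ is independent of $n \in \Z$ and equals $\kappa(\{E\})$.

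The decisive input is the simplicity of eigenvalues of one-dimensional Schr\"odinger operators: $\dim \ker(H_\omega - E) \le 1$ for every $\omega$ and every $E$. This holds because the Wronskian $u(n)v(n+1) - u(n+1)v(n)$ of two solutions of \eqref{eq:deve:osc} is constant in $n$ and is nonzero for linearly independent solutions, whereas if two eigenfunctions both lay in $\ell^2(\Z)$ the Wronskian would tend to $0$ at $\pm\infty$, a contradiction. Consequently $\tr P_\omega = \sum_{n \in \Z} \langle \delta_n, P_\omega \delta_n \rangle \le 1$ for every $\omega$. Integrating this over $\Omega$ and exchanging the (nonnegative) sum with the integral by Tonelli's theorem,
\[
1 \ge \int_\Omega \tr P_\omega \, d\mu(\omega) = \sum_{n \in \Z} \int_\Omega \langle \delta_n, P_\omega \delta_n \rangle \, d\mu(\omega) = \sum_{n \in \Z} c,
\]
which forces $c = 0$, i.e.\ $\kappa(\{E\}) = 0$. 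Since $E$ was arbitrary, $\kappa$ is atomless and $k$ is continuous.

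The only genuinely substantive step is the simplicity of one-dimensional eigenvalues; everything else is the covariance bookkeeping already performed for Theorem~\ref{t.as83} together with an application of Tonelli. (As an alternative to invoking the value of an infinite sum, one may instead use the finite truncations $\chi_{[1,N]}$ in place of the full trace, obtaining $N\,\kappa(\{E\}) = \int_\Omega \tr(P_\omega \chi_{[1,N]}) \, d\mu(\omega) \le \int_\Omega \tr P_\omega \, d\mu(\omega) \le 1$, hence $\kappa(\{E\}) \le 1/N$ for every $N$; the argument is otherwise identical.)
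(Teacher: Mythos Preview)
Your proof is correct and follows the same essential strategy as the paper: show $\kappa(\{E\})=0$ via covariance, $T$-invariance of $\mu$, and the Wronskian bound $\tr P_\omega \le 1$. The differences are streamlinings rather than a different route. First, you apply Definition~\ref{def:IDSgen:IDSandDOSM} directly with $g=\chi_{\{E\}}$, whereas the paper reaches the same identity $\kappa(\{E_0\})=\int_\Omega\langle\delta_0,P_\omega\delta_0\rangle\,d\mu$ through an approximation by continuous $g_n$ and dominated convergence. Second, where the paper invokes a ``standard argument using ergodicity'' to conclude that $\tr P_\omega$ is almost surely constant with value in $\{0,\infty\}$ and then confronts this with $\tr P_\omega\le 1$, your Tonelli computation $\sum_n c = \int\tr P_\omega\,d\mu \le 1$ reaches $c=0$ in one step and in fact never uses ergodicity at all---only $T$-invariance of $\mu$. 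Both arguments yield the slightly stronger byproduct $P_\omega=0$ for $\mu$-a.e.\ $\omega$ (Pastur's theorem), since $\int\tr P_\omega\,d\mu=0$ with $\tr P_\omega\ge 0$ forces $\tr P_\omega=0$ a.e.
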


\begin{proof}
Fix $E_0 \in \R$ and suppose $g_n$ are continuous compactly supported functions with $0 \le g_n \le 1$, $g_n(E_0) = 1$, and $g_n(E) \downarrow 0$ for $E \not= E_0$. Then, by dominated convergence,
\begin{equation}\label{fnlimpos}
\int  \! g_n \, d\kappa \to \kappa(\{E_0\}).
\end{equation}
Moreover, $\langle \delta_0 , g_n(H_\omega) \delta_0 \rangle \to \langle \delta_0 , \chisub{\{E_0\}}(H_\omega) \delta_0 \rangle$ for all $\omega$, and hence, again by dominated convergence,
\begin{equation} \label{eq:IDSgeneral:idscont2}
\int  \! g_n \, d\kappa 
= \int_\Omega \langle \delta_0 , g_n(H_\omega) \delta_0 \rangle \, d\mu(\omega) 
\to \int_\Omega \langle \delta_0 ,
\chisub{\{E_0\}}(H_\omega) \delta_0 \rangle \, d\mu(\omega).
\end{equation}
By a standard argument using ergodicity, $\tr(\chisub{\{E_0\}}(H_\omega))$ is almost-surely constant, and the almost-sure value lies in $\{0,\infty\}$. Since $\tr(\chisub{\{E_0\}}(H_\omega)) \leq 1$ for every\footnote{The trace is at most two since the solution space of $H_\omega \psi = E_0\psi$ is two-dimensional; this bound already suffices for this argument. One can reduce the two to one by using constancy of the Wronskian} $\omega$, we have $\chisub{\{E_0\}}(H_\omega) = 0$ for $\mu$-a.e.\ $\omega$. Combining this observation with \eqref{fnlimpos} and \eqref{eq:IDSgeneral:idscont2}, we observe $\kappa(\{E_0\}) = 0$, which implies that $\kappa$ is a continuous measure. Since $k$ is the accumulation function of $\kappa$, continuity of $k$ follows.
\end{proof}

A different approach to the integrated density of states is obtained if we restrict the operator to a finite box, rather than its spectral projection. 
 \begin{definition} \label{def:IDS:tildekdef}
Denote the restriction of $H_\omega$ to $[1,N]$ with Dirichlet boundary conditions by $H_{\omega,N}$, that is,
\begin{equation}
H_{\omega,N}
=
\begin{bmatrix}
V_\omega(1) & 1 &&&\\
1 & V_\omega(2) & 1 &&\\
& \ddots & \ddots  & \ddots & \\
&& 1  & V_\omega(N-1) & 1 \\
&&& 1 & V_\omega(N)
\end{bmatrix}.
\end{equation}
For $\omega \in \Omega$ and $N \ge 1$, define measures $\widetilde{\kappa}_{\omega,N}$ by placing uniformly distributed point masses at the eigenvalues $E^{(1)}_{\omega,N} < \cdots < E^{(N)}_{\omega,N}$ of $H_{\omega,N}$ (recall that every eigenvalue of $H_{\omega,N}$ is simple, as observed in Proposition~ \ref{prop:dirich:soln:det}). That is, we define
\begin{equation}
\int \! g \, d\widetilde{\kappa}_{\omega,N} = \frac{1}{N} \, \sum_{n = 1}^N g\left( E^{(n)}_{\omega,N} \right)
\end{equation}
for  bounded measurable $g$.
\end{definition}

Note that $\{g(E_{\omega,N}^j): 1 \le j \le N\}$ is precisely the spectrum of $g(H_{\omega,N})$, so we can rewrite the definition of $\widetilde{\kappa}_{\omega,N}$ as follows:
\begin{equation}
\int \! g \, d\widetilde{\kappa}_{\omega,N} = \frac{1}{N} \tr(g(H_\omega \chi_{[1,N]})).
\end{equation}
This also makes the connection to $\kappa_{\omega,N}$ more transparent (compare \eqref{eq:IDSgen:dkOmegaNdef}). To be more explicit, both measures involve cutting off and taking a normalized trace. The measure $\kappa_{\omega,N}$ applies $g$ to $H_\omega$ and cuts the result off, while $\widetilde{\kappa}_{\omega,N}$ cuts $H_\omega$ off first and then applies $g$.

The following theorem shows that the two perspectives on the density of states measure indeed are equivalent.

\begin{theorem}\label{t.evrestdefids}
For $\mu$-almost every $\omega \in \Omega$, the measures $\widetilde{\kappa}_{\omega,N}$ converge weakly to $\kappa$ as $N \to \infty$.
\end{theorem}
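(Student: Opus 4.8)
The plan is to compare $\widetilde\kappa_{\omega,N}$ with the measure $\kappa_{\omega,N}$ from Definition~\ref{def:IDSgen:IDSandDOSM}, which we already know (Corollary~\ref{coro:dosweakconv}) converges weakly to $\kappa$ for $\mu$-almost every $\omega$. Both measures are built from normalized traces of cutoffs, and the key observation is that they differ only by a ``boundary'' perturbation of bounded rank. Concretely, let $P_N$ denote the coordinate projection onto $\ell^2(\{1,\dots,N\})$, so that $H_{\omega,N}$ is the compression $P_N H_\omega P_N$ viewed as an operator on the range of $P_N$, whereas $g(H_\omega)\chi_{[1,N]}$ has normalized trace $\int g\, d\kappa_{\omega,N}$. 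First I would record the elementary fact that $\|H_{\omega,N} - P_N H_\omega P_N\| = 0$ (they literally agree) but that $g(P_N H_\omega P_N)$ and $P_N g(H_\omega) P_N$ need not agree; what saves us is that their difference has rank controlled by the ``edge'' of the box.

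The cleanest route is via polynomial approximation. Fix $g \in C(J,\R)$ where $J = [-M,M]$ with $M = 2 + \|f\|_\infty$, and fix $\varepsilon > 0$. By Weierstrass, choose a polynomial $p$ with $\|g - p\|_{C(J)} < \varepsilon$. Since $\widetilde\kappa_{\omega,N}$ and $\kappa_{\omega,N}$ are both probability measures supported in $J$ (for a.e.\ $\omega$, all $N$), it suffices to compare $\int p \, d\widetilde\kappa_{\omega,N}$ with $\int p\, d\kappa_{\omega,N}$. Now the key step: for a monomial $z^m$, one has
\begin{equation}
\tr\big( (P_N H_\omega P_N)^m \big) - \tr\big( P_N H_\omega^m P_N \big) = \tr\big( (P_N H_\omega P_N)^m - P_N H_\omega^m P_N\big),
\end{equation}
and expanding, the operator $(P_N H_\omega P_N)^m - P_N H_\omega^m P_N$ is a sum of terms each of which factors through the commutator-type error $P_N H_\omega (I - P_N) H_\omega P_N$; since $H_\omega$ is tridiagonal, $(I-P_N)H_\omega P_N$ has rank at most $2$ (it is supported near the sites $0,1$ and $N, N+1$). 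Hence the whole difference has rank $O_m(1)$ and operator norm $O_m(1)$ (using $\|H_\omega\| \le M$), so
\begin{equation}
\left| \int z^m \, d\widetilde\kappa_{\omega,N} - \int z^m \, d\kappa_{\omega,N} \right| = \frac{1}{N}\left| \tr\big( (P_N H_\omega P_N)^m - P_N H_\omega^m P_N\big)\right| \le \frac{C_m}{N}.
\end{equation}
Summing over the (finitely many) monomials appearing in $p$ with their coefficients gives $\big|\int p\, d\widetilde\kappa_{\omega,N} - \int p\, d\kappa_{\omega,N}\big| \le C_p/N \to 0$.

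Putting it together with an $\varepsilon/3$-style argument: for $\omega$ in the full-measure set $\Omega_*$ from Corollary~\ref{coro:dosweakconv},
\begin{equation}
\left| \int g\, d\widetilde\kappa_{\omega,N} - \int g\, d\kappa \right| \le \left| \int (g-p)\, d\widetilde\kappa_{\omega,N}\right| + \left| \int p\, d\widetilde\kappa_{\omega,N} - \int p\, d\kappa_{\omega,N}\right| + \left| \int p\, d\kappa_{\omega,N} - \int g\, d\kappa \right|,
\end{equation}
where the first term is $< \varepsilon$, the second is $\le C_p/N$, and the last is $\le \varepsilon + |\int g\, d\kappa_{\omega,N} - \int g\, d\kappa|$, which is $< 2\varepsilon$ for $N$ large. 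Letting $N \to \infty$ and then $\varepsilon \downarrow 0$ shows $\int g\, d\widetilde\kappa_{\omega,N} \to \int g\, d\kappa$ for every $g \in C(J)$ and every $\omega \in \Omega_*$, which is weak convergence. **The main obstacle** is purely bookkeeping: verifying carefully that the rank and norm of $(P_N H_\omega P_N)^m - P_N H_\omega^m P_N$ are bounded uniformly in $N$ (depending only on $m$ and $M$), which is where the tridiagonal/bounded-rank-boundary structure is essential; everything else is routine approximation. One could alternatively invoke a general rank-bounded perturbation estimate for distribution functions (the eigenvalue counting functions of $H_{\omega,N}$ and of the operator with $g$-then-cutoff differ by $O(1)$), but the polynomial argument keeps the write-up self-contained.
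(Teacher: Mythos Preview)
Your argument is correct and is essentially the paper's own proof, just organized a bit differently. The paper also reduces to moments and uses the same $O(1)$ boundary error---writing $\sum_{n=1}^N \langle \delta_n,(H_{\omega,N})^p\delta_n\rangle = \sum_{n=1}^N \langle \delta_n,(H_\omega)^p\delta_n\rangle + O(1)$, which is exactly your bound on $\tr\big((P_NH_\omega P_N)^m - P_NH_\omega^mP_N\big)$---and then applies Birkhoff directly rather than routing through Corollary~\ref{coro:dosweakconv}; your version simply reuses that corollary instead of invoking Birkhoff a second time.
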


\begin{proof}
Since all spectra are contained in the interval $[-2-\|f\|_\infty,2+ \|f\|_\infty]$, it suffices to prove, for $\mu$-almost every $\omega \in \Omega$, that the moments of $\widetilde{\kappa}_{\omega,N}$ converge to the moments of $\kappa$ as $N \to \infty$. Given $p \in \Z_+$, notice
\begin{align*}
\int E^p \, d\widetilde{\kappa}_{\omega,N}(E) & = \frac{1}{N} \, \sum_{n = 1}^N \left(E^{(n)}_{\omega,N} \right)^p \\
& =
\frac{1}{N} \, \tr \left( (H_{\omega,N})^p \right) \\
& =
\frac{1}{N} \, \sum_{n = 1}^N \left\langle \delta_n , (H_{\omega,N})^p \, \delta_n \right\rangle \\
& =
\frac{1}{N} \, \left[ \sum_{n = 1}^N \left\langle \delta_n , (H_\omega )^p \, \delta_n \right\rangle + O(1) \right] \\
& =
\frac{1}{N} \, \left[ \sum_{n = 1}^N \left\langle \delta_0 , (H_{T^n \omega} )^p \, \delta_0 \right\rangle + O(1) \right].
\end{align*}
By the Birkhoff ergodic theorem, it follows that
$$
\lim_{N \to \infty} \frac{1}{N} \, \sum_{n = 1}^N \left\langle \delta_0 , (H_{T^n \omega} )^p \, \delta_0 \right\rangle
=
\int_\Omega \langle \delta_0 , (H_\omega )^p \delta_0 \rangle \, d\mu(\omega)
$$
for $\mu$-almost every $\omega \in \Omega$. Thus, for these $\omega$'s,
$$
\lim_{N \to \infty} \int E^p \, d\widetilde{\kappa}_{\omega,N}(E) 
= \int_\Omega \langle \delta_0 , (H_\omega )^p \, \delta_0 \rangle \, d\mu(\omega)
 = \int E^p \, d\kappa(E),
$$
as desired.
\end{proof}

We conclude the present section by showing how to view the integrated density of states as a rotation number by counting sign flips of Dirichlet eigenfunctions using the discrete version of Sturm oscillation theory described in Section~\ref{sec:osc}.

\begin{theorem} \label{t:ids:rotnumber}
If $k$ denotes the integrated density of states of the ergodic family $\{H_\omega\}_{\omega \in \Omega}$, one has
\[
1-k(E)
=
\lim_{N\to \infty} \frac{1}{N} \# \{ 1 \leq j \leq N : \sgn(u_{E,\omega}(j)) \neq \sgn(u_{E,\omega}(j+1)) \}
\]
for $\mu$-almost every $\omega \in \Omega$ and every $E \in \R$, where $u_{E,\omega}$ is the solution to $H_\omega u = Eu$ with $u(0) = 0$ and $u(1) = 1$.
\end{theorem}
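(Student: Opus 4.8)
The plan is to combine the discrete Sturm oscillation theorem from Section~\ref{sec:osc} with the eigenvalue-counting description of the density of states provided by Theorem~\ref{t.evrestdefids}. The crucial point is that the sign-flip counting function on the right-hand side is, up to a bounded error, precisely the quantity $F_N(E)$ attached to the Dirichlet truncation $H_{\omega,N}$, and hence by Theorem~\ref{t:sturm:osc} it equals $\#(\sigma(H_{\omega,N})\cap(E,\infty))$. From there the conclusion is essentially a weak-convergence statement for the finite-box eigenvalue distributions $\widetilde{\kappa}_{\omega,N}$.

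First I would fix $E \in \R$ and $\omega \in \Omega$, let $H_{\omega,N}$ be the Dirichlet box restriction from Definition~\ref{def:IDS:tildekdef}, apply the results of Section~\ref{sec:osc} with potential $V_\omega = V_{f,\omega}$, and abbreviate
\[ G_N(E,\omega) = \#\{1 \le j \le N : \sgn(u_{E,\omega}(j)) \neq \sgn(u_{E,\omega}(j+1))\}. \]
When $E \notin \sigma(H_{\omega,N})$, the definition of $F_N$ gives $G_N(E,\omega) = F_N(E)$ on the nose. When $E \in \sigma(H_{\omega,N})$, Proposition~\ref{prop:dirich:soln:det} gives $u_{E,\omega}(N+1) = \det(E - H_{\omega,N}) = 0$; since the solution with $u(0)=0$, $u(1)=1$ cannot vanish at two consecutive sites, a brief case check (using $\sgn(0)=1$) shows $0 \le G_N(E,\omega) - F_N(E) \le 1$. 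Thus $|G_N(E,\omega) - F_N(E)| \le 1$ in all cases, and the Oscillation Theorem (Theorem~\ref{t:sturm:osc}) yields
\[ G_N(E,\omega) = \#\big(\sigma(H_{\omega,N}) \cap (E,\infty)\big) + O(1) = N\big(1 - \widetilde{\kappa}_{\omega,N}((-\infty,E])\big) + O(1), \]
with the error uniform in $N$, $E$, and $\omega$.

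Next I would pass to the limit. By Theorem~\ref{t.evrestdefids} there is a full-measure set $\Omega' \subseteq \Omega$ on which $\widetilde{\kappa}_{\omega,N}$ converges weakly to $\kappa$ as $N \to \infty$. By Theorem~\ref{t:idscont}, $\kappa$ has no atoms, so $\kappa(\{E\}) = 0$ for every $E$; hence the distribution function $E \mapsto \kappa((-\infty,E]) = k(E)$ is continuous, and weak convergence therefore upgrades to pointwise convergence of distribution functions: $\widetilde{\kappa}_{\omega,N}((-\infty,E]) \to k(E)$ for every $\omega \in \Omega'$ and every $E \in \R$. Dividing the displayed identity by $N$ and sending $N \to \infty$ gives $\tfrac{1}{N} G_N(E,\omega) \to 1 - k(E)$, which is exactly the claim.

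The one genuinely delicate point is that the full-measure set must be chosen independently of $E$, so that the conclusion holds simultaneously for all energies; this is precisely what the continuity of $k$ (i.e., the absence of atoms of $\kappa$) delivers, since it makes every real number a continuity point of $k$, and the single full-measure set furnished by Theorem~\ref{t.evrestdefids}—which already controls all moments, hence weak convergence—then works for all $E$ at once. Everything else is bookkeeping: the $O(1)$ discrepancy between the raw sign-flip count and $F_N(E)$ at Dirichlet energies, and the identification of $\#(\sigma(H_{\omega,N})\cap(-\infty,E])$ with $N\,\widetilde{\kappa}_{\omega,N}((-\infty,E])$.
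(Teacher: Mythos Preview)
Your proposal is correct and follows essentially the same route as the paper: invoke the oscillation theorem (Theorem~\ref{t:sturm:osc}) to identify sign flips with the eigenvalue count for $H_{\omega,N}$, then use weak convergence of $\widetilde{\kappa}_{\omega,N}$ to $\kappa$ (Theorem~\ref{t.evrestdefids}) together with continuity of $\kappa$ (Theorem~\ref{t:idscont}) to upgrade to pointwise convergence of distribution functions at every $E$. If anything, you are slightly more careful than the paper in bookkeeping the $O(1)$ discrepancy between the raw sign-flip count and $F_N(E)$ at Dirichlet energies; the paper simply cites Theorem~\ref{t:sturm:osc} at the end without dwelling on this, and also spells out the portmanteau-type sandwich argument with continuous approximants $f_n,g_n$ rather than invoking the standard fact you use.
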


\begin{proof}
Since $\widetilde \kappa_{\omega,N}$ converges weakly to $\kappa$ for almost every $\omega \in \Omega$ by Theorem~\ref{t.evrestdefids} and $\kappa$ is continuous (by Theorem~\ref{t:idscont}), we have
\begin{equation}\label{eq:IDSgen:weakconvgoal}
\lim_{N \to\infty} \int \chi_{(-\infty,E]} \, d\widetilde{\kappa}_{\omega,N}
=
\int \chi_{(-\infty,E]} \, d\kappa
\end{equation}
for every $E \in \R$ and almost every $\omega$.
Concretely, let $f_n$ and $g_n$ be continuous functions with $f_n \equiv 1$ on $(-\infty,E-1/n]$, $g_n \equiv 1$ on $(-\infty,E]$, $f_n \equiv 0$ on $[E,\infty)$, $g_n \equiv 0$ on $[E+1/n,\infty)$ (and linearly interpolated in between). Then, for any $\omega$ for which $d\widetilde{k}_{\omega,N}\to dk$ weakly, one obtains
\begin{align*} \int \! f_n \, d\kappa 
& = \lim_{N \to \infty} \int \! f_n \, d\widetilde{\kappa}_{\omega,N} \\
& \leq \liminf_{N \to \infty} \int \! \chi_{_{(-\infty,E]}}\, d\widetilde{\kappa}_{\omega,N} \\
& \leq \limsup_{N \to \infty} \int \! \chi_{_{(-\infty,E]}}\, d\widetilde{\kappa}_{\omega,N}\\
& \leq \lim_{N \to \infty} \int \! g_n \, d\widetilde{\kappa}_{\omega,N} \\
& = \int \! g_n \, d\kappa .\end{align*}
By dominated convergence and continuity of $\kappa$, $\int(g_n-f_n) \, dk \to 0$ as $n \to\infty$, concluding the proof of \eqref{eq:IDSgen:weakconvgoal}.

Consequently, for a.e.\ $\omega$, one has
\begin{align*}
1-k(E)
   = \int \! \chisub{(E,\infty)} \, dk
 & =\lim_{N \to \infty} \int \! \chisub{(E,\infty)} \, d\widetilde{k}_{\omega,N}\\
 & = \lim_{N \to \infty} \frac{1}{N} \# (\sigma(H_{\omega,N}) \cap (E,\infty)),\end{align*}
so the desired conclusion follows from Theorem~\ref{t:sturm:osc}.
\end{proof}

\section{Flows, Suspensions, and the Schwartzman Homomorphism} \label{sec:flow}

We will briefly recall some facts about continuous flows on compact metric spaces. The overall goal is to build up enough machinery to describe the Schwartzman homomorphism, which provides a version of rotation number for continuous maps to the circle originating from a space on which a flow is defined. Later in the paper, we will relate the Schwartzman homomorphism to the rotation number for a cocycle originating from a Schr\"odinger operator, which is in turn related to sign flips of Dirichlet solutions via Theorem~\ref{t:ids:rotnumber} and hence to the IDS for ergodic operators.

\subsection{Basics}

Let $X$ denote a compact metric space. A \emph{flow} on $X$ will mean an action of $\R$ on $X$ by homeomorphisms. That is, for each $t \in \R$, there is a homeomorphism $T^t : X \to X$ so that $T^t \circ T^s = T^{t+s}$ for all $s, t \in \R$ and $(x,t) \mapsto T^t x$ is a continuous map $X \times \R \to X$. If $X$ is in addition equipped with a Borel probability measure $\mu$, we say that the flow is \emph{measure-preserving} if one has $\mu\left(T^t B\right) = \mu(B)$ for every $B \in \mathcal{B} $ and every $t \in \R$. We want to emphasize at the outset that  we assume no regularity of $T$ beyond mere continuity, and indeed no differentiable structure on $X$; in particular, $X$ does not need to be a manifold. In our main applications, $X$ will be the suspension of a suitable discrete-time dynamical system and hence will sometimes come equipped with a local differentiable structure \emph{along the flow}, but we again emphasize that our typical application will be something like the suspension of a translation on a compact group or the suspension of the shift on a subshift over a finite alphabet. 

We say that a Borel probability measure on $X$ is $T$-\emph{ergodic} if it is measure-preserving and every flow-invariant function is almost surely constant. That is to say, if $g$ is a measurable function on $ X $ and $g(T^tx) = g(x)$ for every $t \in \R$, $x \in X $, then there is a constant $g_*$ and a set $X_* \subseteq X$ of full $\mu$-measure so that $ g(x) = g_* $ for every $ x \in X_* $.  Equivalently, we say that the measure preserving flow described by $T^t$ is ergodic if every measurable set $E \subseteq X$ with $T^t E = E$ for every $t$ satisfies $\mu(E) = 0$ or $\mu(X \setminus E) = 0 $.

If $T$ denotes a flow on $X$, then the map $T^1$ is often called the \emph{time-one map}.  Thus, associated to any flow, there is a natural discrete-time dynamical system, namely $(X,T^1)$.

\begin{theorem} \label{t:birkhoff:flow}
Suppose $(X,\mu)$ is a probability space and $T$ is a flow on $X$ that is ergodic with respect to $\mu$. Then
\begin{equation} \label{eq:birkhoff:flow}
\lim_{t \to \infty} \frac{1}{t} \int_0^t \! g(T^s x) \, ds
=
\int_X g \, d\mu
\end{equation}
for every $g \in L^1(\mu)$ and $\mu$-almost every $x \in X$.
\end{theorem}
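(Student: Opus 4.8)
The plan is to reduce this continuous-time statement to the discrete Birkhoff theorem (recorded in the introduction) applied to the time-one map $T^1$, and then to bring in ergodicity of the \emph{flow} a second time to pin down the limit. The pitfall to bear in mind from the start is that $(X,T^1)$ typically fails to be ergodic even when the flow is --- the time-one map of a suspension fixes the base coordinate --- so the discrete theorem will only hand us a limit function, not its value.

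First I would set $G(x):=\int_0^1 g(T^sx)\,ds$ and clear the measure-theoretic preliminaries. Fixing a Borel representative of $g$, joint continuity of $(x,s)\mapsto T^sx$ makes $(x,s)\mapsto g(T^sx)$ Borel on $X\times\R$, and Tonelli together with the invariance $\mu(T^sB)=\mu(B)$ gives $\int_X\!\int_0^N|g(T^sx)|\,ds\,d\mu(x)=N\|g\|_{L^1}$ for every $N\in\bbN$. Intersecting over $N$ shows that for $\mu$-a.e.\ $x$ the orbit $s\mapsto g(T^sx)$ lies in $L^1_{\mathrm{loc}}(\R)$, that $G\in L^1(\mu)$, and that $\int_XG\,d\mu=\int_Xg\,d\mu$. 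The bridge to discrete time is then the exact identity $\sum_{n=0}^{N-1}G(T^nx)=\int_0^Ng(T^sx)\,ds$, which comes from $T^s\circ T^n=T^{s+n}$ and the change of variables $u=s+n$. Applying the Birkhoff pointwise ergodic theorem to the measure-preserving system $(X,T^1,\mu)$ --- in its general, not-necessarily-ergodic, form --- produces a $T^1$-invariant $\overline G\in L^1(\mu)$ with $\int_X\overline G\,d\mu=\int_Xg\,d\mu$ and $\frac1N\int_0^Ng(T^sx)\,ds\to\overline G(x)$ for $\mu$-a.e.\ $x$ as $N\to\infty$ through integers.

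Next I would pass from integer to real times. For $t\ge 1$ put $N=\lfloor t\rfloor$ and write $\frac1t\int_0^tg(T^sx)\,ds=\frac Nt\cdot\frac1N\int_0^Ng(T^sx)\,ds+\frac1t\int_N^tg(T^sx)\,ds$. The first piece tends to $\overline G(x)$ a.e.\ since $N/t\to1$; the second is bounded in absolute value by $\frac1N\int_N^{N+1}|g(T^sx)|\,ds=\frac1N G_{|g|}(T^Nx)$, where $G_{|g|}(x):=\int_0^1|g(T^sx)|\,ds\in L^1(\mu)$, and since the Ces\`aro averages of $G_{|g|}$ along $T^1$ converge a.e., the elementary fact that convergence of $\frac1N\sum_{n<N}a_n$ forces $a_N/N\to0$ makes this term vanish. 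Hence $\frac1t\int_0^tg(T^sx)\,ds\to\overline G(x)$ for $\mu$-a.e.\ $x$, now as $t\to\infty$ through all of $\R$.

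Finally I would identify $\overline G$. Let $\Gamma(x):=\limsup_{t\to\infty}\frac1t\int_0^tg(T^sx)\,ds$; this is a Borel function, valued in $[-\infty,\infty]$, on the honestly flow-invariant full-measure set $Z$ on which the orbit is locally integrable. For each $u\in\R$, the change of variables $v=s+u$ gives $\frac1t\int_0^tg(T^{s+u}x)\,ds=\frac{t+u}{t}\cdot\frac1{t+u}\int_0^{t+u}g(T^vx)\,dv-\frac1t\int_0^ug(T^vx)\,dv$, and since multiplying by $\frac{t+u}{t}\to1$ leaves the $\limsup$ unchanged while the last term vanishes, one gets $\Gamma(T^ux)=\Gamma(x)$ for \emph{every} $x\in Z$ and every $u$. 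Thus each set $\{\Gamma\le c\}$, $c\in\R$, is flow-invariant, hence has $\mu$-measure $0$ or $1$ by ergodicity of the flow, so $\Gamma$ is $\mu$-a.e.\ constant; and by the previous step $\Gamma=\overline G$ $\mu$-a.e., so $\overline G$ equals the constant $\int_X\overline G\,d\mu=\int_Xg\,d\mu$ $\mu$-a.e., which is the claim. I expect the genuine obstacle to be exactly the issue flagged at the outset: because $T^1$ need not be ergodic, one cannot stop at the discrete Birkhoff theorem but must carry the argument all the way to a genuinely flow-invariant limit before ergodicity of the flow can be invoked; the tail estimate for unbounded $g$ and the bookkeeping of exceptional null sets are the remaining, routine, technical points.
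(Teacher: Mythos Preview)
Your argument is correct and follows exactly the route the paper indicates: the paper does not give a proof but simply records that the result ``can be deduced from the discrete-time version of the ergodic theorem,'' and you carry this out in full, reducing to the discrete Birkhoff theorem for the time-one map via $G(x)=\int_0^1 g(T^sx)\,ds$, interpolating to real times, and then using flow-ergodicity to identify the limit. Your explicit attention to the fact that $T^1$ need not be ergodic (so one must use the general form of the discrete theorem and then bring in flow-invariance of the $\limsup$) is precisely the point a careless write-up would miss, and your handling of it is sound.
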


Theorem~\ref{t:birkhoff:flow} is well known and can be deduced from the discrete-time version of the ergodic theorem; compare \cite{BerLeiMor2012}.

We will eventually want to apply the Schwartzman homomorphism to an invariant section derived from the (discrete) cocycle dynamics associated with $\{H_{f,\omega}\}$. In order to do that, we need an appropriate analog of a continuous cocycle over a flow.

\begin{definition}
A continuous  $\SL(2,\R)$ \emph{cocycle} over the flow $T$ is defined to be a continuous map $\Phi:X \times \R \to \SL(2,\R)$ with the properties
\begin{align}
\Phi(x,0) & = I \\
\Phi(x,s+t) & = \Phi(T^s x, t) \cdot \Phi(x, s)
\end{align}
for all $x \in X$ and $s,t \in \R$. We will denote $ \Phi(x,t) = \Phi^t(x)$.
\end{definition}

A cocycle induces flows on $X \times \R^2$ and $X \times \R\PP^1$ via $T^t (x,v) = (T^t x, \Phi^t(x)v)$. One has the following notions of uniform hyperbolicity for continuous cocycles.

\begin{definition}
Let $\Phi$ be a continuous cocycle. We say that $\Phi$ exhibits \emph{uniform exponential growth} if there are constants, $C>0$, $\lambda>1$ with the property that
\begin{equation}
\| \Phi^t(x) \|
\geq
C \lambda^{|t|}
\end{equation}
for all $t \in \R$ and $x \in X$.

 We say that $\Phi$ admits an \emph{invariant exponential splitting} if there exist constants $c > 0, L > 1$ and continuous maps $\Lambda^s,\Lambda^u : X \to \R \PP^1$ such that the following statements hold.
\begin{itemize}
\item[{\rm (a)}] (Invariance) For all $x \in X$, and $t \in \R$, one has
\begin{equation}
\Phi^t(x) \Lambda^s (x) = \Lambda^s(T^tx),
\quad
\Phi^t(x)\Lambda^u (x) = \Lambda^u(T^t x).
\end{equation}

\item[{\rm (b) }] (Contraction) For all $t > 0$, $x\in X$, $v_s \in \Lambda^s(x)$, and $ v_u \in \Lambda^u(x) $, one has
\begin{equation}
\| \Phi^t   (x) v_s \| \leq c L^{-t} \|v_s\|,
\quad
\| \Phi^{-t}(x) v_u \| \leq c L^{-t} \|v_u\|.
\end{equation}
\end{itemize}

We say that $\Phi$ enjoys a \emph{bounded orbit} if there exist $x \in X$ and $v \in \X^1 $ so that
\begin{equation}
\sup_{t \in \R} \| \Phi^t(x) v \| < \infty.
\end{equation}
\end{definition}

\begin{theorem} \label{t:uh:splitting:continuous}
Suppose $\Phi:X \times \R \to \SL(2,\R)$ is a continuous cocycle over a continuous flow $T$ on a compact metric space $X$. Then the following are equivalent.
\begin{enumerate}
\item[{\rm(a)}] $\Phi$ exhibits uniform exponential growth.
\item[{\rm(b)}] $\Phi$ admits an invariant exponential splitting.
\item[{\rm(c)}] $\Phi$ does not enjoy a bounded orbit.
\end{enumerate}
\end{theorem}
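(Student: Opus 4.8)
The plan is to prove the chain of implications (a) $\Rightarrow$ (b) $\Rightarrow$ (c) $\Rightarrow$ (a), which is the standard route and is self-contained given only compactness of $X$ and continuity of $\Phi$. The implication (b) $\Rightarrow$ (c) is immediate: if an invariant exponential splitting $\Lambda^s, \Lambda^u$ exists, then every nonzero vector $v \in \R^2$ decomposes (for each $x$) as $v = v_s + v_u$ with $v_s \in \Lambda^s(x)$, $v_u \in \Lambda^u(x)$, and unless $v_u = 0$ the forward orbit $\Phi^t(x)v$ grows like $L^t$, while if $v_u = 0$ then $v = v_s \neq 0$ and the backward orbit $\Phi^{-t}(x)v$ grows like $L^t$; in either case no orbit stays bounded for all $t \in \R$. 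The implication (c) $\Rightarrow$ (a) is a compactness argument: I would argue the contrapositive, that failure of uniform exponential growth produces a bounded orbit. Concretely, if $\|\Phi^t(x)\|$ is not bounded below by $C\lambda^{|t|}$ for any $C, \lambda$, one extracts a sequence $x_n \in X$ and times $t_n$ along which $\|\Phi^{t_n}(x_n)\|$ grows subexponentially; after passing to a subsequence using compactness of $X$ and of the relevant projective/matrix data, a diagonal/limiting argument (together with the cocycle identity and uniform continuity of $\Phi$ on compact time intervals) produces a point $x_\infty$ and a direction $v$ whose full orbit under $\Phi^t(x_\infty)$ is bounded. This kind of argument is classical (it is the flow analog of the Johnson--Sacker--Sell theory and of Yoccoz's characterization of uniform hyperbolicity), and the discrete-time version would typically be invoked or adapted; the flow case follows by examining integer times plus the uniformly bounded distortion over unit time intervals coming from continuity of $\Phi$ on $X \times [0,1]$.

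The main work is the implication (a) $\Rightarrow$ (b): constructing the invariant splitting from uniform exponential growth. The natural candidates for $\Lambda^s(x)$ and $\Lambda^u(x)$ are the ``most contracted'' directions in forward and backward time respectively. Following the standard approach, for fixed $x$ and large $t > 0$ let $e^s(x,t) \in \R\PP^1$ be the direction along which $\|\Phi^t(x)\,\cdot\,\|$ is minimized (equivalently, the direction of the image under $(\Phi^t(x))^{-1}$ of the most expanded direction). Uniform exponential growth gives, via the identity $\|\Phi^t(x)\|\cdot\|\Phi^t(x)^{-1}\| = \|\Phi^t(x)\|^2$ for $\SL(2,\R)$ matrices and the fact that $\det = 1$, that the ratio of singular values of $\Phi^t(x)$ grows like $\lambda^{2|t|}$; this makes the directions $e^s(x,t)$ a Cauchy family in $\R\PP^1$ as $t \to +\infty$ (the angle between $e^s(x,t)$ and $e^s(x,t')$ is controlled by the inverse of the singular-value gap, hence decays exponentially), so $\Lambda^s(x) := \lim_{t\to\infty} e^s(x,t)$ exists. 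Invariance $\Phi^\tau(x)\Lambda^s(x) = \Lambda^s(T^\tau x)$ follows from the cocycle relation $\Phi^{t+\tau}(x) = \Phi^t(T^\tau x)\Phi^\tau(x)$ by passing to the limit. One defines $\Lambda^u(x)$ symmetrically using $t \to -\infty$, and checks $\Lambda^s(x) \neq \Lambda^u(x)$ (they cannot coincide, else that common direction would be simultaneously forward- and backward-contracted, contradicting $\det\Phi^t = 1$ combined with the growth of $\|\Phi^t\|$). The contraction estimates in (b) then follow from uniform exponential growth restricted to these directions, together with a uniform bound on the ``opening angle'' between $\Lambda^s$ and $\Lambda^u$ over $X$.

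The step I expect to be the main obstacle is establishing \emph{continuity} of the maps $\Lambda^s, \Lambda^u : X \to \R\PP^1$ and the \emph{uniformity} of the constants $c$ and $L$ in the splitting. Pointwise existence of the limits is relatively soft, but to get continuity one needs the convergence $e^s(x,t) \to \Lambda^s(x)$ to be uniform in $x$, which in turn requires the exponential singular-value gap to hold uniformly over $X$ — and this is exactly what hypothesis (a) provides, once one notes that $\|\Phi^t(x)\| \ge C\lambda^{|t|}$ together with $|\det| = 1$ forces the smaller singular value to be $\le C^{-1}\lambda^{-|t|}$. Combining this uniform gap with the uniform continuity of $(x,t) \mapsto \Phi^t(x)$ on $X \times [-T_0, T_0]$ (valid since $X$ is compact) yields equicontinuity of the family $\{e^s(\cdot,t)\}_{t \ge T_0}$ and hence continuity of the uniform limit $\Lambda^s$. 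I would present this carefully, isolating the singular-value estimate as the crux; the remaining verifications (invariance, transversality, contraction with uniform constants) are then routine consequences packaged with a compactness argument to pass from "for each $x$, eventually" statements to genuinely uniform ones.
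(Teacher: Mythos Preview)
Your proposal is correct and essentially carries out in detail what the paper only sketches. The paper's own proof is far terser: it passes to the time-one map to obtain a discrete cocycle $(T^1,\Phi^1)$ over $X$, invokes the known discrete-time equivalence of (a), (b), and (c) from the literature (citing \cite{DFLY2016DCDS, Zhang2020JST}), and then interpolates back to the flow using continuity of $\Phi$ on $X\times[0,1]$. The paper explicitly mentions as an alternative that ``one can prove the equivalence of (a), (b), and (c) directly by following the proofs of corresponding discrete-time results and making minor cosmetic changes,'' leaving the details to the reader --- and that is precisely what you have done.

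What your direct approach buys is self-containment: you avoid relying on external references and give the reader the actual construction of $\Lambda^s,\Lambda^u$ via limits of most-contracted directions, with the uniform singular-value gap doing the work for continuity. What the paper's reduction buys is brevity and a clean separation of concerns: the substantive dynamics is outsourced to the discrete theory, and the only flow-specific content is the trivial observation that uniform bounds on $\|\Phi^t\|$ for $t\in[0,1]$ let one pass between integer-time and real-time versions of each of (a), (b), (c). You in fact allude to exactly this reduction in your discussion of (c)$\Rightarrow$(a), so the two approaches are closer than they might first appear.
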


\begin{proof}
Consider the discrete cocycle $(T^1,A)$ associated with the time-one map, that is, the skew product $(T^1,A):X \times \R^2 \to X \times \R^2$ given by $(x,v) \mapsto (T^1x,\Phi^1(x)v)$. There are notions of uniform hyperbolicity for such discrete cocycles that are almost identical to those discussed above for continuous cocycles; compare \cite{Zhang2020JST}.

Then, use continuity of $\Phi$ to interpolate the corresponding discrete results characterizing uniform hyperbolicity of $(T^1,A)$ (e.g.\ \cite{DFLY2016DCDS, Zhang2020JST}). Alternatively, one can prove the equivalence of (a), (b), and (c) directly by following the proofs of corresponding discrete-time results and making minor cosmetic changes. The details are left to the reader.
\end{proof}

\subsection{The Suspension of a Dynamical System}

It is easy to produce a discrete-time dynamical system from a flow by passing to the time-one map.  To go in the other direction, one may consider the suspension.  
\begin{definition} \label{def:flow:suspension}
Suppose $(\Omega,T)$ is a topological dynamical system, that is, $\Omega$ is a compact metric space and $T:\Omega \to \Omega$ is a homeomorphism.  Define an action of $\Z$ on $\Omega \times \R$  by
\begin{equation} n \odot (\omega, t) = (T^n \omega, t - n),
\end{equation}
which induces an equivalence relation on $\Omega \times \R$ via
\begin{align}
(\omega,t) \sim (\omega',t')
& \iff
n \odot (\omega,t) = (\omega',t') \text{ for some } n \in \Z \\
& \iff
 (t-t')  \in \Z,
\text{ and }
T^{t-t'} \omega = \omega'.
\end{align}
The \emph{suspension} of the dynamical system $(\Omega,T)$ is the quotient of $\Omega \times \R$ by this equivalence relation, i.e.
\begin{equation}
X(\Omega,T)
:=
(\Omega \times \R) / \! \sim.
\end{equation}
Generally, the homeomorphism and compact space are clear from the context, so we will sometimes simply write $X$.  Naturally, there are homeomorphic copies of $\Omega$ inside $X$, namely the fibers $\Omega_s = \{ [\omega,s] : \omega \in \Omega \}$ for each fixed $s \in \R$, where $[\omega,t]$ denotes the equivalence class of $(\omega,t)$ in $X$. There is a natural $\R$-flow on $X$ defined by translation in the second factor, which we denote by $\oT$. That is, $\oT^t[\omega,s] = [\omega,s+t]$.  For each $\omega \in \Omega$, one has
\begin{equation}
\oT^1 [\omega,s]
=
[\omega,s+1]
=
[T \omega, s].
\end{equation}
In other words, $\oT^1$ maps the fiber $\Omega_s$ to itself and the action of $\oT^1$ on the fiber $\Omega_s$ is the same as the action of $T$ on $\Omega$.
\end{definition}

\begin{remark}
One could equivalently realize $X$ as the quotient $\Omega \times [0,1] /\!\!\sim$ where $ (\omega,1) \sim (T \omega, 0) $. The reader can readily check that the map sending $[\omega,t]\mapsto [T^{\lfloor t \rfloor}\omega,\{t\}]$ is a homeomorphism from the first version of $X$ to this version. Henceforth, we freely work with whichever realization of the suspension is more convenient.
\end{remark}

We will also need the notion of the suspension of a measure $\mu$ on $\Omega$, which is the measure $\overline{\mu}$ defined on $X$ by
\begin{equation}
\int_{X} \! g \, d\overline{\mu}
=
\int_0^1 \! \int_{\Omega} \! g[\omega,s] \, d\mu(\omega) \, ds.
\end{equation}

\begin{lemma}
Fix a topological dynamical system $(\Omega,T)$, a Borel probability measure $\mu$ on $\Omega$, and $X = X(\Omega,T)$.
\begin{enumerate}
\item[{\rm(a)}] If $\mu$ is $T$-invariant, then $\overline{\mu}$ is $\oT$-invariant.
\item[{\rm(b)}] If $\mu$ is $T$-ergodic, then $\overline{\mu}$ is $\oT$-ergodic.
\end{enumerate}
\end{lemma}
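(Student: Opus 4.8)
The plan is to prove (a) first and then bootstrap to (b), since ergodicity of $\overline{\mu}$ will rest on its invariance together with an identification of $\oT$-invariant sets with $T$-invariant sets. For part (a), I would verify that $\overline{\mu}$ is a Borel probability measure (the normalization $\int_X d\overline{\mu} = \int_0^1\int_\Omega 1\,d\mu\,ds = 1$ is immediate) and then check invariance under $\oT^t$ for every $t \in \R$. The cleanest route is to use the "mapping torus" realization $\Omega \times [0,1]/\!\sim$ mentioned in the remark, or equivalently to compute directly: for a bounded Borel $g$ on $X$,
\begin{equation}
\int_X g(\oT^t x)\,d\overline{\mu}(x) = \int_0^1\int_\Omega g[\omega,s+t]\,d\mu(\omega)\,ds.
\end{equation}
Writing $s + t = n + r$ with $n \in \Z$ and $r \in [0,1)$ depending on $s$, and using $[\omega, s+t] = [\omega, n+r] = [T^n\omega, r]$ together with the $T$-invariance of $\mu$ to replace $\int_\Omega g[T^n\omega,r]\,d\mu(\omega)$ by $\int_\Omega g[\omega,r]\,d\mu(\omega)$, the $s$-integral becomes an integral of $h(r) := \int_\Omega g[\omega,r]\,d\mu(\omega)$ (extended $1$-periodically in $r$) over an interval of length $1$, which equals $\int_0^1 h(r)\,dr = \int_X g\,d\overline{\mu}$. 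Keeping careful track of the finitely many break points where $n$ jumps is the only mild technicality; alternatively one reduces to $t \in [0,1)$ by the group law and splits $[0,1)$ at $1-t$.

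For part (b), assume $\mu$ is $T$-ergodic; by (a), $\overline{\mu}$ is $\oT$-invariant, so it remains to show every $\oT$-invariant measurable set $B \subseteq X$ has $\overline{\mu}(B) \in \{0,1\}$. The key step is to pass from the flow-invariant set $B$ downstairs to a $T$-invariant set in $\Omega$. Since $B$ is invariant under $\oT^t$ for all $t$, in particular under every $\oT^{-s}$, the set $B$ is a union of full flow orbits; using the fiber description $X = \bigcup_{s\in[0,1)}\Omega_s$ with $\Omega_s \cong \Omega$, flow-invariance forces $B$ to have the "cylinder" form $B = \{[\omega,s] : \omega \in B_0,\ s \in [0,1)\}$ for a single set $B_0 \subseteq \Omega$ (one sets $B_0 = \{\omega : [\omega,0]\in B\}$ and checks that $[\omega,s]\in B \iff \omega\in B_0$ using $\oT^{-s}[\omega,s]=[\omega,0]$). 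Because $\oT^1$ restricted to $\Omega_0$ acts as $T$ on $\Omega$, invariance of $B$ under $\oT^1$ gives $T^{-1}B_0 = B_0$, so $T$-ergodicity yields $\mu(B_0)\in\{0,1\}$. Finally Fubini (in the form defining $\overline{\mu}$) gives $\overline{\mu}(B) = \int_0^1 \mu(B_0)\,ds = \mu(B_0) \in \{0,1\}$, as desired.

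The main obstacle is the measurability bookkeeping in the fiber decomposition of $B$: one must ensure that $B_0$ is genuinely Borel (or at least $\mu$-measurable) and that the identification $[\omega,s]\in B \iff \omega \in B_0$ holds not just orbit-by-orbit but measurably, so that Fubini applies. This is where one should be a little careful — ideally by working with the compact-quotient realization $\Omega\times[0,1]/\!\sim$, where the quotient map $q:\Omega\times[0,1)\to X$ is a Borel isomorphism onto its image and $q^{-1}(B) = B_0\times[0,1)$ follows from flow-invariance, so $B_0$ is the Borel set $\{\omega:(\omega,0)\in q^{-1}(B)\}$. An alternative, slicker argument for (b) avoids the set decomposition entirely: given a bounded $\oT$-invariant function $G$ on $X$, note that $G$ is constant along flow lines, hence $G[\omega,s]$ does not depend on $s$ and descends to a function $g$ on $\Omega$ with $g\circ T = g$ (using $\oT^1$), so $T$-ergodicity makes $g$ a.e.\ constant, whence $G$ is $\overline{\mu}$-a.e.\ constant; this is essentially the same content packaged to sidestep the measurable-selection subtlety, and either presentation is acceptable.
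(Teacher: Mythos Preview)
Your proposal is correct and is precisely the kind of direct computation the paper has in mind: the paper's own proof simply reads ``This follows from direct calculations. We leave the details to the reader.'' Your change-of-variables argument for (a) and the fiber-decomposition (or invariant-function) argument for (b), including the measurability caveat you flag, are the standard way to carry out those details.
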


\begin{proof}
This follows from direct calculations. We leave the details to the reader.\end{proof}

\subsection{The Schwartzman Homomorphism}
\label{subsec:schwartzman}

Let us return to the setting of general flows to define the Schwartzman homomorphism, which will play a key role in the gap 
labelling theorem. For the remainder of the section, fix a compact metric space $X$, a continuous flow $T$ on $X$, and a $T$-ergodic probability measure $\mu$. Consider $C(X,\T)$, the set of continuous maps $X \to \T$ endowed with the group operation of pointwise addition, i.e.
\begin{equation} \label{e.pi1.group.law}
(\phi_1 + \phi_2)(x)
=
\phi_1(x) + \phi_2(x),
\quad
x \in X.
\end{equation}
Denote the unit interval by $I = [0,1]$. Let us recall some general definitions.
\begin{definition} \label{def:homotopy}
 Given a topological space $\mathcal X$, continuous functions $\phi,\phi': X \to \mathcal X$ are said to be \emph{homotopic} if there is a continuous function $F:X \times I \to \mathcal X$ (called a \emph{homotopy}) so that
\begin{align}
F(x,0) & = \phi(x) \\
F(x,1) & = \phi'(x)
\end{align}
for all $x \in X$. One can confirm that the relation \begin{equation}
\phi \sim \phi' \iff \phi \text{ is homotopic to } \phi'
\end{equation}
 defines an equivalence relation on $C(X,\T)$; we denote by $[\phi]$ the equivalence class of $\phi$ modulo this relation and refer to it as the \emph{homotopy class} of $\phi$. Let $C^\sharp(X,\T)$ denote the set of equivalence classes in $C(X,\T)$ modulo homotopy. 
 \end{definition}
 
 We collect a few significant facts in the following proposition.

\begin{prop} \label{prop:flow:homotopyClassFacts}
Let $X$ be a compact metric space.
\begin{enumerate}
\item[{\rm(a)}] If $\phi_1,\phi_2,\phi_1',\phi_2' \in C(X,\T)$ are such that $\phi_j \sim \phi_j'$ for $j=1,2$, then $(\phi_1+\phi_2) \sim (\phi_1' + \phi_2')$.
\item[{\rm(b)}] The operation
\[[\phi_1]+[\phi_2] = [\phi_1+\phi_2], \quad \phi_1,\phi_2 \in C(X,\T) \]
is well-defined and gives $C^\sharp(X,\T)$ the structure of an abelian group.
\item[{\rm(c)}] Let $d$ denote the standard metric on $\T$, given by \begin{equation}
d(x,y) = \dist(x-y,\Z).
\end{equation}
If $\phi_1,\phi_2\in C(X,\T)$ are such that
\[d(\phi_1(x),\phi_2(x)) < 1/2\]
for all $x$, then $\phi_1 \sim \phi_2$.
\item[{\rm(d)}] $C^\sharp(X,\T)$ is countable.
\end{enumerate}
\end{prop}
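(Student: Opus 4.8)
The four parts of Proposition~\ref{prop:flow:homotopyClassFacts} build on one another, so I would prove them roughly in the order (c), (a), (b), (d).

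\emph{Part (c).} The plan is to construct an explicit homotopy. Given $\phi_1, \phi_2$ with $d(\phi_1(x), \phi_2(x)) < 1/2$ for all $x$, the key observation is that the difference $\phi_2 - \phi_1 : X \to \T$ takes values in the open arc of length $1$ centered at $0 \in \T$, i.e.\ in the image under $\pi : \R \to \T$ of the open interval $(-1/2, 1/2)$. Since $\pi$ restricted to $(-1/2,1/2)$ is a homeomorphism onto its image, there is a well-defined continuous lift $g : X \to (-1/2,1/2) \subseteq \R$ with $\pi \circ g = \phi_2 - \phi_1$. Then $F(x,t) = \phi_1(x) + \pi(t\, g(x))$ is continuous on $X \times I$, equals $\phi_1$ at $t=0$ and $\phi_2$ at $t=1$, so $\phi_1 \sim \phi_2$.

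\emph{Parts (a) and (b).} For (a), if $F_j : X \times I \to \T$ is a homotopy from $\phi_j$ to $\phi_j'$, then $F_1 + F_2$ is a homotopy from $\phi_1 + \phi_2$ to $\phi_1' + \phi_2'$; this is immediate from continuity of addition on $\T$. Part (b) is then formal: (a) says $+$ descends to a well-defined operation on $C^\sharp(X,\T)$, associativity and commutativity are inherited from $C(X,\T)$, the class of the constant map $0$ is the identity, and $[-\phi]$ is the inverse of $[\phi]$.

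\emph{Part (d)} is the main obstacle, since it requires genuine input about the topology of $X$ rather than formal manipulation. The plan is to exploit compactness to reduce to a finite, combinatorial count. First, since $X$ is a compact metric space, by Arzel\`a--Ascoli-type reasoning (or directly: cover $\T$ by finitely many arcs of diameter $< 1/4$, pull back via uniform continuity to a finite open cover of $X$, and triangulate) one shows that $C(X,\T)$ is separable in the uniform metric; let $\{\psi_k\}_{k \in \N}$ be a countable dense set. Given any $\phi \in C(X,\T)$, pick $\psi_k$ with $\sup_x d(\phi(x), \psi_k(x)) < 1/2$; by part (c), $\phi \sim \psi_k$. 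Hence the map $k \mapsto [\psi_k]$ from $\N$ onto $C^\sharp(X,\T)$ is surjective, so $C^\sharp(X,\T)$ is countable. The one point requiring care is the separability of $C(X,\T)$: one should note that $C(X,\R)$ is separable because $X$ is a compact metric space (e.g.\ it is second countable, so $C(X,\R)$ with the sup norm is separable by Stone--Weierstrass applied to a countable generating family, or simply because $C(X,\R)$ is a separable Banach space in this setting), and then $C(X,\T) = \pi_* C(X,\R)$ in the sense that every continuous map $X \to \T$ need not lift globally --- so instead one argues directly that a countable dense subset of $C(X,\R)$ maps onto a $1/2$-net's worth of maps into $\T$, which is enough by (c). I would phrase this last point carefully to avoid the trap of assuming global lifts exist.
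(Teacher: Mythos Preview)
Your approach matches the paper's exactly: for (a) use $F_1+F_2$, (b) is formal, (c) is left to the reader (your explicit lift of $\phi_2-\phi_1$ to $(-1/2,1/2)$ is the intended argument), and (d) follows from separability of $C(X,\T)$ together with (c).

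The one place your write-up needs tightening is the justification of separability of $C(X,\T)$. Your final suggestion---project a countable dense subset of $C(X,\R)$ through $\pi$---does not work: every map in $\pi_*(C(X,\R))$ is nullhomotopic (contract the lift linearly), so by your own part (c) nothing in that image can be uniformly within $1/2$ of a map like $\mathrm{id}:\T\to\T$ when $X=\T$. A clean fix is to embed $\T$ into $\R^2$ (say via $t\mapsto(\cos 2\pi t,\sin 2\pi t)$); then $C(X,\T)$ sits isometrically (up to a Lipschitz constant) inside $C(X,\R^2)\cong C(X,\R)^2$, which is a separable metric space, and subspaces of separable metric spaces are separable.
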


\begin{proof}
  If $F_j$ is a homotopy from $\phi_j$ to $\phi_j'$ for $j = 1,2$, then one can check that $F_1+F_2$ is a homotopy between $\phi_1 + \phi_2$ and $\phi_1' + \phi_2'$, proving the assertion from (a). The assertion in (b) follows. We leave the assertion from (c) to the reader. Since $X$ is compact, $C(X,\T)$ is separable. Denoting a countable dense set by $\{\phi_n\}$, we see from (c) that every homotopy class in $C^\sharp(X,\T)$ contains at least one $\phi_n$, which suffices to prove the assertion from (d).
\end{proof}

The following fact about homotopic maps into $\T$ will be useful.

\begin{prop} \label{p:nullhomo:lifting}
Let $X$ be a compact metric space. Then two maps $\phi,\phi' : X \to \T$ are homotopic if and only if there exists a continuous map $\widetilde{\psi} : X \to \R$ so that
\begin{equation} \label{eq:difference.lift}
\phi(x) = \phi'(x) + \pi \left( \widetilde{\psi}(x) \right)
\end{equation}
for all $x \in X$, where $\pi: \R \to \T$ denotes the canonical projection.
\end{prop}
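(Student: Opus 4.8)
The plan is to prove the two implications separately; the reverse direction is essentially immediate, while the forward direction requires a covering-space-style lifting argument that I will carry out by hand, since $X$ is only assumed to be a compact metric space.

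For the reverse implication, I would simply observe that if such a $\widetilde\psi$ exists, then $F(x,t) := \phi'(x) + \pi\bigl(t\,\widetilde\psi(x)\bigr)$ is a continuous map $X \times I \to \T$ with $F(\cdot,0) = \phi'$ and $F(\cdot,1) = \phi$, hence a homotopy from $\phi'$ to $\phi$.

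For the forward implication, let $F : X \times I \to \T$ be a homotopy with $F(\cdot,0) = \phi'$ and $F(\cdot,1) = \phi$, and set $G(x,t) := F(x,t) - \phi'(x)$, a continuous map with $G(\cdot,0) \equiv 0$ and $G(\cdot,1) = \phi - \phi'$. It then suffices to construct a continuous $\widetilde G : X \times I \to \R$ with $\pi \circ \widetilde G = G$ and $\widetilde G(\cdot,0) \equiv 0$, for then $\widetilde\psi(x) := \widetilde G(x,1)$ is the desired lift of $\phi - \phi'$. I would build $\widetilde G$ slab by slab: since $X \times I$ is compact, $G$ is uniformly continuous, so there is $\delta > 0$ with $|t-s| \le \delta \Rightarrow d(G(x,t),G(x,s)) < 1/2$ for all $x$, where $d$ is the metric on $\T$ from Proposition~\ref{prop:flow:homotopyClassFacts}(c); fix a partition $0 = t_0 < \cdots < t_m = 1$ of mesh at most $\delta$. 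The key elementary fact is that $\pi$ restricts to a homeomorphism of $(-1/2,1/2)$ onto $\{z \in \T : d(z,0) < 1/2\}$; write $\ell$ for its inverse. On $X \times [0,t_1]$ set $\widetilde G(x,t) := \ell(G(x,t))$ (legitimate since $d(G(x,t),0) < 1/2$ there, and it vanishes at $t=0$), and inductively, once $\widetilde G$ is continuous on $X \times [0,t_j]$ with $\pi \circ \widetilde G = G$, extend it to $X \times [0,t_{j+1}]$ by $\widetilde G(x,t) := \widetilde G(x,t_j) + \ell\bigl(G(x,t) - G(x,t_j)\bigr)$ for $t \in [t_j,t_{j+1}]$, which is defined because $d\bigl(G(x,t) - G(x,t_j),0\bigr) < 1/2$, is continuous, agrees with the earlier definition along $X \times \{t_j\}$, and still satisfies $\pi \circ \widetilde G = G$. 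The pasting lemma then gives continuity on $X \times [0,t_{j+1}]$, and after $m$ steps $\widetilde G$ is defined on all of $X \times I$.

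I expect the main obstacle to be exactly this forward direction: one cannot invoke the homotopy lifting property of the covering $\pi : \R \to \T$ off the shelf without regularity hypotheses on $X$, so the lift must be produced explicitly from uniform continuity of $G$ on $X \times I$ together with the fact that $\pi$ is a local homeomorphism. The remaining pieces — the continuity verifications, the pasting lemma, and the straight-line homotopy used in the converse — are routine.
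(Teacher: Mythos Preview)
Your proof is correct, and the reverse implication is handled exactly as in the paper. For the forward direction, however, you and the paper take genuinely different routes. The paper reduces to the case $\phi' \equiv 0$ and then factors through the path space $\Gamma = \{\gamma \in C(I,\T) : \gamma(1)=0\}$: the homotopy $F$ induces a continuous map $f:X\to\Gamma$, $x\mapsto F(x,\cdot)$, and evaluation at $0$ gives $\Pi:\Gamma\to\T$ with $\Pi\circ f = \phi$. Since $\Gamma$ is contractible, path-connected, and locally path-connected, standard covering-space theory lifts $\Pi$ to $\widetilde\Pi:\Gamma\to\R$, and one sets $\widetilde\psi = \widetilde\Pi\circ f$. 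The point is that the awkward topology of $X$ is bypassed by routing through the nice space $\Gamma$. Your approach instead lifts the homotopy directly on $X\times I$ by exploiting compactness to get uniform continuity and then piecing together local inverses of $\pi$ slab by slab. Your argument is more elementary and fully self-contained (no appeal to lifting theorems as a black box), while the paper's argument is more conceptual and explains structurally \emph{why} no hypotheses on $X$ are needed: all the lifting happens in $\Gamma$, not in $X$.
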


\begin{proof}
If \eqref{eq:difference.lift} holds, then
$$
F(x,t)
=
\phi(x) - \pi \left(t \widetilde{\psi}(x) \right)
$$
defines a homotopy from $\phi$ to $\phi'$.

For the converse, notice that it suffices to deal with the case when $\phi' \equiv 0$. To that end, assume that $F$ is a homotopy from $\phi$ to the constant function $0$. Let $\Gamma$ denote the space of continuous functions $\gamma:I \to \T$ with the property that $\gamma(1) = 0$, topologized with the uniform metric
\begin{equation}
d_\unif(\gamma,\gamma')
=
\sup_{0 \le t \le 1} d(\gamma(t),\gamma'(t)),
\end{equation}
and observe that $\Gamma$ is contractible, path-connected, and locally path-connected. The homotopy $F$ induces a continuous map $f: X \to \Gamma$ by mapping $x \mapsto F(x,\cdot)$. If $\Pi:\Gamma \to \T$ is given by $\Pi(\gamma) = \gamma(0)$, then we have the following commutative diagram:
\begin{equation}
\xymatrix{
& \Gamma \ar[rd]^\Pi & \\
X \ar[ru]^f \ar[rr]^\phi &  & \T.
}
\end{equation}
Since $\Gamma$ is contractible, path connected, and locally path connected, we may use covering space theory (e.g.\ 
\cite{Hatcher2001:AlgTop}) to lift $\Pi$ to a map $\widetilde \Pi:\Gamma \to \R$ so that the following diagram commutes:
\begin{equation}
\xymatrix{
&& \R \ar[dd]^\pi \\
& \Gamma \ar[ru]^{\widetilde \Pi} \ar[rd]^\Pi & \\
X \ar[ru]^f \ar[rr]^\phi &  &\, \T.
}
\end{equation}
With $\widetilde{\psi} = \widetilde{\Pi} \circ f$, we have the desired result.
\end{proof}

\begin{remark}
The reader should notice that the Proposition~\ref{p:nullhomo:lifting} is not a trivial corollary of basic lifting theorems from geometric topology since we have not assumed that $X$ is path connected, nor have we assumed that it is locally path connected; indeed some of the applications we have in mind will involve suspensions of totally disconnected spaces.
\end{remark}

Given a continuous map $\phi:X \to \T $ and $ x \in X$, we obtain a continuous function $ \phi_x: \R \to \T $ by following the image of $\phi$ along the orbit of $x$, i.e.
\begin{equation}
\phi_x(t)
=
\phi(T^t x).
\end{equation}
We then want to talk about the ``rotation number'' of $\phi$ by tracking the average rate at which $\phi_x(t)$ winds around $\T$. To make this precise, recall from the general covering space theory that, for each $x$, there are continuous functions $\widetilde{\phi}_x : \R \to \R$ that satisfy $\pi \circ \widetilde{\phi}_x = \phi_x $, where $\pi : \R \to \T$ is the canonical projection. Indeed, there is a unique such lift satisfying
\begin{equation}
\widetilde{\phi}_x(0) = a
\end{equation}
for each $a \in \pi^{-1}\{\phi_x(0)\}$. Moreover, if $\widetilde{\psi}$ and $\widetilde{\phi}$ are any two distinct lifts of $\phi_x$, then there is an integer $n$ such that $\widetilde{\phi} \equiv \widetilde{\psi} + n$. We will say that $\phi$ is \emph{differentiable} along the flow if $\widetilde \phi_x$ is a differentiable function for every $x \in X$. One can check that this notion does not depend on the choice of lift.
 We denote the \emph{derivative} of $\phi$ with respect to the flow by $\partial \phi $, i.e.
\begin{equation}
(\partial \phi)(x)
=
\frac{d\widetilde\phi_x}{dt}(0).
\end{equation}
Note for later use that one has for $s \in \R$ and $x \in X$ the relationship
\begin{equation}
(\partial \phi)(T^sx)
=
\frac{d\widetilde\phi_x}{dt}(s).
\end{equation}
Finally, let us say that $\phi$ is $C^1$ along the flow if $\partial \phi$ exists everywhere and is a continuous function $X \to \R$.

In order to measure the average rate of winding of $\phi$ along the orbit of some point $x \in X$, the \emph{rotation number} of $\phi$ along the orbit of $x$ is defined by
\begin{equation} \label{eq:schwarz:rotnumber:def}
\rot(\phi; x)
=
\lim_{t \to \infty} \frac{\widetilde{\phi}_x(t)}{t},
\end{equation}
whenever this limit exists.
The next crucial step is to define a homomorphism $\mathfrak{A} : C(X,\T) \to \R$ by sending a map $\phi \in C(X,\T)$ to its rotation number. There is a slight complication in that the limit defining the rotation number may not exist uniformly and it may depend on the choice of $x$.  We will presently prove that the limit on the right hand side of \eqref{eq:schwarz:rotnumber:def} is defined and constant for a set of full $\mu$-measure (recall that we are assuming that $\mu$ is a $T$-ergodic measure). We then define $\mathfrak{A}= \mathfrak{A}_\mu$ to be this almost-sure value:
\begin{equation} \label{e.gap.label.homom}
\mathfrak{A}_{\mu}(\phi)
=
\int_X \rot(\phi;x)) \, d\mu(x)
=
\lim_{t \to \infty} \frac{\widetilde{\phi}_x(t)}{t},
\quad
\mu\text{-a.e.\ } x.
\end{equation}
To establish the existence of the limit, we will apply a smoothing argument to show that an arbitrary continuous function on $X$ can be approximated by integrals along the flow.  More precisely, we will show that the set of functions that are $C^1$ along the flow is uniformly dense in the space of continuous functions. If $\phi$ is $C^1$ along the flow, we then show that \begin{equation}
\rot(\phi;x) = \int_X \partial \phi\, d\mu \ \text{ a.e.\ } x \in X
\end{equation} by Birkhoff's theorem.

\begin{lemma} \label{l:kakutani:smoothing}
Suppose $T$ is a continuous flow on the compact metric space $X$. Then, the set of functions that are $C^1$ along the flow is dense in $C(X,\T)$ with respect to the uniform topology.
\end{lemma}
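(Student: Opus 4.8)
The plan is to use a standard mollification (smoothing) argument along the flow direction. Given $\phi \in C(X,\T)$, I want to produce, for each small $\varepsilon > 0$, a function $\phi_\varepsilon$ that is $C^1$ along the flow and satisfies $d(\phi_\varepsilon(x),\phi(x)) < 1/2$ for all $x$; by Proposition~\ref{prop:flow:homotopyClassFacts}(c) this guarantees $\phi_\varepsilon \sim \phi$, but more importantly the construction will give uniform (not just homotopic) closeness, which is what the lemma asserts. Since homotopy classes form a discrete-looking set, the honest content here is genuine uniform approximation.

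The first step is to pass to a local lift so that averaging makes sense. One cannot directly average a $\T$-valued function, so I would fix $x \in X$ and work with the lift $\widetilde\phi_x:\R\to\R$ of $\phi_x(t) = \phi(T^tx)$ along the orbit. Define, for $\varepsilon > 0$,
\begin{equation}
\Psi_\varepsilon(x) := \frac{1}{\varepsilon}\int_0^\varepsilon \bigl(\widetilde\phi_x(s) - \widetilde\phi_x(0)\bigr)\,ds,
\end{equation}
which is a well-defined real number independent of the choice of lift (the ambiguity is an additive integer constant that cancels in the difference), and then set $\phi_\varepsilon(x) := \phi(x) + \pi(\Psi_\varepsilon(x))$. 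Equivalently, and more symmetrically, $\phi_\varepsilon(x) = \pi\bigl(\tfrac1\varepsilon\int_0^\varepsilon \widetilde\phi_x(s)\,ds\bigr)$. The second step is to verify the three required properties of $\phi_\varepsilon$: (i) \emph{continuity} of $\phi_\varepsilon$ as a map $X\to\T$ — this follows from joint continuity of $(x,t)\mapsto \phi(T^tx)$ together with continuity of the lift in $x$ on small orbit segments, using compactness of $X$ to get uniformity; (ii) $\phi_\varepsilon$ is $C^1$ \emph{along the flow} — here one computes $\widetilde{(\phi_\varepsilon)}_x(t) = \tfrac1\varepsilon\int_0^\varepsilon \widetilde\phi_x(t+s)\,ds = \tfrac1\varepsilon\int_t^{t+\varepsilon}\widetilde\phi_x(u)\,du$, which is differentiable in $t$ with $\partial\phi_\varepsilon(x) = \tfrac1\varepsilon(\widetilde\phi_x(\varepsilon) - \widetilde\phi_x(0)) = \tfrac1\varepsilon(\phi_\varepsilon^{\mathrm{shift}})$, a continuous function of $x$ by the same reasoning as in (i); and (iii) \emph{uniform closeness}: by uniform continuity of $(x,t)\mapsto\phi(T^tx)$ on the compact set $X\times[0,1]$, for any $\delta < 1/2$ there is $\varepsilon_0$ so that $|\widetilde\phi_x(s) - \widetilde\phi_x(0)| < \delta$ for all $x$ and all $|s|\le\varepsilon_0$, hence $|\Psi_\varepsilon(x)| < \delta$ and $d(\phi_\varepsilon(x),\phi(x)) < \delta < 1/2$ for $\varepsilon < \varepsilon_0$.

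The main obstacle is step (i)/(ii), namely establishing continuity and $C^1$-dependence of the lift-based averages \emph{uniformly in $x$} when $X$ is merely a compact metric space with a continuous flow and no differentiable or product structure. The subtlety is that the local lift $\widetilde\phi_x(s)$ for $s$ in a bounded window is continuous in $x$ because one can always choose the lift with $\widetilde\phi_x(0) \in \pi^{-1}\{\phi(x)\}$ varying continuously (locally in $x$), and then the path $s\mapsto\widetilde\phi_x(s)$ is determined; uniformity comes from a compactness/Lebesgue-number argument on $X\times[0,\varepsilon]$, splitting the orbit window into finitely many pieces of diameter less than $1/2$ in $\T$. I would present this carefully but without belaboring the routine $\varepsilon$–$\delta$ bookkeeping. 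Once $\phi_\varepsilon$ is shown to be continuous and $C^1$ along the flow and within $1/2$ of $\phi$ uniformly, the lemma follows; I would also remark in passing that letting $\varepsilon\to 0$ recovers $\phi$ uniformly, which is the precise statement claimed, and that combined with Birkhoff's theorem (Theorem~\ref{t:birkhoff:flow}) applied to the continuous function $\partial\phi_\varepsilon$ this is exactly the input needed to define $\mathfrak{A}_\mu$ in \eqref{e.gap.label.homom}.
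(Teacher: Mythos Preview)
Your argument is correct: mollifying along the flow via orbit-lifts to $\R$, then projecting back, does yield a $C^1$-along-the-flow approximant, and your three verification steps (continuity of $\Psi_\varepsilon$, flow-differentiability via the fundamental theorem of calculus, uniform closeness from uniform continuity on $X\times[0,\varepsilon]$) all go through as you describe. The one place to be slightly more careful is step (i): you need that the \emph{difference} $\widetilde\phi_x(s)-\widetilde\phi_x(0)$ is continuous in $x$, which follows by choosing the initial lift locally and observing that the displacement is then uniquely determined and varies continuously; you sketch this correctly.

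The paper takes the same conceptual route (average along the flow over a short window) but with a cleaner implementation that avoids lifts altogether: identify $\T$ with $\X^1\subset\C$, average the $\C$-valued function $\psi^\varepsilon(x)=\tfrac1\varepsilon\int_0^\varepsilon\phi(T^sx)\,ds$ directly in $\C$, and then normalize $\eta^\varepsilon=\psi^\varepsilon/|\psi^\varepsilon|$ once $\varepsilon$ is small enough that $\psi^\varepsilon$ is nonvanishing. Continuity and flow-$C^1$ of $\eta^\varepsilon$ are then immediate from the linear structure of $\C$, with no need to manage local lifts or patch them across $X$. What your approach buys is that it stays entirely within the $\R\to\T$ covering picture already set up in the paper, so it dovetails more transparently with the definition of $\partial\phi$ and the subsequent Birkhoff argument; what the paper's approach buys is that the analytic verifications become one-liners.
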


The lemma is taken from \cite{Schwarzmann1957Annals}; Schwartzman attributes the statement to Kakutani.

\begin{proof}[Proof of Lemma~\ref{l:kakutani:smoothing}]
It is somewhat more straightforward to work with functions
\begin{equation}
X \to \X^1 := \{z \in \C : |z|=1\}.
\end{equation}
 Naturally, one can define the notion of flow-differentiability for elements of $C(X,\X^1)$, and it suffices to prove that $C^1$  functions from $X$ to $\X^1$ are dense in $C(X,\X^1)$.  To that end, assume given $\phi \in C(X,\X^1)$ and  $\varepsilon > 0$, and define $\psi = \psi^\varepsilon \in C(X,\C) $ by
\begin{equation}
\psi(x)
=
\frac{1}{\varepsilon} \int_0^\varepsilon \! \phi(T^s x) \, ds,
\quad
x \in X.
\end{equation}
One can verify that $\psi^\varepsilon$ converges uniformly to $\phi$ as $\varepsilon \downarrow 0$. In particular, if $\varepsilon$ is small enough, $\psi(x) \neq 0$ for all $x \in X$. For such small $\varepsilon$, 
define $\eta^\varepsilon(x) = \psi^\varepsilon(x)/|\psi^\varepsilon(x)|$. The reader may check that $\eta^\varepsilon$ is $C^1$ and $\eta^\varepsilon \to \psi$ uniformly as $\varepsilon\downarrow 0$.
\end{proof}

\begin{coro} \label{coro:flowc1:homotop}
Any $\phi \in C(X,\T)$ is homotopic to a function that is $C^1$ along the flow.
\end{coro}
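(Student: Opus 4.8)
The plan is to combine Lemma~\ref{l:kakutani:smoothing} with Proposition~\ref{p:nullhomo:lifting}. Given $\phi \in C(X,\T)$, I first want to produce a function that is $C^1$ along the flow and uniformly close to $\phi$ in the metric $d$ on $\T$; then I want to invoke the ``small perturbations are homotopic'' principle to conclude that this $C^1$ function lies in the same homotopy class as $\phi$.

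Concretely, here is the order of steps. First, fix $\phi \in C(X,\T)$. By Lemma~\ref{l:kakutani:smoothing}, the set of functions that are $C^1$ along the flow is dense in $C(X,\T)$ for the uniform topology; in particular, choosing $\varepsilon = 1/4$ (say), there exists $\psi \in C(X,\T)$ that is $C^1$ along the flow with $d(\phi(x),\psi(x)) < 1/2$ for all $x \in X$. Second, apply Proposition~\ref{prop:flow:homotopyClassFacts}(c): since $d(\phi(x),\psi(x)) < 1/2$ for every $x$, we conclude $\phi \sim \psi$, i.e.\ $\phi$ is homotopic to $\psi$, which is $C^1$ along the flow. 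This is exactly the assertion of the corollary.

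Alternatively, one can give the homotopy directly without quoting part (c): if $d(\phi(x),\psi(x))<1/2$ for all $x$, then $\phi - \psi$ (computed in $\T$) always lies in the open half-circle about $0$, so it lifts to a unique continuous function $\widetilde\psi : X \to (-1/2,1/2) \subseteq \R$ with $\pi(\widetilde\psi(x)) = \phi(x) - \psi(x)$; then $F(x,t) = \psi(x) + \pi(t\,\widetilde\psi(x))$ is a homotopy from $\psi$ to $\phi$, which is precisely the construction appearing in the proof of Proposition~\ref{p:nullhomo:lifting}.

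There is no real obstacle here: the corollary is essentially an immediate consequence of the two preceding results, and the only point requiring a line of care is that uniform closeness in $C(X,\X^1)$ (the form in which Lemma~\ref{l:kakutani:smoothing} is proven) transfers to closeness in the metric $d$ on $\T$, which follows because the quotient map $\X^1 \to \T$ (or rather the identification $\T \cong \X^1$) is a local isometry up to a bounded factor near the diagonal, so a sufficiently small uniform error on the circle forces $d(\phi(x),\psi(x)) < 1/2$. Hence the proof is a two-line deduction.
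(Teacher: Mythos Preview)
Your proof is correct and follows essentially the same approach as the paper: apply Lemma~\ref{l:kakutani:smoothing} to obtain a $C^1$ approximant and then invoke Proposition~\ref{prop:flow:homotopyClassFacts}(c) to conclude that uniformly close maps are homotopic. The paper's proof is just the two-line version of what you wrote.
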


\begin{proof}
If $\phi$ and $\phi'$ are sufficiently close in the uniform metric, then they are homotopic by Proposition~\ref{prop:flow:homotopyClassFacts}. Thus, the result follows from Lemma~\ref{l:kakutani:smoothing}.
\end{proof}

We are now in a position to define the Schwartzman asymptotic cycle and prove that it has the desired properties. The following result is due to Schwartzman \cite{Schwarzmann1957Annals}.

\begin{theorem} \label{t:schwarzmann}
Let $X$ denote a compact metric space, $T$ a continuous flow on $X$, and $\mu$ a $T$-ergodic Borel probability measure.
\begin{enumerate}
\item[{\rm(a)}] For each  $\phi\in C(X , \T)$, the limit
\begin{equation} \label{eq:t:schwarzman:rotphidef}
\rot(\phi;x) = \lim_{t\to \infty} \frac{\widetilde{\phi}_x(t)}{t}
\end{equation}
 exists for $\mu$-almost every $x \in X$ and does not depend on the choice of lift.
\item[{\rm(b)}] For each $\phi \in C(X,\T)$, there exists $\mathfrak{A}_\mu(\phi) \in \R$ with $\rot(\phi;x) = \mathfrak{A}_\mu(\phi)$ for $\mu$-a.e.\ $x$.
\item[{\rm(c)}] If $\phi$ and $\phi'$ are homotopic, then $ \mathfrak{A}_\mu(\phi) = \mathfrak{A}_\mu(\phi')$.
\end{enumerate}
 Thus, \eqref{e.gap.label.homom} descends to a well-defined homomorphism from $C^\sharp(X,\T)$ to $\R$.
\end{theorem}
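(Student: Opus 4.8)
The plan is to prove the three parts of Theorem~\ref{t:schwarzman} in order, reducing everything to the Birkhoff ergodic theorem for flows (Theorem~\ref{t:birkhoff:flow}) via the smoothing device of Lemma~\ref{l:kakutani:smoothing} and Corollary~\ref{coro:flowc1:homotop}. First I would dispose of part (a) in the special case where $\phi$ is $C^1$ along the flow. In that case, for a fixed lift $\widetilde\phi_x$ of $\phi_x$, the fundamental theorem of calculus gives
\begin{equation}
\widetilde\phi_x(t) - \widetilde\phi_x(0) = \int_0^t \frac{d\widetilde\phi_x}{ds}(s)\,ds = \int_0^t (\partial\phi)(T^s x)\,ds,
\end{equation}
using the relation $(\partial\phi)(T^s x) = \frac{d\widetilde\phi_x}{ds}(s)$ recorded just before the lemma. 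Since $\partial\phi$ is continuous on the compact space $X$, it lies in $L^1(\mu)$, so Theorem~\ref{t:birkhoff:flow} applies: for $\mu$-a.e.\ $x$,
\begin{equation}
\lim_{t\to\infty}\frac{\widetilde\phi_x(t)}{t} = \lim_{t\to\infty}\frac{1}{t}\int_0^t(\partial\phi)(T^s x)\,ds = \int_X \partial\phi\,d\mu.
\end{equation}
Because the bounded term $\widetilde\phi_x(0)/t \to 0$ and because two lifts differ by an integer constant, the limit exists a.e., is independent of the choice of lift, and equals the constant $\int_X \partial\phi\,d\mu$. This simultaneously proves (a) and (b) for $C^1$ functions, with $\mathfrak{A}_\mu(\phi) = \int_X \partial\phi\,d\mu$.

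Next I would bootstrap to general continuous $\phi$. The key quantitative observation is that if $\phi,\phi'\in C(X,\T)$ satisfy $d(\phi(x),\phi'(x)) < 1/2$ for all $x$, then their orbit-lifts can be chosen so that $\sup_{t}|\widetilde\phi_x(t) - \widetilde\phi'_x(t)|$ stays bounded: indeed $\widetilde\phi_x - \widetilde\phi'_x$ is a continuous real function projecting to the $\T$-valued function $t\mapsto \phi(T^t x) - \phi'(T^t x)$, which takes values in the arc of radius $1/2$ about $0$, hence the difference of lifts, once chosen in $(-1/2,1/2)$ at $t=0$, cannot escape $(-1/2,1/2)$ by continuity and the covering property. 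Consequently $\rot(\phi;x)$ and $\rot(\phi';x)$ exist simultaneously and agree whenever one of them exists. Given an arbitrary $\phi\in C(X,\T)$ and $\varepsilon>0$, Lemma~\ref{l:kakutani:smoothing} supplies a function $\phi_\varepsilon$ that is $C^1$ along the flow with $\sup_x d(\phi(x),\phi_\varepsilon(x))$ arbitrarily small; taking it $<1/2$ we conclude from the $C^1$ case that $\rot(\phi;x)$ exists for $\mu$-a.e.\ $x$, is independent of the lift, and equals the constant $\mathfrak{A}_\mu(\phi) := \mathfrak{A}_\mu(\phi_\varepsilon) = \int_X \partial\phi_\varepsilon\,d\mu$; the value is independent of the choice of approximant since any two such approximants are within $1/2$ of each other (through $\phi$) after possibly halving $\varepsilon$, forcing equal rotation numbers. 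This establishes (a) and (b) in general.

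For part (c), suppose $\phi\sim\phi'$. By Proposition~\ref{p:nullhomo:lifting} there is a continuous $\widetilde\psi:X\to\R$ with $\phi(x) = \phi'(x) + \pi(\widetilde\psi(x))$ for all $x$. Then for each $x$ one may take $\widetilde\phi_x(t) = \widetilde\phi'_x(t) + \widetilde\psi(T^t x)$ as a lift of $\phi_x$, and since $\widetilde\psi$ is continuous on the compact space $X$ it is bounded, so $|\widetilde\phi_x(t) - \widetilde\phi'_x(t)| \le \|\widetilde\psi\|_\infty$ for all $t$. Dividing by $t$ and letting $t\to\infty$ gives $\rot(\phi;x) = \rot(\phi';x)$ wherever either exists, hence $\mathfrak{A}_\mu(\phi) = \mathfrak{A}_\mu(\phi')$. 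Finally, the homomorphism property follows from additivity of derivatives along the flow: for $\phi_1,\phi_2$ that are $C^1$ along the flow one checks $\partial(\phi_1+\phi_2) = \partial\phi_1 + \partial\phi_2$ directly from the definition (lifts add), so $\mathfrak{A}_\mu(\phi_1+\phi_2) = \int_X \partial(\phi_1+\phi_2)\,d\mu = \mathfrak{A}_\mu(\phi_1)+\mathfrak{A}_\mu(\phi_2)$; the general case follows by approximating each $\phi_j$ by a $C^1$ function and using part (c) together with Proposition~\ref{prop:flow:homotopyClassFacts}(a). Combined with (c), this shows $\mathfrak{A}_\mu$ descends to a well-defined homomorphism $C^\sharp(X,\T)\to\R$.

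I expect the main obstacle to be the rigorous handling of the lifts: making precise, uniformly in $x$ and $t$, that nearby maps into $\T$ (either close in sup-distance, or differing by a globally defined $\R$-valued function) have orbit-lifts whose difference stays bounded, without invoking path-connectivity or local path-connectivity of $X$. This is exactly where one must be careful, since $X$ may be a suspension of a totally disconnected space; the resolution is that one works orbit-by-orbit with $\R$ as the base of the covering $\pi:\R\to\T$, where connectivity is automatic, rather than trying to lift globally on $X$. Everything else is a routine application of Birkhoff's theorem and the smoothing lemma.
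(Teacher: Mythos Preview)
Your proposal is correct and follows essentially the same route as the paper: handle the $C^1$ case via Birkhoff's theorem for flows, then bootstrap to general $\phi$ by comparing with a nearby $C^1$ function, and use Proposition~\ref{p:nullhomo:lifting} for homotopy invariance. The only organizational difference is that the paper passes from $\phi$ to a $C^1$ representative via Corollary~\ref{coro:flowc1:homotop} and then invokes Proposition~\ref{p:nullhomo:lifting} globally to produce the bounded $\widetilde f$, thereby proving (a), (b), and (c) in one stroke; you instead do the bootstrap by a direct orbit-by-orbit covering argument (close maps have bounded lift-difference along each orbit) and treat (c) separately. Both are valid, and your comment that the connectivity issue is sidestepped by working with lifts over $\R$ rather than over $X$ is exactly the right observation.
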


\begin{definition} \label{def:flow:schwarzmannhom}
The induced map $\mathfrak{A}_\mu : C^\sharp(X,\T) \to \R$ is called the \emph{Schwartzman homomorphism}.
\end{definition}

\begin{proof}[Proof of Theorem~\ref{t:schwarzmann}]
Suppose given $\phi \in C(X,\T)$. If $\phi$ is $C^1$ along the flow, then Birkhoff's theorem (Theorem~\ref{t:birkhoff:flow}) yields
\begin{align*}
\rot(\phi;x)
& =
\lim_{t \to \infty} \frac{\widetilde \phi_x(t)}{t} \\
& =
\lim_{t \to \infty} \frac{1}{t} \left( \int_0^t \! (\partial \phi)(T^s x) \, ds + \widetilde \phi_x(0) \right) \\
& =
\lim_{t \to \infty} \frac{1}{t}  \int_0^t \! (\partial \phi)(T^s x) \, ds \\
& =
\int \partial \phi \, d\mu,
\end{align*}
for $\mu$-almost every $x \in X$. Any two lifts of $\phi_x$ differ by a fixed additive constant, which will wash out of the right hand side of \eqref{eq:t:schwarzman:rotphidef} in the limit $t \to \infty$. Thus, independence of the right hand side of \eqref{eq:t:schwarzman:rotphidef} on the choice of lift follows. Consequently, we have obtained (a) and (b) for functions that are $C^1$ along the flow.

Given a general $\phi \in C(X,\T)$, there exists a function $\psi$ that is $C^1$ along the flow and to which $\phi$ is homotopic by Corollary~\ref{coro:flowc1:homotop}. Using Proposition~\ref{p:nullhomo:lifting}, we can produce a continuous (hence bounded!) map $\widetilde f: X \to \R$ such that $\phi = \psi + \pi \circ \widetilde f$.  Boundedness of $\widetilde f$ implies that $ \rot(\phi;x) = \rot(\psi;x) $ for all $x$ for which the latter is well-defined. In particular, we obtain (a) and (b) for general $\phi$. Moreover, the preceding argument also implies that $\mathfrak{A}_\mu$ is well-defined modulo homotopy, as claimed in (c).

Now that  the various well-definedness issues above are resolved, one can verify that $\mathfrak{A}_\mu$ defines a homomorphism,  
concluding the argument.
\end{proof}

\begin{remark}
Since $X$ is assumed to be a compact metric space, $C(X,\T)$ is separable, and so we can reverse the quantifiers: there is a uniform set $X_*$ of full $\mu$-measure so that $ \rot(\phi;x) $ exists and equals $\mathfrak{A}_\mu(\phi)$ for all $x \in X_*$ and every continuous $\phi:X \to \T$.
\end{remark}

To construct the desired gap labelling scheme, we will need one final piece: we will relate the Schwartzman homomorphism to a natural notion of rotation number of a uniformly hyperbolic continuous cocycle. To that end, suppose $\Phi$ is a uniformly hyperbolic continuous $\SL(2,\R)$-cocycle. We wish to define the rotation number of $\Phi$ to be the average winding number of $\Phi^t(x) \cdot v$ around the origin for some vector $v \in \R^2 \setminus \{0\}$. Since the Schwartzman homomorphism was defined in terms of maps into $\T$ and the winding of $\Phi^t(x)\cdot v$ is most readily comprehended as a net change in argument in $\R\PP^1$, the following definition is helpful.

\begin{definition} \label{def:flow:DeltaArg}
For $\theta \in \R$, write $e_\theta = [\cos\theta,\sin\theta]^\top$, and recall that $\bar{e}_\theta \in \R\PP^1$ denotes the corresponding equivalence class. Write $p:\R \to \R\PP^1$ for the universal covering map
\begin{equation}
p:\theta \mapsto \bar{e}_\theta \in \R\PP^1
\end{equation}
and define the identification $h:\R\PP^1 \to \T$ by
\begin{equation} \label{eq:RP1:T:isomorphism}
h: \bar{e}_{\pi\theta} \mapsto \theta \in \T.
\end{equation}
Consider a continuous map $\Lambda:[a,b]\to\R\PP^1$.
As before, one may lift $\Lambda$ to the universal cover $p:\R \to \R\PP^1$ to obtain a continuous $\widetilde{\Lambda}:[a,b] \to \R$ for which
\begin{equation}
p \circ \widetilde\Lambda = \Lambda.
\end{equation}
With this setup, one defines
\begin{equation} \label{eq:flow:DeltaArgDef}
\Delta_{\arg}^{a,b}\Lambda = \widetilde{\Lambda}(b) - \widetilde{\Lambda}(a),
\end{equation}
which we refer to as the \emph{total change of argument} of $\Lambda$ as $t$ increases from $a$ to $b$.
\end{definition}

We are now prepared to define the application of the Schwartzman homomorphism to a uniformly hyperbolic continuous cocycle $\Phi$. Given such a cocycle $\Phi$, let $\Lambda^u:X \to \R\PP^1$ denote the corresponding unstable section. Using the map $h$ from Definition~\ref{def:flow:DeltaArg}, we can view $\Lambda^u$ as a continuous map $\Lambda_0^u = h \circ \Lambda^u : X \to \T$ and define
\begin{equation} \label{eq:cocycle:schw:def}
\mathfrak{A}_\mu(\Phi)
=
\mathfrak{A}_\mu(\Lambda_0^u).
\end{equation}
Using the definitions and \eqref{eq:RP1:T:isomorphism}, we have
\begin{equation} \label{eq:cocycle:schw:arg}
\mathfrak{A}_\mu(\Lambda_0^u)
=
\lim_{t \to \infty} \frac{1}{\pi t} \Delta_{\arg}^{0,t} \left( \Lambda^u(T^s x) \right),
\quad
\mu\text{-a.e.\ } x.
\end{equation}

The content of the final theorem of this section is that \eqref{eq:cocycle:schw:def} is indeed an appropriate notion of the winding number of a hyperbolic cocycle, in that it measures the average rate of change of the argument of $\Phi^t(x) \cdot v $ for $\mu$-almost every $x$ and every $v\in \R\PP^1$.

\begin{theorem} \label{t:uh:schwarzmann}
Suppose $\Phi$ is uniformly hyperbolic. For $\mu$-almost every $x \in X$ and every $v \in \R\PP^1$, we have
\begin{equation} \label{eq:uh:schwarzmann}
\mathfrak{A}_\mu( \Phi )
=
\lim_{t \to \infty} \frac{1}{\pi t} \Delta_{\arg}^{0,t} \left( \Phi^s(x) \cdot v \right).
\end{equation}
\end{theorem}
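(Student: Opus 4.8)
The plan is to show that for $\mu$-almost every $x$, the argument of $\Phi^t(x)\cdot v$ and the argument of $\Phi^t(x)\cdot \Lambda^u(x)$ grow at the same linear rate, so that the winding number of any nonzero vector matches that of the unstable direction, which is exactly $\mathfrak{A}_\mu(\Phi)$ by \eqref{eq:cocycle:schw:def} and \eqref{eq:cocycle:schw:arg}. The key point is that invariant hyperbolic splitting forces every generic direction to be attracted to the unstable line under forward iteration, and the difference in total change of argument between two directions that converge to each other stays bounded.

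First I would reduce to the case $v \notin \Lambda^s(x)$; since $\Lambda^s(x)$ is a single line in $\R\PP^1$, this excludes at most one choice of $v$ for each $x$, and the cases $v\in\Lambda^s(x)$ can be handled afterward by a separate argument (or noted to hold on a $\mu$-null set of $x$ after invoking invariance, since $\Phi^t$ carries $\Lambda^s(x)$ to $\Lambda^s(T^tx)$). Next, using the invariant exponential splitting from Theorem~\ref{t:uh:splitting:continuous}, I would write $v = v_s + v_u$ with $v_u \neq 0$ in the $\Lambda^u(x)$ component, and estimate $\|\Phi^t(x)v_s\| \le cL^{-t}\|v_s\|$ while $\|\Phi^t(x)v_u\|$ grows exponentially (the latter because $\Phi^{-t}$ contracts $\Lambda^u(T^tx)$, so $\Phi^t$ expands $\Lambda^u(x)$ at rate at least $c^{-1}L^t$). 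Hence the angle between $\Phi^t(x)\cdot v$ and $\Lambda^u(T^tx) = \Phi^t(x)\cdot\Lambda^u(x)$ in $\R\PP^1$ tends to $0$ exponentially fast as $t\to\infty$. Consequently the two lifted argument functions $\widetilde{(\Phi^s(x)v)}$ and $\widetilde{(\Lambda^u(T^sx))}$ differ by a quantity that converges as $s\to\infty$ and in particular stays bounded; dividing by $\pi t$ and sending $t\to\infty$ kills this bounded discrepancy.

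It then remains to identify $\lim_{t\to\infty}\frac{1}{\pi t}\Delta_{\arg}^{0,t}(\Lambda^u(T^sx))$ with $\mathfrak{A}_\mu(\Phi)$. But $\Lambda^u$ is a \emph{continuous} map $X\to\R\PP^1$ (this is part of the definition of invariant exponential splitting), so $\Lambda_0^u = h\circ\Lambda^u \in C(X,\T)$, and by construction of the total change of argument together with the identification $h$ from Definition~\ref{def:flow:DeltaArg}, the quantity $\frac{1}{\pi t}\Delta_{\arg}^{0,t}(\Lambda^u(T^sx))$ is precisely $\widetilde{(\Lambda_0^u)}_x(t)/t$ up to a fixed normalization; its almost-sure limit is $\mathfrak{A}_\mu(\Lambda_0^u) = \mathfrak{A}_\mu(\Phi)$ by Theorem~\ref{t:schwarzmann} and \eqref{eq:cocycle:schw:def}. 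Finally, I would use separability of $C(X,\R\PP^1)$ (or just the fact that there is a single full-measure set working for $\Lambda_0^u$ via the remark following Theorem~\ref{t:schwarzmann}) to arrange that the exceptional null set does not depend on $v$, and then treat the degenerate directions $v\in\Lambda^s(x)$: here one shows the argument of $\Phi^t(x)v$ still advances at the same rate, because although $\|\Phi^t(x)v\|$ shrinks, the \emph{direction} $\Phi^t(x)\cdot\Lambda^s(x) = \Lambda^s(T^tx)$ is still a continuous section, and one can run the identical Birkhoff/Schwartzman argument with $\Lambda^s_0 = h\circ\Lambda^s$ in place of $\Lambda^u_0$, checking that $\mathfrak{A}_\mu(\Lambda^s_0) = \mathfrak{A}_\mu(\Lambda^u_0)$ since $\Lambda^s$ and $\Lambda^u$ wind together (they never cross, as the splitting is defined for all $t$). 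The main obstacle I anticipate is making the "angle goes to zero, hence bounded argument discrepancy" step fully rigorous at the level of lifts to $\R$ — one must check that two continuous paths in $\R\PP^1$ whose distance in $\R\PP^1$ tends to $0$ have lifts to $\R$ differing by a bounded amount (which is true once the distance is eventually below the injectivity radius $\pi/2$, but requires care about the initial segment), and to confirm that no spurious half-integer winding is introduced by the passage through $\R\PP^1$ versus $\T$ under the double cover $h$.
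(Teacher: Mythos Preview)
Your proposal is correct, but it works much harder than necessary. The paper's proof bypasses the exponential splitting estimates entirely and rests on a single topological observation: since each $\Phi^s(x) \in \SL(2,\R)$ is nonsingular, for any fixed $x$ and any $v \in \R\PP^1$ the two paths $s \mapsto \Phi^s(x)\cdot v$ and $s \mapsto \Phi^s(x)\cdot \Lambda^u(x)$ in $\R\PP^1$ never meet (unless $v = \Lambda^u(x)$, in which case they coincide). Hence their lifts to $\R$ stay trapped in a single fundamental domain of width $\pi$ relative to one another, giving
\[
\left|\Delta_{\arg}^{0,t}\bigl(\Phi^s(x)\cdot v\bigr) - \Delta_{\arg}^{0,t}\bigl(\Phi^s(x)\cdot \Lambda^u(x)\bigr)\right| < \pi
\]
for \emph{all} $t>0$, \emph{all} $x$, and \emph{all} $v$ simultaneously. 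Dividing by $\pi t$ and using \eqref{eq:cocycle:schw:arg} finishes the proof immediately. No decomposition $v = v_s + v_u$, no convergence of angles, no separate treatment of $v \in \Lambda^s(x)$, and no worry about the initial segment is needed.

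It is worth noting that you essentially discovered the paper's argument yourself in your final step: your reasoning that $\mathfrak{A}_\mu(\Lambda^s_0) = \mathfrak{A}_\mu(\Lambda^u_0)$ because ``they never cross'' is exactly the mechanism above, and it applies verbatim with $\Lambda^s(x)$ replaced by an arbitrary $v$. Had you led with that observation, steps 1--3 of your outline would have been unnecessary. The hyperbolicity hypothesis is used only to guarantee the existence of the continuous section $\Lambda^u$ on which the Schwartzman homomorphism can be evaluated; the comparison of winding for different initial directions uses nothing beyond invertibility of the cocycle.
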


\begin{proof}
By invariance, we have $ \Phi^s(x) \cdot \Lambda^u(x) = \Lambda^u(T^s x) $.  In particular, \eqref{eq:cocycle:schw:arg} implies that
$$
\mathfrak{A}_\mu(\Phi)
=
\mathfrak{A}_\mu(\Lambda_0^u)
=
\lim_{t \to \infty} \frac{1}{\pi t} \Delta_{\arg}^{0,t} \left( \Phi^s(x) \cdot \Lambda^u(x) \right)
$$
for $\mu$-almost every $x \in X$.  Since $\Phi^s(x)$ is nonsingular for all $s$ and $x$, it follows that
$$
\left|
\Delta_{\arg}^{0,t} \left( \Phi^s(x)  \cdot v \right)
-
\Delta_{\arg}^{0,t} \left( \Phi^s(x)  \cdot \Lambda^u(x) \right)
\right|
<
\pi
$$
for all $t>0$, $x \in X$, and every $v \in \R\PP^1$. In light of \eqref{eq:cocycle:schw:arg}, this implies that
$$
\lim_{t \to \infty} \frac{1}{\pi t} \Delta_{\arg}^{0,t} \left( \Phi^s(x) \cdot v \right)
=
\lim_{t \to \infty} \frac{1}{\pi t} \Delta_{\arg}^{0,t} \left( \Phi^s(x) \cdot \Lambda^u(x) \right)
=
\mathfrak{A}_\mu(\Phi)
$$
for any $v \in \R\PP^1$ and $\mu$-almost every $x \in X$, as desired.
\end{proof}

\begin{remark}
We want to emphasize that we work with ``merely continuous'' flows, that is, actions of $\R$ on compact metric spaces via homeomorphisms, rather than, say, actions of $\R$ on differentiable manifolds via diffeomorphisms. This is not simply generality for its own sake. Rather, the main application of the Schwartzman homomorphism will involve the suspension of a general topological dynamical system, and some of the main examples we have in mind (like a minimal translation of a totally disconnected compact group or a shift on a suitable sequence space) are given by suspensions of minimal transformations of totally disconnected spaces (which then are not differentiable manifolds).
\end{remark}

\section{The Gap Labelling Theorem} \label{sec:gaplabel}

Having discussed the necessary background, we return to the setting of ergodic Schr\"odinger operators. Recall that $(\Omega,T)$ is a topological dynamical system, $\mu$ is a $T$-ergodic Borel probability measure on $\Omega$, and $f \in C(\Omega,\R)$. By restricting $T$ if necessary, we will also assume
\begin{equation}\label{e.fullsupport}
\supp\mu = \Omega,
\end{equation}
that is, we assume $\mu(U) > 0$ for every nonempty open set $U \subseteq \Omega$. Recall also that $\Sigma_{\mu,f}$ denotes the $\mu$-almost sure spectrum of the family $\{H_{f,\omega}\}_{\omega \in \Omega}$ defined in \eqref{eq:HfomegaDef} and \eqref{eq:VfomegaDef}.
Since $\Sigma_{\mu,f}$ is closed and bounded, there exists a countable pairwise disjoint family of open intervals $G_n$ such that
\begin{equation}
 \Sigma_{\mu,f}
=
\left[\min\Sigma_{\mu,f},\max\Sigma_{\mu,f}\right] \setminus \bigcup_{n} G_n.
\end{equation}
Each such interval is naturally called a \emph{gap} of $\Sigma$. In Theorem~\ref{t.as83}, we have seen that the almost-sure spectrum $\Sigma$ is precisely the set of points of increase of the integrated density of states, $k$.  In particular, $k$ is constant on each gap. As mentioned before, the constant value $k$ assumes on $G_n$ is referred to as the \emph{label} of the gap.  In this section, we will discuss the proof of Theorem~\ref{t:gablabel}, and emphasize that it applies to \emph{any} topological family of Schr\"odinger operators satisfying the stated restrictions:  any gap label must be in the image of the Schwartzman homomorphism of the suspension of the underlying dynamical system $(\Omega,T)$ (see Definitions~\ref{def:flow:suspension} and \ref{def:flow:schwarzmannhom}).  In particular, the possible gap labels that might appear depend only on the underlying dynamics, and do not depend on the choice of sampling function $f$. Moreover, we now reap a reward from our work in Section~\ref{sec:flow}. In particular, we established the general theory surrounding the Schwartzman homomorphism without additional assumptions on the topology of $\Omega$ (such as connectedness or a smooth structure), so we can now discuss gap labelling for a general topological dynamical system. 

Recall the Schr\"odinger cocycle
\begin{equation}
(T, A_E) : \Omega \times \R^2 \to \Omega \times \R^2
\end{equation}
defined by $(T, A_E)(\omega, v) = (T\omega, A_E(\omega) v)$, where
\begin{equation}
A_E(\omega)
=
\begin{bmatrix}
E - f(T\omega) & -1 \\
1 & 0
\end{bmatrix}.
\end{equation}
As usual, we define $A_E^n$ for $n \in \Z$ by 
\begin{equation}
A_E^n(\omega) =
\begin{cases}
A_E(T^{n-1}\omega)A_E(T^{n-2}\omega) \cdots A_E(\omega) 
& n \geq 1 \\
I & n=0 \\
[A_E^{-n}(T^n\omega)]^{-1} & n \leq -1,
\end{cases}
\end{equation}
 so that iterates of the cocycle obey $ (T,A_E)^n = (T^n,A_E^n) $. The definitions of $A_E$ and $A_E^n$ ensure that one may characterize solutions of the difference equation associated to $H_\omega$; that is, $u:\Z\to\C$ solves $H_\omega u = Eu$ if and only if
 \begin{equation}
 \begin{bmatrix} u(n+1) \\ u(n) \end{bmatrix} =  A_E^n(\omega) \begin{bmatrix} u(1) \\ u(0) \end{bmatrix} \quad \text{ for every } n \in \Z.
 \end{equation}
   By \eqref{e.fullsupport} and Johnson's theorem, $\Sigma$ is precisely the complement of the energies at which $(T,A_E)$ is uniformly hyperbolic \cite{Johnson1986JDE}; see also the surveys \cite{DFLY2016DCDS, Zhang2020JST}.

Now, let $X = X(\Omega,T)$ denote the suspension of $(\Omega,T)$ (see Definition~\ref{def:flow:suspension}). In order to relate the Schwartzman homomorphism on $X$ to the integrated density of states, we want to define an interpolated $A_E^t:\Omega \to \SL(2,\R)$ for $t \in \R$ in such a way that $A_E^t$ induces a well-defined cocycle on $X$ and the winding number of orbits of $A_E^t$ corresponds to the rotation number as measured by counting sign flips of Dirichlet eigenfunctions. In particular, one may do this in such a way that $A_E^t(\omega)$ is a smooth function of $t$ for each fixed $E \in \R$ and $\omega \in \Omega$. To that end let $\theta$ and $\lambda$ be smooth nondecreasing functions so that
\begin{equation}
\theta(t) = 0 \text{ for } 0 \leq t \leq 1/6, \quad
\theta(t) = \pi/2 \text{ for } 1/3 \leq t \leq 1/2,
\end{equation}
and
\begin{equation}
\lambda(t) = 0 \text{ for } 1/2 \leq t \leq 2/3, \quad
\lambda(t) = 1 \text{ for } 5/6 \leq t \leq 1,
\end{equation}
and then define
\begin{equation} \label{def:homotopy:identity}
Y_E(\omega,t)
=
\begin{cases}
\begin{bmatrix}
\cos(\theta(t)) & -\sin(\theta(t)) \\
\sin(\theta(t)) & \cos(\theta(t))
\end{bmatrix}
&
0 \leq t \leq 1/2
\\[5.5mm]
\begin{bmatrix}
\lambda(t)(E - f(T \omega)) & -1 \\
1 & 0
\end{bmatrix}
&
1/2 \leq t \leq 1
\end{cases}
\end{equation}

With this, we define $A_E^t(\omega)$ by using $Y_E$ to interpolate between $A_E^n$ and $A_E^{n+1}$. More precisely, put
\begin{equation}
A_E^t(\omega)
=
Y_E\left(T^n \omega,t-n \right) A_E^{n}(\omega),
\quad
\omega \in \Omega, \; n \le t < n+1,
\end{equation}
where $n \in \Z$. One can check that $A_E^t(\omega)$ is a smooth function of $t$ for all fixed $\omega \in \Omega$ and $E \in \R$ that agrees with $A_E^n$ when restricted to the integers. This induces a continuous cocycle $\Phi_E$ on $X$ via
\begin{equation}
\Phi_E^t([\omega,s])
=
A_E^{t+s}(\omega)
A_E^s (\omega)^{-1}.
\end{equation}
The reader may readily check the following:

\begin{prop}\label{prop:gaplabel:phiprops} The map $\Phi_E$ is a well-defined continuous cocycle on $X$. Moreover, $\Phi_E$ is uniformly hyperbolic if and only if the corresponding discrete cocycle $(T,A_E)$ is uniformly hyperbolic.
\end{prop}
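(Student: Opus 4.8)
The plan is to establish the three ingredients of the first assertion — independence of the representative $(\omega,s)$, continuity, and the cocycle identities — and then to reduce the uniform-hyperbolicity equivalence to the bounded-orbit criterion of Theorem~\ref{t:uh:splitting:continuous}. The workhorse for the first assertion is the \emph{integer-shift identity}
\begin{equation}
A_E^{t+s}(\omega) = A_E^{t+s-n}(T^n\omega)\,A_E^n(\omega), \qquad n \in \Z,\ \omega\in\Omega,\ s,t\in\R,
\end{equation}
which I would verify by writing $t+s = m + u$ with $m = \lfloor t+s\rfloor$, $u = \{t+s\}$, applying the local formula $A_E^\tau(\cdot) = Y_E\big(T^{\lfloor\tau\rfloor}(\cdot),\{\tau\}\big)A_E^{\lfloor\tau\rfloor}(\cdot)$ to both sides, and invoking the ordinary discrete cocycle identity $A_E^m(\omega) = A_E^{m-n}(T^n\omega)A_E^n(\omega)$. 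Granting this, if $(\omega',s') = (T^n\omega, s-n)$ then $A_E^{t+s'}(\omega') = A_E^{t+s}(\omega)A_E^n(\omega)^{-1}$ and $A_E^{s'}(\omega') = A_E^s(\omega)A_E^n(\omega)^{-1}$, so $A_E^{t+s'}(\omega')A_E^{s'}(\omega')^{-1} = A_E^{t+s}(\omega)A_E^s(\omega)^{-1}$, which is precisely the claim that $\Phi_E^t([\omega,s])$ is independent of the chosen representative. The laws $\Phi_E^0([\omega,s]) = I$ and $\Phi_E^{t_1+t_2}([\omega,s]) = \Phi_E^{t_2}(\oT^{t_1}[\omega,s])\,\Phi_E^{t_1}([\omega,s])$ then fall out by cancelling inner factors, using $\oT^{t_1}[\omega,s] = [\omega,s+t_1]$. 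For continuity, I would first note that $A_E^t(\omega)$ is jointly continuous on $\R\times\Omega$: the function $Y_E$ is continuous on $\Omega\times[0,1]$ with $Y_E(\cdot,0) = I$ and $Y_E(\cdot,1) = A_E(\cdot)$, so the piecewise definition of $A_E^t$ matches at the integers; hence $(\omega,s,t)\mapsto A_E^{t+s}(\omega)A_E^s(\omega)^{-1}$ is continuous and, being constant on the fibers of the (open) quotient map $\Omega\times\R\to X$, descends to a continuous cocycle on $X$.

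For the second assertion I would argue via bounded orbits. First note $\Phi_E^n([\omega,0]) = A_E^n(\omega)A_E^0(\omega)^{-1} = A_E^n(\omega)$ for every $n\in\Z$, so restricting $\Phi_E$ to integer times on the fiber $\Omega_0$ literally recovers $(T,A_E)$. The one quantitative input needed is a constant $C_E\geq 1$ with $C_E^{-1}\|w\| \leq \|Y_E(\omega,u)w\| \leq C_E\|w\|$ for all $\omega\in\Omega$, $u\in[0,1]$, $w\in\R^2$; this exists because $Y_E$ is a continuous $\SL(2,\R)$-valued function on the compact set $\Omega\times[0,1]$, hence has bounded norm and bounded inverse-norm. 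On the one hand, if $(T,A_E)$ is \emph{not} uniformly hyperbolic then, by the discrete analog of Theorem~\ref{t:uh:splitting:continuous}, it has a bounded orbit: $\sup_{n\in\Z}\|A_E^n(\omega)w\| < \infty$ for some $\omega$ and some $w\neq 0$. The local formula gives $\|A_E^t(\omega)w\| \leq C_E\|A_E^{\lfloor t\rfloor}(\omega)w\|$ for all $t\in\R$, whence $\sup_{t\in\R}\|\Phi_E^t([\omega,0])w\| < \infty$, so $\Phi_E$ is not uniformly hyperbolic. Conversely, if $\Phi_E$ has a bounded orbit, say $\sup_{t\in\R}\|\Phi_E^t([\omega,s])v\| < \infty$ with $v\neq 0$, then setting $w := A_E^s(\omega)^{-1}v \neq 0$ gives $\|\Phi_E^t([\omega,s])v\| = \|A_E^{t+s}(\omega)w\|$, and restricting to $t\in -s+\Z$ yields $\sup_{n\in\Z}\|A_E^n(\omega)w\| < \infty$, so $(T,A_E)$ is not uniformly hyperbolic. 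Combining these two implications with Theorem~\ref{t:uh:splitting:continuous} and its discrete counterpart gives the equivalence.

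Most of this is routine bookkeeping; the point that deserves care is ensuring that every estimate is uniform in the right variable — in particular that the bound on $Y_E$ is genuinely uniform over all of $\Omega$ (this is where compactness of $\Omega$ enters), and that the bounded-orbit transfer does not secretly demand control of $A_E^s(\omega)^{-1}$ uniformly in $s\in\R$ (it does not: $s$ is fixed once the orbit is fixed). An alternative to the bounded-orbit route would be to push the invariant sections $\Lambda^{s/u}\colon\Omega\to\R\PP^1$ of the discrete cocycle forward to sections $[\omega,s]\mapsto A_E^s(\omega)\Lambda^{s/u}(\omega)$ of $\Phi_E$ and verify the contraction estimates directly, but that forces one to track $\lfloor\cdot\rfloor$/$\lceil\cdot\rceil$ corrections throughout, so I would instead write up the bounded-orbit argument.
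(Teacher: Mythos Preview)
Your proposal is correct. The paper does not give a proof of this proposition at all: it is stated immediately after the phrase ``The reader may readily check the following,'' so there is nothing to compare against. Your verification---the integer-shift identity for well-definedness, the descent through the open quotient map for continuity, and the bounded-orbit criterion from Theorem~\ref{t:uh:splitting:continuous} (together with its discrete analogue, which the paper invokes in the proof of that theorem) for the hyperbolicity equivalence---is a complete and efficient way to fill in what the paper omits.
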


By Johnson's theorem and the relationship between $\Phi$ and $A$, Proposition~\ref{prop:gaplabel:phiprops} demonstrates that $\Phi_E$ is uniformly hyperbolic if and only if $E \notin \Sigma$. We are heading towards a proof of the identity
\begin{equation} \label{eq:gapLabelGoal}
k(E)
=
1 - \mathfrak{A}_{\overline{\mu}}(\Phi_E),
\end{equation}
where $\mathfrak{A}_{\overline{\mu}}$ denotes the Schwartzman homomorphism (defined in Definition~\ref{def:flow:schwarzmannhom}). Let us recall some notation from Subsection~\ref{subsec:schwartzman}: $C^\sharp(X,\T)$ denotes the set of homotopy classes of maps $X \to \T$, and $\mathfrak{A}_{\overline{\mu}}:C^\sharp(X,\T) \to \R$ denotes the Schwartzman homomorphism, which is defined by \eqref{e.gap.label.homom}. Finally, $\mathfrak{A}_{\overline{\mu}}(\Phi)$ for a hyperbolic cocycle $\Phi$ is defined to be the Schwartzman homomorphism evaluated at the (homotopy class of the) unstable section of said cocycle. 

\begin{remark} \label{rem:gl:AcontainsZ}Notice that the image of the Schwartzman homomorphism in the present case always contains $\Z$ (proof: use the map $X \ni [\omega,t] \mapsto nt \in \T$). In particular, \eqref{eq:gapLabelGoal} suffices to establish that $k(E)$ lies in the image of $\mathfrak{A}_{\overline{\mu}}$ whenever $E$ lies in a gap.
\end{remark}

From Theorem~\ref{t:ids:rotnumber}, we know that $k$ can be related to sign flips of solutions, namely
\begin{equation} \label{eq:ids:defn}
1 - k(E)
=
\lim_{N \to \infty} \frac{1}{N} \# \left\{ 1 \leq j \leq N : \sgn(u_{E,\omega}(j)) \neq \sgn(u_{E,\omega}(j+1)) \right\}
\end{equation}
for $\mu$-almost every $\omega \in \Omega$, where $u_{E,\omega}$ is the Dirichlet solution of $H_\omega u = Eu$ (with the convention $\sgn(0) = 1$). Our main goal is to relate this sign-flip counting to a rotation number in a more explicit fashion in order to invoke the Schwartzman homomorphism. If $\Lambda:[a,b] \to \R\PP^1$ is continuous, recall that $\Delta_{\arg}^{a,b}\Lambda(t)$ denotes the total change in the argument of $\Lambda(t)$ as $t$ increases from $a$ to $b$ as in \eqref{eq:flow:DeltaArgDef}.

\begin{theorem} \label{t:ids:rotnumber:continuous}
For each $E \in \R $ and $\mu$-almost every $\omega$,
\begin{equation} \label{eq:ids:rotation}
1 - k(E)
=
\lim_{N \to \infty} \frac{1}{\pi N} \Delta_{\arg}^{0,N}(A_E^t(\omega)  \cdot e_1).
\end{equation}

\end{theorem}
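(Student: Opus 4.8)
The plan is to connect the combinatorial sign-flip count in \eqref{eq:ids:defn} to the continuous change-of-argument functional $\Delta_{\arg}$ for the interpolated cocycle $A_E^t$, and then take the limit. First I would recall that a solution $u_{E,\omega}$ of $H_\omega u = Eu$ is governed by the discrete cocycle via $[u(n+1),u(n)]^\top = A_E^n(\omega)[u(1),u(0)]^\top = A_E^n(\omega) e_1$, since $u_{E,\omega}(0)=0$ and $u_{E,\omega}(1)=1$. Thus the sequence of projective directions $\overline{A_E^n(\omega) e_1} \in \R\PP^1$ records the geometry of the Dirichlet solution, and a sign flip of $u_{E,\omega}$ between $j$ and $j+1$ corresponds to the vector $A_E^j(\omega) e_1$ (which lies in the half-plane determined by $u(j)$) passing to the opposite half-plane, i.e.\ to the argument crossing a multiple of $\pi$.

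The key step is a careful local analysis of the interpolation $Y_E$ defined in \eqref{def:homotopy:identity} on a single unit interval $[n,n+1]$. The first piece (for $t \in [0,1/2]$) rotates by a fixed angle $\pi/2$ via an honest rotation matrix, and the second piece (for $t \in [1/2,1]$) deforms linearly from the rotation's endpoint to $A_E(T^n\omega)$. I would show that the total change of argument of $A_E^t(\omega) \cdot e_1$ over one such interval, divided by $\pi$, differs from the indicator $\mathbf{1}[\sgn(u_{E,\omega}(j)) \neq \sgn(u_{E,\omega}(j+1))]$ by a quantity that is uniformly bounded (in fact, one can arrange the interpolation so that the agreement is exact, or off by at most $1$, uniformly in $n$, $\omega$, $E$). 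The point of inserting the fixed $\pi/2$ rotation before applying $A_E$ is precisely to make the passage through vertical lines unambiguous and to pin down the integer ambiguity in lifting to $\R$; here one uses that $A_E$ maps $[0,1]^\top$ to a vector with strictly positive first coordinate and the second-row structure of $A_E$ to track exactly when $u$ changes sign. Summing over $n$ from $0$ to $N-1$ and telescoping, $\Delta_{\arg}^{0,N}(A_E^t(\omega)\cdot e_1)$ equals $\pi$ times the sign-flip count up to an error that is $O(1)$ as $N \to \infty$.

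Dividing by $\pi N$ and letting $N \to \infty$, the $O(1)$ error washes out, and the right-hand side of \eqref{eq:ids:rotation} equals $\lim_{N\to\infty}\frac1N \#\{1 \le j \le N : \sgn(u_{E,\omega}(j)) \neq \sgn(u_{E,\omega}(j+1))\}$, which by Theorem~\ref{t:ids:rotnumber} equals $1 - k(E)$ for $\mu$-a.e.\ $\omega$. The main obstacle I anticipate is the bookkeeping in the local step: one must handle the degenerate cases where $u_{E,\omega}(j) = 0$ (a Dirichlet eigenvalue situation, governed by the convention $\sgn(0)=1$) and verify that the chosen smooth profiles $\theta,\lambda$ indeed produce monotone enough behavior that no spurious back-and-forth winding is introduced on $[n,n+1]$; this requires using that the linear homotopy $v \mapsto \begin{bmatrix}\lambda(t)(E - f(T^n\omega)) & -1 \\ 1 & 0\end{bmatrix}$ stays in $\GL(2,\R)$ for all $t$ (its determinant is identically $1$), so the direction $A_E^t(\omega)\cdot e_1$ never vanishes and the lift $\widetilde{\Lambda}$ is well-defined and continuous throughout. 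Once the uniform $O(1)$ bound on the per-interval discrepancy is established, the rest is a routine limit.
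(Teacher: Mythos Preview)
Your proposal is correct and follows essentially the same approach as the paper: relate the sign-flip count from Theorem~\ref{t:ids:rotnumber} to $\Delta_{\arg}$ via a local analysis of the interpolation $Y_E$, obtain an $O(1)$ global discrepancy, and pass to the limit. The paper makes the ``$O(1)$'' step precise by proving an exact identity (the cumulative sign-flip count equals $\lfloor \frac{1}{\pi}\Delta_{\arg}^{0,N+1}(\mathbf{u}(t))\rfloor$ with a correction of $-1$ in one boundary case) via induction on $N$ and a quadrant-by-quadrant case analysis, which is the concrete implementation of the bookkeeping you anticipate.
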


\begin{proof}
Fix $E \in \R$ and $\omega$ in the full-measure set for which \eqref{eq:ids:defn} holds and suppress $\omega$ from the notation. For each $t$, define
\begin{equation}
\textbf{u}(t)
=
\begin{bmatrix}
u(t+1) \\ u(t)
\end{bmatrix}
=
A^t_E \cdot e_1.
\end{equation}
Note that invertibility of $A_E$ implies $\mathbf{u}(t) \neq 0$ for all $t \in \R$. By our choice of $\omega$, and our definition of $u(t)$, \eqref{eq:ids:defn} implies that
\begin{equation}
1-k(E)
=
\lim_{N \to \infty} \frac{1}{N} \#
\{ j \in \N : 1 \leq j \leq N \text{ and } \sgn(u(j)) \neq \sgn(u(j+1))\}
\end{equation}
for all $E$. The key observation here is that each sign flip of $u(j)$ corresponds to a half-rotation of $\textbf{u}(t)$ about the origin and that the cocycle $A^t_E$ is such that one actually sees these rotations reflected in $\Delta_{\arg}(\textbf{u}(t))$; the rest is bookkeeping. Concretely, we claim that
\[
\#
\{ j \in \N : 1 \leq j \leq N \text{ and }\sgn(u(j)) \neq \sgn(u(j+1))\}
=
\frac{1}{\pi} \Delta_{\arg}^{0,N+1} (\textbf{u}(t)) + O(1).
\]
Let us make this more precise. Given signs $s,s' \in \{+, \, - \}$, define the quadrant $Q^{s,s'} = \{(x,y) \in \R^2 : sx > 0, s'y > 0 \}$. Similarly, denote the four semi-axes by
\begin{equation}
 Q^{0,\pm} = \{(0,y) : \pm y > 0\},
\quad
 Q^{\pm,0} = \{ (x,0) : \pm x > 0 \}.
\end{equation}
One can show the following:
\begin{equation} \label{eq:ids:rotnumber}
\#
\{ j \in \N \cap[1,N] : \sgn(u(j)) \neq \sgn(u(j+1))\}
=
\begin{cases}
\left\lfloor \frac{1}{\pi} \Delta_{\arg}^{0,N+1} (\textbf{u}(t)) \right\rfloor - 1 & \mathbf u_N \in Q^{0,+} \\[3mm]
\left\lfloor \frac{1}{\pi} \Delta_{\arg}^{0,N+1} (\textbf{u}(t)) \right\rfloor & \text{otherwise}
\end{cases}
\end{equation}
for all $N \in \N$, which suffices to prove \eqref{eq:ids:rotation}. To prove \eqref{eq:ids:rotnumber}, one proceeds inductively as in the proof of Theorem~\ref{t:sturm:osc}. Denote the left-hand side of \eqref{eq:ids:rotnumber} by $F_N$ and the right-hand side by $R_N$.

First, consider the case $N=1$.  If $u(2) > 0$, then $\sgn(u(1)) = \sgn(u(2))$ and $u(2) \neq 0$, so $u(t) > 0$ for all $0 < t \leq 2$. In particular, $\textbf{u}(t)$ is in the open upper half-plane for all $0 < t \leq 2$, and hence
\[
0 \leq \Delta_{\arg}^{0,2}(\textbf{u}(t)) < \pi.
\]
If $u(2) = 0$ (notice that this implies $\mathbf{u}(1) \in Q^{0,+}$), then one has $ \Delta_{\arg}^{0,2} \mathbf{u}(t) =\pi$ by a direct calculation. On the other hand, if $\sgn(u(1)) \neq \sgn(u(2))$, one has
$$
\pi
\leq
\Delta_{\arg}^{0,2}(\textbf{u}(t))
<
2 \pi.
$$
Thus, the claim holds for $N=1$ (keep in mind the convention $\sgn\, 0 = 1$). Now, suppose that $F_{N-1} = R_{N-1}$ for some $N \ge 2$. One must consider several cases separately.

\begin{case} \textbf{{\boldmath$\mathbf u(N) \in Q^{-,0}$.}} First, notice that this implies that $\sgn(u(N+1)) \neq \sgn(u(N))$, which means that the sign flip counter increments, i.e.,
\[
F_N = F_{N-1} + 1.
\]
Additionally, in this case, one has $\mathbf u(N-1) \in Q^{0,+}$ and $\mathbf u(N+1)$ is in the open lower half-plane. In particular,
\[
R_N
=
\left\lfloor \frac{1}{\pi} \Delta_{\arg}^{0,N+1} (\textbf{u}(t)) \right\rfloor \\
=
\left\lfloor \frac{1}{\pi} \Delta_{\arg}^{0,N} (\textbf{u}(t)) \right\rfloor
=
R_{N-1}+1,
\]
which proves $R_N = F_N$ in this case. Notice that we have used $\mathbf u(N-1) \in Q^{0,+}$ to get the final equality.
\end{case}

\begin{case} \textbf{{\boldmath$\textbf{u}(N) \in Q^{-,+}$}} Since this implies $\sgn(u(N)) \neq \sgn(u(N+1))$, one has $F_N = F_{N-1} + 1$. Notice that $\textbf{u}(N+1) \in Q^{-,-} \cup Q^{0,-} \cup Q^{+,-}$.  Using the explicit form of the homotopy used to construct $A_E^t$, we see that
\[
R_N
=
\left\lfloor \frac{1}{\pi} \Delta_{\arg}^{0,N+1}(\textbf{u}(t)) \right\rfloor
=
\left\lfloor \frac{1}{\pi} \Delta_{\arg}^{0,N}(\textbf{u}(t)) \right\rfloor + 1
=
R_{N-1}+1,
\]
which shows $F_N = R_N$ in this case.
\end{case}

The remaining cases are similar and left to the reader. Thus, $F_N = R_N$ for every $N \in \N$, so \eqref{eq:ids:rotnumber} holds.
\end{proof}

\begin{theorem}\label{t:gap:label} 
With setup as above, let $G \subseteq \R \setminus \Sigma_{\mu,f}$ be given. We have
\begin{equation} \label{eq:ids:schwarzmann}
1 - k(E)
=
\mathfrak{A}_{\overline{\mu}}(\Phi_E)
\end{equation}
for every $E \in G$.
\end{theorem}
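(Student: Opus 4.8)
The plan is to establish \eqref{eq:ids:schwarzmann} by relating the two sides to a common intermediate quantity: the average rate of change of argument of $A_E^t(\omega) \cdot e_1$ along the flow. On the one hand, Theorem~\ref{t:ids:rotnumber:continuous} already identifies $1 - k(E)$ with $\lim_{N \to \infty} \frac{1}{\pi N} \Delta_{\arg}^{0,N}(A_E^t(\omega) \cdot e_1)$ for $\mu$-almost every $\omega$. On the other hand, since $E \in G \subseteq \R \setminus \Sigma_{\mu,f}$, Johnson's theorem together with Proposition~\ref{prop:gaplabel:phiprops} shows that $\Phi_E$ is a uniformly hyperbolic continuous cocycle on the suspension $X = X(\Omega,T)$, so Theorem~\ref{t:uh:schwarzmann} applies and gives
\begin{equation}
\mathfrak{A}_{\overline{\mu}}(\Phi_E)
=
\lim_{t \to \infty} \frac{1}{\pi t} \Delta_{\arg}^{0,t}\left( \Phi_E^s([\omega,0]) \cdot v \right)
\end{equation}
for $\overline{\mu}$-almost every $[\omega,0] \in X$ and every $v \in \R\PP^1$. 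So the task reduces to matching these two limits.

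First I would unwind the definition of $\Phi_E$ on the fiber $\Omega_0 = \{[\omega,0]\}$: from $\Phi_E^t([\omega,0]) = A_E^{t}(\omega) A_E^0(\omega)^{-1} = A_E^t(\omega)$, we see that the orbit of $[\omega,0]$ under the suspension flow carries the $\R\PP^1$-dynamics given precisely by $v \mapsto A_E^t(\omega) \cdot v$. Therefore, choosing $v = \bar{e}_1$ in Theorem~\ref{t:uh:schwarzmann}, we get $\mathfrak{A}_{\overline{\mu}}(\Phi_E) = \lim_{t \to \infty} \frac{1}{\pi t} \Delta_{\arg}^{0,t}(A_E^t(\omega) \cdot e_1)$ for $\overline{\mu}$-a.e.\ $[\omega,0]$. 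The continuous-parameter limit over $t \in \R$ and the discrete-parameter limit over $N \in \N$ agree because $\Delta_{\arg}^{N,N+1}$ is uniformly bounded in $N$ (the map $t \mapsto A_E^t(\omega)$ is smooth and depends continuously on $\omega$ over the compact space $\Omega$, so the total change of argument over a unit time interval is bounded independently of $\omega$ and $N$); dividing by $\pi N$ and passing to the limit, the $O(1)$ discrepancy washes out. Hence $\mathfrak{A}_{\overline{\mu}}(\Phi_E) = \lim_{N \to \infty} \frac{1}{\pi N} \Delta_{\arg}^{0,N}(A_E^t(\omega) \cdot e_1)$ for $\overline{\mu}$-a.e.\ $[\omega,0]$.

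The remaining point is a measure-theoretic compatibility check: the full-$\overline{\mu}$-measure set on the fiber $\Omega_0$ supplied by Theorem~\ref{t:uh:schwarzmann} must be reconciled with the full-$\mu$-measure set on $\Omega$ supplied by Theorem~\ref{t:ids:rotnumber:continuous}. By the definition of the suspended measure $\overline{\mu}$, a set $S \subseteq X$ has full $\overline{\mu}$-measure iff for (Lebesgue-)almost every $s \in [0,1]$ the slice $\{\omega : [\omega,s] \in S\}$ has full $\mu$-measure; using $\overline{T}$-invariance one transfers this to the single fiber $s = 0$, so the hypothesis of Theorem~\ref{t:uh:schwarzmann} can be taken to hold for $\mu$-a.e.\ $\omega$ with $[\omega,0]$. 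Intersecting with the full-measure set from Theorem~\ref{t:ids:rotnumber:continuous}, we obtain a single $\mu$-full-measure set of $\omega$ for which both chains of equalities hold, whence $1 - k(E) = \mathfrak{A}_{\overline{\mu}}(\Phi_E)$. I expect the main obstacle to be precisely this last bookkeeping step of aligning the almost-everywhere statements across the suspension construction and confirming that the uniform-hyperbolicity input from Johnson's theorem transfers cleanly through Proposition~\ref{prop:gaplabel:phiprops}; the argument-counting and limit-comparison steps are essentially routine once Theorems~\ref{t:ids:rotnumber:continuous} and \ref{t:uh:schwarzmann} are in hand.
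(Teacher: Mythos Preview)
Your proposal is correct and follows essentially the same route as the paper's proof: both sides are identified with the common limit $\lim_{N\to\infty}\frac{1}{\pi N}\Delta_{\arg}^{0,N}(A_E^t(\omega)\cdot e_1)$ via Theorem~\ref{t:ids:rotnumber:continuous} on one side and Theorem~\ref{t:uh:schwarzmann} (together with the identification $\Phi_E^t([\omega,0])=A_E^t(\omega)$) on the other. You are in fact more explicit than the paper about the uniform-hyperbolicity prerequisite, the continuous-versus-discrete time limit, and the $\overline{\mu}$-to-$\mu$ measure compatibility, all of which the paper handles with a single ``Consequently.''
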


\begin{proof}
By Theorem~\ref{t:ids:rotnumber:continuous},
\begin{equation} \label{eq:ids:winding}
1 - k(E)
=
\lim_{N \to \infty} \frac{1}{\pi N} \Delta_{\arg}^{0,N} (A_E^t(\omega) e_1),
\quad
\mu \text{-a.e.\ } \omega \in \Omega.
\end{equation}
On the other hand, Theorem~\ref{t:uh:schwarzmann} implies
\begin{equation}
\mathfrak{A}_{\overline{\mu}}(\Phi_E)
=
\lim_{N \to \infty} \frac{1}{\pi N} \Delta_{\arg}^{0,N} (\Phi_E^t(x) e_1),
\quad
\overline{\mu} \text{-a.e.\ } x \in X.
\end{equation}
Consequently,
\begin{equation} \label{eq:uhenergy:schwarzmann}
\mathfrak{A}_{\overline{\mu}}(\Phi_E)
=
\lim_{N \to \infty} \frac{1}{\pi N} \Delta_{\arg}^{0,N} (A_E^t(\omega) e_1),
\quad
\mu \text{-a.e.\ } \omega \in \Omega.
\end{equation}
Combining \eqref{eq:ids:winding} and \eqref{eq:uhenergy:schwarzmann}, we get \eqref{eq:ids:schwarzmann}.
\end{proof}

This allows us to complete the proof of Theorem~\ref{t:gablabel}.

\begin{proof}[Proof of Theorem~\ref{t:gablabel}]
As discussed above, we know that $\schwartzmanGroup = \schwartzmanGroup(\Omega,T,\mu)$ contains $\Z$. Since $\schwartzmanGroup$ is a subgroup of $\R$, if $E$ belongs to a gap, we get
\[k(E) = 1-(1-k(E)) \in \schwartzmanGroup\]
 from Theorem~\ref{t:gap:label}. Since $0 \le k(E) \le 1$ for all $E$ by definition, the theorem is proved.
\end{proof}

\begin{remark}
Recall that we showed in Proposition~\ref{prop:flow:homotopyClassFacts} that $C^\sharp(\Omega,\T)$ is countable whenever $\Omega$ is compact. Since (by Theorem~\ref{t:schwarzmann}), the Schwartzman homomorphism is constant on homotopy classes, it follows that the range of the Schwartzman homomorphism is a \emph{countable} set. Thus, there is a fixed countable set of gap labels that only depends on the base dynamics $(\Omega,T)$.
\end{remark}

\section{Almost-Periodic Potentials}\label{sec:ap}

With the general theorem proved, it is naturally of interest to compute the group $\schwartzmanGroup(\Omega,T,\mu)$ for specific examples. In this section, we begin this endeavor by computing $\schwartzmanGroup(\Omega,T,\mu)$ for almost-periodic potentials. For reference, this section will address the claims from Example~\ref{ex:ap}.

We will start off by studying two special cases of almost-periodic potentials: periodic and quasi-periodic potentials. In each of those two cases, one can identify the groups $\schwartzmanGroup(\Omega,T)$ in a few lines, whereas the characterization of $\schwartzmanGroup(\Omega,T)$ for general almost-periodic hulls takes more work.

\subsection{Examples}

As a warm-up, let us see how the gap labels arise from the Schwartzman homomorphism in the setting of periodic Schr\"odinger operators. Of course, one can produce the gap labels directly from Floquet theory; the point is to demonstrate the general theory in the simplest example first.

Suppose $V:\Z \to \R$ satisfies $V(n+p) = V(n)$ for some $p \in \N$ and every $n \in \Z$. This can be realized as an ergodic operator with 
\begin{equation}
\Omega = \Z_p, \quad T[k] = [k + 1], \quad f([k]) = V(k),
\end{equation}
 where $[k]$ denotes the class of $k \in \Z$ in $\Z_p$. Of course, $(\Z_p,T)$ is minimal and uniquely ergodic with unique invariant measure given by normalized counting measure.

\begin{prop} \label{prop:ap:schwartzOfZp}
Let $p \in \N$, $\Omega = \Z_p$, and $T:\Omega \to \Omega$ be given by $T[k]=[k+1]$. We have
\[\schwartzmanGroup(\Omega,T) = \frac{1}{p}\Z = \set{\frac{n}{p} : n \in \Z}.\]
\end{prop}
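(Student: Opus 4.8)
## Proof Proposal

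The plan is to compute the suspension $X = X(\Z_p, T)$ explicitly, identify it as a circle, and then directly describe the homotopy classes of maps $X \to \T$ and the action of the Schwartzman homomorphism on them. Since $(\Z_p, T)$ is uniquely ergodic, there is no need to track the measure dependence, and the unique invariant measure $\mu$ on $\Z_p$ is normalized counting measure, with suspension $\overline{\mu}$.

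First I would observe that $\Z_p$ is a set of $p$ points on which $T$ acts as a cyclic permutation of order $p$, so $T$ has a single orbit. Using the $[0,1]$-realization of the suspension from the remark following Definition~\ref{def:flow:suspension} — namely $\Omega \times [0,1]/\!\sim$ with $(\omega,1)\sim(T\omega,0)$ — the space $X$ is obtained by gluing $p$ copies of the interval $[0,1]$ (indexed by $\Z_p$) end-to-end in the cyclic order dictated by $T$, so that $X$ is homeomorphic to a circle of ``length'' $p$. Concretely, the map $[k,s] \mapsto (k+s)/p \in \R/\Z = \T$ is a homeomorphism $h: X \to \T$ that conjugates the suspension flow $\oT^t$ to the constant-speed rotation flow $s \mapsto s + t/p$ on $\T$, and $h_*\overline{\mu}$ is normalized Lebesgue measure on $\T$.

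Second I would use that homotopy classes of maps $\T \to \T$ are classified by degree, so $C^\sharp(X,\T) \cong C^\sharp(\T,\T) \cong \Z$, with a representative of the class of degree $n$ given by $\phi_n(s) = ns$. It remains to compute $\mathfrak{A}_{\overline{\mu}}(\phi_n)$. Since $\phi_n$ is visibly $C^1$ along the flow on $\T$ (it is smooth), Theorem~\ref{t:schwarzmann} gives $\mathfrak{A}_{\overline{\mu}}(\phi_n) = \int_X \partial \phi_n \, d\overline{\mu}$; here the lift of $t \mapsto \phi_n(h\,\oT^t x)$ is $t \mapsto n(s_0 + t/p)$, so $\partial\phi_n \equiv n/p$ is constant, and therefore $\mathfrak{A}_{\overline{\mu}}(\phi_n) = n/p$. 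As $n$ ranges over $\Z$ this gives exactly $\schwartzmanGroup(\Omega,T) = p^{-1}\Z$.

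The only mild obstacle is the bookkeeping of the first step: one must check carefully that the gluing of the $p$ intervals really produces a single circle (this uses precisely that $T$ is a $p$-cycle on $\Z_p$, i.e.\ minimality) and that the explicit formula $h([k,s]) = (k+s)/p$ is well-defined on equivalence classes, continuous, bijective, and flow-conjugating. Once $X \cong \T$ with the flow identified as uniform rotation at speed $1/p$, the remaining computation of $C^\sharp(X,\T)$ and of the rotation numbers is routine. An alternative, essentially equivalent route would avoid naming $h$ and instead argue that the time-$p$ map of the suspension flow returns each fiber to itself via $T^p = \mathrm{id}$, so orbits of the flow have period $p$, and any $\phi \in C(X,\T)$ that is $C^1$ along the flow has $\mathfrak{A}_{\overline{\mu}}(\phi) = \tfrac1p \int_0^p (\partial\phi)(\oT^s x)\,ds$ equal to $\tfrac1p$ times an integer (the net winding of $\phi$ over one period), again yielding $p^{-1}\Z$; combined with Remark~\ref{rem:gl:AcontainsZ} one gets the reverse inclusion $p^{-1}\Z \subseteq \schwartzmanGroup$ from the explicit maps $[k,s]\mapsto (k+s)/p$.
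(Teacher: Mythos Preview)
Your proposal is correct and follows essentially the same approach as the paper: you identify the suspension with $\T$ via the same explicit map $[k,s]\mapsto (k+s)/p$, reduce to the degree classification of maps $\T\to\T$, and compute the rotation number of the degree-$n$ representative to be $n/p$. The only cosmetic difference is that you compute $\mathfrak{A}_{\overline{\mu}}$ by integrating $\partial\phi_n$, whereas the paper takes the limit of $\widetilde\phi^{(n)}_x(t)/t$ directly.
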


\begin{proof}
The reader can verify that $X=X(\Omega,T)$ is homeomorphic to the circle $\T$ via the identification  given by
\begin{equation} \label{eq:suspOfZp}
X \ni\big[[k],t\big] \mapsto p^{-1}(k+t) \in \T.
\end{equation}
Since every continuous map $\T \to \T$ is homotopic to $x \mapsto nx$ for some $n \in \Z$, \eqref{eq:suspOfZp} implies that every map $\phi:X \to \T$ is homotopic to
\[\phi^{(n)}: [0,t] \mapsto nt/p \in \T \]
for some $n \in \Z$ (notice that this defines $\phi^{(n)}$ for all $x\in X$, since $[\bar k, t] = [0,t+k]$). For any choice of base point $x \in X$, one considers $\phi_x^{(n)}:\R \to \T$ given by $\phi_x^{(n)}(t) = \phi^{(n)}(T^tx)$ as in Section~\ref{subsec:schwartzman}. One readily sees that the lift $\widetilde{\phi}^{(n)}_x$ is of the form $nt/p+C$ for some $C=C(x)$ and thus
\[\lim_{t\to\infty} \frac{\widetilde\phi_x^{(n)}(t)}{t}
= \lim_{t\to \infty} \frac{nt/p+ C}{t} = \frac{n}{p},\]
 which shows that the range of $\mathfrak{A}$ is contained in $p^{-1}\Z$, as claimed. Moreover, since every $\phi^{(n)}$ is continuous, the previous calculation also shows that the range  contains $p^{-1}\Z$.
\end{proof}

Before considering the case of general almost-periodic potentials, let us consider the class of quasi-periodic potentials, which as we will see in Theorem~\ref{t:ap:freqmod:char} is a strict superset of the set of periodic potentials and a strict subset of the set of all almost-periodic potentials. Here, the base dynamics are given by $R_\alpha: \omega\mapsto\omega+\alpha$ with $\alpha \in \R^d$ having rationally independent coordinates.\footnote{Recall that we write $\alpha$ both for the vector in $\R^d$ and its projection to $\T^d$.} In this case, it is known that $(\T^d,R_\alpha)$ is strictly ergodic with Lebesgue measure supplying the unique invariant measure; see \cite[Propositions~1.4.1, 4.2.2, and 4.2.3]{KatokHassel1995Book}.

\begin{theorem} \label{t:ap:qpschwartzmangroup}
Let $\alpha \in \R^d$ have rationally independent coordinates. One has
\[\schwartzmanGroup(\T^d,R_\alpha) = \Z + \alpha\Z^d = \set{k_0 + \sum_{j=1}^d k_j\alpha_j : k_j \in \Z \ \forall 0 \le j \le d}.\]
\end{theorem}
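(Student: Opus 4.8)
The strategy is to compute the Schwartzman group $\schwartzmanGroup(\T^d,R_\alpha) = \mathfrak{A}_{\overline{\mu}}(C^\sharp(X,\T))$ directly, where $X = X(\T^d,R_\alpha)$ is the suspension and $\overline{\mu}$ is the suspension of Lebesgue measure on $\T^d$. The key observation is that the suspension of the translation $R_\alpha$ on $\T^d$ is homeomorphic to a torus $\T^{d+1}$, with the flow $\oT^t$ becoming the linear flow in the direction $(\alpha_1,\dots,\alpha_d,1)$. Indeed, one checks that the map $[\omega,s] \mapsto (\omega + s\alpha, s) \bmod \Z^{d+1}$ (using the $\Omega \times [0,1]/\!\sim$ realization of the suspension, with $(\omega,1)\sim(\omega+\alpha,0)$) is a homeomorphism $X \to \T^{d+1}$ carrying $\overline{\mu}$ to normalized Lebesgue measure on $\T^{d+1}$ and carrying $\oT^t$ to the flow $(\theta,\tau) \mapsto (\theta + t\alpha, \tau + t)$.

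First I would establish this homeomorphism carefully and transport all structures through it, so that the problem becomes: compute the range of the Schwartzman homomorphism for the linear flow on $\T^{d+1}$ with velocity vector $\beta := (\alpha,1) \in \R^{d+1}$ and Lebesgue measure. Second, I would identify $C^\sharp(\T^{d+1},\T)$: every continuous map $\T^{d+1} \to \T$ is homotopic to a linear character $\chi_k: x \mapsto \langle k, x\rangle \bmod \Z$ for a unique $k \in \Z^{d+1}$ (this is standard; it follows because $[\T^{d+1},\T] \cong H^1(\T^{d+1};\Z) \cong \Z^{d+1}$, or more elementarily by averaging/lifting as in the proof of Proposition~\ref{prop:ap:schwartzOfZp}). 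Third, for such a character $\chi_k$, the function $\phi_x(t) = \chi_k(\oT^t x) = \langle k, x + t\beta\rangle \bmod \Z$ has the lift $\widetilde{\phi}_x(t) = \langle k, x\rangle + t\langle k,\beta\rangle$, which is $C^1$ along the flow with $\partial\phi \equiv \langle k,\beta\rangle$, so by Theorem~\ref{t:schwarzmann}, $\mathfrak{A}_{\overline{\mu}}(\chi_k) = \langle k,\beta\rangle = k_0 + \sum_{j=1}^d k_j\alpha_j$ (writing $k = (k_1,\dots,k_d,k_0)$ for convenient indexing). Letting $k$ range over $\Z^{d+1}$ gives exactly $\Z + \alpha\Z^d$, as desired.

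The main obstacle — or rather the one place requiring genuine care — is verifying that the claimed map $X \to \T^{d+1}$ is well-defined, continuous, and a homeomorphism intertwining the flows and measures; this is elementary but must be done with the right conventions to match the equivalence relation defining the suspension. A secondary point is justifying that $C^\sharp(\T^{d+1},\T)$ is exactly $\Z^{d+1}$: one should either cite the isomorphism $[\mathcal{Y},\T] \cong H^1(\mathcal{Y};\Z)$ for CW complexes, or give the self-contained argument that $\phi - \chi_k$ is null-homotopic for suitable $k$ (obtained from the induced map on $H_1$, equivalently from integrating $\phi$ over the standard loops), at which point Proposition~\ref{p:nullhomo:lifting} applies. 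Everything after the homeomorphism is then a one-line Birkhoff computation via Theorem~\ref{t:schwarzmann}, exactly parallel to the periodic case in Proposition~\ref{prop:ap:schwartzOfZp}. Note also that by Example~\ref{ex:ap}(2) the constant $\alpha_0 = 1$ is included in the indexing, consistent with $k_0$ ranging over $\Z$.
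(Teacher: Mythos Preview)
Your proposal is correct and follows essentially the same approach as the paper: identify the suspension with $\T^{d+1}$ via $[\omega,t]\mapsto(\omega+t\alpha,t)$, use that every map $\T^{d+1}\to\T$ is homotopic to a character $x\mapsto\langle k,x\rangle$, and compute $\mathfrak{A}_{\overline{\mu}}(\chi_k)=\langle k,(\alpha,1)\rangle$. The paper's proof is terser (deferring the final computation to the analogous one in Proposition~\ref{prop:ap:schwartzOfZp}), but the strategy is identical.
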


\begin{proof}
The suspension $X = X(\T^d,R_\alpha)$ is homeomorphic to $\T^{d+1}$ via the map
\begin{equation}
X \ni [\omega,t] \mapsto [\omega+t\alpha,t].
\end{equation}
Since every continuous map $\T^{d+1} \to \T$ is homotopic to\footnote{This is well known and not hard to show using that any map $\T \to \T$ is homotopic to $x\mapsto nx$ for some $n \in \Z$. Alternatively, it also follows from the more general discussion below in Section~\ref{subsec:apgen}.} $x\mapsto \langle k,x \rangle$ for some $k \in \Z^{d+1}$, the result follows from a calculation similar to the conclusion of the proof of Proposition~\ref{prop:ap:schwartzOfZp}.
\end{proof}

\subsection{Generalities about Almost-Periodic Sequences}
We now move to a discussion of general almost-periodic sequences. The first goal is to place a general almost-periodic potential in the setting of dynamically defined potentials. One can accomplish this by showing that the shift on the hull of an almost-periodic sequence can be viewed as a translation of a suitable compact group.

First, we recall some background without proofs. Let $S$ denote the shift $[SV](n) = V(n+1)$ for $V \in \ell^\infty(\Z)$ and $n \in \Z$. Given $V \in \ell^\infty(\Z)$, the \emph{orbit} of $V$ is the set
\[\orb(V) = \set{V(\cdot - k) : k \in \Z}\]
of all translates of $V$. The \emph{hull} of $V$ is the closure of the orbit in $\ell^\infty$:
\[ \hull(V) = \overline{\orb(V)}^{\ell^\infty(\Z)}. \]
We say that $V$ is \emph{almost-periodic} if $\hull(V)$ is compact in $\ell^\infty(\Z)$. The reader can readily check that a given $V \in \ell^\infty(\Z)$ is almost-periodic if and only if for all $\varepsilon>0$, the $\varepsilon$ almost-periods of $V$ are relatively dense in $\Z$ (recall $p\in\Z$ is an $\varepsilon$ almost-period of $V$ if $\|V-S^pV\|_\infty < \varepsilon$).

Given $V \in \ell^\infty(\Z)$ almost-periodic, it is well known (and not hard to show) that the pairing $\oast:\orb(V) \times \orb(V) \to \orb(V)$ given by
\begin{equation}
S^kV \oast S^\ell V := S^{k+\ell}V
\end{equation}
is uniformly continuous, and thus extends uniquely to a uniformly continuous binary operation on $\hull(V)$. One has:

\begin{prop}
If $V$ is almost-periodic, then $(\hull(V),\oast)$ is a compact abelian topological group with a dense cyclic subgroup {\rm(}namely $\orb(V)${\rm)}.
\end{prop}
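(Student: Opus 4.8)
The plan is to verify the group axioms for $(\hull(V),\oast)$ directly, using the density of the orbit and the uniform continuity of $\oast$. First I would recall that the orbit $\orb(V)$ together with $\oast$ is a cyclic group isomorphic to $\Z$ (or to $\Z_p$ if $V$ is periodic, but the genuinely new content is the aperiodic case): the identity is $V$ itself, and $(S^kV)^{-1} = S^{-k}V$, with $S^kV\oast S^\ell V = S^{k+\ell}V$. Since $\orb(V)$ is dense in the compact metric space $\hull(V)$ and $\oast$ extends to a (uniformly) continuous map $\hull(V)\times\hull(V)\to\hull(V)$ by hypothesis, the associativity identity $(x\oast y)\oast z = x\oast(y\oast z)$ holds on the dense set $\orb(V)^3$ and hence, by continuity of both sides as maps $\hull(V)^3\to\hull(V)$, on all of $\hull(V)^3$. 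Thus $(\hull(V),\oast)$ is an associative topological monoid with identity $V$, and it is abelian for the same density/continuity reason.

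The one point requiring slightly more care is the existence of inverses and the continuity of inversion. Here I would argue as follows: the map $\iota_0: S^kV \mapsto S^{-k}V$ on $\orb(V)$ is an isometry for the $\ell^\infty$ metric (since $\|S^kV - S^\ell V\|_\infty = \|S^{-k}V - S^{-\ell}V\|_\infty$, both equalling $\|V - S^{\ell-k}V\|_\infty$ after applying the shift), so it is uniformly continuous and extends uniquely to a continuous map $\iota:\hull(V)\to\hull(V)$. Since $x\oast\iota(x) = V$ holds on the dense set $\orb(V)$ and both sides are continuous in $x$, it holds throughout $\hull(V)$; hence every element has an inverse, given by $\iota$, and $\iota$ is continuous. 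Therefore $(\hull(V),\oast)$ is a topological group. Compactness is exactly the definition of $V$ being almost-periodic. Finally, $\orb(V)$ is a dense subgroup which is cyclic, generated by $SV = V(\cdot - 1)$ (equivalently by $S^{-1}V$).

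The main (and really the only) obstacle is a bookkeeping subtlety rather than a conceptual one: one must make sure that the extension of $\oast$ to $\hull(V)$ given to us is genuinely a \emph{jointly} continuous binary operation, so that identities verified on the dense subset $\orb(V)$ genuinely propagate. This is where the uniform continuity of $\oast$ on $\orb(V)\times\orb(V)$ (asserted just before the proposition) is essential: a uniformly continuous map on a dense subset of a metric space extends uniquely to a continuous map on the whole space, and the extension of a map that happens to land in the subgroup structure inherits the algebraic relations by a standard $\varepsilon$-argument. I would state this extension principle once and then invoke it for associativity, commutativity, the identity law, and the inverse law in turn. No estimates beyond the triangle inequality and the shift-invariance of the sup norm are needed, so I would not belabor them.
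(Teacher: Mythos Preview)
The paper does not actually supply a proof of this proposition; it states the result and moves on, treating it as well known. Your argument is correct and provides exactly the details the paper omits: extending associativity, commutativity, and the identity law from $\orb(V)$ to $\hull(V)$ by density and joint continuity, and handling inversion by observing that $S^kV\mapsto S^{-k}V$ is an $\ell^\infty$-isometry (hence uniformly continuous, hence extendible). One small slip: with the paper's convention $[SV](n)=V(n+1)$, one has $SV=V(\cdot+1)$, not $V(\cdot-1)$; this does not affect the argument since the orbit is the same set either way.
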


The allows one to characterize almost-periodic sequences as precisely those that are generated by continuously sampling translations of compact monothetic groups:

\begin{prop} \label{p:ap:char:mintrans}
Let $V \in \ell^\infty(\Z)$ be given.
The following are equivalent:
\begin{enumerate}
\item[{\rm(a)}] $V$ is almost-periodic.
\item[{\rm(b)}] There exist a compact monothetic metrizable group $\Omega$ {\rm(}with dense cyclic subgroup generated by $\alpha${\rm)}, a continuous function $f:\Omega \to \R$, and an element $\omega_0 \in \Omega$ such that
 \begin{equation}\label{e.apdyndefrep}
V(n) = f(\omega_0 + n\alpha), \quad n \in \Z.
\end{equation}
\end{enumerate}
\end{prop}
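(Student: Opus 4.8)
The plan is to establish the two implications separately, with (b)$\Rightarrow$(a) being essentially immediate and (a)$\Rightarrow$(b) being the substantive direction that packages the preceding structural results.

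\emph{The direction (b)$\Rightarrow$(a).} Suppose $V(n) = f(\omega_0 + n\alpha)$ for a compact monothetic metrizable group $\Omega$, continuous $f$, and generator $\alpha$. First I would observe that $\hull(V)$ is the continuous image of $\Omega$ under the map $\omega \mapsto (n \mapsto f(\omega + n\alpha))$, which is continuous from $\Omega$ into $\ell^\infty(\Z)$ by uniform continuity of $f$ on the compact group $\Omega$ (indeed, for $\omega,\omega'$ close in $\Omega$, all translates $\omega + n\alpha$ and $\omega' + n\alpha$ are uniformly close, so the corresponding sequences are uniformly close). This map sends the dense cyclic subgroup $\langle\alpha\rangle$ onto $\orb(V)$, hence its image is a compact set containing $\overline{\orb(V)} = \hull(V)$; since the image is also contained in $\hull(V)$ (each $\omega + k\alpha$ with $k \in \Z$ maps into $\orb(V)$ and continuity plus density does the rest), we get that $\hull(V)$ is compact. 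Therefore $V$ is almost-periodic.

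\emph{The direction (a)$\Rightarrow$(b).} This is where the earlier results do the work. Assume $V$ is almost-periodic. By the Proposition immediately preceding this statement, $(\hull(V), \oast)$ is a compact abelian topological group with dense cyclic subgroup $\orb(V)$, i.e.\ it is monothetic; it is metrizable since it is a compact metric space as a subset of $\ell^\infty(\Z)$ (separability of $\ell^\infty(\Z)$ is not needed — $\hull(V)$ is compact metric in the subspace topology). Take $\Omega = \hull(V)$, let $\alpha = SV \in \hull(V)$ (a generator of the dense cyclic subgroup, since $S^k V = \alpha^{\oast k}$ in the group and these are dense), and let $\omega_0 = V \in \hull(V)$ be the identity-shifted base point. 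Define $f : \Omega \to \R$ by $f(W) = W(0)$, the evaluation-at-zero functional, which is manifestly continuous on $\ell^\infty(\Z)$ hence on $\Omega$. Then one computes
\begin{equation}
f(\omega_0 + n\alpha) = f(V \oast S^n V \text{-type element}) = (S^n V)(0) = V(n),
\end{equation}
using that in the group $(\hull(V),\oast)$ the element $\omega_0 + n\alpha$ corresponds to $S^n V$ (since $\oast$ extends $S^kV \oast S^\ell V = S^{k+\ell}V$ and $\omega_0 = S^0 V$). This gives \eqref{e.apdyndefrep}.

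\emph{Main obstacle.} The routine direction (b)$\Rightarrow$(a) has no real obstacle. In direction (a)$\Rightarrow$(b), the one point requiring care is the identification of the group operation $\oast$ on all of $\hull(V)$ with shifts in a way that makes $f(\omega_0 + n\alpha) = V(n)$ literally correct — that is, verifying that the abstractly-extended operation $\oast$, when restricted to the orbit and evaluated through the base point $\omega_0 = V$, really does produce $S^n V$ and not some twisted version. This is handled by the defining relation $S^k V \oast S^\ell V = S^{k+\ell} V$ together with the fact that $\omega_0 = V = S^0 V$ is the group identity, so it is bookkeeping rather than a genuine difficulty; the heavy lifting (compactness of the group, continuity of $\oast$, density of the cyclic subgroup) has already been done in the cited Proposition. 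I would also remark, for completeness, that metrizability of $\Omega$ in (b) is automatic here since $\hull(V) \subseteq \ell^\infty(\Z)$ is compact metric, matching the hypotheses needed later when one passes to the suspension and applies the Schwartzman formalism.
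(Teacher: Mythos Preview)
Your proposal is correct and follows essentially the same approach as the paper: for (a)$\Rightarrow$(b) you take $\Omega=\hull(V)$, $\omega_0=V$, $\alpha=SV$, and $f(\omega)=\omega(0)$, and for (b)$\Rightarrow$(a) you show $\hull(V)$ is the continuous image of the compact group $\Omega$ under $\omega\mapsto V_\omega$. The only differences are cosmetic (you include more commentary on metrizability and the bookkeeping for $\oast$).
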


\begin{proof}
$\boxed{(\mathrm{a})\implies(\mathrm{b}).}$ If $V$ is almost-periodic, we can write it in the form \eqref{e.apdyndefrep} with $\Omega = \hull(V)$, $\omega_0 = V$, $f(\omega) = \omega(0)$ (which is continuous), and $\alpha = SV$. The reader should recall that we write the group operation as $\oast$ in this case.

$\boxed{(\mathrm{b})\implies(\mathrm{a}).}$ Assume that $V$ is of the form \eqref{e.apdyndefrep}. For each $\omega \in \Omega$, define $V_\omega \in \ell^\infty(\Z)$ by
\begin{equation}
V_\omega(n)
=
f(\omega + n\alpha),
\quad
n \in \Z.
\end{equation}
The reader may readily check that $V_\omega \in \hull(V)$ for every $\omega \in \Omega$ and that the mapping $\omega \mapsto V_\omega$ is continuous and maps $\Omega$ onto $\hull(V)$. Thus, the hull of $V$ is the continuous image of a compact set and hence is compact.
\end{proof}

In this section we single out several prominent subclasses of almost-periodic sequences, prove that they are indeed almost-periodic, and characterize them in terms of their hulls.
Let us begin with the genuinely periodic sequences. The following is not hard to check and is left to the reader.

\begin{prop} \label{p:per:hull:char}\mbox{}
\begin{enumerate}
\item[{\rm(a)}] Every periodic sequence is almost-periodic.
\item[{\rm(b)}] An almost-periodic sequence is periodic if and only if its hull is isomorphic to $\Z_p$ for some $p \in \N$.
\item[{\rm(c)}] An almost-periodic sequence is aperiodic if and only if no point of its hull is isolated.
\end{enumerate}
\end{prop}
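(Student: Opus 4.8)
\textbf{Proof plan for Proposition~\ref{p:per:hull:char}.} The plan is to prove the three parts in order, with part (a) essentially immediate, part (b) following from matching up the two group structures, and part (c) being a consequence of (b) together with a homogeneity argument. For part (a), given $V$ with period $p$, the orbit $\orb(V)$ consists of at most $p$ distinct sequences $V(\cdot-0),\dots,V(\cdot-(p-1))$, hence is already closed in $\ell^\infty(\Z)$ and finite, so $\hull(V) = \orb(V)$ is compact (indeed finite) and $V$ is almost-periodic. This also identifies $\hull(V)$ as a group: $(\hull(V),\oast)$ is generated by $SV$, which has order dividing $p$, so it is a cyclic group of order dividing $p$; when $p$ is the minimal period it is exactly $\Z_p$.

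\textbf{Part (b).} The forward implication follows from the computation just described: if $V$ is periodic with minimal period $p$, then $\hull(V) \cong \Z_p$. For the converse, suppose $V$ is almost-periodic with $\hull(V) \cong \Z_p$ for some $p$. Using Proposition~\ref{p:ap:char:mintrans} in the concrete realization $\Omega = \hull(V)$, $\alpha = SV$, $\omega_0 = V$, $f(\omega) = \omega(0)$, we have $V(n) = f(\omega_0 \oast n\alpha)$. Since $\hull(V)$ has exactly $p$ elements and $\alpha$ generates it cyclically, $n\alpha$ depends only on $n \bmod p$; hence $V(n+p) = f(\omega_0 \oast (n+p)\alpha) = f(\omega_0 \oast n\alpha) = V(n)$ for all $n$, so $V$ is periodic. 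The one point requiring a line of care is that the abstract isomorphism $\hull(V) \cong \Z_p$ is an isomorphism of topological groups carrying the canonical generator $SV$ to a generator of $\Z_p$; this is automatic because any isomorphism $\Z_p \to \Z_p$ is multiplication by a unit, so it sends generators to generators, and one may as well replace it by one sending $SV \mapsto [1]$.

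\textbf{Part (c).} ``Aperiodic'' means not periodic, so by part (b) this is equivalent to $\hull(V)$ not being isomorphic to any $\Z_p$, i.e.\ $\hull(V)$ being infinite (it is a compact monothetic group, hence either finite cyclic or infinite). It therefore suffices to show: a compact monothetic metrizable group $\Omega$ has an isolated point if and only if it is finite. If $\Omega$ is finite it is discrete, so every point is isolated. Conversely, if some point $\omega_*$ is isolated, then by translation-invariance of the topology (the maps $R_g$ are homeomorphisms) \emph{every} point is isolated, so $\Omega$ is discrete; being also compact, it is finite. Transporting this back through $\omega \mapsto V_\omega$ (a homeomorphism onto $\hull(V)$, as in Proposition~\ref{p:ap:char:mintrans}), we conclude that $V$ is aperiodic iff $\hull(V)$ has no isolated point.

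\textbf{Main obstacle.} There is no real obstacle here; the only place demanding attention is the bookkeeping in part (b) around promoting the abstract group isomorphism $\hull(V) \cong \Z_p$ to one respecting the distinguished generator, and in making sure the equivalence in (c) correctly uses that a compact monothetic metrizable group is \emph{either} finite cyclic \emph{or} infinite (so that ``not $\cong \Z_p$ for any $p$'' is the same as ``infinite''). These are routine and the paper's phrasing ``not hard to check and is left to the reader'' is justified; the write-up above simply records the argument for completeness.
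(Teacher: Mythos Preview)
Your argument is correct. The paper itself leaves this proposition entirely to the reader (``not hard to check and is left to the reader''), so there is no proof to compare against; your write-up supplies exactly the routine verification the authors had in mind. Two minor remarks: in (b) the care about matching generators is unnecessary, since finiteness of $\hull(V)$ alone forces $S^pV=V$ for $p=|\hull(V)|$; and in (c) the final ``transporting back'' sentence is superfluous, as you are already working directly in $\hull(V)$.
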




\begin{definition}
We say that $V \in \ell^\infty(\Z)$ is \emph{limit-periodic}\index{limit-periodic} if it belongs to the $\ell^\infty$-closure of the set of periodic points of $S$. More precisely, $V$ is limit-periodic if there is a sequence $(V_j)_{j=1}^\infty$ such that $V_j \in \ell^\infty(\Z)$ for each $j$, $V_j$ is periodic for every $j$, and
\begin{equation}
\lim_{j \to \infty} \|V - V_j\|_\infty
=
0.
\end{equation}
\end{definition}

\begin{prop} \label{p.almost.per.lim.per}\mbox{}
\begin{enumerate}
\item[{\rm(a)}] Every limit-periodic sequence is almost-periodic.
\item[{\rm(b)}] An almost-periodic sequence is limit-periodic if and only if its hull is totally disconnected.
\end{enumerate}
\end{prop}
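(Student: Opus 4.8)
The plan is to handle the two assertions separately, with (a) being a routine closure/compactness argument and (b) being the substantive statement. For part (a), I would argue that the set of almost-periodic sequences is closed in $\ell^\infty(\Z)$: if $V_j \to V$ uniformly with each $V_j$ almost-periodic, then given $\varepsilon > 0$ one picks $j$ with $\|V - V_j\|_\infty < \varepsilon/3$, and every $\varepsilon/3$ almost-period of $V_j$ is an $\varepsilon$ almost-period of $V$; since the former are relatively dense, so are the latter, whence $V$ is almost-periodic by the almost-period criterion recalled before Proposition~\ref{p:ap:char:mintrans}. Since every periodic sequence is almost-periodic (Proposition~\ref{p:per:hull:char}(a)) and limit-periodic sequences are by definition uniform limits of periodic ones, (a) follows.

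For part (b), let $V$ be almost-periodic with hull $\Omega = \hull(V)$, a compact abelian topological group. I would prove both implications.

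$\boxed{\Longrightarrow}$ Suppose $V$ is limit-periodic, say $V_j \to V$ uniformly with $V_j$ periodic of period $p_j$. After passing to a subsequence and replacing $p_j$ by $p_1 p_2 \cdots p_j$, we may assume $p_1 \mid p_2 \mid p_3 \mid \cdots$. I claim $\hull(V)$ is an inverse limit of the finite groups $\Z_{p_j}$, hence totally disconnected. Concretely: the orbit map $k \mapsto S^k V$ extends to a continuous surjective group homomorphism $\Z_{p_j} \to \hull(V_j)$, and compatibility of these as $j$ grows, together with $V_j \to V$, exhibits $\hull(V)$ as a closed subgroup of $\varprojlim \Z_{p_j}$; since $\hull(V)$ contains a dense cyclic subgroup mapping onto each $\Z_{p_j}$, in fact $\hull(V) = \varprojlim \Z_{p_j}$. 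An inverse limit of finite discrete groups is totally disconnected (it is a closed subgroup of the product $\prod_j \Z_{p_j}$, which is totally disconnected), proving this direction. Alternatively, and perhaps more cleanly: uniform approximation by $V_j$ of period $p_j$ shows that for each $j$ the projection $\Omega \to \Omega / \overline{\langle p_j \alpha \rangle}$ has finite image approximating the identity on $\Omega$ to within $\|V - V_j\|_\infty$ in the relevant metric, so $\Omega$ admits arbitrarily fine partitions into clopen cosets, which forces total disconnectedness.

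$\boxed{\Longleftarrow}$ Suppose $\Omega = \hull(V)$ is totally disconnected. A compact totally disconnected abelian group has a neighborhood basis at the identity consisting of open (hence also closed, by compactness) subgroups $\{U_i\}$, and $\Omega / U_i$ is a finite abelian group for each $i$. With $\alpha = SV$ the generator of the dense cyclic subgroup $\orb(V)$, the image of $\alpha$ in the finite group $\Omega/U_i$ has some finite order $p_i$, so $p_i \alpha \in U_i$. Using $f(\omega) = \omega(0)$ and $\omega_0 = V$ as in Proposition~\ref{p:ap:char:mintrans}, define $V_i$ by first pushing to $\Omega/U_i$: since $f$ is continuous on the compact group $\Omega$ and $U_i$ shrinks to $\{e\}$, for any $\varepsilon$ we can (for $i$ large) find a continuous $\Omega/U_i$-factor-through approximation $\tilde f_i$ with $\|f - \tilde f_i \circ \pi_i\|_\infty < \varepsilon$, where $\pi_i : \Omega \to \Omega/U_i$ is the quotient; then $V_i(n) = \tilde f_i(\pi_i(\omega_0 + n\alpha))$ is periodic with period dividing $p_i$ and $\|V - V_i\|_\infty < \varepsilon$. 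Letting $\varepsilon \downarrow 0$ exhibits $V$ as a uniform limit of periodic sequences, i.e.\ limit-periodic. The only point needing a word of care is the existence of $\tilde f_i$: this is because the clopen cosets of $U_i$ form a partition of $\Omega$ into finitely many sets on each of which $f$ oscillates by less than $\varepsilon$ once $U_i$ is small enough (using uniform continuity of $f$ and compactness of $\Omega$), so one simply averages $f$ over each coset.

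The main obstacle is the $\boxed{\Longleftarrow}$ direction, specifically the structure theory input that a compact totally disconnected abelian group has a neighborhood basis of open subgroups at the identity (equivalently, is profinite). This is standard but it is the one nontrivial external ingredient; everything else is bookkeeping with quotients, uniform continuity, and the characterization of almost-periodicity via relatively dense almost-periods.
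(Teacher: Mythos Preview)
Your proof is correct in outline, but the inverse-limit argument in the $\Longrightarrow$ direction of (b) does not stand as written: the claim $\hull(V) = \varprojlim \Z_{p_j}$ is false in general (take $V$ itself $2$-periodic and $V_j = V$, so after your replacement $p_j = 2^j$, the inverse limit is the $2$-adic integers while $\hull(V) \cong \Z_2$), and even the weaker claim that $\hull(V)$ embeds as a closed subgroup requires constructing continuous homomorphisms $\hull(V) \to \Z_{p_j}$, which is essentially the content of your ``alternatively'' paragraph rather than something given for free. Your alternative---that $\overline{\langle p_j\alpha\rangle}$ is a closed subgroup of finite index whose cosets have diameter $O(\|V-V_j\|_\infty)$---is correct and matches the paper's approach: the paper calls this subgroup $\hull^{p_j}(V)$ and observes it is clopen as the complement of finitely many closed cosets.

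The substantive methodological difference is in the $\Longleftarrow$ direction. You invoke the structure theorem that a compact totally disconnected (abelian) group has a neighborhood basis of open subgroups at the identity, then average $f$ over cosets to produce the periodic approximant. The paper avoids this external input entirely: it takes only a clopen \emph{neighborhood} $N$ of the identity (not a subgroup), chosen small enough that $\|W_1 \oast W_2 - W_1\|_\infty < \varepsilon/2$ for all $W_2 \in N$, uses almost-periodicity to find $p$ with $S^pV \in N$, and then argues (via the positive distance $\delta$ between $N$ and its complement, together with the fact that $S$ is an isometry) that $S^{kp}V \in N$ for all $k \in \Z$. The periodic approximant is then simply the $p$-periodic extension of $V|_{[0,p-1]}$. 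Your route is cleaner if one is willing to quote the profiniteness fact; the paper's route is more self-contained, needing only the uniform continuity of $\oast$ and elementary metric reasoning.
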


\begin{proof}
(a) Suppose $V$ is limit-periodic. For each $j \in \N$, choose $V_j \in \ell^\infty(\Z)$ and $p_j \in \N$ such that $S^{p_j}V_j = V_j$ and $\lim_{j \to \infty} \|V - V_j\|_\infty = 0$. It suffices to prove that $\orb(V)$ is totally bounded in order to show that $V$ is almost-periodic. Given $\varepsilon > 0$, choose $j$ such that
$$
\|V - V_j\|_\infty < \varepsilon.
$$
We claim that $\orb(V)$ is contained in the following finite union of $\varepsilon$-balls:
$$
\bigcup_{k = 0}^{p_j - 1} B(S^kV_j,\varepsilon).
$$
Indeed, given $\ell \in \Z$, choose $0 \le k < p_j$ with $\ell \equiv k \mod p_j$. Then, by $p_j$-periodicity of $V_j$ and isometry of $S$, we have
$$
\|S^\ell V - S^k V_j\|_\infty
=
\|S^\ell V - S^\ell V_j\|_\infty = \|V - V_j\|_\infty < \varepsilon.
$$
Thus, $\hull(V)$ is compact and $V$ is almost-periodic. 

(b) First, supposing that $V$ is limit-periodic, let us show that $\hull(V)$ is totally disconnected. Since the hull of $V$ is a topological group, it suffices to show that there are arbitrarily small neighborhoods of the identity that are both open and closed. Therefore, given $\varepsilon > 0$ we will show that $B(V,\varepsilon) \cap \hull(V)$ contains a non-empty set that is both closed and open. Since $V$ is limit-periodic, choose a periodic $W$ with period $p$ so that $\|W - V\|_\infty \leq \frac{\varepsilon}{2}$. Then,
\begin{equation}
\hull^p(V) = \overline{\{ S^{jp} V : j \in \Z \}}
\end{equation}
is a compact subgroup of $\hull(V)$ of index at most $p$. By construction, $\hull^p(V)$ is closed, but it is also open since it is the complement of the union of no more than $p-1$ other closed cosets. Moreover, since $S^p W = W$ and $\|W - V\|_\infty \leq \frac{\varepsilon}{2}$, every element $V' \in \hull^p(V)$ satisfies $ \| V' - W \|_\infty \leq \varepsilon/2 $ and hence $\hull^p(V)$ is contained in the $\varepsilon$-ball around $V$. Consequently, $\hull(V)$ is totally disconnected.

Conversely, let $V$ be almost-periodic and suppose its hull is totally disconnected. We have to show that $V$ is limit-periodic. Given $\varepsilon > 0$, we have to find $W \in \ell^\infty(\Z)$ and $p \in \N$ with $S^p W = W$ and $\|W - V\|_\infty < \varepsilon$. By total disconnectedness, we may choose a compact open neighborhood $N$ of $V$ in $\hull(V)$, small enough so that
\begin{equation}\label{e.avilalp1}
\|(W_1 \oast W_2) - W_1 \|_\infty
<
\varepsilon/2
\quad
\text{ for every } W_1 \in \hull(V) \text{ and every } W_2 \in N.
\end{equation}
This is possible since $\oast$ is uniformly continuous, $V$ is the identity of $\hull(V)$ with respect to $\oast$, and $\hull(V)$ is totally disconnected. Since the sets $N$ and $\hull(V) \setminus N$ are compact and disjoint, there exists $\delta > 0$ so that $\| X - Y \| \geq \delta$ for all $X \in N$ and all $Y \in \hull(V) \setminus N$. By almost-periodicity of $V$, we can choose $p \ge 1$ so that
$$
\|S^p V - V\|_\infty < \delta,
$$
hence $S^p V \in N$ by our choice of $\delta$. But then we find inductively that $\{ S^{kp}V : k \in \Z \} \subseteq N$, by isometry of $S$ and the choice of $\delta$. Now consider the $p$-periodic $W$ that coincides with $V$ on $[0,p-1]$. Given $n \in \Z$, we write $n = r + \ell p$ with $\ell \in \Z$ and $0 \le r \le p-1$. Then, it follows from \eqref{e.avilalp1} that
$$
|V(n) - W(n)|
=
| V(r + \ell p) - V(r)|
=
\left| (S^r V \oast S^{\ell p}V) (0) - (S^rV)(0) \right|
<
\varepsilon/2,
$$
since $S^{\ell p}V \in N$. This shows that the $p$-periodic $W$ obeys $\|W - V\|_\infty \leq \varepsilon/2 < \varepsilon$, concluding the proof.
\end{proof}

\begin{definition}
We say that $V \in \ell^{\infty}(\Z)$ is \emph{quasi-periodic}\index{quasi-periodic} if and only if it can be obtained by continuously sampling along orbits of a translation on a finite-dimensional torus.  More precisely, $V$ is quasi-periodic if and only if it can be written as
\begin{equation}\label{eq:qp:def}
V(n)=f(n\alpha + \omega_0),
\quad n \in \Z,
\end{equation}
for some $\omega_0,\alpha \in \T^d$, $d \in \N$, and some continuous $f: \T^d \to \R$. 
\end{definition}

Naturally, if $d=1$ and $\alpha = 1/p \in \T$, then $f(\omega_0+n \alpha)$ defines a $p$-periodic sequence. Moreover, it is clear that any given $p$-periodic sequence can be obtained in this manner.

\begin{prop} \label{p:qp:char}
\mbox{}
\begin{enumerate}
\item[{\rm(a)}] Any quasi-periodic sequence is almost-periodic. 
\item[{\rm(b)}] An almost-periodic sequence is quasi-periodic if and only if its hull is isomorphic to $\T^r \oplus A$ for some $r \geq 0$ and some finite abelian group $A$.
\end{enumerate}
\end{prop}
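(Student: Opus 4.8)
The plan is to handle (a) and the two implications of (b) together, letting Pontryagin duality do most of the work. For (a), given a quasi-periodic $V$ written as in \eqref{eq:qp:def}, namely $V(n) = f(n\alpha + \omega_0)$ with $f \in C(\T^d,\R)$, I would pass to the closed subgroup $\Omega' := \overline{\{n\alpha : n\in\Z\}} \subseteq \T^d$, which is compact, metrizable, abelian, and monothetic with $\alpha$ generating a dense cyclic subgroup. Setting $g \in C(\Omega',\R)$ by $g(\omega) = f(\omega_0 + \omega)$ gives $V(n) = g(n\alpha)$, so Proposition~\ref{p:ap:char:mintrans} (applied with base group $\Omega'$, generator $\alpha$, sampling function $g$, and the identity as distinguished point) shows $V$ is almost-periodic. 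Moreover this realizes the hull concretely: defining $\psi \colon \Omega' \to \ell^\infty(\Z)$ by $\psi(\omega)(n) = g(\omega + n\alpha)$, uniform continuity of $f$ makes $\psi$ continuous; the identities $\psi(k\alpha) = S^kV$ and $S^kV \oast S^\ell V = S^{k+\ell}V$ show $\psi$ is a homomorphism from $(\Omega',+)$ into $(\hull(V),\oast)$ on the dense subgroup $\langle\alpha\rangle$, hence everywhere; and $\psi(\Omega')$ is compact, closed, and contains $\orb(V)$, so $\hull(V) = \psi(\Omega')$. Since a continuous bijective homomorphism from a compact group to a Hausdorff group is an isomorphism of topological groups, $\hull(V) \cong \Omega'/\ker\psi$.

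For the forward implication of (b) I would dualize everything. The inclusion $\Omega' \hookrightarrow \T^d$ dualizes to a surjection $\Z^d = \widehat{\T^d} \twoheadrightarrow \widehat{\Omega'}$, so $\widehat{\Omega'}$ is finitely generated abelian; the surjection $\psi \colon \Omega' \twoheadrightarrow \hull(V)$ dualizes to an injection $\widehat{\hull(V)} \hookrightarrow \widehat{\Omega'}$, and a subgroup of a finitely generated abelian group is finitely generated, so $\widehat{\hull(V)} \cong \Z^r \oplus F$ for some $r \ge 0$ and finite abelian $F$. Applying duality once more, $\hull(V) \cong \widehat{\Z^r \oplus F} \cong \T^r \oplus \widehat F$, which is of the desired form with $A = \widehat F$.

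For the converse of (b), suppose $\hull(V) \cong \T^r \oplus A$ with $A$ finite. Using the structure theorem for finite abelian groups I would write $A \cong \Z_{m_1} \oplus \cdots \oplus \Z_{m_s}$ and embed each $\Z_m \hookrightarrow \T$ via $[j] \mapsto j/m$, obtaining a continuous injective homomorphism $\iota \colon \hull(V) \to \T^{r+s}$; since $\hull(V)$ is compact, $\iota$ is a topological embedding onto a closed subgroup. The Tietze extension theorem extends the continuous function $f \circ \iota^{-1}$ on $\iota(\hull(V))$ to some $\tilde f \in C(\T^{r+s},\R)$. Using the standard realization $V(n) = f(\omega_0 + n\alpha)$ with $\Omega = \hull(V)$, $\omega_0 = V$, $\alpha = SV$, $f(\omega) = \omega(0)$ from the proof of Proposition~\ref{p:ap:char:mintrans}, the homomorphism property of $\iota$ yields $V(n) = \tilde f(\iota(\omega_0) + n\,\iota(\alpha))$, exhibiting $V$ as quasi-periodic over the torus $\T^{r+s}$.

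The only real subtlety---rather than a deep obstacle---is that $\hull(V)$ is in general merely a \emph{quotient} of $\overline{\langle\alpha\rangle}$, since the sampling function $f$ need not separate points of $\T^d$; one therefore cannot simply say the hull "is" a closed subgroup of a torus, and the duality argument above is precisely what converts "quotient of a closed subgroup of $\T^d$" into "finitely generated abelian dual". The remaining ingredients---Pontryagin duality for locally compact abelian groups, that subgroups of finitely generated abelian groups are finitely generated, the classification of finite abelian groups, and the Tietze extension theorem---are all standard and enter only by citation.
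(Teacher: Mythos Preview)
Your proof is correct and follows essentially the same skeleton as the paper's: realize $\hull(V)$ as a quotient of the closed subgroup $\overline{\langle\alpha\rangle}\subseteq\T^d$ via the sampling map, then argue the quotient has the form $\T^r\oplus A$; for the converse, embed $\hull(V)$ into a torus and extend the sampling function. The differences are in execution rather than strategy. In the forward direction of (b), the paper simply asserts that a quotient of a closed subgroup of $\T^d$ by a closed subgroup must be of the form $\T^r\oplus A$, whereas you supply an actual justification via Pontryagin duality (dualize the inclusion and the quotient map to see $\widehat{\hull(V)}$ is finitely generated); your argument is thus more self-contained. In the converse, the paper extends the $\hull(V)$-valued isomorphism $K\to\hull(V)$ to all of $\T^d$ and then composes with evaluation at $0$, whereas you embed $\iota\colon\hull(V)\hookrightarrow\T^{r+s}$ and apply Tietze directly to the real-valued function $f\circ\iota^{-1}$; these are equivalent maneuvers, yours arguably cleaner since Tietze for real-valued functions requires no justification.
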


\begin{proof}(a) This follows immediately from the definition and Proposition~\ref{p:ap:char:mintrans}.

(b) To prove one direction, let us assume that $V$ is quasi-periodic and show that the hull of $V$ is indeed a direct sum of a finite-dimensional torus and a finite abelian group. Since $V$ is quasi-periodic, we may write $V(n) = f(n\alpha+\omega_0)$ with $\omega_0,\alpha \in \T^d$ and $f \in C(\T^d)$. Replacing $f$ by a suitable translate thereof, we may assume $\omega_0=0$. Define $K(\alpha)$ to be the closed subgroup of $\T^d$ generated by $\alpha$, and put
\begin{equation}
V_\omega(n) = f(n\alpha + \omega),
\quad
n \in \Z, \, \omega \in K(\alpha),
\end{equation}
as in the proof of Proposition~\ref{p:ap:char:mintrans}. Arguing as before, we see that $\Phi(\omega) = V_\omega$ defines a continuous surjective map $\Phi:K(\alpha) \to \hull(V)$. Moreover $\Phi$ is a homomorphism from $K(\alpha)$ to $\hull(V)$, and hence is a (topological) quotient map by the open mapping theorem (see, e.g., \cite[Chapter~1]{Morris1977:TopGrps}). In particular, $\hull(V)$ is isomorphic to $ K(\alpha) / \ker(\Phi) $, which must be of the stated form since it is a quotient of a closed subgroup of $\T^d$ by a closed subgroup.

Conversely, assume that $V$ is almost-periodic and that $\hull(V) \cong \T^r \oplus A$.  By using the classification of finite abelian groups (for example), it is not hard to see that $\hull(V)$ is isomorphic to a closed subgroup $K \subseteq \T^d$ for some $d \geq r$. Let $\widetilde f:K \to \hull(V)$ be a continuous group isomorphism, and choose $\alpha$ so that $\widetilde f(\alpha) = SV$. Extending $\widetilde{f}$ to a continuous map $\widetilde f:\T^d \to \hull(V)$, we see that $V$ can be realized via \eqref{eq:qp:def} with $\omega_0 = 0$ and $ f(\omega) = \left[\widetilde f(\omega)\right](0)$.
\end{proof}

\subsection{The Frequency Module} \label{subsec:apgen}

As discussed in Propsition~\ref{p:ap:char:mintrans}, every almost-periodic potential is a dynamically defined potential with base dynamics given by a minimal translation of a compact abelian group; moreover, the structure of the group dictates the structure of the potential.

\begin{definition}
Given a (locally) compact abelian topological group $G$, a \emph{character} of $G$ is a continuous homomorphism $\chi: G \to \T$. The collection $\widehat{G}$ of all characters of $G$ is a group under pointwise addition that is itself a topological group in the compact-open topology, called the \emph{dual group} of $G$.
\end{definition}

We freely use well known facts about topological groups, their duals, and harmonic analysis on such groups. For reference and additional background, see any of the texbook treatments in \cite{DiestelSpalsbury2014, Higgins1974, Katznelson2004:HA3rdEd, Loomis1953:HA, MontgomeryZippin1955, Morris1977:TopGrps, Rudin1990Fourier}.

\begin{definition}
Let $V$ be an almost-periodic sequence, $\Omega_0 = \orb(V)$, $\Omega = \hull(V)$, and $\Psi: \Z \to \Omega$ the natural homomorphism
\begin{equation}
\Psi: k
\mapsto
S^k V.
\end{equation}
Since $\widehat{\Z} \cong \T$ (by identifying $\chi \in \widehat{\Z}$ with $\chi(1) \in \T$), this in turn induces a homomorphism $\widehat{\Psi}: \widehat{\Omega} \to \T $ by duality. Thus
\begin{equation} \label{def:fm:homo}
\widehat{\Psi}(\chi)
=
\chi(SV),
\quad
\chi \in \widehat{\Omega}.
\end{equation}
Since $\Omega_0$ is a dense subgroup of $\Omega$, $\widehat{\Psi}$ is an injective homomorphism from $\widehat{\Omega}$ onto its image in $\T$, so we may identify $\widehat\Omega$ with a countable subgroup of $\T$. The pullback of this subgroup of $\T$ to $\R$ is called the \emph{frequency module} of $V$, and is denoted by $\M = \M(V)$.  In particular, since $\Z$ is the kernel of the canonical projection from $\R$ to $\T$, one has $\Z \subseteq \M$.
\end{definition}

Using the explicit form of $\Psi$ in \eqref{def:fm:homo}, we see that $t \in \M(V)$ if and only if there exists $\chi \in \widehat \Omega$ with $\chi(SV) = t\, (\mathrm{mod} \, \Z)$, which holds if and only if
\begin{equation}
\chi_t(S^k V)
=
kt \, (\mathrm{mod} \, \Z)
\end{equation}
extends to a continuous homomorphism $\chi_t:\Omega \to \T$.

\begin{theorem} \label{t:ap:gaplabel}
Suppose $\Omega$ is a compact topological group and $\alpha \in \Omega$ generates a dense cylic subgroup. With $\widehat{\varphi}_\alpha$, $R_\alpha$, and $\pi$ defined as in Example~\ref{ex:ap}, one has
\begin{equation}
\schwartzmanGroup(\Omega,R_\alpha) = \pi^{-1}(\widehat{\varphi}_\alpha(\widehat{\Omega})).
\end{equation}
In particular, if $V$ is almost-periodic, $\Omega = \hull(V)$, and $T:\Omega \to \Omega$ denotes the shift $[T\omega](n) = \omega(n+1)$, then
\begin{equation}
\schwartzmanGroup(\Omega,T) = \M(V).
\end{equation}
\end{theorem}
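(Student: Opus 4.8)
The plan is to reduce the computation of $\schwartzmanGroup(\Omega,R_\alpha)$ to an identification of homotopy classes of maps $X(\Omega,R_\alpha) \to \T$ with characters of a suitable group, and then to trace through how the Schwartzman homomorphism acts on these classes. The key structural input is that the suspension $X = X(\Omega,R_\alpha)$ is itself (homeomorphic to) a compact abelian group, namely the quotient $(\Omega \times \R)/\!\!\sim$, where the $\Z$-action is $n \odot (\omega,t) = (\omega + n\alpha, t-n)$; this quotient is exactly $(\Omega \times \R)/\Gamma$ with $\Gamma = \{(n\alpha, -n) : n \in \Z\} \cong \Z$ a discrete (hence closed) subgroup, so $X$ is a compact abelian group and the flow $\oT^t$ is translation by the one-parameter subgroup $\{(0,t)\}$. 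The suspended measure $\overline\mu$ is normalized Haar measure on $X$.

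First I would invoke Pontryagin duality for $X$: since $X$ is a compact abelian group, $C^\sharp(X,\T)$ is naturally identified with $\widehat{X}$ (every continuous map $X \to \T$ is homotopic to a unique character — this is the statement that $H^1(X;\Z) \cong \widehat X$ for compact abelian groups, and can be proved directly using Proposition~\ref{prop:flow:homotopyClassFacts}(c) plus the fact that the characters are uniformly dense among continuous maps to $\T$ after subtracting a real-valued correction, exactly as in the $\T^{d+1}$ computation of Theorem~\ref{t:ap:qpschwartzmangroup}). Then I would compute $\widehat X = \widehat{(\Omega\times\R)/\Gamma}$ as the annihilator $\Gamma^\perp \subseteq \widehat\Omega \times \widehat\R = \widehat\Omega \times \R$; a pair $(\chi, s)$ annihilates $\Gamma$ iff $\chi(n\alpha) - ns \in \Z$ for all $n$, i.e. iff $\chi(\alpha) = \pi(s)$ in $\T$, i.e. iff $s \in \pi^{-1}(\widehat\varphi_\alpha(\chi))$. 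So $\widehat X = \{(\chi,s) : s \in \pi^{-1}\widehat\varphi_\alpha(\chi)\}$.

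Next I would evaluate the Schwartzman homomorphism on such a character. For the character $\phi_{(\chi,s)}([\omega,t]) = \chi(\omega) + \pi(st)$ — note this is well-defined precisely by the annihilator condition — the lift along the orbit of the base point $[\omega,0]$ is $\widetilde\phi_x(t) = (\text{const}) + st$ (since $\oT^t$ moves only the $\R$-coordinate and $\chi(\omega)$ stays fixed), so $\rot(\phi_{(\chi,s)};x) = s$ and hence $\mathfrak{A}_{\overline\mu}(\phi_{(\chi,s)}) = s$. Ranging over all admissible $(\chi,s)$, the image is exactly $\bigcup_{\chi \in \widehat\Omega} \pi^{-1}(\widehat\varphi_\alpha(\chi)) = \pi^{-1}(\widehat\varphi_\alpha(\widehat\Omega))$, which is the first claimed formula. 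For the second assertion, specialize to $\Omega = \hull(V)$, $\alpha = SV$: by definition of the frequency module, $\M(V)$ is precisely the pullback to $\R$ of $\widehat\Psi(\widehat\Omega) = \{\chi(SV) : \chi \in \widehat\Omega\} = \widehat\varphi_{SV}(\widehat\Omega)$, so $\schwartzmanGroup(\Omega,T) = \pi^{-1}(\widehat\varphi_\alpha(\widehat\Omega)) = \M(V)$.

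\textbf{Main obstacle.} I expect the genuine content to be the identification $C^\sharp(X,\T) \cong \widehat X$ for a general compact abelian group $X$ — i.e. that every continuous circle-valued map on $X$ is homotopic to a character. The $\varepsilon/3$-type argument of Proposition~\ref{prop:flow:homotopyClassFacts} gives that characters are dense among continuous maps after a real correction only if one already knows that an arbitrary $\phi$ can be uniformly approximated by something of the form $\chi + (\text{small})$; establishing this requires a genuine harmonic-analysis input (trigonometric-polynomial approximation on $X$, together with the observation that a map uniformly close to $0$ is null-homotopic, and that a uniformly small perturbation of a sum of characters can be absorbed). An alternative, cleaner route that sidesteps full generality: use Proposition~\ref{p:nullhomo:lifting} together with the structure theorem for compact abelian groups (every such group is an inverse limit of Lie groups, i.e. of finite-dimensional tori times finite groups) to reduce to the torus case already handled in Theorem~\ref{t:ap:qpschwartzmangroup}; the homotopy classification then passes to the inverse limit since $[\phi]$ factors through some finite stage by compactness. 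Either way, once the identification $C^\sharp(X,\T) = \widehat X$ is in hand, the remaining duality computation and the evaluation of $\mathfrak{A}_{\overline\mu}$ are routine.
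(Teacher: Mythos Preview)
Your approach is correct and genuinely different from the paper's. You propose to first identify $C^\sharp(X,\T)$ with $\widehat X$ for the compact abelian group $X = (\Omega\times\R)/\Gamma$, then compute $\widehat X$ as the annihilator $\Gamma^\perp$, and finally read off the Schwartzman values. The paper takes a more direct, hands-on route for the nontrivial inclusion $\schwartzmanGroup \subseteq \pi^{-1}(\widehat\varphi_\alpha(\widehat\Omega))$: given an arbitrary $\phi \in C(X,\T)$ with $\rho = \mathfrak{A}_{\overline\mu}(\phi)$, it never classifies $[\phi]$ but instead shows directly that the map $[0,t]\mapsto \pi(\rho t)$ on the dense one-parameter subgroup $S = \{[0,t]:t\in\R\}$ is uniformly continuous (this is the technical heart, proved by comparing $\widetilde\phi(t+q)-\widetilde\phi(t)$ for $q$ an almost-period and invoking the sublinearity $\widetilde G(t)/t \to 0$), then extends to $X$ and restricts to the fiber $\Omega_0$ to obtain the desired character $\chi$ with $\chi(\alpha)=\pi(\rho)$. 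The easy inclusion is handled identically in both arguments, via the explicit characters you call $\phi_{(\chi,s)}$.

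What each buys: your route is conceptually cleaner and makes the role of Pontryagin duality transparent, but it front-loads the nontrivial fact $C^\sharp(X,\T)\cong \widehat X$ for general compact abelian $X$; your inverse-limit reduction to the Lie case is the right way to do this, and it works, though it imports the structure theorem. The paper's route is entirely self-contained within the framework already built (uniform continuity, lifts, the flow), at the cost of a somewhat delicate estimate; it is in spirit the argument that $\rho$ is an ``eigenfrequency'' of the flow without ever saying the word cohomology.
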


Before proving the main theorem, it is instructive to try to understand these definitions when the underlying potential is periodic, quasi-periodic, or limit-periodic. We will show how one can characterize these various subclasses in terms of their frequency modules. To do this adequately, we need one more definition. Let us say that $G$ has the \emph{divisor property} if, for every pair  $g , g' \in G$, there exist $n,n'\in \Z$ and an element $d \in G$  such that $nd = g$ and $n'd = g'$.

\begin{lemma} \label{t:freqmod:vs:omegadual}
Let $V$ be almost-periodic, $\Omega = \hull(V)$, and $\M = \M(V)$.  
\begin{enumerate}
\item[{\rm(a)}] $\M$ is finitely generated  if and only if $\widehat \Omega$ is finitely generated. 
\item[{\rm(b)}] $\M$ enjoys the divisor property if and only if $\widehat{\Omega}$ is a torsion group.
\end{enumerate}
\end{lemma}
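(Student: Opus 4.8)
The plan is to reduce both parts to a single short exact sequence of abelian groups. Recall from the definition of the frequency module that $\widehat\Psi:\widehat\Omega\to\T$ is an injective homomorphism, that $\M=\pi^{-1}\big(\widehat\Psi(\widehat\Omega)\big)$ where $\pi:\R\to\T$ is the canonical projection, and that $\ker\pi=\Z\subseteq\M$. Hence $\pi(\M)=\widehat\Psi(\widehat\Omega)$ and we obtain
\[
0\longrightarrow \Z \longrightarrow \M \overset{\pi}{\longrightarrow}\widehat\Psi(\widehat\Omega)\longrightarrow 0,
\qquad \widehat\Psi(\widehat\Omega)\cong\widehat\Omega .
\]
So $\widehat\Omega$ is, up to isomorphism, exactly the quotient $\M/\Z$, and every structural question about $\widehat\Omega$ becomes a question about $\M$ relative to its subgroup $\Z$. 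Everything below is elementary abelian group theory once this identification is in place.

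For part (a), the direction ``$\M$ finitely generated $\Rightarrow\widehat\Omega$ finitely generated'' is immediate, since $\widehat\Omega\cong\M/\Z$ is a quotient of $\M$ and quotients of finitely generated groups are finitely generated. For the converse, I would lift a finite generating set $\bar t_1,\dots,\bar t_k$ of $\widehat\Psi(\widehat\Omega)$ to elements $t_1,\dots,t_k\in\M$; then for arbitrary $g\in\M$ one writes $\pi(g)=\sum_i m_i\bar t_i$ with $m_i\in\Z$, whence $g-\sum_i m_i t_i\in\ker\pi=\Z$, so $\M$ is generated by $\{1,t_1,\dots,t_k\}$.

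For part (b), the crux is the identification ``$\widehat\Omega$ is a torsion group $\iff \M\subseteq\Q$'': a subgroup of $\T$ is torsion if and only if it is contained in $\Q/\Z$, so $\widehat\Psi(\widehat\Omega)$ is torsion iff $\pi(\M)\subseteq\Q/\Z$ iff $\M\subseteq\pi^{-1}(\Q/\Z)=\Q$. It therefore suffices to show that $\M$ has the divisor property iff $\M\subseteq\Q$. For the forward implication, given $g\in\M$, apply the divisor property to the pair $(g,1)$ (note $1\in\Z\subseteq\M$): one gets $n,n'\in\Z$ and $d\in\M$ with $n'd=1$ and $nd=g$; the first equation, read inside $\R$, forces $n'\neq0$ and $d=1/n'\in\Q$, so $g=n/n'\in\Q$. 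For the reverse implication, if $\M\subseteq\Q$ then any two elements $g,g'\in\M$ lie in a finitely generated subgroup of $\Q$; such a subgroup is torsion-free of rank one, hence infinite cyclic, say $\langle d\rangle$ with $d\in\M$, and then $g=nd$, $g'=n'd$ for suitable $n,n'\in\Z$ — which is precisely the divisor property.

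I do not expect a genuine obstacle here: both statements are formal consequences of the exact sequence together with two standard facts (quotients and extensions of finitely generated groups are finitely generated; a finitely generated subgroup of $\Q$ is cyclic). The only points needing a word of care are that $\widehat\Psi$ is honestly injective, so that the structure of $\widehat\Omega$ is faithfully reflected in $\pi(\M)=\M/\Z$ — this is recorded in the text right after the definition of $\M$ — and, in the reverse direction of (b), the appeal to torsion-free rank one of $\Q$.
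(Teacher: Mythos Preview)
Your proof is correct and follows essentially the same route as the paper: both reduce (b) to the equivalence ``divisor property $\iff \M\subseteq\Q \iff \widehat\Omega$ torsion,'' with the paper invoking B\'ezout explicitly where you invoke the equivalent fact that finitely generated subgroups of $\Q$ are cyclic. Your exact-sequence framing cleanly handles (a), which the paper simply leaves to the reader.
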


\begin{proof}
The proof of (a) is left to the reader. For the second, first suppose that every element of $\widehat \Omega$ has finite order, and note that this implies $\M \subseteq \Q$.  Thus, given $r = p/q$ and $r' = p'/q'$ in $\M$, use Bezout's theorem to choose $n,m \in \Z$ with $d = npq' + mp'q$, where $d$ denotes the greatest common divisor of $p'q$ and $pq'$. Naturally,
\begin{equation}
nr + mr'
=
\frac{d}{qq'} \in \M
\end{equation}
is a divisor of both $r$ and $r'$.

Conversely, suppose $\M$ has the divisor property. Since $1 \in \M$, one has $\M \subseteq \Q$, which in turn implies that every element of $\widehat \Omega$ has finite order, concluding the proof of part~(b).
\end{proof}

\begin{theorem} \label{t:ap:freqmod:char}
Let $V$ be almost-periodic and $\M = \M(V)$.  Then
\begin{enumerate}
\item[{\rm(a)}] $V$ is periodic if and only if $\M$ is discrete.
\item[{\rm(b)}] $V$ is quasi-periodic if and only if $\M$ is finitely generated.
\item[{\rm(c)}] $V$ is limit-periodic if and only if $\M$ enjoys the divisor property.
\end{enumerate}
\end{theorem}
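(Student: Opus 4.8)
The plan is to combine the identification $\schwartzmanGroup(\Omega,T) = \M(V)$ from Theorem~\ref{t:ap:gaplabel} with the structural characterizations of the hull established in Propositions~\ref{p:per:hull:char}, \ref{p.almost.per.lim.per}, and \ref{p:qp:char}, translating each condition on $\Omega = \hull(V)$ into the dual-side condition on $\widehat\Omega$ via Pontryagin duality, and then into a condition on $\M$ via Lemma~\ref{t:freqmod:vs:omegadual} and direct inspection. The recurring theme is: a topological property of $\Omega$ corresponds (by the duality functor and its exactness properties) to an algebraic property of $\widehat\Omega$, which in turn corresponds to an arithmetic property of $\M \subseteq \R$ because $\widehat\Psi$ identifies $\widehat\Omega$ with $\M/\Z \subseteq \T$.

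For part~(a): By Proposition~\ref{p:per:hull:char}(b), $V$ is periodic iff $\hull(V) \cong \Z_p$ for some $p$. Dually, $\widehat{\Z_p} \cong \Z_p$ is finite, and a compact abelian group has finite dual iff it is finite; hence $V$ is periodic iff $\widehat\Omega$ is finite. Under the identification $\widehat\Omega \cong \M/\Z$, finiteness of $\widehat\Omega$ is equivalent to $\M/\Z$ being finite, i.e.\ $\M = p^{-1}\Z$ for some $p$, which is exactly the condition that $\M$ be discrete (recall $\Z \subseteq \M$, so $\M$ is discrete iff it is a nontrivial cyclic extension of $\Z$, i.e.\ $p^{-1}\Z$). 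One should check the easy direction too: if $\M$ is discrete then, being a subgroup of $\R$ containing $\Z$, it equals $p^{-1}\Z$ for some $p \in \N$, so $\M/\Z$ is finite, forcing $\widehat\Omega$ finite and $\Omega$ finite; then Proposition~\ref{p:per:hull:char}(b) (together with the fact that a finite monothetic group is cyclic) gives periodicity.

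For part~(b): By Proposition~\ref{p:qp:char}(b), $V$ is quasi-periodic iff $\hull(V) \cong \T^r \oplus A$ with $A$ finite. Dualizing, $\widehat{\T^r \oplus A} \cong \Z^r \oplus \widehat A$, which is finitely generated; conversely, a compact abelian group whose dual is finitely generated must be of this form (the finitely generated abelian group $\widehat\Omega \cong \Z^r \oplus A$ dualizes back to $\T^r \oplus \widehat A$, using Pontryagin reflexivity). So $V$ is quasi-periodic iff $\widehat\Omega$ is finitely generated, which by Lemma~\ref{t:freqmod:vs:omegadual}(a) is equivalent to $\M$ being finitely generated. For part~(c): By Proposition~\ref{p.almost.per.lim.per}(b), $V$ is limit-periodic iff $\hull(V)$ is totally disconnected. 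A compact abelian group is totally disconnected iff it is profinite iff its dual is a torsion group (the dual of a discrete torsion group is profinite, and conversely — this is standard Pontryagin duality: connectedness of $\Omega$ corresponds to torsion-freeness of $\widehat\Omega$, so total disconnectedness corresponds to $\widehat\Omega$ being torsion). By Lemma~\ref{t:freqmod:vs:omegadual}(b), $\widehat\Omega$ being torsion is equivalent to $\M$ enjoying the divisor property, completing the proof.

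The main obstacle I anticipate is assembling the precise duality dictionary in part~(c): one needs the statement that a compact abelian group is totally disconnected if and only if its Pontryagin dual is a torsion group. This is classical but deserves a one-line justification — e.g.\ the connected component of the identity in $\Omega$ is the annihilator of the torsion subgroup of $\widehat\Omega$, so $\Omega$ is totally disconnected iff $\widehat\Omega$ equals its own torsion subgroup — and care should be taken that all the compactness/discreteness hypotheses are in force so that Pontryagin reflexivity applies. The parts~(a) and~(b) arguments are more routine, amounting to bookkeeping with finitely generated abelian groups and their duals, once the identification $\widehat\Omega \cong \M/\Z$ is invoked; the only subtlety there is making sure the "easy directions" (deducing the hull structure from the $\M$ condition) genuinely go through, which they do because any subgroup of $\R$ containing $\Z$ is severely constrained once one knows it is discrete or finitely generated.
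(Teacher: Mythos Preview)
Your proposal is correct and follows essentially the same route as the paper: for each part you pass from the hull characterization (Propositions~\ref{p:per:hull:char}, \ref{p.almost.per.lim.per}, \ref{p:qp:char}) to a property of $\widehat\Omega$ via Pontryagin duality, and then to the corresponding property of $\M$ via Lemma~\ref{t:freqmod:vs:omegadual} or direct inspection. One small remark: the identification $\schwartzmanGroup(\Omega,T)=\M(V)$ from Theorem~\ref{t:ap:gaplabel} that you cite in your plan is never actually used in your argument (nor in the paper's), so you can drop that reference.
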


\begin{proof} Put $\Omega = \hull(V)$.

(a) If $V$ is periodic, one can calculate directly that $\M$ is the subgroup of $\R$ generated by $1/p$, where $p$ is the minimal period of $V$.  On the other hand, if $\M$ is discrete, then $\widehat\Omega$ must be finite. By Pontryagin duality, $\Omega$ is finite, so $V$ is periodic.

(b) By Lemma~\ref{t:freqmod:vs:omegadual}, it suffices to prove that $V$ is quasi-periodic if and only if $\widehat\Omega$ is finitely generated.  By the classification of finitely generated abelian groups and Pontryagin duality, $\widehat\Omega$ is finitely generated if and only if $\Omega$ is of the form $\T^r \oplus A$ for some finite abelian group $A$ and some $r \geq 0$. By Proposition~\ref{p:qp:char}, $\hull(V)$ is of this form if and only if $V$ is quasi-periodic.

(c) By Proposition~\ref{p.almost.per.lim.per}, $V$ is limit-periodic if and only if $\Omega$ is totally disconnected. The reader can check that $\Omega$ is totally disconnected if and only if $\widehat{\Omega}$ is a torsion group, so this statement follows from the second claim in Lemma~\ref{t:freqmod:vs:omegadual}.
\end{proof}

\begin{coro}
A sequence is periodic if and only if it is both quasi-periodic and limit-periodic.
\end{coro}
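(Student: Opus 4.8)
The plan is to deduce the corollary directly from the characterizations established in Theorem~\ref{t:ap:freqmod:char}. Writing $\M = \M(V)$ for the frequency module, that theorem identifies ``$V$ periodic'' with ``$\M$ discrete'', ``$V$ quasi-periodic'' with ``$\M$ finitely generated'', and ``$V$ limit-periodic'' with ``$\M$ enjoys the divisor property''. So the whole statement reduces to the purely group-theoretic claim: a subgroup $\M \subseteq \R$ with $\Z \subseteq \M$ is discrete if and only if it is both finitely generated and enjoys the divisor property. This is the only thing I would actually have to verify.

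For the forward implication, I would recall that a discrete subgroup of $(\R,+)$ is cyclic, say $\M = c\Z$ with $c > 0$ (and $c \neq 0$ since $1 \in \M$). A cyclic group is certainly finitely generated, and its generator $c$ serves as a universal divisor: any $g = nc$ and $g' = n'c$ in $\M$ are divisible by $d = c \in \M$. Hence $\M$ has the divisor property, so every periodic sequence is quasi-periodic and limit-periodic.

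For the reverse implication, suppose $\M$ is finitely generated and has the divisor property. Applying the divisor property to an arbitrary $g \in \M$ together with the distinguished element $1 \in \M$, one gets $d \in \M$ and $n,n' \in \Z$ with $nd = g$ and $n'd = 1$, so $d = 1/n'$ and $g = n/n' \in \Q$; thus $\M \subseteq \Q$. A finitely generated subgroup of $\Q$ is cyclic: if it is generated by $a_1/b_1,\dots,a_k/b_k$, then it is contained in $\tfrac1b\Z$ with $b = \operatorname{lcm}(b_1,\dots,b_k)$, and every subgroup of $\tfrac1b\Z\cong\Z$ is cyclic, hence discrete in $\R$. By Theorem~\ref{t:ap:freqmod:char}(a), $V$ is periodic.

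There is no serious obstacle here; the only step that needs a moment's care is deducing $\M \subseteq \Q$ from the divisor property, where one must remember to pair a general element with $1 \in \M$. (As an alternative route that avoids the frequency module entirely, one could argue on the level of hulls: quasi-periodicity gives $\hull(V) \cong \T^r \oplus A$ by Proposition~\ref{p:qp:char}, while limit-periodicity forces $\hull(V)$ to be totally disconnected by Proposition~\ref{p.almost.per.lim.per}; since $\T^r$ is connected and nontrivial for $r \geq 1$, this forces $r = 0$, so the hull is finite, hence has an isolated point, and $V$ is periodic by Proposition~\ref{p:per:hull:char}. The converse is immediate: a $p$-periodic sequence is realized via \eqref{eq:qp:def} with $d = 1$, $\alpha = 1/p$, and it serves as its own periodic approximant.)
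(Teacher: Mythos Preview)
Your proof is correct and follows the same route as the paper: invoke Theorem~\ref{t:ap:freqmod:char} to translate the statement into the claim that a finitely generated $\M \subseteq \R$ with the divisor property is cyclic, hence discrete. The paper states this implication in one line, whereas you helpfully supply the details (using $1 \in \M$ to force $\M \subseteq \Q$, then observing that a finitely generated subgroup of $\Q$ is cyclic); your alternative hull-based argument is also valid but not the approach taken in the paper.
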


\begin{proof}
A periodic sequence is trivially both quasi-periodic and limit-periodic. Conversely, if $V$ is quasi-periodic and limit-periodic, then $\M$ is finitely generated and has the divisor property. As this implies that $\M$ is generated by a single element, it follows that $V$ is periodic by Theorem~\ref{t:ap:freqmod:char}.
\end{proof}

We conclude by recovering the classical gap labelling theorem for almost-periodic potentials. The main observations are that the suspension $X = X(\Omega,R_\alpha)$ is itself a topological group and that continuous functions on $X$ are homotopic to characters of $X$.

\begin{proof}[Proof of Theorem~\ref{t:ap:gaplabel}] Let $\Omega$ be given and suppose $\alpha \in \Omega$ generates a dense cyclic subgroup. We have already discussed $\Omega = \Z_p$, so let us assume that $\Omega$ is infinite. Denote by $\mu$ the normalized Haar measure on $\Omega$, and consider the suspension $X = X(\Omega,T)$. 

In $X$, notice that
\begin{equation}
[\omega,s] = [\omega',s'] \iff (\omega-\omega',s-s') = (m\alpha,-m) \text{ for some } m \in \Z.\end{equation}
Thus, $X$ is the quotient of the topological group $\Omega \times \R$ by the closed subgroup $\{(m\alpha,-m):m \in \Z\}$, so $X$ is a topological group, and moreover the subgroup $S = \{ [0,t] : t \in \R \}$ is dense in $X$. The topology of $X$ is metrizable, with a metric given by
\[
d_X(x_1,x_2)
=
\inf\set{d_{\Omega}(\omega_1,\omega_2) + | t_1 - t_2 | : \omega_j \in \Omega, \; t_j \in \R \text{ with } [\omega_j,t_j] = x_j}.
\]
We will also denote by $d$ the usual metric on $\T$:
\begin{equation}
d_\T(t,t')
= \dist(t - t',\Z).
\end{equation}
Now, let $\phi \in C(X,\T)$ be given, and denote $\rho = \mathfrak{A}_{\overline \mu}(\phi)$. Using uniform continuity of $\phi$, one can check that $\rot(\phi;x)=\rho$ for every $x \in X$, in particular for\footnote{We are writing the abelian group $\Omega$ additively, so one should understand the zero in the first coordinate as the additive identity element of $\Omega$.} $x=[0,0]$.


Recall that $\pi:\R \to \T$ denotes the canonical projection, choose a continuous function $\widetilde\phi:\R \to \R$ so that $\phi([0,t]) =\pi(\widetilde\phi(t))$, and denote 
\[\widetilde G(t) = \widetilde\phi(t) - \rho t, \quad G= \pi \circ \widetilde G.
\] Notice that these definitions yield
\begin{equation} \label{eq:tildeGiso(t)}
\lim_{t \to \infty} \frac{\widetilde G(t)}{t}
= \lim_{t\to\infty} \frac{\widetilde{\phi}(t)-\rho t}{t}
=\rho-\rho
=
0.
\end{equation}
To conclude, we must show $\pi(\rho) \in \widehat{\varphi}_\alpha(\widehat{\Omega})$. We will need the following claim.

\begin{claim} The map $S\to\T$ given by $[0,t]\mapsto \pi(\rho t)$ is uniformly continuous.
\end{claim}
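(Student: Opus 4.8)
The plan is to recognize the map $[0,t]\mapsto\pi(\rho t)$ as the restriction to $S$ of an honest \emph{character} of the compact group $X$, after which uniform continuity is automatic because $X$ is compact. Recall that here $X=X(\Omega,T)$ is a compact abelian group, namely the quotient of $\Omega\times\R$ by the closed subgroup $\Gamma=\{(m\alpha,-m):m\in\Z\}$, and that $\overline{\mu}$ is its normalized Haar measure. The first step is to record that \emph{$X$ is connected}. This can be seen by computing $\widehat{X}$ directly: a character of $X$ is the same thing as a pair $(\chi_\Omega,c)\in\widehat{\Omega}\times\R$ subject to $\chi_\Omega(\alpha)=\pi(c)$, and the map $(\chi_\Omega,c)\mapsto c$ is injective on $\widehat{X}$ (if $c=0$ then $\chi_\Omega$ vanishes on the dense subgroup generated by $\alpha$, hence $\chi_\Omega\equiv 0$). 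Thus $\widehat{X}$ is isomorphic to a subgroup of $\R$, in particular torsion-free, so by Pontryagin duality $X$ is connected. Note also that a character $\overline{\chi}\in\widehat{X}$ corresponding to $(\chi_\Omega,c)$ satisfies $\overline{\chi}([\omega,s])=\chi_\Omega(\omega)+\pi(cs)$, so in particular $\overline{\chi}([0,t])=\pi(ct)$ and $\mathfrak{A}_{\overline{\mu}}(\overline{\chi})=\lim_{t\to\infty}ct/t=c$.

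The second step is the structural input: \emph{every continuous map from a compact connected abelian group to $\T$ is homotopic to a character}. (This is the identification $[X,\T]\cong H^1(X;\Z)\cong\widehat{X}$; for $X$ compact connected abelian metrizable it follows by writing $X$ as an inverse limit of tori and passing to the colimit, or by extracting from $e^{2\pi i\phi}\in C(X,\X^1)$ its winding character via Peter--Weyl and observing that the remaining factor is null-homotopic; well-definedness uses that a null-homotopic character of a compact connected group is trivial, since it lifts to a continuous homomorphism into $\R$.) Applying this to $\phi$ gives a character $\overline{\chi}\in\widehat{X}$ with $\phi\sim\overline{\chi}$. Since the Schwartzman homomorphism is constant on homotopy classes (Theorem~\ref{t:schwarzmann}(c)), $\mathfrak{A}_{\overline{\mu}}(\overline{\chi})=\mathfrak{A}_{\overline{\mu}}(\phi)=\rho$; combined with the previous paragraph this says that the parameter $c$ of $\overline{\chi}$ equals $\rho$. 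Hence $[0,t]\mapsto\pi(\rho t)=\pi(ct)=\overline{\chi}([0,t])$ coincides with $\overline{\chi}|_S$, the restriction to $S$ of the continuous function $\overline{\chi}$ on the compact space $X$, and is therefore uniformly continuous. (As a bonus, this immediately yields the conclusion the theorem is after, since $\pi(\rho)=\overline{\chi}([0,1])=\overline{\chi}([\alpha,0])=\chi_\Omega(\alpha)=\widehat{\varphi}_\alpha(\chi_\Omega)\in\widehat{\varphi}_\alpha(\widehat{\Omega})$.)

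The main obstacle is the homotopy-to-character fact for compact connected abelian groups; everything else is bookkeeping with the group structure of $X$ and the already-established properties of $\mathfrak{A}_{\overline{\mu}}$. If one wishes to keep the argument self-contained in the spirit of the rest of the paper, one can instead try to produce $\overline{\chi}$ by hand: replace $\phi$ by a homotopic representative that is $C^1$ along the flow (Corollary~\ref{coro:flowc1:homotop}), and use unique ergodicity of $(X,\oT)$ — valid because it is the suspension of the uniquely ergodic $(\Omega,R_\alpha)$ — to see that the flow-derivative of that representative minus $\rho$ has uniformly small Birkhoff averages. The genuine content one must then supply is that this flow-derivative minus $\rho$ is a \emph{coboundary} for the flow; equivalently, that $\phi-\overline{\chi}$ is null-homotopic, equivalently that the lift $\widetilde G(t)=\widetilde\phi(t)-\rho t$ is bounded. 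Granting this, writing $\phi-\overline{\chi}=\pi\circ v$ with $v\in C(X,\R)$, the claimed map becomes $[0,t]\mapsto\pi(\rho t)=\overline{\chi}([0,t])=\phi([0,t])-\pi\big(v([0,t])\big)$, a difference of restrictions to $S$ of continuous functions on the compact space $X$, hence uniformly continuous — so this route ultimately rests on the same topological input and is not genuinely shorter.
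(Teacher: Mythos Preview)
Your argument is correct, and it takes a genuinely different route from the paper. The paper proves the claim by a direct $\varepsilon$--$\delta$ estimate: using uniform continuity of $\phi$ on the compact space $X$ together with the sublinear growth $\widetilde G(t)/t\to 0$, it shows that whenever $d_X([0,0],[0,q])$ is small one has $|\widetilde G(t+q)-\widetilde G(t)|<2\varepsilon$ for all $t$, which yields uniform continuity of $[0,t]\mapsto G(t)$ and hence of $[0,t]\mapsto\pi(\rho t)=\phi([0,t])-G(t)$. No structure theory is invoked; the only inputs are compactness of $X$ and the rotation-number identity \eqref{eq:tildeGiso(t)}.

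Your approach is structural: you recognize $X$ as a compact \emph{connected} abelian group, invoke the identification $C^\sharp(X,\T)\cong\widehat{X}$, and conclude that $\phi$ is homotopic to a character $\overline{\chi}$ whose Schwartzman image must then equal $\rho$; the claimed map is just $\overline{\chi}|_S$, hence uniformly continuous. This is more conceptual and in fact delivers the surrounding theorem in one stroke (your parenthetical observation $\pi(\rho)=\chi_\Omega(\alpha)\in\widehat{\varphi}_\alpha(\widehat{\Omega})$ is exactly what the paper extracts \emph{after} the claim). The cost is that the identification $[X,\T]\cong\widehat{X}$ for compact connected abelian $X$ lies outside the self-contained toolkit the paper has built; the paper's hands-on argument stays entirely within uniform continuity and the already-established properties of $\mathfrak A_{\overline{\mu}}$, which matches its stated goal of avoiding external machinery. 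Your second paragraph correctly identifies this trade-off, and your alternative ``coboundary'' sketch indeed collapses back to the same topological input (boundedness of $\widetilde G$), so it does not circumvent it.
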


\begin{claimproof} Since $\phi$ is continuous, it suffices to prove that $[0,t] \mapsto G(t)$ is uniformly continuous\footnote{Notice that this is stronger than just proving uniform continuity of $\widetilde G$, since $d([\alpha,t],[\alpha,s])$  can be small even if $s-t$ is not small.} on $S$.  To that end, let $0 < \varepsilon < 1/2$ be given. Since $\phi$ is continuous on the compact space $X$, it is uniformly continuous, and we may find $\delta > 0$ so that
\[
d_\T(\phi(x), \phi(x')) < \varepsilon
\text{ whenever }
d_X(x,x') < \delta.
\]
Consequently, a typical lifting argument shows that
\begin{equation}
\left|\widetilde\phi(t) - \widetilde\phi(s)\right|
<
\varepsilon
\text{ whenever }
|t-s| < \delta.
\end{equation}
Thus, $\widetilde\phi$ is uniformly continuous, and so too is $\widetilde G$. Now, suppose that $d_X([0,0],[0,q]) < \delta$ for some $q \in \N$ and $0 < \delta < 1$ (note that this forces $d_\Omega(q\alpha,0)<\delta$). There then exists an integer $n$ such that
\[
\left|\widetilde\phi(t+q) - \widetilde\phi(t) - n\right|
<
\varepsilon
\]
for all $t \in \R$. Equivalently,
\begin{equation}
|\widetilde G(t + q) - \widetilde G(t) -(n- q\rho)|
<
\varepsilon
\end{equation}
for all $t$. Next, let us show that $|n- q\rho| > \varepsilon$, produces a contradiction to \eqref{eq:tildeGiso(t)}. Concretely, if $n - q\rho > \varepsilon$, then
\[
\widetilde G(t+q) - \widetilde G(t)
\geq
\Delta
:=
n - q \rho - \varepsilon > 0
\]
for all $t$. However, this necessarily implies $\liminf \widetilde G(t)/t \geq \Delta/q > 0$, contradicting \eqref{eq:tildeGiso(t)}; the case $n - q\rho < -\varepsilon$ is similar. Consequently, one has $|n- q\rho| \leq \varepsilon$ and so
\[
|\widetilde G(t+q) - \widetilde G(t)|
<
2\varepsilon
\]
for all $t \in \R$. Putting everything together,
\[
d_\T(G(t),G(s)) < 3\varepsilon
\text{ whenever }
d_X([0,t], [0,s]) < \delta,
\]
and hence $S \ni [0,t] \mapsto G(t)$ is uniformly continuous, which in turn implies that $[0,t] \mapsto \pi(\rho t)$ is uniformly continuous on $S$.
\end{claimproof}
\bigskip

By the claim, the map $\psi:S \to \T$ given by $\psi:[0,t]\mapsto \pi(\rho t)$ is uniformly continuous, and hence extends to a continuous function $\psi:X \to \T$. Restricting $\psi$ to the fiber 
\[ \{ [\omega,0] : \omega \in \Omega \} \cong \Omega,
\]
 we obtain an element of $\widehat \Omega$ via $\chi(\omega) = \psi([\omega,0])$. One has 
 \[\widehat{\varphi}_\alpha(\chi) = \chi(\alpha) = \psi([\alpha,0]) = \psi([0,1]) = \pi(\rho),\] which shows $\pi(\rho) \in \widehat{\varphi}_\alpha(\widehat{\Omega})$ and completes the proof of $\schwartzmanGroup(\Omega,R_\alpha) \subseteq \pi^{-1}(\widehat{\varphi}_\alpha(\widehat{\Omega}))$.

 To see the reverse inclusion, let $\rho \in \pi^{-1}(\widehat{\varphi}_\alpha(\widehat{\Omega}))$ be given. Then, there exists $\chi \in \widehat{\Omega}$ such that $\chi(\alpha) = \pi(\rho)$.  Notice that $\chi$ can be extended to an element $\chi \in\widehat{X}$ via
\begin{equation}
\chi([\omega,t]) = \pi(\rho t) + \chi(\omega),
\quad
\omega \in \Omega, \; t \in \R.
\end{equation}
One can check that $\mathfrak{A}_{\overline{\mu}}(\chi) = \rho$, and hence $\pi^{-1}(\widehat{\varphi}_\alpha(\widehat{\Omega})) \subseteq \schwartzmanGroup(\Omega,R_\alpha)$.
\end{proof}

\section{Subshift Potentials} \label{sec:subshift}

Our next class of examples will be those topological dynamical systems given by subshifts. The present section addresses the claims from Example~\ref{ex.subshift}.

\subsection{Schwartzman Group Associated with a General Subshift}
Suppose $\mathcal{A}$ is a finite set (called the \emph{alphabet}) and give $\mathcal{A}^\Z$ the product topology induced by equipping each factor with the discrete topology. A set $\Omega \subseteq \mathcal{A}^\Z$ is called a \emph{subshift} over $\A$ if it is closed (thus compact) and invariant under the action of the shift $T:\mathcal{A}^\Z \to \mathcal{A}^\Z$ given by $[T\omega](n)=\omega(n+1)$. Abusing notation somewhat, we also write $T$ for the restriction of $T$ to $\Omega$ since this should not cause confusion.

The topology on $\A^\Z$ (hence on $\Omega$) is metrizable, e.g., by
\[ d(\omega,\omega') = 2^{-\min\{|n| : \omega(n) \neq \omega'(n) \} }, \quad \omega \neq \omega'. \]
One may naturally ask how to construct continuous functions from $X = X(\Omega,T)$ to $\T$ whenever $(\Omega,T)$ is a subshift, and moreover, how to construct a family that is large enough that it represents every homotopy class of maps $X \to \T$. The idea is to start with a continuous integer-valued function, $g$, on the subshift and use those to construct functions on the suspension by winding around the circle $g(\omega)$ times in passing from $[\omega,0]$ to $[\omega,1]=[T\omega,0]$.

More precisely, given $g \in C(\Omega,\Z)$, define $\og:X \to \T$ by
\begin{equation} \label{eq:overfDef}
\og[\omega,t] = t g(\omega),
\quad \omega \in \Omega, \ t \in [0,1].
\end{equation}
Given a subshift $\Omega \subseteq \A^\Z$, recall that a \emph{cylinder set} is a set of the form
\[ \Xi = \set{\omega \in \Omega : \omega_{n+j} = u_j \text{ for all } 1 \le j \le k}, \]
where $u= u_1\cdots u_k \in \A^*$ and $n \in \Z$.

\begin{theorem}\label{t.subshiftproperties}
Let $(\Omega,T)$ denote a subshift and let $X = X(\Omega,T)$ denote its suspension. 
\begin{enumerate}
\item[{\rm(a)}] For any $g \in C(\Omega,\Z)$, $\og$ defined in \eqref{eq:overfDef} belongs to $C(X,\T)$.\smallskip

\item[{\rm(b)}] $C^\sharp(X,\T) = \{[\og] : g \in C(\Omega,\Z)\}$. That is, every $h \in C(X,\T)$ is homotopic to $\og$ for some $g \in C(\Omega,\Z)$.
\smallskip

\item[{\rm(c)}] If $\mu$ is a $T$-ergodic measure on $\Omega$, then 
\begin{equation}
\schwartzmanGroup(\Omega,T,\mu) = \set{\int \! g \, d\mu : g \in C(\Omega,\Z)}
\end{equation}
\smallskip

\item[{\rm(d)}] If $\mu$ is a $T$-ergodic measure on $\Omega$, then $\schwartzmanGroup$ is the $\Z$-module generated by 
\[S = \set{\mu(\Xi) : \Xi \subseteq \Omega \text{ is a cylinder set}}.
\]
\end{enumerate}
\end{theorem}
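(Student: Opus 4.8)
The plan for Theorem~\ref{t.subshiftproperties} is to treat part (b) — that every $h\in C(X,\T)$ is homotopic to some $\og$ with $g\in C(\Omega,\Z)$ — as the crux, and to read off (a), (c), (d) from it. Part (a) is immediate: $\og$ is well defined since $\og[\omega,1]=g(\omega)$ agrees in $\T$ with $\og[T\omega,0]=0$ (because $g$ is $\Z$-valued), and it is continuous because a continuous $\Z$-valued function on the compact space $\Omega$ is locally constant, so $(\omega,t)\mapsto tg(\omega)$ is continuous on the chart $\Omega\times[0,1]$ and descends continuously to $X$.

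For (b) I would use two successive homotopies. First I would reduce to the case $h|_{\Omega_0}\equiv 0$, where $\Omega_0=\{[\omega,0]:\omega\in\Omega\}$ is the zero fiber. Here the essential point is that a subshift over a finite alphabet is totally disconnected, so $h|_{\Omega_0}\colon\Omega\to\T$ admits a continuous lift $\widetilde\phi_0\colon\Omega\to\R$ through $\pi\colon\R\to\T$: cover $\T$ by finitely many arcs of length $<1$, pull back and refine to a finite clopen partition of $\Omega$, lift on each clopen piece, and patch. In particular $h|_{\Omega_0}$ is null-homotopic, and since $\Omega_0$ has a collar neighborhood in $X$ (namely $\{[\omega,t]:|t|<1/2\}\cong\Omega\times(-1/2,1/2)$) the inclusion $\Omega_0\hookrightarrow X$ has the homotopy extension property; extending the null-homotopy and passing to its endpoint, I may assume $h|_{\Omega_0}\equiv 0$. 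Then, writing $X=\Omega\times[0,1]/((\omega,1)\sim(T\omega,0))$, the induced map $\widehat h\colon\Omega\times[0,1]\to\T$ satisfies $\widehat h(\omega,0)=\widehat h(\omega,1)=0$. Lifting each loop $t\mapsto\widehat h(\omega,t)$ to $\widetilde h(\omega,\cdot)$ with $\widetilde h(\omega,0)=0$, uniqueness of path lifts makes $(\omega,t)\mapsto\widetilde h(\omega,t)$ continuous and forces $g(\omega):=\widetilde h(\omega,1)\in\Z$ to depend continuously on $\omega$, so $g\in C(\Omega,\Z)$. Finally $\pi\bigl((1-s)\widetilde h(\omega,t)+st\,g(\omega)\bigr)$ is a homotopy on $\Omega\times[0,1]$ from $\widehat h$ to $\og$ that is identically $0$ on $\Omega\times\{0,1\}$, hence descends to $X$; this gives $h\sim\og$.

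For (c), part (b) yields $C^\sharp(X,\T)=\{[\og]:g\in C(\Omega,\Z)\}$, so $\schwartzmanGroup(\Omega,T,\mu)=\{\mathfrak A_{\overline\mu}(\og):g\in C(\Omega,\Z)\}$, and it remains to show $\mathfrak A_{\overline\mu}(\og)=\int g\,d\mu$. Choosing the base point $x=[\omega,0]$ with $\omega$ in the full-measure set where Birkhoff's theorem holds for $g$, and using $\oT^{\,n+r}[\omega,0]=[T^n\omega,r]$, the lift of $t\mapsto\og(\oT^t x)$ starting at $0$ takes the value $\sum_{n=0}^{N-1}g(T^n\omega)$ at $t=N$ and varies by at most $\|g\|_\infty$ between consecutive integers; dividing by $t$ and applying the ergodic theorem gives $\mathfrak A_{\overline\mu}(\og)=\int g\,d\mu$.

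For (d), I would identify $\{\int g\,d\mu:g\in C(\Omega,\Z)\}$ with the $\Z$-module generated by $S=\{\mu(\Xi):\Xi\text{ a cylinder set}\}$. One inclusion is clear: a cylinder $\Xi$ is clopen, so $\chi_\Xi\in C(\Omega,\Z)$ and $\mu(\Xi)=\int\chi_\Xi\,d\mu$ lies in $\schwartzmanGroup$, which is a subgroup of $\R$. Conversely, $g\in C(\Omega,\Z)$ takes finitely many values with clopen level sets, and since every clopen subset of a subshift is a finite disjoint union of cylinders determined on a common window, one writes $g=\sum_i k_i\sum_\ell\chi_{\Xi_{i,\ell}}$ and hence $\int g\,d\mu=\sum_{i,\ell}k_i\,\mu(\Xi_{i,\ell})$, which lies in the $\Z$-module generated by $S$. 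The main obstacle is part (b): with $\Omega$ totally disconnected there is no manifold- or degree-theoretic shortcut, so one has to carry out the lifting argument by hand and justify the homotopy-extension step via the explicit collar of the zero fiber.
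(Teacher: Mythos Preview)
Your proposal is correct and follows essentially the same approach as the paper: normalize $h$ on the zero fiber using total disconnectedness, read off $g$ as the winding number of the resulting loops, homotope linearly to $\og$, compute $\mathfrak{A}_{\overline\mu}(\og)$ via Birkhoff, and handle (d) by writing clopen level sets as finite unions of cylinders. The only cosmetic difference is that where the paper says ``one can check'' for the normalization and the final homotopy, you supply concrete mechanisms (the collar/homotopy-extension argument for Step~1 and the explicit straight-line homotopy $\pi((1-s)\widetilde h(\omega,t)+st\,g(\omega))$ for the last step); these are valid and make the argument more self-contained.
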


\begin{proof} (a) Let $g \in C(\Omega,\Z)$ be given. Let us first note that
\[\og[\omega,1]=0=\og[T\omega,0],\]
so $\og$ is well-defined. Since $g$ is continuous and integer-valued, choose $\delta>0$ such that $d(\omega,\omega')<\delta$ implies $g(\omega) = g(\omega')$. Suppose one is given $\omega,\omega'$ such that  $d(\omega,\omega')<\delta$, and denote $k=g(\omega)= g(\omega')$. One then can note
\[ d_\T(g[\omega,t], g[\omega',t']) \leq k |t-t'|, \quad t,t'\in[0,1]\]
to see that $\og$ is continuous.
\medskip

\noindent (b) Let $h:X \to \T$ be a given continuous function. 

\begin{step} Using total disconnectedness of $\Omega$, one can check that there exists $h_1 \in C(X,\T)$ homotopic to $h$ such that $h_1[\omega,0]=0$ for all $\omega \in \Omega$. 
\end{step}

\begin{step} The winding of $t \mapsto h_1[\omega,t]$ is locally constant. More precisely, for each $\omega \in \Omega$, lift $\phi_\omega:t \mapsto h_1[\omega,t]$ to  $\widetilde\phi_\omega:\R \to \R$ and define 
 \[g(\omega) = \widetilde\phi_\omega(1) - \widetilde\phi_\omega(0).\]
  One has $g \in C(\Omega,\Z)$. Indeed, if $\omega$ and $\omega'$ are sufficiently close, then $d(\phi_\omega(t),\phi_{\omega'}(t)) < 1/2$ for all $t$, so one can choose lifts that preserve this property and hence $g(\omega) = g(\omega')$. Thus, $g \in C(\Omega,\Z)$, as claimed.
 \end{step}
 
 \begin{step} One can check that $h_1$ is homotopic to $\bar{g}$, which concludes the proof of (b).
 \end{step}
 
(c) Given $g \in C(\Omega,\Z)$ and $\omega \in \Omega$, consider $\phi_\omega(t) = \og[\omega,t]$ and the corresponding lift $\widetilde{\phi}_\omega$. From the definition of $\og$, one has, for $\mu$-a.e.\ $\omega$,
\[ \lim_{n\to\infty} \frac{\widetilde{\phi}_\omega(n)}{n} = \lim_{n\to\infty} \frac{1}{n}\sum_{j=0}^{n-1} g(T^j\omega) = \int \! g\, d\mu. \]
Thus, the conclusion of (c) immediately by noting that the previous calculation implies
\[\mathfrak{A}_{\overline{\mu}}(\og) = \int \! g \, d\mu.\]
 (d) Note that any integer combination of characteristic functions of cylinder sets lies in $C(\Omega,\Z)$, so the $\Z$-module generated by $S$ is contained in $\schwartzmanGroup(\Omega,T,\mu)$. For the other inclusion, consider $g:\Omega \to \Z$ continuous. By continuity and compactness, $f$ assumes finitely many values $s_1,\ldots,s_k$ and the sets $\Omega_j =g^{-1}(\{s_j\})$, $1 \le j \le k$ are closed and open. Writing each $\Omega_j$ as a disjoint union of cylinder sets, one sees that $\int\! g \, d\mu$ is in the $\Z$-module generated by $S$.
\end{proof}

\begin{coro}\label{c.SgroupUESH}
Let $(\Omega,T)$ denote a strictly ergodic subshift. We have
\begin{equation}\label{e.SgroupUESH}
\schwartzmanGroup(\Omega,T) = \set{\int g \, d\mu : g \in C(\Omega,\Z)},
\end{equation}
where $\mu$ denotes the unique $T$-invariant measure on $\Omega$.
\end{coro}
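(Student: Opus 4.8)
The plan is to read the identity off directly from part~(c) of Theorem~\ref{t.subshiftproperties}, after recording the elementary consequences of strict ergodicity. First I would recall that a strictly ergodic system is by definition both minimal and uniquely ergodic; in particular there is a unique $T$-invariant Borel probability measure $\mu$ on $\Omega$. A standard fact from ergodic theory is that this unique invariant measure is automatically ergodic: were it a nontrivial convex combination of two distinct invariant measures its ergodic decomposition would be nondegenerate, contradicting uniqueness. Hence $\mu$ is a $T$-ergodic measure on $\Omega$, and Theorem~\ref{t.subshiftproperties}(c) applies verbatim to yield
\[
\schwartzmanGroup(\Omega,T,\mu) = \set{\int g \, d\mu : g \in C(\Omega,\Z)}.
\]

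It then remains only to match notation. Since $(\Omega,T)$ is uniquely ergodic, I would invoke the convention fixed in the remarks following Theorem~\ref{t:gablabel} and write $\schwartzmanGroup(\Omega,T)$ in place of $\schwartzmanGroup(\Omega,T,\mu)$, which is exactly \eqref{e.SgroupUESH}. As a consistency check with the general framework of Section~\ref{sec:gaplabel}, I would also observe that minimality forces $\supp\mu = \Omega$: the support of $\mu$ is a nonempty closed $T$-invariant subset of $\Omega$, hence equals $\Omega$, so no restriction of $T$ is needed and the object $\schwartzmanGroup(\Omega,T)$ really is attached to the original system.

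There is no substantive obstacle here — the corollary is an immediate specialization of Theorem~\ref{t.subshiftproperties}(c). The only points worth flagging are the two routine sanity checks just mentioned, namely the automatic ergodicity of the unique invariant measure and the full-support property under minimality, both of which are standard and are already used implicitly elsewhere in the paper.
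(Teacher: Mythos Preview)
Your proposal is correct and matches the paper's approach exactly: the paper states the corollary immediately after Theorem~\ref{t.subshiftproperties} without proof, treating it as a direct specialization of part~(c) together with the notational convention for uniquely ergodic systems. Your additional remarks on automatic ergodicity and full support are correct and harmless, though the paper does not bother to spell them out.
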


\subsection{Subshifts Generated by Substitutions} \label{subsec:substitution}
The following result contains the statement from part~(2) of Example~\ref{ex.subshift}.

\begin{theorem}\label{t.fibonacci}
Let $(\Omega,T)$ denote the Fibonacci subshift. If $\alpha = \frac{1}{2}(\sqrt{5}-1)$ denotes the inverse of the golden mean, then
\begin{equation}
\schwartzmanGroup(\Omega,T) = \Z[\alpha].
\end{equation}
\end{theorem}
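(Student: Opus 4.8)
The plan is to reduce everything, via Corollary~\ref{c.SgroupUESH} together with part~(d) of Theorem~\ref{t.subshiftproperties}, to a statement about cylinder measures: since the Fibonacci subshift $(\Omega,T)$ is strictly ergodic with unique invariant measure $\mu$, the group $\schwartzmanGroup(\Omega,T)$ is the $\Z$-module generated by $\{\mu(\Xi):\Xi\subseteq\Omega\text{ a cylinder}\}$, and by $T$-invariance $\mu(\Xi)$ depends only on the finite word $w$ defining $\Xi$, so $\mu(\Xi)=\mathrm{freq}(w)$, the asymptotic frequency of $w$ in elements of $\Omega$ (equivalently the limiting frequency of $w$ in the approximants $u_n=S^n(0)$). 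Hence it suffices to prove: (i) every such frequency lies in $\Z+\alpha\Z$, and (ii) both $1$ and $\alpha$ are integer combinations of cylinder measures. Since $\alpha^2+\alpha-1=0$ one has $\Z[\alpha]=\Z+\alpha\Z$, so (i) and (ii) together give the theorem.

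For (ii): counting letters in $u_n=S^n(0)$ gives $\#\{0\text{'s in }u_n\}=F_{n+1}$ and $|u_n|=F_{n+2}$, so $\mathrm{freq}(0)=\lim_n F_{n+1}/F_{n+2}=\alpha$; thus $\mu([0])=\alpha\in\schwartzmanGroup(\Omega,T)$ and $1=\mu([0])+\mu([1])\in\schwartzmanGroup(\Omega,T)$, whence $\Z+\alpha\Z\subseteq\schwartzmanGroup(\Omega,T)$.

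For (i) I would use the renormalization coming from recognizability of the Fibonacci substitution. First establish, elementarily, that every $\omega\in\Omega$ has a unique decomposition into blocks from $\{S(0),S(1)\}=\{01,0\}$: if $\omega_i\omega_{i+1}=01$ then this pair is forced to be an $S(0)$-block (since $1$ cannot begin a block), and if $\omega_i\omega_{i+1}=00$ then $\omega_i$ is forced to be a complete $S(1)$-block; reading off the letters $1$ and the subwords $00$ therefore pins down all block boundaries. Now track an occurrence of a word $w$ through the block decomposition: it is covered by a bounded number of consecutive blocks $B_j\cdots B_{j+r}$ in one of finitely many \emph{types} (recording which blocks $B_j,\dots,B_{j+r}$ occur and how $w$ is offset within $B_j$ and $B_{j+r}$), and the density of occurrences of $w$ of a given type equals $\alpha$ times the density, in the desubstituted sequence, of the corresponding word $w'$ obtained by desubstituting $B_j\cdots B_{j+r}$ (the scale factor being the reciprocal of the $\mu$-average block length $2\cdot\mathrm{freq}(0)+1\cdot\mathrm{freq}(1)=1+\alpha=1/\alpha$). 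This yields an identity $\mathrm{freq}(w)=\alpha\sum_t n_t\,\mathrm{freq}(w'_t)$ with $n_t\in\Z_{\ge0}$; and since $S(1)$-blocks cannot be adjacent (that would force $11$ in the desubstituted word), any run of blocks has average length exceeding $3/2$, so $|w'_t|<|w|$ once $|w|$ is large enough. Combined with the elementary observation that every $1$ in an element of $\Omega$ is both preceded and followed by $0$ (which handles the finitely many short words, reducing their frequencies to $\mathrm{freq}(\epsilon)=1$, $\mathrm{freq}(0)=\alpha$, $\mathrm{freq}(1)=1-\alpha$), an induction on $|w|$, using $\alpha^2=1-\alpha$ to stay inside the group, shows $\mathrm{freq}(w)\in\Z+\alpha\Z$. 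This proves (i) and completes the argument.

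The main obstacle is the bookkeeping in this renormalization step: making "type" precise, handling the boundary blocks $B_j$ and $B_{j+r}$ that $w$ meets only partially, and verifying that the desubstituted words have strictly smaller length so that the induction terminates. An alternative that avoids this is to invoke the identification of the Fibonacci subshift with a Sturmian subshift of slope $\theta=1-\alpha$: under the conjugacy with a coding of the rotation $R_\theta$ on $\T$, the cylinder $[w]$ for a word of length $n$ corresponds to an interval of $\T$ whose endpoints lie in $\{-k\theta\bmod 1:0\le k\le n\}$, so $\mu([w])\in(\Z+\theta\Z)\cap[0,1]=(\Z+\alpha\Z)\cap[0,1]$, again giving (i).
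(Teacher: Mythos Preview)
Your argument is correct in outline and genuinely different from the paper's. The paper does not attack Fibonacci directly: it first proves a general result for primitive substitutions (Theorem~\ref{t.primitivesubst}, due to Bellissard--Bovier--Ghez) asserting that $\schwartzmanGroup(\Omega,T)$ is the $\Z[\theta^{-1}]$-module generated by the entries of the Perron eigenvectors $v_\theta$ and $v_\theta^{(2)}$ of the substitution matrix and its induced two-block version, and then specializes by computing $M$, $M_2$, $\theta$, $v_\theta$, $v_\theta^{(2)}$ explicitly for Fibonacci. Your route instead exploits the specific combinatorics of Fibonacci---recognizability and the self-similarity under desubstitution (or, in your alternative, the Sturmian realization)---to show directly that every cylinder measure lies in $\Z+\alpha\Z$. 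Your approach is more elementary and self-contained for this single example, and the Sturmian shortcut is especially clean; the paper's approach, by contrast, buys uniformity: the same machinery immediately handles Thue--Morse, period doubling, and any other primitive substitution with no new ideas, only matrix computations. One small caution on your renormalization sketch: the claim $|w'_t|<|w|$ can fail for very short $w$ (e.g.\ $w=00$ may desubstitute to a word of the same length), so the base cases must be enumerated carefully---you gesture at this, and it does work, but the paper's approach sidesteps this bookkeeping entirely by showing once and for all that $v_\theta^{(m)}$ is an integer matrix times $\theta^{-p} v_\theta^{(2)}$.
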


We will derive Theorem~\ref{t.fibonacci} from a more general result for primitive substitution subshifts, since the latter is obtained via a procedure that works in the same fashion for any subshift of that kind. This procedure is due to Bellissard, Bovier, and Ghez \cite{BelBovGhe1992}.

\begin{definition}
 A \emph{substitution} on the finite alphabet $\A$ is given by a map $S:\A \to \A^*$, where $\A^*$ denotes the set of finite words over $\A$. The substitution $S$ can be extended by concatenation to $\A^*$ and $\A^\N$. A fixed point $u$ of $S : \A^\N \to \A^\N$ is called a \emph{substitution sequence}; the associated subshift is given by
$$
\Omega = \Omega_u = \{ \omega \in \A^\Z : \text{each finite subword of $\omega$ is a subword of u} \}.
$$
\end{definition}

In the Fibonacci case considered in Example~\ref{ex.subshift} we have $\A = \{ 0,1 \}$, $S(0) = 01$, $S(1) = 0$, and $u = 0100101001001 \ldots$.

\begin{definition}
The \emph{substitution matrix} $M = M_S$ associated with a substitution $S : \A \to \A^*$, where $\A = \{ a_1 , \ldots, a_\ell \}$, is given by the $\ell \times \ell$ matrix $M = (m_{i,j})_{1 \le i,j \le \ell}$, where $m_{i,j}$ is given by the number of times $a_i$ occurs in $S(a_j)$. The substitution $S$ is called \emph{primitive} if its substitution matrix $M$ is primitive, that is, for some $k \in \N$, all entries of $M^k$ are strictly positive.
\end{definition}

The Fibonacci substitution is primitive, which follows from observing that
\begin{equation}\label{e.fibsubstmat}
M^2 = \begin{bmatrix} 2 & 1 \\ 1 & 1 \end{bmatrix}.
\end{equation}

If $S$ is primitive and $u$ is a substitution sequence, then the associated subshift $\Omega$ is known to be strictly ergodic \cite{Queffelec1987}. In fact, even if $S : \A^\N \to \A^N$ does not have a fixed point, one can check that some power of this map does, and that power will be primitive as well and hence the associated subshift will be strictly ergodic. In addition, the resulting subshift is independent of the power and the fixed point one chooses and therefore only depends on $S$. 

We are thus in a setting covered by Corollary~\ref{c.SgroupUESH} and wish to determine $\schwartzmanGroup(\Omega,T)$ via \eqref{e.SgroupUESH}. In order to do so, we need the following concepts. Applying the Perron-Frobenius theorem to $M$ we infer that $M$ has a leading simple eigenvalue $\theta > 0$ (in the sense that it has multiplicity one and every other eigenvalue $\lambda$ of $M$ obeys $|\lambda| < \theta$), along with an eigenvector $v_\theta$ with strictly positive entries. We will normalize $v_\theta$ so that the sum of its entries equals one. The entries of this normalized vectors then correspond to the frequencies with which the symbols $a_j$ appear in $u$ (or any $\omega \in \Omega$) \cite{Queffelec1987}. 

In order to study frequencies of subwords of $u$ of length $m > 1$, one can consider the set $\CW_m$ of these words and the derived substitution $S_m : \CW_m \to \CW_m^*$, which is defined as follows. If $w = w_1 \ldots w_m \in \CW_m$ and $S(w) = s_1 s_2 \ldots s_{|S(w)|}$ with $w_j, s_j \in \A$, then set
\begin{equation}\label{e.smdef}
S_m(w) = (s_1 \ldots s_m) (s_2 \ldots s_{m+1}) \ldots (s_{|S(w_1)|} \ldots s_{|S(w_1)| + m - 1}).
\end{equation}
It can be checked that $S_m$ is primitive as well, with the same leading eigenvalue $\theta$ and an $m$-dependent normalized corresponding eigenvector $v_\theta^{(m)}$ and, as before, the frequency of a subword $w \in \CW_m$ is given by the entry of $v_\theta^{(m)}$ that corresponds to $w$ \cite{Queffelec1987}.

We can now state the general result for subshifts arising from primitive substitutions, which is due to Bellissard, Bovier, and Ghez \cite{BelBovGhe1992}.

\begin{theorem}\label{t.primitivesubst}
If $S$ is a primitive substitution, then with the notation from above, the Schwartzman group $\schwartzmanGroup(\Omega,T)$ is given by the $\Z[\theta^{-1}]$--module generated by the entries of $v_\theta$ and $v_\theta^{(2)}$.
\end{theorem}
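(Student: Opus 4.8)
My plan is to translate the statement, via the results already proved, into an equality of two explicit $\Z$-submodules of $\R$ and then prove the two inclusions. By Corollary~\ref{c.SgroupUESH} and Theorem~\ref{t.subshiftproperties}(d), $\schwartzmanGroup(\Omega,T)$ equals the $\Z$-module $\CG$ generated by the $\mu$-measures of cylinder sets, where $\mu$ is the unique invariant measure; since $\mu$ is shift invariant, the measure of the cylinder cut out by a word $w$ is the frequency $f(w)$ of $w$ in $u$, and, as recalled above, $f(w)=(v_\theta^{(|w|)})_w$. Writing $\CG_m$ for the $\Z$-span of $\{(v_\theta^{(\ell)})_w:\ell\le m,\ w\in\CW_\ell\}$, we have $\CG=\bigcup_{m\ge 1}\CG_m$, and the assertion is exactly that $\CG$ coincides with $\Z[\theta^{-1}]\CG_2$, the $\Z[\theta^{-1}]$-submodule of $\R$ generated by the entries of $v_\theta$ and $v_\theta^{(2)}$. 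The first thing I would record is that $\CG_2$ is stable under multiplication by $\theta$: this is immediate from $Mv_\theta=\theta v_\theta$ and $M_2v_\theta^{(2)}=\theta v_\theta^{(2)}$ together with the integrality of $M$ and $M_2$. Consequently $\Z[\theta^{-1}]\CG_2=\bigcup_{n\ge 0}\theta^{-n}\CG_2$ is an increasing union, and only the two inclusions between this union and $\CG$ remain.

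For $\CG\subseteq\Z[\theta^{-1}]\CG_2$ I would first replace $S$ by a suitable power: this changes neither the subshift $\Omega$ nor the Perron eigenvectors $v_\theta,v_\theta^{(2)}$ (only $\theta$ is replaced by $\theta^{k}$, and $\Z[\theta^{-k}]\CG_2\subseteq\Z[\theta^{-1}]\CG_2$), so we may assume $\min_a|S(a)|\ge 2$. Then, reading the identity $u=S(u)$ forward, any occurrence of a word $w$ of length $m$ lies inside a run of at most $J(m):=1+\lceil (m-1)/2\rceil$ consecutive blocks $S(u_i)\cdots S(u_{i+r})$; counting the occurrences of $w$ in $u[1,N]$ by grouping them according to the leftmost block of their run — and using that $u[1,N]$ is assembled from $N/\theta+O(1)$ blocks — yields an identity $f(w)=\theta^{-1}\sum_{\ell\le J(m)}\sum_{v\in\CW_\ell}n_{w,v}\,f(v)$ with $n_{w,v}\in\Z_{\ge 0}$. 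Since $J(m)\le m-1$ for $m\ge 3$, this gives $\CG_m\subseteq\theta^{-1}\CG_{m-1}$ for $m\ge 3$, hence inductively $\CG_m\subseteq\theta^{-(m-2)}\CG_2$, and therefore $\CG\subseteq\bigcup_n\theta^{-n}\CG_2=\Z[\theta^{-1}]\CG_2$. This refinement direction is essentially transfer-operator bookkeeping and should be routine.

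The reverse inclusion $\Z[\theta^{-1}]\CG_2\subseteq\CG$ is where the real content lies: by the increasing-union structure it suffices to show $\theta^{-n}x\in\CG$ for every $n$ and every entry $x$ of $v_\theta$ or $v_\theta^{(2)}$. Assuming $S$ aperiodic (the periodic case being trivial), I would invoke Mossé's recognizability theorem: there is a radius $L_n$ such that the cutting points of the canonical $S^n$-decomposition of $u$ are read off from windows of that radius, so the set $B_n\subseteq\Omega$ of sequences whose $0$-th coordinate is such a cutting point is clopen. Intersecting $B_n$ with cylinders encoding that the symbols immediately after a cutting point reproduce a prefix of $S^n(u_k)S^n(u_{k+1})\cdots$ realizes $\theta^{-n}\sum_{v:\,S^n(v)=S^n(w)}f(v)$ as the $\mu$-measure of a clopen set, hence as an element of $\CG$; when $S$ is injective on letters this is exactly $\theta^{-n}f(w)$, and in general one must combine these identities across the fibres of $v\mapsto S^n(v)$ with the integer matrix relations of the first paragraph to isolate $\theta^{-n}f(w)$. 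I expect this recognizability argument — and in particular the non-injective case, where a cutting point reveals only the $S$-image of the letter it begins, not the letter itself — to be the main obstacle; everything else is soft. Finally, Theorem~\ref{t.fibonacci} drops out as the specialization $\theta=\alpha^{-1}$, $v_\theta=(\alpha,\alpha^2)$, $v_\theta^{(2)}=(\alpha^3,\alpha^2,\alpha^2)$ on $\CW_2=\{00,01,10\}$, so that (using $\alpha^2=1-\alpha$) the $\Z$-span of these entries already equals $\Z+\Z\alpha=\Z[\alpha]=\Z[\theta^{-1}]$, a module over itself, giving $\schwartzmanGroup(\Omega,T)=\Z[\alpha]$.
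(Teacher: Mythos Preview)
Your reduction to the $\Z$-module $\CG$ generated by cylinder measures is the paper's starting point as well, but from there the arguments diverge.

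For the inclusion $\CG\subseteq\Z[\theta^{-1}]\CG_2$, the paper does \emph{not} descend inductively from length $m$ to length $m-1$ by block-counting as you do. Instead it jumps directly from length $m$ to length $2$ via a single matrix identity: for $p$ large enough that $|S^p(a)|\ge m-1$ for every letter $a$, the word $S_m^p(w)$ depends only on the two-letter prefix $w_1w_2$ of $w$. The paper encodes this as an integer matrix $M_{p,m,2}$ (rows indexed by $\CW_m$, columns by $\CW_2$) together with the prefix-projection matrix $P_{2,m}$, records the three relations $M_m^p=M_{p,m,2}P_{2,m}$, $M_2^p=P_{2,m}M_{p,m,2}$, and $M_mM_{p,m,2}=M_{p,m,2}M_2$, and from these deduces $v_\theta^{(m)}=\theta^{-p}M_{p,m,2}\,v_\theta^{(2)}$ in one stroke. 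Your recursion $f(w)=\theta^{-1}\sum_v n_{w,v}f(v)$ with $|v|\le J(m)\le m-1$ is also correct, but the paper's route avoids both the passage to a power of $S$ and the induction on $m$, at the cost of introducing the auxiliary matrices.

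For the reverse inclusion $\Z[\theta^{-1}]\CG_2\subseteq\CG$, the paper's proof is silent: it stops immediately after the identity $v_\theta^{(m)}=\theta^{-p}M_{p,m,2}\,v_\theta^{(2)}$ and does not address why $\theta^{-n}$ times a one- or two-letter frequency lies in $\CG$. For gap-labelling purposes only the upper bound on $\schwartzmanGroup$ is needed, which presumably explains the omission; if one reads the theorem as a genuine equality, your Moss\'e-recognizability outline is the standard way to supply the missing half, and you are right to flag the non-injective case as the point requiring care.
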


Let us first show how Theorem~\ref{t.fibonacci} follows quickly from Theorem~\ref{t.primitivesubst}.

\begin{proof}[Proof of Theorem~\ref{t.fibonacci}]
As pointed out above, in the case of the Fibonacci substitution, the substitution matrix is given by \eqref{e.fibsubstmat}. A quick calculation then shows that the leading eigenvalue is given by the golden mean
$$
\theta = \frac{1 + \sqrt{5}}{2} = \alpha^{-1}
$$
and the corresponding normalized eigenvector is given by
$$
v_\theta = \begin{bmatrix} \theta^{-1} \\ \theta^{-2} \end{bmatrix} = \begin{bmatrix} \theta - 1 \\ 2 - \theta \end{bmatrix}.
$$
Another calculation shows that the substitution matrix associated with $S_2$ is given by
$$
M_2 = \begin{bmatrix} 0 & 0 & 1 \\ 1 & 1 & 0 \\ 1 & 1 & 0 \end{bmatrix},
$$
which has the same leading eigenvalue $\theta$ and associated normalized eigenvector
$$
v_\theta^{(2)} = \begin{bmatrix} \theta^{-3} \\ \theta^{-2} \\ \theta^{-2} \end{bmatrix} = \begin{bmatrix} 2\theta - 3 \\ 2 - \theta \\ 2 - \theta \end{bmatrix}.
$$
Using one more time that $\theta^{-1} = \theta - 1 = \alpha$, we see that the $\Z[\theta^{-1}]$--module generated by the entries of $v_\theta$ and $v_\theta^{(2)}$ is equal to $\Z[\alpha]$. Thus, Theorem~\ref{t.fibonacci} indeed follows from Theorem~\ref{t.primitivesubst}.
\end{proof}

\begin{proof}[Proof of Theorem~\ref{t.primitivesubst}]
Recall that due to \eqref{e.SgroupUESH} we need to consider the collection of numbers $\int\! f \, d\mu$, where $\mu$ denotes the unique $T$-invariant measure on $\Omega$ and $f$ runs through $C(\Omega,\Z)$.

Due to the nature of the topology of $\Omega$ and the fact that each $f$ in question takes values in the integers, we can restrict our attention to the consideration of the $\mu$-measure of cylinder sets; compare Theorem~\ref{t.subshiftproperties}.(d).

This in turn shows that we need to understand the frequencies with which finite subwords of $u$ occur in $u$ (or any $\omega \in \Omega$ due to unique ergodicity). As pointed out above, the frequency of a subword $w \in \CW_m$ is given by the entry of $v_\theta^{(m)}$ that corresponds to $w$. Thus, it remains to relate the entries of $v_\theta^{(m)}$ for $m \ge 3$ to the entries of $v_\theta$ and $v_\theta^{(2)}$.

A useful relation of the desired nature is obtained by observing from \eqref{e.smdef} that if $p$ is large enough, then $S_m^p(w)$ only depends on the first two symbols of $w$, that is, on $w_1, w_2$ if $w = w_1 \ldots w_m$ with $w_j \in \A$. Indeed, writing
$$
S^p(w) = S^p(w_1) \ldots S^p(w_m) = s_1 \ldots s_{|S^p(w)|}
$$
and using that $(S_m)^p = (S^p)_m$, we find
$$
S_m^p(w) =  (s_1 \ldots s_m) (s_2 \ldots s_{m+1}) \ldots (s_{|S^p(w_1)|} \ldots s_{|S^p(w_1)| + m - 1}).
$$
Thus, if
$$
|S^p(w_1)| + |S^p(w_2)| \ge |S^p(w_1)| + m - 1,
$$
then $S^p(w)$ depends only on $w_1$ and $w_2$. By primitivity of $S$ (and hence $M$), this inequality holds for large enough $p$, uniformly in $w_2$. 

Fix such a value of $p$. As a consequence of this choice, in the matrix $M_m^p$ (where $M_m$ denotes the substitution matrix associated with the substitution $S_m$), all columns with labels $w$ having the same prefix $w_1 w_2$ are equal. Thus, we may as well collapse them into one column. This results in the matrix $M_{p,m,2}$, where the columns are now labeled by the elements $w_1 w_2$ of $\CW_2$. Denoting the process of passing to the prefix of length $2$ by $\pi_{2,m} : \CW_m \to \CW_2$, $w = w_1\ldots w_m \mapsto w_1 w_2$ and the associated substitution matrix by $P_{2,m}$, we therefore have
\begin{equation}\label{e.substrelation1}
M_m^p = M_{p,m,2} P_{2,m}.
\end{equation}

On the other hand, a moment's thought yields the additional relations
\begin{equation}\label{e.substrelation2}
M_2^p = P_{2,m} M_{p,m,2}
\end{equation}
and
\begin{equation}\label{e.substrelation3}
M_m M_{p,m,2} =  M_{p+1,m,2} = M_{p,m,2} M_2.
\end{equation}

We are now able to address our actual goal: expressing the entries of $v_\theta^{(m)}$ in terms of those of $v_\theta$ and $v_\theta^{(2)}$. Consider the vector
\begin{equation}\label{e.tildevmdef}
\tilde v^{(m)} := M_{p,m,2} v_\theta^{(2)}.
\end{equation}
By \eqref{e.substrelation3} we have
$$
M_m \tilde v^{(m)} = M_m M_{p,m,2} v_\theta^{(2)} = M_{p,m,2} M_2  v_\theta^{(2)} = \theta M_{p,m,2} v_\theta^{(2)} = \theta \tilde v^{(m)}.
$$
As $\theta$ is the leading eigenvalue of $M_m$ and it is simple, it follows that $\tilde v^{(m)}$ must be a multiple of $v_\theta^{(m)}$. To determine the multiplier, let us denote by $\mathbf{1}_k$ the vector of all ones in $\R^{\# \CW_k}$, so that
\begin{align*}
\sum_{w \in \CW_m} \tilde v^{(m)}_w & = \langle \mathbf{1}_m, \tilde v^{(m)} \rangle \\
& = \langle \mathbf{1}_2 P_{2,m}, M_{p,m,2} v_\theta^{(2)} \rangle \\
& = \langle \mathbf{1}_2, P_{2,m} M_{p,m,2} v_\theta^{(2)} \rangle \\
& = \langle \mathbf{1}_2, M_2^p v_\theta^{(2)} \rangle \\
& = \langle \mathbf{1}_2, \theta^p v_\theta^{(2)} \rangle \\
& =  \theta^p,
\end{align*}
where we used \eqref{e.substrelation2} in the fourth step and the normalization of $v_\theta^{(2)}$ in the sixth step. As $v_\theta^{(m)}$ is normalized as well, we see that the multiplier is $\theta^p$, that is,  \begin{equation}\label{e.tildevmrel}
\tilde v^{(m)} = \theta^p  v_\theta^{(m)}.
\end{equation}

It now follows from \eqref{e.tildevmdef} and \eqref{e.tildevmrel} that for every $m \ge 3$, each of the entries of $v_\theta^{(m)}$ is given by an integral linear combination of the entries of $v_\theta^{(2)}$ times a negative power of $\theta$, concluding the proof.
\end{proof}

Two other prominent examples of substitutions on a two-symbol alphabet are given by the Thue--Morse and period-doubling substitutions. The \emph{Thue--Morse substitution} acts on $\A = \{0,1\}$ via $0\mapsto 01$ and $1 \mapsto 10$.
\begin{theorem} Let $(\Omega,T)$ denote the Thue--Morse subshift. One has
\begin{equation} \label{eq:subshift:thuemorsegaplabels} \schwartzmanGroup(\Omega,T) = \set{ \frac{k}{3\cdot 2^n} : k \in \Z, \ n \in \Z_+}. \end{equation} \end{theorem}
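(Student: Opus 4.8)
The plan is to apply Theorem~\ref{t.primitivesubst} directly, so the entire argument reduces to a finite computation. First I would record the basic data of the Thue--Morse substitution $S:0\mapsto 01$, $1\mapsto 10$: its substitution matrix is $M=\begin{bmatrix}1&1\\1&1\end{bmatrix}$, which is primitive with leading eigenvalue $\theta=2$ and normalized Perron eigenvector $v_\theta=[\tfrac12,\tfrac12]^\top$. Hence $\Z[\theta^{-1}]=\Z[\tfrac12]$, the ring of dyadic rationals, and the only remaining task is to identify the entries of $v_\theta^{(2)}$.

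Next I would determine the alphabet $\CW_2$ of length-two subwords of the Thue--Morse fixed point $u=0110100110010110\cdots$ and observe that all four words $00,01,10,11$ occur (this is visible already in the first eight symbols of $u$). Then I compute the derived substitution $S_2:\CW_2\to\CW_2^*$ via \eqref{e.smdef}; since every $S(a)$ has length two, each $S_2(w)$ has length two, and one obtains
\[ S_2(00)=(01)(10),\quad S_2(01)=(01)(11),\quad S_2(10)=(10)(00),\quad S_2(11)=(10)(01). \]
Reading off the associated substitution matrix $M_2$ in the ordering $(00,01,10,11)$ and solving $M_2v=2v$ subject to the normalization that the entries sum to one yields $v_\theta^{(2)}=[\tfrac16,\tfrac13,\tfrac13,\tfrac16]^\top$. (The $0\leftrightarrow 1$ symmetry of Thue--Morse can be used to streamline this step.)

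Finally I would compute the $\Z[\tfrac12]$-module generated by the entries of $v_\theta$ and $v_\theta^{(2)}$, i.e.\ by $\{\tfrac12,\tfrac16,\tfrac13\}$. Since $\tfrac12$ is a unit in $\Z[\tfrac12]$ and $\tfrac13=2\cdot\tfrac16$, this module equals $\tfrac16\Z[\tfrac12]$; unwinding the definition of $\Z[\tfrac12]$ then shows $\tfrac16\Z[\tfrac12]=\{k/(6\cdot 2^n):k\in\Z,\,n\in\Z_+\}=\{k/(3\cdot 2^n):k\in\Z,\,n\in\Z_+\}$, which is exactly the group in \eqref{eq:subshift:thuemorsegaplabels}. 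I do not expect a serious obstacle here: the computation is mechanical once Theorem~\ref{t.primitivesubst} is available, and the only points requiring a little care are verifying that $\CW_2$ contains all four two-letter words (so that $M_2$ is the correct $4\times4$ matrix) and the elementary manipulation identifying $\tfrac16\Z[\tfrac12]$ with the displayed set.
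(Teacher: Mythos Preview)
Your proposal is correct and follows essentially the same approach as the paper: apply Theorem~\ref{t.primitivesubst}, compute $M$, $\theta=2$, $v_\theta$, the derived substitution $S_2$, the $4\times4$ matrix $M_2$, and $v_\theta^{(2)}=[\tfrac16,\tfrac13,\tfrac13,\tfrac16]^\top$, then identify the $\Z[\tfrac12]$-module generated by $\{\tfrac12,\tfrac13,\tfrac16\}$ with the displayed set. Your version is slightly more explicit in checking that $\CW_2$ has all four words and in writing the module as $\tfrac16\Z[\tfrac12]$, but the argument is otherwise identical.
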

\begin{proof} The substitution matrix, leading eigenvalue, and normalized eigenvector for the Thue--Morse substitution are given by
\begin{equation} M = \begin{bmatrix} 1 & 1 \\ 1 & 1 \end{bmatrix}, \quad \theta = 2, \quad  v_\theta = \begin{bmatrix} 1/2 \\ 1/2 \end{bmatrix}. \end{equation}
Another quick calculation shows that the substitution matrix associated with $S_2$ and its leading eigenvector are given by
\begin{equation} M_2= 
\begin{bmatrix} 0 & 0 & 1 & 0 \\ 1 & 1 & 0 & 1 \\ 1 & 0 & 1 & 1 \\ 0 & 1 & 0 & 0 \end{bmatrix}, \quad v_\theta^{(2)} =  \begin{bmatrix}1/6 \\ 1/3 \\ 1/3 \\ 1/6 \end{bmatrix}. \end{equation}  
In view of Theorem~\ref{t.primitivesubst}, $\schwartzmanGroup(\Omega,T)$ is the $\Z[1/2]$-module generated by $\{1/2,1/3,1/6\}$, which the reader can readily check is equivalent to the set in \eqref{eq:subshift:thuemorsegaplabels}.\end{proof}
The \emph{period-doubling substitution} acts on $\A = \{0,1\}$ via $0\mapsto 01$ and $1 \mapsto 00$.
\begin{theorem} Let $(\Omega,T)$ denote the period-doubling subshift. One has
\begin{equation} \schwartzmanGroup(\Omega,T) = \set{ \frac{k}{3\cdot 2^n} : k \in \Z, \ n \in \Z_+}. \end{equation} \end{theorem}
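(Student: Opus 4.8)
The plan is to run the same argument as in the Thue--Morse case, invoking Theorem~\ref{t.primitivesubst} after computing the relevant Perron--Frobenius data for the period-doubling substitution. First I would record that, with $\A = \{0,1\}$ and $a_1 = 0$, $a_2 = 1$, the substitution $S\colon 0 \mapsto 01$, $1 \mapsto 00$ has substitution matrix
\[
M = \begin{bmatrix} 1 & 2 \\ 1 & 0 \end{bmatrix},
\]
and since $M^2$ has strictly positive entries, $S$ is primitive, so Theorem~\ref{t.primitivesubst} applies and $(\Omega,T)$ is strictly ergodic. A short computation gives the characteristic polynomial $\lambda^2 - \lambda - 2 = (\lambda-2)(\lambda+1)$, so the leading eigenvalue is $\theta = 2$, with normalized Perron eigenvector $v_\theta = \begin{bmatrix} 2/3 \\ 1/3 \end{bmatrix}$.

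Next I would identify the length-two words and the derived substitution $S_2$. In any element of the period-doubling subshift the symbol $1$ occurs only as the second letter of $S(0) = 01$ and is therefore always followed by the first letter of the next block, which is $0$; hence $11 \notin \CW_2$ and $\CW_2 = \{00, 01, 10\}$ is a three-letter alphabet. Using \eqref{e.smdef} together with $|S(0)| = |S(1)| = 2$, one computes $S_2(00) = (01)(10)$, $S_2(01) = (01)(10)$, and $S_2(10) = (00)(00)$; ordering $\CW_2$ as $(00,01,10)$, the associated substitution matrix is
\[
M_2 = \begin{bmatrix} 0 & 0 & 2 \\ 1 & 1 & 0 \\ 1 & 1 & 0 \end{bmatrix},
\]
whose characteristic polynomial is $-\lambda(\lambda-2)(\lambda+1)$, so its leading eigenvalue is again $\theta = 2$, with normalized Perron eigenvector $v_\theta^{(2)} = \begin{bmatrix} 1/3 \\ 1/3 \\ 1/3 \end{bmatrix}$ (consistent with each length-two word having frequency $1/3$).

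Finally, Theorem~\ref{t.primitivesubst} identifies $\schwartzmanGroup(\Omega,T)$ with the $\Z[\theta^{-1}] = \Z[1/2]$-module generated by the entries of $v_\theta$ and $v_\theta^{(2)}$, i.e.\ by $\{1/3, 2/3\}$. Since $2/3 = 2 \cdot (1/3)$ and $\Z[1/2] = \{k/2^n : k \in \Z,\ n \in \Z_+\}$, this module equals $(1/3) \cdot \Z[1/2] = \{k/(3 \cdot 2^n) : k \in \Z,\ n \in \Z_+\}$, which is exactly the claimed set. The only place that requires genuine care is the bookkeeping in passing to the derived substitution $S_2$ --- correctly excluding $11$ from $\CW_2$ and reading off $M_2$ --- but this is entirely mechanical once the structure of the period-doubling sequence is recalled, so I expect no substantive obstacle beyond that routine verification.
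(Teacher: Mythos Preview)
Your proposal is correct and follows exactly the paper's approach: compute $M$, $\theta$, $v_\theta$, then $M_2$ and $v_\theta^{(2)}$, and invoke Theorem~\ref{t.primitivesubst}. You have in fact supplied more detail than the paper does (the explicit exclusion of $11$ from $\CW_2$, the characteristic polynomials, and the final identification of the $\Z[1/2]$-module), all of which is accurate.
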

\begin{proof} The substitution matrix, leading eigenvalue, and normalized eigenvector of the period-doubling substitution are given by
\begin{equation} M = \begin{bmatrix} 1 & 2 \\ 1 & 0 \end{bmatrix}, \quad \theta = 2, \quad v_\theta = \begin{bmatrix} 2/3 \\ 1/3 \end{bmatrix}. \end{equation} 
Another quick calculation shows that the substitution matrix and leading eigenvector associated with $S_2$ are given by
\begin{equation} M_2= 
\begin{bmatrix} 0 & 0 & 2 \\ 1 & 1 & 0 \\ 1 & 1 & 0 \end{bmatrix}, \quad v_\theta^{(2)} = \begin{bmatrix} 1/3 \\ 1/3 \\ 1/3 \end{bmatrix}. \end{equation} 
The result follows again from Theorem~\ref{t.primitivesubst}. \end{proof}

\subsection{Full Shift Over a Finite Alphabet}

Let us consider the random case in which $\Omega = \A^\Z$ for some finite set $\A$, $\mu = \mu_0^\Z$ for a probability measure $\mu_0$ on $\mathcal{A}$, and $T$ again is the shift. Without loss, take $\mathcal{A} = \{1,2,\ldots,m\}$, and abbreviate $w_j = \mu(\{j\})$ for $1 \le j \le m$. 

\begin{theorem} \label{t:subshift:bernoulli}
For the random setup here,
\[ \schwartzmanGroup(\Omega,T,\mu) = \Z[w_1,w_2,\ldots,w_m]. \]
In particular, in the case of equal weighting $w_1 = \cdots = w_m = 1/m$, one has
\[ \schwartzmanGroup(\Omega,T,\mu) = \Z[1/m]. \]\end{theorem}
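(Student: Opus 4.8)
The strategy is to reduce the statement to the general description of the Schwartzman group of a subshift already established in Theorem~\ref{t.subshiftproperties}(d), after which only an elementary computation with product measures remains. We may assume $w_j>0$ for every $j$: if some $w_j=0$, we simply discard the letters of zero weight and replace $\A$ by $\A'=\set{j:w_j>0}$, which affects neither $\schwartzmanGroup(\Omega,T,\mu)$ (since $\mu$ is then carried by $(\A')^\Z=\supp\mu$, and the paper's convention restricts $T$ to the support) nor $\Z[w_1,\dots,w_m]=\Z[\set{w_j:w_j>0}]$. With $w_j>0$ for all $j$ we have $\supp\mu=\Omega$, and $\mu=\mu_0^\Z$ is a Bernoulli measure, hence $T$-ergodic. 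Thus Theorem~\ref{t.subshiftproperties}(d) applies and tells us that $\schwartzmanGroup(\Omega,T,\mu)$ is the $\Z$-module generated by
\[ S = \set{\mu(\Xi) : \Xi \subseteq \Omega \text{ a cylinder set}}. \]

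Next I would compute $S$ explicitly. A cylinder set has the form $\Xi = \set{\omega \in \Omega : \omega_{n+i}=u_i \text{ for } 1\le i\le k}$ for some word $u=u_1\cdots u_k \in \A^*$ and some $n\in\Z$. Because $\mu=\mu_0^\Z$ is a product measure, $\mu(\Xi) = \prod_{i=1}^k w_{u_i}$, independently of the base point $n$. Collecting exponents, $\mu(\Xi) = w_1^{a_1}\cdots w_m^{a_m}$ where $a_j = \#\set{i:u_i=j}\in\Z_+$; conversely every such monomial arises this way, the empty word $k=0$ giving $\mu(\Omega)=1=w^0$. Hence
\[ S = \set{w_1^{a_1}\cdots w_m^{a_m} : (a_1,\dots,a_m)\in\Z_+^m} \]
is precisely the set of all monomials in $w_1,\dots,w_m$. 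Since $\Z[w_1,\dots,w_m]\subseteq\R$ is, by definition, the set of finite $\Z$-linear combinations of such monomials, the $\Z$-module generated by $S$ is exactly $\Z[w_1,\dots,w_m]$, which proves the first assertion.

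In the equal-weight case $w_1=\cdots=w_m=1/m$, the monomial $w_1^{a_1}\cdots w_m^{a_m}$ equals $m^{-(a_1+\cdots+a_m)}$, so $S=\set{m^{-n}:n\in\Z_+}$ and the $\Z$-module it generates is $\set{km^{-n}:k\in\Z,\ n\in\Z_+}=\Z[1/m]$, giving the second assertion. I expect no real obstacle here: the substantive steps—identifying $\schwartzmanGroup(\Omega,T,\mu)$ first with $\set{\int g\,d\mu : g\in C(\Omega,\Z)}$ and then with the $\Z$-module generated by the cylinder measures—are precisely the content of Theorem~\ref{t.subshiftproperties}, so all that is genuinely needed is the product formula $\mu(\Xi)=\prod_i w_{u_i}$ together with the trivial observation that the $\Z$-span of all monomials is the whole polynomial ring.
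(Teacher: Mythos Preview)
Your proof is correct and follows essentially the same route as the paper: both arguments identify $\schwartzmanGroup(\Omega,T,\mu)$ with the $\Z$-span of the $\mu$-measures of cylinder sets and then use the product structure $\mu(\Xi)=\prod_i w_{u_i}$ to recognize those measures as exactly the monomials in $w_1,\ldots,w_m$. The only cosmetic difference is that you invoke Theorem~\ref{t.subshiftproperties}(d) directly, whereas the paper reproves the relevant inclusions in situ; your packaging is slightly cleaner, and your explicit handling of the degenerate case $w_j=0$ is a small addition the paper omits.
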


\begin{proof}
Consider $u = a_1 \ldots a_n$ with $a_j \in \A$ for each $j$. The function
\[\chi_u(\omega) = \begin{cases} 1 & \omega|_{[1,n]} = u
\\
0 & \text{otherwise}
\end{cases}\]
is a locally constant (hence continuous) integer-valued function. Thus, 
\[ \int \chi_u\, d\mu = w_{a_1} w_{a_2} \cdots w_{a_n} \in \schwartzmanGroup(\Omega,T,\mu). \]
Since $\schwartzmanGroup$ is a subgroup of $\R$, the inclusion ``$\supseteq$'' is proved.
\medskip

Conversely, assume given $f \in C(\Omega,\Z)$. By compactness, we can write $f$ as a finite sum
\[f = \sum k \chi_{_{\Omega_k}}, \quad \Omega_k = f^{-1}(\{k\}).\]
By writing each $\Omega_k$ as a disjoint union of cylinder sets, we observe the inclusion ``$\subseteq$''.
\end{proof}

\section{Potentials Generated by Affine Torus Homeomorphisms} \label{sec:affine}

In this section, we will consider homeomorphisms of the torus induced by suitable affine maps on $\R^d$ and address the claims from Example~\ref{ex:affine}. Namely, given $d \in \N$, denote $\Omega = \T^d$. Given $A \in \SL(d,\Z)$, and $b \in \T^d$, we recall $T = T_{A,b}:\Omega \to \Omega$ is the homeomorphism $\T^d \to \T^d$ given by $T_{A,b} \, \omega =   A\omega+b$. This class includes several important examples as special cases, among which we single out two (both in dimension $d=2$). The \emph{cat map} is the homeomorphism  $T_{\rm cat}$ of $\T^2$ given by
\[A= \begin{bmatrix} 2 & 1 \\ 1 & 1\end{bmatrix}, \quad b = 0. \]
Given $\alpha \in \T\setminus \Q$, the associated \emph{skew-shift} is defined by
\begin{equation}
T(\omega_1,\omega_2) = (\omega_1+\alpha,\omega_1 + \omega_2)
\end{equation}
which corresponds to the choices
\begin{equation}
A = \begin{bmatrix} 1 & 0 \\ 1 & 1 \end{bmatrix}, \quad b = \begin{bmatrix} \alpha \\ 0 \end{bmatrix}.
\end{equation}

For the affine transformations discussed in this section, the image of the Schwartzman homomorphism may be computed as follows:

\begin{theorem} \label{t:affine:Tabschwartzgrp}
Let $d \in \N$, $A \in \SL(d,\Z)$, and $b \in \T^d$ be given, and let $\mu$ be a $T_{A,b}$-ergodic measure. In this case,
\begin{equation}
\schwartzmanGroup(\T^d,T_{A,b},\mu) = \set{\langle k, b \rangle + n : n \in \Z \text{ and } k \in  \Z^d \cap \ker(I-A^*)}.
\end{equation}
\end{theorem}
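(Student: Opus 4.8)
The plan is to prove the two inclusions separately: first exhibit enough explicit continuous maps $X \to \T$, where $X = X(\T^d,T_{A,b})$ is the suspension, to force $\schwartzmanGroup(\T^d,T_{A,b},\mu) \supseteq \{\langle k,b\rangle + n : n\in\Z,\ k\in\Z^d\cap\ker(I-A^*)\}$, and then handle an arbitrary $\phi\in C(X,\T)$ by restricting it to a fiber of the suspension and comparing it with the explicit maps.

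For the inclusion ``$\supseteq$'', I would work with the realization $X=(\T^d\times[0,1])/\!\!\sim$, where $(\omega,1)\sim(A\omega+b,0)$. Fix $k\in\Z^d\cap\ker(I-A^*)$ (so $A^*k=k$) and a representative $\beta\in\R$ of $\langle k,b\rangle\in\T$, i.e.\ $\pi(\beta)=\langle k,b\rangle$, and set $g_{k,\beta}[\omega,t] = \langle k,\omega\rangle + \pi(t\beta)$. The identities $A^*k=k$ and $\pi(\beta)=\langle k,b\rangle$ make this well-defined and continuous on $X$. Flowing from the base point $[\omega,0]$ and repeatedly using $[\omega,s]=[A\omega+b,s-1]$, one checks the exact formula $g_{k,\beta}(\oT^s[\omega,0]) = \langle k,\omega\rangle + \pi(s\beta)$ for all $s\ge0$, so a lift along the orbit is $a+s\beta$ and $\mathfrak{A}_{\overline\mu}(g_{k,\beta})=\beta$ (for every $\omega$, hence independently of $\mu$). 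Taking $k=0$ and $\beta=n\in\Z$ recovers the ``time maps'' $\tau_n[\omega,t]=\pi(nt)$ with $\mathfrak{A}_{\overline\mu}(\tau_n)=n$ (this is the observation in Remark~\ref{rem:gl:AcontainsZ}). Since $\mathfrak{A}_{\overline\mu}$ is a homomorphism (Theorem~\ref{t:schwarzmann}) and $g_{k,\beta}+\tau_n=g_{k,\beta+n}$ as maps into $\T$, the image contains $\beta+\Z=\pi^{-1}(\langle k,b\rangle)$ for every admissible $k$, giving the inclusion. These computations are entirely parallel to those in the proof of Proposition~\ref{prop:ap:schwartzOfZp} and Theorem~\ref{t.subshiftproperties}.

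For ``$\subseteq$'', let $\phi\in C(X,\T)$ be arbitrary and let $\Omega_0=\{[\omega,0]:\omega\in\T^d\}\cong\T^d$ be the zero fiber. A standard lifting argument for maps out of tori (pull back along $\R^d\to\T^d$, lift to $\R$ by simple connectedness, and subtract a linear functional) shows that $\phi|_{\Omega_0}$ is homotopic to $\omega\mapsto\langle k,\omega\rangle$ for a unique $k\in\Z^d$. The homotopy $(\omega,s)\mapsto\phi[\omega,s]$, $s\in[0,1]$, is a homotopy from $\phi|_{\Omega_0}$ to $(\phi|_{\Omega_0})\circ T_{A,b}$, whose homotopy class is $A^*k$; hence $A^*k=k$, i.e.\ $k\in\Z^d\cap\ker(I-A^*)$. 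Now fix a representative $\beta_0$ of $\langle k,b\rangle$ and consider $\psi:=\phi-g_{k,\beta_0}$, whose restriction to $\Omega_0$ is null-homotopic. By Proposition~\ref{p:nullhomo:lifting} there is a continuous $\widetilde\eta:\Omega_0\to\R$ with $\psi|_{\Omega_0}=\pi\circ\widetilde\eta$; extending $\widetilde\eta$ to a continuous $\widehat\eta:X\to\R$ (Tietze) and replacing $\psi$ by the homotopic map $\psi-\pi\circ\widehat\eta$, we may assume $\psi\equiv0$ on $\Omega_0$. Then for each $\omega$ the loop $s\mapsto\psi[\omega,s]$, $s\in[0,1]$, has a winding number $w(\omega)\in\Z$ depending continuously on $\omega$, hence $w\equiv n$ constant; tracking a lift of $s\mapsto\psi(\oT^s[\omega,0])$ over consecutive unit intervals (each of which starts and ends in $\Omega_0$) yields $\widetilde\psi(N)=nN+\mathrm{const}$, so $\mathfrak{A}_{\overline\mu}(\psi)=n\in\Z$. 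Therefore $\mathfrak{A}_{\overline\mu}(\phi)=\mathfrak{A}_{\overline\mu}(g_{k,\beta_0})+n=\beta_0+n\in\pi^{-1}(\langle k,b\rangle)$, which finishes the proof.

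The main obstacle is the step showing that a map $X\to\T$ which is null-homotopic on the fiber has integer Schwartzman value: one must argue that, for such a map, only the winding in the flow direction survives, and do so by elementary means (Proposition~\ref{p:nullhomo:lifting}, Tietze extension, and constancy of the fiberwise winding number on the connected fiber $\T^d$) rather than by invoking the cohomology of mapping tori. The normalization to $\psi\equiv0$ on $\Omega_0$ and the constancy of $w(\cdot)$ are the two places that need care; the remaining verifications — well-definedness and continuity of $g_{k,\beta}$, the exact orbit formula, and the additivity bookkeeping — are routine.
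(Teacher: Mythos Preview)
Your argument is correct. The ``$\supseteq$'' inclusion and the extraction of $k\in\Z^d\cap\ker(I-A^*)$ from the fiber restriction match the paper's Lemmas~\ref{lem:affine:Csharp}(a) and \ref{lem:affine:Amugknb} essentially verbatim. The genuine difference is in how the integer part is obtained for ``$\subseteq$''. The paper first reduces to $b=0$ via the explicit homeomorphism $X_b\cong X_0$ of Lemma~\ref{lem:affine:Csharp}(c), then uses the circle $\{[0,t]:t\in\R\}\subset X_0$ (which exists because $0$ is a fixed point of $T_{A,0}$) to read off $n$, and finally argues that the remainder is globally nullhomotopic, thereby computing $C^\sharp(X_b,\T)$ completely. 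You instead work with arbitrary $b$ throughout: after subtracting $g_{k,\beta_0}$ and using Tietze to normalize $\psi$ to vanish on $\Omega_0$, you extract $n$ as the (necessarily constant, by connectedness of $\T^d$) fiberwise winding number and compute $\mathfrak{A}_{\overline\mu}(\psi)=n$ directly from the orbit decomposition, never needing the fixed-point circle or the homeomorphism. Your route is a bit more direct for the theorem as stated and works uniformly in $b$; the paper's route yields the stronger conclusion that every homotopy class in $C^\sharp(X_b,\T)$ is represented by some $g_{k,n,b}$, which you do not prove (and do not need).
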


Here, let us point out that Theorem~\ref{t:affine:Tabschwartzgrp} computes $\schwartzmanGroup(\T^d,T_{A,b},\mu)$ for any ergodic measure, $\mu$. However, in order for Theorem~\ref{t:gablabel} to guarantee that $\schwartzmanGroup(\T^d,T_{A,b},\mu)$ gives the set of labels, one needs the additional assumption $\supp\mu = \T^d$. 

Let us explain how the labels in Theorem~\ref{t:affine:Tabschwartzgrp} arise. It is instructive to consider first the basic example $A = I$ so that $T$ is simply the identity map. In this case, one has $X(\Omega,T) = \T^{d+1}$. More generally, if $A$ fixes a subspace $V \subseteq \R^d$ of dimension $k$, this projects to a set $\bar{V}\subseteq \T^d$ that is homeomorphic to $\T^k$ and hence produces a copy of $\T^{k+1}$ inside $X$. This suggests a link between subspaces fixed by $A$ and generators of homotopy classes. The following lemma gives the precise result.

\begin{lemma} \label{lem:affine:Csharp}
Let $d \in \N$ and $A \in \SL(d,\Z)$ be given. For each $b \in \T^d$, define $X_b = X(\Omega,T_{A,b})$. For each $k \in K := 
\Z^d \cap \ker(I-A^*)$, $n \in \Z$, and $b \in \T^d$, define $g_{k,n,b} :X_b \to \T$ by
\begin{equation}
g_{k,n,b}[\omega,t] = \langle k, \omega \rangle + nt + t \langle k,b\rangle.
\end{equation}
\begin{enumerate}
\item[{\rm(a)}] For each $k \in K$,  $n \in \Z$, and $b \in \T^d$, $g_{k,n,b}$ is a well-defined continuous map $X_b\to\T$.\medskip

\item[{\rm(b)}] One has $C^\sharp(X_0,\T) = \{ [g_{k,n,0}] : k \in K, \ n \in \Z\}$; that is, every $g \in C(X_0,\T)$ is homotopic to some $g_{k,n,0}$.\medskip

\item[{\rm(c)}] For every $b \in \T^d$, $X_b$ is homeomorphic to $X_0$.
\medskip

\item[{\rm(d)}] For every $b \in \T^d$, one has $C^\sharp(X_b,\T) = \{ [g_{k,n,b}] : k \in K, \ n \in \Z\}$.
\end{enumerate} 
\end{lemma}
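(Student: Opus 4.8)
The plan is to dispatch (a) by a direct check, to reduce (d) to (b) via the homeomorphism produced in (c), and to put the essential work into (b), which I would attack by passing to the universal cover of $X_0$. For (a), I would fix a lift $\tilde b\in\R^d$ of $b$ (retaining the paper's convention of writing $b$ for both a point of $\T^d$ and a chosen lift) and, for $\omega\in\T^d$ with lift $\tilde\omega$, read the formula as $g_{k,n,b}[\omega,t]=\langle k,\tilde\omega\rangle+nt+t\langle k,\tilde b\rangle\pmod\Z$. This is independent of $\tilde\omega$ because $k\in\Z^d$, and it respects the relation $[\omega,t]=[A\omega+b,t-1]$ defining $X_b$: replacing $(\tilde\omega,t)$ by $(A\tilde\omega+\tilde b,t-1)$ changes the value by $\langle(A^*-I)k,\tilde\omega\rangle-n=-n\in\Z$, using $k\in\ker(I-A^*)$; continuity is clear from the formula. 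I would also note that passing from $\tilde b$ to $\tilde b+m$ with $m\in\Z^d$ replaces $g_{k,n,b}$ by $g_{k,\,n+\langle k,m\rangle,\,b}$, so the family $\{g_{k,n,b}:k\in K,\ n\in\Z\}$ does not depend on the chosen lift of $b$.

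For (b), I would realize $X_0=X(\T^d,T_{A,0})$ as $\R^{d+1}/\Gamma$, where $\Gamma$ is generated by $\tau_p\colon(\tilde\omega,t)\mapsto(\tilde\omega+p,t)$ for $p\in\Z^d$ and $\sigma\colon(\tilde\omega,t)\mapsto(A\tilde\omega,t-1)$; a routine check shows $\Gamma$ acts freely and properly discontinuously, so $\R^{d+1}\to X_0$ is the universal cover, $\Gamma\cong\Z^d\rtimes_A\Z$, and $\sigma\tau_p\sigma^{-1}=\tau_{Ap}$. Given $h\in C(X_0,\T)$, I would lift $\R^{d+1}\to X_0\xrightarrow{h}\T$ along $\pi\colon\R\to\T$ to a continuous $\tilde h\colon\R^{d+1}\to\R$; for each $\gamma\in\Gamma$ the continuous map $x\mapsto\tilde h(\gamma x)-\tilde h(x)$ is $\Z$-valued, hence equal to a constant $c_h(\gamma)$, and $\gamma\mapsto c_h(\gamma)$ is a homomorphism $\Gamma\to\Z$ (independent of the chosen lift $\tilde h$). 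Abelianness of $\Z$ together with the relation $\sigma\tau_{e_i}\sigma^{-1}=\tau_{Ae_i}$ shows that a homomorphism $c\colon\Gamma\to\Z$ is precisely a pair $\big((c(\tau_{e_i}))_{i=1}^d,\,c(\sigma)\big)$ in which the first component is forced to satisfy $\kappa=A^*\kappa$, i.e.\ $\Hom(\Gamma,\Z)\cong K\times\Z$. Since the lift of $g_{k,n,0}$ is $(\tilde\omega,t)\mapsto\langle k,\tilde\omega\rangle+nt$, one computes $c_{g_{k,n,0}}=(k,-n)$; hence I can choose $(k,n)\in K\times\Z$ with $c_h=c_{g_{k,n,0}}$, in which case $\tilde h$ and the lift of $g_{k,n,0}$ differ by a $\Gamma$-invariant function, which descends to a continuous $\psi\colon X_0\to\R$, so $h=g_{k,n,0}+\pi\circ\psi\sim g_{k,n,0}$ by Proposition~\ref{p:nullhomo:lifting}. (Alternatively one can invoke $[X_0,\T]=H^1(X_0;\Z)$ and the Wang sequence of the bundle $\T^d\to X_0\to\T$, which gives $H^1(X_0;\Z)\cong\Z\oplus\ker(A^*-I|_{\Z^d})$.)

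For (c) and (d): the homeomorphisms $T_{A,0}$ and $T_{A,b}$ of $\T^d$ are isotopic via $s\mapsto T_{A,sb}$, $s\in[0,1]$, and isotopic self-homeomorphisms have homeomorphic suspensions. Explicitly, I would pick a continuous $v\colon\R\to\R^d$ obeying the twisted periodicity $v(s-1)=Av(s)+\tilde b$ — for instance $v(s)=-s\,A^{-1}\tilde b$ on $[0,1]$, which satisfies $v(0)=Av(1)+\tilde b$, extended to $\R$ by the relation — and define $\Phi\colon X_0\to X_b$ by $\Phi[\tilde\omega,s]=[\tilde\omega+v(s),s]$. The twisted periodicity of $v$ is exactly what makes $\Phi$ carry the $X_0$-relation to the $X_b$-relation, so $\Phi$ is a well-defined continuous map, and it is a homeomorphism with inverse $[\tilde\omega,s]\mapsto[\tilde\omega-v(s),s]$; this proves (c). For (d), $\Phi$ induces a bijection $C^\sharp(X_b,\T)\to C^\sharp(X_0,\T)$, $[h]\mapsto[h\circ\Phi]$, so by (b) it suffices to show $g_{k,n,b}\circ\Phi=g_{k,n,0}$. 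For $s\in[0,1]$ one has $\langle k,v(s)\rangle=-s\langle k,A^{-1}\tilde b\rangle=-s\langle(A^*)^{-1}k,\tilde b\rangle=-s\langle k,\tilde b\rangle$, where the last step uses that $A^*k=k$ forces $(A^*)^{-1}k=k$; hence $g_{k,n,b}[\tilde\omega+v(s),s]=\langle k,\tilde\omega\rangle+ns=g_{k,n,0}[\tilde\omega,s]$, and since every point of $X_0$ has a representative with $s\in[0,1]$ the identity $g_{k,n,b}\circ\Phi=g_{k,n,0}$ follows. Thus every $h\in C(X_b,\T)$ is homotopic to $g_{k,n,b}$ for the $(k,n)$ with $h\circ\Phi\sim g_{k,n,0}$.

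I expect the main obstacle to be part (b): the identification $C^\sharp(X_0,\T)\cong K\times\Z$ together with the check that the explicit family $g_{k,n,0}$ realizes every homotopy class. Once (b) is in hand, part (a) is bookkeeping, part (c) is the standard isotopy-invariance of the suspension made concrete, and part (d) reduces to the short computation above, whose one subtlety is the identity $(A^*)^{-1}k=k$ for $k\in K$.
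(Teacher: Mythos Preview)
Your proof is correct; the main genuine difference from the paper is in part~(b). The paper argues by hand: given $g\in C(X_0,\T)$, it restricts $g$ to the fiber $\Omega_0\cong\T^d$ to extract $k\in\Z^d$ (and deduces $k\in K$ by comparing with the homotopic restriction to $\Omega_1$), restricts to the circle $\{[0,t]:t\in\R\}$ to extract $n\in\Z$, replaces $g$ by a homotopic $h$ agreeing with $g_{k,n,0}$ on both of these subspaces, and then asserts that $h-g_{k,n,0}$ is nullhomotopic. You instead realize $X_0=\R^{d+1}/\Gamma$ with $\Gamma\cong\Z^d\rtimes_A\Z$, so that the deck-transformation cocycle $h\mapsto c_h$ identifies $C^\sharp(X_0,\T)$ with $\Hom(\Gamma,\Z)$, and the semidirect-product relation $\sigma\tau_p\sigma^{-1}=\tau_{Ap}$ immediately yields $\Hom(\Gamma,\Z)\cong K\times\Z$. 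Your route is cleaner and makes the underlying identification $C^\sharp(X_0,\T)\cong H^1(X_0;\Z)$ explicit (the paper mentions this cohomological interpretation only in a remark afterward); the paper's argument is lighter on prerequisites but leaves the final nullhomotopy step to the reader.

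For (c), your homeomorphism $\Phi$ is precisely the inverse of the paper's $\phi_b[\omega,t]=[\omega+tA^{-1}b,t]$ (restricted to $t\in[0,1]$). Your treatment of (d) via the explicit identity $g_{k,n,b}\circ\Phi=g_{k,n,0}$ is sharper than the paper's one-line ``follows from (b) by composing,'' though both ultimately rest on the same observation $(A^*)^{-1}k=k$ for $k\in K$ that you flagged.
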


\begin{proof}(a) Since $k \in \ker(I-A^*)$, one has $A^*k =k$. Using this, one checks directly that 
\begin{align*}
g_{k,n,b}[T_{A,b}\omega,t] 
& = \langle k,A \omega + b \rangle + nt + t \langle k, b \rangle  \\
& = \langle A^* k,\omega \rangle + nt + (t+1) \langle k,b \rangle \\
& = \langle  k,\omega \rangle + n(t+1) + (t+1) \langle k,b \rangle  \\
& = g_{k,n,b}[\omega,t+1],
\end{align*}
which implies that $g_{k,n}$ is well-defined and continuous.\medskip

(b) Let $g \in C(X,\T)$ be given.  For $m \in \Z^d$, let $\chi_m(\omega) = \langle m,\omega \rangle$. 

\begin{step} \label{step:afftorus:kchoice} Write $\Omega_t$ for the fiber $\{[\omega,t] : \omega \in \Omega\}$ in $X$. Since every continuous map $\T^d\to \T$ is homotopic to $\chi_k:\omega \mapsto \langle k , \omega\rangle $ for some $k \in \Z^d$, there exists $k \in \Z^d$ such that $g|_{\Omega_0}$ is homotopic to $\chi_k$. By continuity, notice furthermore that $g|_{\Omega_t}$ is also homotopic to $\chi_k$ for every $t$. 
\end{step}

\begin{step}
Notice that $\langle m,A\omega\rangle = \langle A^*m,\omega\rangle$ for $m \in \Z^d$ and $\omega \in \T^d$. In particular, since $g|_{\Omega_0}$ is homotopic to $g|_{\Omega_1}$, we deduce $A^*k = k$ with $k$ from the previous step, and consequently, $k \in K$.
\end{step}

\begin{step} \label{step:afftorus:nchoice}
Considering the circle\footnote{This uses $A0=0$.} $\{[0,t] : t \in \R\}$, there exists $n \in \Z$ such that $t\mapsto g[0,t]$ is homotopic to $t\mapsto nt$.
\end{step}

\begin{step}
With $k$ and $n$ as in Steps~\ref{step:afftorus:kchoice} and \ref{step:afftorus:nchoice}, there is a map $h \in C(X,\T)$ homotopic to $g$ with $h[x,0] = \langle k,x \rangle$ and $h[0,t]=nt$. As mentioned before, note that $h|_{\Omega_t}$ is homotopic to $\langle k, \cdot \rangle$ for every $t$.
\end{step}

\begin{step}
Consider $h_0 = h - g_{k,n}$ with the $k$ and $n$ from Steps~\ref{step:afftorus:kchoice} and \ref{step:afftorus:nchoice}. By previous steps, $h_0$ vanishes on $\{[\omega,0] : \omega \in \Omega\} \cup\{[0,t] : t \in \R\}$. The reader can check that $h_0$ is then nullhomotopic.
\end{step}
Since $h_0$ is nullhomotopic, $[g]=[g_{k,n}]$ and the proof is done.
\medskip

(c) View $X_b$ as $\T^d \times [0,1]/\!\!\sim$ where $(\omega,1) \sim (T_{A,b} \,\omega,0)$. The map
\begin{equation}
\phi_b[\omega,t] = [\omega+tA^{-1}b,t]
\end{equation}
establishes the desired homeomorphism $\phi_b:X_b \to X_0$. Indeed, since
\begin{align*}
\phi_b[\omega,1]  = [\omega+A^{-1}b,1]
= [A(\omega+A^{-1}b),0] 
= [A\omega+b,0]
= \phi_b[A\omega + b,0]
= \phi_b[T_{A,b} \, \omega ,0],
\end{align*}
we see that $\phi_b$ is well-defined and continuous. The reader can check that the inverse is given by sending $[\omega,t] \in X_0$ to $[\omega-tA^{-1}b,t] \in X_b$ and is continuous.\medskip

(d) This follows from (b) by composing $g_{k,n,0}$ with the homeomorphism from (c).
\end{proof}

\begin{remark}
With some additional tools from algebraic topology, one can view Lemma~\ref{lem:affine:Csharp} as an explicit realization of the following diagram chase: First, since $\T$ is a $K(\Z,1)$-space, $C^\sharp(X,\T)$ can be identified with $H^1(X,\Z)$, the first cohomology group (cf.\ \cite{Hatcher2001:AlgTop}). Since $X$ is an orientable $d+1$-dimensional manifold, this can further be identified with $H_d(X,\Z)$, the $d$-th homology group via Poincar\'e duality. By using a standard exact sequence for mapping tori (cf.\ \cite[Example~2.48]{Hatcher2001:AlgTop}), one arrives at the exact sequence
\begin{equation}
0 \longrightarrow H_d(\T^d) \longrightarrow H_d(X) \longrightarrow \ker(I-A) \longrightarrow 0,
\end{equation}
from which one can realize $H_d(X)$ is free abelian on $1+\dim(\ker(I-A))$ generators. The functions $g_{k,n,b}$ provide explicit representatives of homotopy classes in $C^\sharp(X,\T)$ and hence (by chasing the diagram backwards) also give representatives of the cohomology classes in $H^1(X)$.
\end{remark}

\begin{lemma} \label{lem:affine:Amugknb}
Let $d \in \N$, $A \in \SL(d,\Z)$, and $b \in \T^d$ be given. For all $n \in \Z$, $k \in \Z^d \cap \ker(I- A^*)$, and $x \in X(\T^d,T_{A,b})$, one has $\rot(g_{k,n,b};x) = n+\langle k, b \rangle$.

In particular, if $\mu$ is any $T_{A,b}$-ergodic measure on $\T^d$, then
\begin{equation}
\mathfrak{A}_{\overline{\mu}}([g_{k,n,b}]) = n+ \langle k, b \rangle.
\end{equation}
for every $n \in \Z$ and $k \in \Z^d \cap \ker(I-A^*)$.
\end{lemma}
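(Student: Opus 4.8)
The plan is to compute the rotation number of $g_{k,n,b}$ directly along every orbit of the suspension flow, using the explicit formula for $g_{k,n,b}$ from Lemma~\ref{lem:affine:Csharp}. Fix $x = [\omega,s] \in X(\T^d,T_{A,b})$; by shifting the representative we may assume $s = 0$, so $x = [\omega,0]$. The suspension flow satisfies $\oT^t[\omega,0] = [\omega,t]$, and for $t = m + r$ with $m \in \Z$ and $r \in [0,1)$ we have $[\omega,m+r] = [T_{A,b}^m\omega, r]$. So first I would write down the function $\phi_x(t) = g_{k,n,b}(\oT^t x)$ and observe that it is the composition of $\pi:\R\to\T$ with an \emph{explicit} real-valued function. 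Concretely, on the interval $[m,m+1]$, plugging $[T_{A,b}^m\omega, t-m]$ into the formula gives
\begin{equation}
g_{k,n,b}[T_{A,b}^m\omega,\, t-m] = \langle k, T_{A,b}^m\omega\rangle + n(t-m) + (t-m)\langle k,b\rangle \pmod{\Z}.
\end{equation}

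Next I would produce a genuine continuous lift $\widetilde\phi_x:\R\to\R$. Using $A^*k = k$, one has $\langle k, T_{A,b}^m\omega\rangle = \langle k,\omega\rangle + m\langle k,b\rangle$ as real numbers (choosing a real representative of $\omega$ and of $b$), so the natural candidate is
\begin{equation}
\widetilde\phi_x(t) = \langle k,\omega\rangle + (n+\langle k,b\rangle)\, t,
\end{equation}
which is linear in $t$ with slope exactly $n + \langle k,b\rangle$. I would check that $\pi\circ\widetilde\phi_x = \phi_x$ by verifying agreement on each $[m,m+1]$ modulo $\Z$ — this is where the compatibility of the formula with the equivalence relation (already established in part~(a) of Lemma~\ref{lem:affine:Csharp}) does the real work. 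Since $\widetilde\phi_x$ is continuous and is a lift of $\phi_x$, it follows immediately that
\begin{equation}
\rot(g_{k,n,b};x) = \lim_{t\to\infty}\frac{\widetilde\phi_x(t)}{t} = \lim_{t\to\infty}\frac{\langle k,\omega\rangle + (n+\langle k,b\rangle)t}{t} = n + \langle k,b\rangle,
\end{equation}
and this holds for \emph{every} $x \in X$, with the value independent of $x$ and of the choice of lift (two lifts differ by an integer constant which washes out in the limit).

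Finally, for the ``in particular'' statement: since $\mu$ is $T_{A,b}$-ergodic, $\overline\mu$ is $\oT$-ergodic by the lemma in Section~\ref{sec:flow}, so Theorem~\ref{t:schwarzmann}(b) applies and $\mathfrak{A}_{\overline\mu}([g_{k,n,b}])$ equals the $\overline\mu$-almost-sure value of $\rot(g_{k,n,b};x)$. But we have just shown this rotation number is the constant $n + \langle k,b\rangle$ on all of $X$, so $\mathfrak{A}_{\overline\mu}([g_{k,n,b}]) = n + \langle k,b\rangle$. I do not expect any serious obstacle here: the content is entirely in the bookkeeping of the equivalence relation on the suspension and the use of $A^*k = k$; the only mild subtlety is being careful that $\langle k,b\rangle$ is a well-defined real number once one fixes a representative of $b$ in $\R^d$, and that changing the representative of $b$ by an element of $\Z^d$ changes $\langle k,b\rangle$ by an integer (since $k\in\Z^d$), hence does not affect the coset in the range group $\schwartzmanGroup$. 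The main "hard part," such as it is, is simply writing the lift cleanly across fiber boundaries; everything else is a direct calculation.
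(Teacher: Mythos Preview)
Your proposal is correct and follows essentially the same approach as the paper's own proof: reduce to $x=[\omega,0]$, write down the explicit linear lift $\widetilde\phi_\omega(t)=\langle k,\omega\rangle+(n+\langle k,b\rangle)t$, and read off the limit. Your treatment is in fact slightly more detailed than the paper's (you spell out the piecewise check across fibers using $A^*k=k$ and note the well-definedness of $\langle k,b\rangle$ modulo $\Z$), but the argument is the same.
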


\begin{proof}
For the first claim, it suffices to establish $\rot(g_{k,n,b};[\omega,0]) = n + \langle k,b \rangle$ for every $\omega \in \T^d$.
Given $\omega \in \T^d$, the map $\phi_\omega:\R \to \T$ sending $t \in \R$ to $g_{k,n,b}[\omega,t]$ lifts to the map 
\[ \widetilde{\phi}_\omega: t \mapsto \langle k, \omega\rangle + nt + t\langle k,b \rangle \in \R.\]
One then sees immediately that
\[ \lim_{t \to \infty} \frac{\widetilde\phi_\omega(t)}{t} = n+\langle k, b \rangle. \]
Since this holds for every $\omega \in \T^d$, the first statement of the result follows. The second statement is an immediate consequence.
\end{proof}

We can now prove the main result.

\begin{proof}[Proof of Theorem~\ref{t:affine:Tabschwartzgrp}]
This is a consequence of Lemmas~\ref{lem:affine:Csharp} and \ref{lem:affine:Amugknb}.
\end{proof}

\begin{theorem}
Let $\Omega = \T^2$, $\alpha \in \T \setminus \Q$ be given, let $T:\T^2 \to \T^2$ denote the associated skew-shift given by
\begin{equation}
T(\omega_1,\omega_2) = (\omega_1 + \alpha,\omega_1 + \omega_2),
\end{equation}
and let $\mu$ be Lebesgue measure. One has
\begin{equation}
\schwartzmanGroup(\Omega,T) = \Z + \alpha\Z = \{n+m\alpha: n,m\in\Z\}.
\end{equation}
\end{theorem}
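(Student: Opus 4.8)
The plan is to apply Theorem~\ref{t:affine:Tabschwartzgrp} with the specific data of the skew-shift, namely $d=2$, $A = \begin{bmatrix} 1 & 0 \\ 1 & 1 \end{bmatrix}$, and $b = \begin{bmatrix} \alpha \\ 0 \end{bmatrix}$, and to check that the resulting set $\{\langle k, b\rangle + n : n \in \Z, \ k \in \Z^2 \cap \ker(I - A^*)\}$ equals $\Z + \alpha\Z$. The one thing we must verify before invoking Theorem~\ref{t:gablabel} is that Lebesgue measure has full support and is ergodic for this $T$; the latter is classical (the skew-shift is strictly ergodic when $\alpha$ is irrational, as recorded in Example~\ref{ex:affine} with reference to \cite{Furst1981Porter}), and full support is immediate since Lebesgue measure charges every nonempty open subset of $\T^2$. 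Thus $\schwartzmanGroup(\T^2,T) = \schwartzmanGroup(\T^2,T,\mathrm{Leb})$ is given by the formula in Theorem~\ref{t:affine:Tabschwartzgrp}.

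The computational heart is determining $K := \Z^2 \cap \ker(I - A^*)$. We have $A^* = A^\top = \begin{bmatrix} 1 & 1 \\ 0 & 1 \end{bmatrix}$, so $I - A^* = \begin{bmatrix} 0 & -1 \\ 0 & 0 \end{bmatrix}$, and a vector $k = (k_1,k_2)^\top$ lies in $\ker(I - A^*)$ precisely when $k_2 = 0$. Hence $K = \{(k_1,0)^\top : k_1 \in \Z\}$. For such $k$ one computes $\langle k, b \rangle = k_1 \alpha + 0 \cdot 0 = k_1 \alpha$ (as an element of $\T$, but in the formula of Theorem~\ref{t:affine:Tabschwartzgrp} one lifts this to $\R$, so we read it as the real number $k_1\alpha$). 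Therefore
\[
\schwartzmanGroup(\T^2, T) = \{ k_1 \alpha + n : k_1 \in \Z, \ n \in \Z \} = \Z + \alpha \Z = \{ n + m\alpha : n,m \in \Z\},
\]
which is exactly the claimed set.

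There is no serious obstacle here: the statement follows mechanically once Theorem~\ref{t:affine:Tabschwartzgrp} is in hand, and the only genuine inputs are (i) the ergodicity and full support of Lebesgue measure for the skew-shift, which is standard, and (ii) the elementary linear-algebra computation of $\ker(I - A^*)$ over $\Z$. If one wanted to keep the write-up self-contained, the single point worth spelling out is the transpose in $A^*$ — it is the kernel of $I - A^*$ (not $I - A$) that enters the formula, a distinction that matters for non-symmetric $A$ such as this one, even though here both kernels happen to be spanned by the same vector $(1,0)^\top$. The proof can therefore be written in a few lines: invoke Theorem~\ref{t:gablabel} and Theorem~\ref{t:affine:Tabschwartzgrp}, record the computation $\ker(I - A^*) = \R(1,0)^\top$, and conclude.
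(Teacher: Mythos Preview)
Your proof is correct and follows the same approach as the paper: invoke strict ergodicity of the skew-shift (with Lebesgue measure the unique invariant measure) and apply Theorem~\ref{t:affine:Tabschwartzgrp}, with the computation $\ker(I-A^*) = \R(1,0)^\top$ spelled out. One small slip in your closing parenthetical: $\ker(I-A)$ is actually spanned by $(0,1)^\top$, not $(1,0)^\top$, so the two kernels do differ here---but this side remark plays no role in the argument itself.
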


\begin{proof}
Since the skew-shift is strictly ergodic with Lebesgue measure supplying the unique invariant measure \cite{Furst1981Porter}, the result follows from Theorem~\ref{t:affine:Tabschwartzgrp}.
\end{proof}

Let us point out that the skew-shift is known to be uniquely ergodic (with Lebesgue measure the unique preserve measure), justifying the suppression of invariant measure from the notation.

\begin{theorem} \label{t:catmap}
Let $\Omega = \T^d$, suppose $A \in \SL(d,\Z)$ is such that no eigenvalue of $A$ is a root of unity, and let $\mu$ denote Lebesgue measure. One has
\begin{equation}
\schwartzmanGroup(\Omega,T_{A,0},\mu) = \Z.
\end{equation}
Consequently, for every $f \in C(\Omega,\R)$, $\Sigma_{\mu,f}$ is an interval.
\end{theorem}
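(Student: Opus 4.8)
The plan is to deduce Theorem~\ref{t:catmap} directly from Theorem~\ref{t:affine:Tabschwartzgrp} and Corollary~\ref{coro:A=Z}. The one genuinely necessary input beyond those results is that Lebesgue measure $\mu$ is $T_{A,0}$-ergodic: this is precisely the role of the hypothesis that no eigenvalue of $A$ is a root of unity, since that spectral condition is exactly the classical criterion for ergodicity of the toral automorphism $T_{A,0}$ with respect to Haar/Lebesgue measure (see \cite[Corollary~1.10.1]{Walters1982:ErgTh}, as already invoked in Example~\ref{ex:affine}). I would open the proof by recording this ergodicity.

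Next I would apply Theorem~\ref{t:affine:Tabschwartzgrp} with $b=0$. Since $\langle k,0\rangle = 0$ for every $k \in \Z^d$, the formula there collapses, and one obtains $\schwartzmanGroup(\T^d,T_{A,0},\mu) = \set{n : n \in \Z} = \Z$. One may remark that the hypothesis on $A$ also forces $\Z^d \cap \ker(I-A^*) = \{0\}$ — a nonzero integer vector fixed by $A^*$ would make $1$, hence a root of unity, an eigenvalue of $A$ — so nothing is lost by discarding the $\langle k,b\rangle$ term; but this observation is not logically needed for the computation, as $b=0$ kills those contributions regardless.

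Finally, since $\mu$ is Lebesgue measure, $\supp\mu = \T^d$, so Theorem~\ref{t:gablabel} applies: every gap label of $\Sigma_{\mu,f}$ lies in $\schwartzmanGroup(\T^d,T_{A,0},\mu) \cap [0,1] = \{0,1\}$. Corollary~\ref{coro:A=Z} (in the case $\schwartzmanGroup = \Z$) then yields the asserted conclusion, namely that $\Sigma_{\mu,f}$ is an interval for every $f \in C(\Omega,\R)$: the only conceivable gaps are the two trivial unbounded ones, $(-\infty,\min\Sigma_{\mu,f})$ and $(\max\Sigma_{\mu,f},\infty)$.

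I do not anticipate a real obstacle: the substantive work was already carried out in Theorem~\ref{t:affine:Tabschwartzgrp} and Corollary~\ref{coro:A=Z}. The only points requiring care are the bookkeeping remark that $b=0$ annihilates the $\langle k,b\rangle$ terms, and the correct citation of the ergodicity criterion for root-of-unity-free toral automorphisms (which is quoted, not reproved); one should also keep in mind that this criterion is in general strictly weaker than hyperbolicity, so the cat map — whose linear part is Anosov — is just one concrete instance satisfying the hypothesis.
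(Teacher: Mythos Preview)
Your proposal is correct and matches the paper's own proof essentially line for line: cite ergodicity of $(\T^d,T_{A,0},\mu)$ via the root-of-unity criterion, apply Theorem~\ref{t:affine:Tabschwartzgrp} with $b=0$ to obtain $\schwartzmanGroup = \Z$, then invoke Corollary~\ref{coro:A=Z}. Your additional remarks about $\Z^d \cap \ker(I-A^*)=\{0\}$ and $\supp\mu = \T^d$ are correct elaborations but, as you note, not strictly needed.
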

\begin{proof}
The assumption on the eigenvalues of $A$ implies that $(\Omega,T_{A,0},\mu)$ is ergodic \cite[Corollary~1.10.1]{Walters1982:ErgTh}. Thus, the first statement follows from Theorem~\ref{t:affine:Tabschwartzgrp}. The second statement follows from Corollary~\ref{coro:A=Z}.
\end{proof}

In particular, the reader can readily check that the eigenvalues of 
\[ A = \begin{bmatrix} 2 & 1 \\ 1 & 1\end{bmatrix} \]
are $\frac{1}{2}(3\pm\sqrt{5})$, neither of which is a root of unity, and hence Theorem~\ref{t:catmap} applies to ergodic operators generated by the cat map.

\begin{remark}
There is a subtle point here: the ergodic measure must have full support in order for the gap labelling theorem to be applicable. Concretely, the reader may note that any hyperbolic toral automorphism has many invariant measures. Notably, any periodic point of such an automorphism may be used to generate an ergodic measure with finite support, which then leads to periodic potentials, which can in turn be chosen in such a way as to produce a spectral gap.\footnote{Indeed, the spectrum of any Schr\"odinger operator in $\ell^2(\Z)$ with a non-constant periodic potential has at least one gap \cite{Hochstadt1984LAA}.} 

On one hand, these measures generated by periodic orbits do not have full support, so the theorem asserting gaplessness does not apply to them. On the other hand, the mechanism at work here is the following: for the gap labelling theorem, one only works on the set of energies at which the associated cocycle is uniformly hyperbolic, which in particular means that the monodromy matrices associated with every periodic orbit must simultaneously be hyperbolic. On a single periodic orbit, we can have hyperbolicity of the associated monodromy without forcing uniform hyperbolicity of the whole cocycle.

\end{remark}

\bibliographystyle{abbrv}

\bibliography{gapbib}

\end{document}